\numberwithin{equation}{subsection}
\let\oldmarginpar\marginpar
\renewcommand\marginpar[1]{\-\oldmarginpar[\raggedleft\footnotesize #1]
{\raggedright\footnotesize #1}}
\newcommand{\conjeq}{\stackrel{?}{=}}
\newtheorem{theorem}{Theorem}[subsection]
\newtheorem{proposition}[theorem]{Proposition}
\newtheorem{corollary}[theorem]{Corollary}
\newtheorem{conjecture}[theorem]{Conjecture}
\newtheorem{lemma}[theorem]{Lemma}
\theoremstyle{remark}
\newtheorem{remark}[theorem]{Remark}
\theoremstyle{definition}
\newtheorem{definition}[theorem]{Definition}
\newcounter{margin}
{\end{itshape}  \bigskip}
\def\beq{\begin{eqnarray}}
\def\eeq{\end{eqnarray}}
\def\bes{\begin{eqnarray*}}
\def\ees{\end{eqnarray*}}
\def\muhat{{\bm \mu}}
\def\lambdahat{{\bm \lambda}}
\def\alphahat{{\bm \alpha}}
\def\nuhat{{\bm \nu}}
\def\omhat{{\bm \omega}}
\def\varphihat{{\bm \varphi}}
\def\Vhat{{\bm V}}
\def\X{{\mathbb{X}}}
\def\V{\mathcal{V}}
\def\C{\mathbb{C}}
\def\M{{\mathcal{M}}}
\def\calQ{{\mathcal{Q}}}
\def\calA{{\mathcal{A}}}
\def\calX{{\mathcal{X}}}
\def\calU{{\mathcal{U}}}
\def\calV{{\mathcal{V}}}
\def\calF{{\mathcal{F}}}
\def\calP{\mathcal{P}}
\def\calH{\mathcal{H}}
\def\x{\mathbf{x}}
\def\e{\mathbf{e}}
\def\y{\mathbf{y}}
\def\k{\mathbf{k}}
\def\v{\mathbf{v}}
\def\w{\mathbf{w}}
\def\P{\mathcal{P}}
\def\H{\mathbb{H}}
\def\N{\mathbb{Z}_{\geq 0}}
\def\Nstar{\mathbb{Z}_{> 0}}
\def\F{\mathbb{F}}
\def\Q{\mathbb{Q}}
\def\calC{{\mathcal C}}
\def\calO{{\mathcal O}}
\def\Z{\mathbb{Z}}
\def\nrm{\sigma}
\def\K{\mathbb{K}}
\def\gl{{\mathfrak g\mathfrak l}}
\newcommand{\nc}{\newcommand}
\def\calZ{\mathcal{Z}}
\nc{\op}[1]{\mathop{\mathchoice{\mbox{\rm #1}}{\mbox{\rm #1}}
{\mbox{\rm \scriptsize #1}}{\mbox{\rm \tiny #1}}}\nolimits}
\nc{\al}{\alpha}
\nc{\ep}{\varepsilon} \nc{\ga}{\gamma} \nc{\Ga}{\Gamma}
\nc{\la}{\lambda} \nc{\La}{\Lambda} \nc{\si}{\sigma}
\nc{\Sig}{{\Gamma}} \nc{\Om}{\Omega} \nc{\om}{\omega}
\nc{\SL}{{\rm SL}} \nc{\GL}{{\rm GL}} \nc{\PGL}{{\rm PGL}}
\nc{\G}{{\rm G}}
\nc{\cpt}{{\op{cpt}}} \nc{\Dol}{{\op{Dol}}} \nc{\DR}{{\op{DR}}}
\nc{\B}{{\op{B}}} \nc{\Triv}{\op{Triv}} \nc{\Hod}{{\op{Hod}}}
\nc{\Log}{{\op{Log}}} \nc{\Exp}{{\op{Exp}}} \nc{\Est}{E_{\op{st}}}
\nc{\Hst}{H_{\op{st}}} \nc{\Left}[1]{\hbox{$\left#1\vbox to
  10.5pt{}\right.\nulldelimiterspace=0pt \mathsurround=0pt$}}
\nc{\Right}[1]{\hbox{$\left.\vbox to
  10.5pt{}\right#1\nulldelimiterspace=0pt \mathsurround=0pt$}}
\nc{\LEFT}[1]{\hbox{$\left#1\vbox to
  15.5pt{}\right.\nulldelimiterspace=0pt \mathsurround=0pt$}}
\nc{\RIGHT}[1]{\hbox{$\left.\vbox to
  15.5pt{}\right#1\nulldelimiterspace=0pt \mathsurround=0pt$}}
\nc{\bee}{{\bf E}} \nc{\bphi}{{\bf \Phi}}
\begin{document}

\title{Arithmetic harmonic analysis on\\ character and quiver varieties II}

\author{ Tam\'as Hausel
\\ {\it University of Oxford}
\\{\tt hausel@maths.ox.ac.uk} \and Emmanuel Letellier \\ {\it
  Universit\'e de Caen} \\ {\tt
  letellier.emmanuel@math.unicaen.fr}\and Fernando Rodriguez-Villegas
\\ 
{\it University of Texas at Austin} \\ {\tt
  villegas@math.utexas.edu}\\ \\
{with an appendix by Gergely~Harcos} 
 }

\pagestyle{myheadings}

\maketitle

\begin{abstract}We study connections between the topology of generic
  character varieties of fundamental groups of punctured Riemann surfaces, Macdonald polynomials, quiver representations,
  Hilbert schemes on $\C^\times\times\C^\times$, modular forms and multiplicities in tensor products of irreducible
  characters of finite general linear groups.
\end{abstract}

\newpage
\tableofcontents
\newpage

\section{Introduction}

\subsection{Character varieties}\label{char}
Given a non-negative integer $g$ and a $k$-tuple
$\muhat=(\mu^1,\mu^2,\dots,\mu^k)$ of partitions of $n$, we define the
generic character variety $\M_\muhat$ of type $\muhat$ as
follows (see \cite{hausel-letellier-villegas} for more details). Choose a \emph{generic} tuple $(\calC_1,\dots,\calC_k)$ of
semisimple conjugacy classes of $\GL_n(\C)$ such that for each
$i=1,2,\dots,k$ the multiplicities of the eigenvalues of $\calC_i$ are
given by the parts of $\mu^i$.

Define $\cal{Z}_\muhat$ as 
$$
\calZ_\muhat:= \left\{(a_1,b_1,\dots,a_g,b_g,x_1,\dots,x_k)\in
  (\GL_n)^{2g} \times\calC_1\times\cdots\times\calC_k\,\left|\,
    \prod_{j=1}^g(a_i,b_i)\prod_{i=1}^kx_i=1\right.\right\},
$$
where $(a,b)=aba^{-1}b^{-1}$. The group $\GL_n$ acts diagonally by
conjugation on $\calZ_\muhat$ and we define $\M_\muhat$ as the affine
GIT quotient 
$$
\M_\muhat:=\calZ_\muhat/\!/\GL_n:={\rm
  Spec}\,\left(\C[\calZ_\muhat]^{\GL_n}\right).
$$ 
We prove in \cite{hausel-letellier-villegas} that, if non-empty,
$\M_\muhat$ is nonsingular of pure dimension
$$
d_\muhat:=n^2(2g-2+k)-\sum_{i,j}(\mu^i_j)^2+2.
$$
We also defined an {\em a priori} rational function
$\H_\muhat(z,w)\in\Q(z,w)$ in terms of  Macdonald symmetric
functions (see \S~\ref{Cauchy} for a precise definition) and we conjecture
that the compactly supported mixed Hodge numbers $\{h_c^{i,j;k}(\M_\muhat)\}_{i,j,k}$ satisfies $h_c^{i,j;k}(\M_\muhat)=0$ unless $i=j$ and 
\begin{equation}
  H_c(\M_\muhat;q,t)\conjeq(t\sqrt{q})^{d_\muhat}\H_\muhat 
\left(-t\sqrt{q},\frac{1}{\sqrt{q}}\right),
\label{mainconj}
\end{equation}
where $H_c(\M_\muhat;q,t):=\sum_{i,j}h_c^{i,i;j}(\M_\muhat)q^it^j$ is
the compactly supported mixed Hodge polynomial. 

In particular, $\H_\muhat(-z,w)$
should actually be a polynomial with non-negative integer coefficients
of degree $d_\muhat$ in each variable.  

In
\cite{hausel-letellier-villegas} we prove that (\ref{mainconj}) is
true under the specialization $(q,t)\mapsto (q,-1)$, namely,
\begin{equation}
  E(\M_\muhat;q):=H_c(\M_\muhat;q,-1)= 
q^{\frac{1}{2}d_\muhat}\H_\muhat\left(\sqrt{q},\frac{1}{\sqrt{q}}\right).
\label{mainresult}
\end{equation}

This formula is obtained by counting points of $\M_\muhat$ over finite
fields (after choosing a spreading out of $\M_\muhat$ over a finitely
generated subalgebra of $\C$). We compute $\M_\muhat(\F_q)$ using a
formula involving the values of the irreducible characters of
$\GL_n(\F_q)$ (a formula that goes back to Frobenius
\cite{frobenius}).  The calculation shows that $\M_\muhat$ is
\emph{polynomial count}; i.e., there exists a polynomial $P\in\C[T]$
such that for any finite field $\F_q$ of sufficiently large
characteristic, $\#\M_\muhat(\F_q)=P(q)$. Then by a theorem of
Katz \cite[Appendix]{hausel-letellier-villegas} $E(\M_\muhat;q)=P(q)$.

Recall also that the $E(\M_\muhat;q)$ satisfies the following identity

\begin{equation}
E(\M_\muhat;q)=q^{d_\muhat}E(\M_\muhat;q^{-1}).
\label{curious}\end{equation}

In this paper we use Formula (\ref{mainresult}) to prove the following theorem.

\begin{theorem} 
If non-empty, the character variety $\M_\muhat$ is connected.
\end{theorem}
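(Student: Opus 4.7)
The plan is to identify the number $c(\M_\muhat)$ of connected components of $\M_\muhat$ with a specific coefficient of the $E$--polynomial and then extract that coefficient from (\ref{mainresult}). Since $\M_\muhat$ is smooth of pure dimension $d_\muhat$, the top compactly supported cohomology $H^{2d_\muhat}_c(\M_\muhat;\Q)$ is pure of Hodge type $(d_\muhat,d_\muhat)$ and has dimension $c(\M_\muhat)$. Writing $E(\M_\muhat;q)=\sum_i e_i q^i$, standard weight considerations for smooth varieties of pure dimension $d_\muhat$ show that $c(\M_\muhat)=e_{d_\muhat}$, so the theorem reduces to showing that this top coefficient equals $1$.

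Next, the curious Poincar\'e duality (\ref{curious}) forces the palindromic symmetry $e_i=e_{d_\muhat-i}$, giving
$$c(\M_\muhat)\;=\;e_{d_\muhat}\;=\;e_0\;=\;E(\M_\muhat;0).$$
Substituting $z=\sqrt q$ in (\ref{mainresult}) then transports the problem to the leading-order identity
$$\lim_{z\to 0}\,z^{d_\muhat}\,\H_\muhat(z,1/z)\;=\;1$$
in the rational function $\H_\muhat$.

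To verify this identity I would unfold the definition of $\H_\muhat$ from \S\ref{Cauchy} as a sum over partitions $\lambda\vdash n$ of products of modified Macdonald polynomials $\tilde H_\lambda[\x;z^2,w^2]$ in auxiliary alphabets, divided by a rational ``hook'' pre-factor depending on $\lambda$. Along the curve $w=1/z$ one can read off the order at $z=0$ of each summand combinatorially from the shape of $\lambda$, while the specialization $\tilde H_\lambda[\x;0,t]$ degenerates to a modified Hall--Littlewood polynomial in $t$. The main obstacle is the combinatorial bookkeeping required to identify which partitions $\lambda$ contribute to the leading power $z^{-d_\muhat}$, and to verify that the surviving contribution collapses to~$1$; the genericity hypothesis on the eigenvalues of $\calC_1,\dots,\calC_k$ is what should rule out the higher Littlewood--Richardson-type multiplicities that could otherwise appear, leaving a single Hall--Littlewood inner-product of value $1$.
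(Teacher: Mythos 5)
Your reduction of the theorem to an analytic statement about $\H_\muhat$ is correct, and in fact runs in parallel with the paper's opening lemma (the paper extracts the constant term $e_0=h^{0,0;0}(\M_\muhat)$ directly via Poincar\'e duality for mixed Hodge structures, while you extract $e_{d_\muhat}=h_c^{d_\muhat,d_\muhat;2d_\muhat}(\M_\muhat)$ and then invoke the palindromy $e_i=e_{d_\muhat-i}$ coming from \eqref{curious}; these are dual routes to the same fact). So you have correctly reduced the problem to showing that the coefficient of $q^{-d_\muhat/2}$ in $\H_\muhat(\sqrt q,1/\sqrt q)$ equals $1$.

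However, the remainder of your proposal contains a genuine gap. First, $\H_\muhat$ is not a direct sum over partitions $\lambda$ of $n$: by \eqref{H} it is $(z^2-1)(1-w^2)\langle\Log\Omega(z,w),h_\muhat\rangle$, so the $\Log$ reorganizes contributions of \emph{multi-types} $\omhat=(d_1,\omhat^1)\cdots(d_s,\omhat^s)$, not of single partitions. Controlling which of these multi-types can attain the extremal valuation, and checking that the cancellations built into the $\Log$ leave exactly $1$, is the real content of the proof. In the paper this takes two substantial steps: first one bounds the valuation of the coefficients $\calA_{\lambda\muhat}(q)$ in $\Omega(\sqrt q,1/\sqrt q)$ (Theorem~\ref{minim} in \S\ref{step-1}), and then one analyzes how these combine under $\Log$ (\S\ref{step-2}, Theorem~\ref{connectedness1}). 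Both steps hinge on the quadratic inequality proved in the appendix (\S\ref{appendix}, applied via Lemma~\ref{nrm-ineq} and Lemma~\ref{ineq-1}) and on the quiver-theoretic dichotomy between the affine and non-affine cases; in the affine case, several terms contribute to the lowest power of $q$ and their sum collapses to $1$ only after comparison with Euler's identity $\Log\left(\sum_{n\ge0}p(n)T^n\right)=\sum_{n\ge1}T^n$. Finally, for $g=0$ there is a further reduction using the Weyl group action on dimension vectors \cite{crawley-par}. Second, the appeal to genericity of the conjugacy classes is misplaced at this stage: $\H_\muhat(z,w)$ is a purely combinatorial object that carries no memory of the eigenvalues; genericity enters only in the proof of the point-count identity \eqref{mainresult}, which you have already used. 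In short, the reduction is sound, but your sketch of the leading-coefficient computation does not engage with the $\Log$ structure, misidentifies the role of genericity, and omits the key inequality — which is precisely the ``rather delicate argument'' the paper flags as the most technical part of the whole work.
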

The proof of the theorem reduces to proving that the coefficient of
the lowest power of $q$ in $\H_\muhat(\sqrt{q},1/\sqrt{q})$, namely
$q^{-d_\muhat/2}$, equals $1$.  This turns out to require a rather
delicate argument, by far the most technical of the paper, that uses
the inequality of \S~\ref{appendix} in a crucial way.

\subsection{Relations to Hilbert schemes on $\C^\times\times\C^\times$ and modular forms}

Here we assume that $g=k=1$. Put $X=\C^\times\times\C^\times$ and
denote by $X^{[n]}$ the Hilbert scheme of $n$ points in $X$.  Define
 $\H^{[n]}(z,w)\in \Q(z,w)$ by
\begin{equation}
\sum_{n\geq 0}\H^{[n]}(z,w)T^n:=\prod_{n\geq
1}\frac{(1-zwT^n)^2}{(1-z^2T^n)(1-w^2T^n)},
\end{equation}
with the convention that $\H^{[0]}(z,w):=1$.  It is known
by work of G\"ottsche and Soergel \cite{Gottsche-Soergel} that the
mixed Hodge polynomial $H_c\left(X^{[n]};q,t\right)$
is given by
$$
H_c\left(X^{[n]};q,t\right)
=(qt^2)^n\H^{[n]}\left(-t\sqrt{q}, \frac{1}{\sqrt{q}}\right).
$$

\begin{conjecture} 
We have $$\H^{[n]}(z,w)=\H_{(n-1,1)}(z,w).$$
\label{conjHS}
\end{conjecture}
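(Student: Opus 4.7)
The plan is to reduce the conjecture to a symmetric-function identity between two generating functions in $T$ and verify it using the explicit structure of modified Macdonald polynomials. Since this is a statement about rational functions in $(z,w)$, the argument is purely combinatorial, not geometric.

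First, I would recall from Section~\ref{Cauchy} that $\H_\muhat(z,w)$ is extracted as a plethystic-log coefficient from the partition sum
\[
\Omega_g(z,w; X_1,\ldots,X_k) \;=\; \sum_\lambda \calH_\lambda^g(z,w)\,\prod_{i=1}^{k} \tilde H_\lambda[X_i;\,z^2,w^2],
\]
where $\calH_\lambda^g$ is the standard $g$-dependent hook-product weight. For $g=k=1$ and $\muhat=(n-1,1)$, what must be extracted is the coefficient of the Schur function
\[
s_{(n-1,1)}(X) \;=\; h_{n-1}(X)\,h_{1}(X) - h_{n}(X)
\]
in $\Log\,\Omega_1(z,w;X)$, up to the universal prefactor $(z^2-1)(1-w^2)$.

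Second, I would assemble the generating series $F(z,w;T) := \sum_{n\geq 1}\H_{(n-1,1)}(z,w)\,T^n$, so that the conjecture becomes
\[
F(z,w;T) \;=\; \prod_{n\geq 1}\frac{(1-zwT^n)^2}{(1-z^2T^n)(1-w^2T^n)} \;-\; 1.
\]
Taking the ordinary logarithm, the right-hand side becomes the explicit rational function $\sum_{k\geq 1} \frac{(z^k-w^k)^2}{k}\cdot\frac{T^k}{1-T^k}$. Since $F(z,w;T)$ already arises from a plethystic logarithm, the conjecture is equivalent to an identity between two power series in $T$ with coefficients in $\Q(z,w)$.

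Third, I would use the classical combinatorial formulas for the pairings $\langle \tilde H_\lambda[X;\,z^2,w^2],\,h_n(X)\rangle$ and $\langle \tilde H_\lambda[X;\,z^2,w^2],\,h_{n-1}(X)h_1(X)\rangle$ (which count certain standard Young tableaux weighted by cocharge-type statistics), substitute them into the partition sum, interchange summations, and try to match the resulting expression with the simple logarithm above.

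The main obstacle lies in this final matching: the sum over $\lambda$ mixes contributions from all partitions and there is no manifest cancellation in the raw formula. The most promising route I see is to interpret the sum via Haiman's identification of modified Macdonald polynomials with Frobenius characters of tautological sheaves on the Hilbert scheme of $\C^2$, and then to pass to $\C^\times\times\C^\times$ by the plethystic substitution relating the two. In this framework the desired product formula should emerge naturally from Göttsche's formula, with the very restricted hook shape $(n-1,1)$ providing the combinatorial simplification needed to close the argument; obtaining the identity by purely symmetric-function methods, independent of the Hilbert-scheme interpretation, appears to require a new nabla-type or vertex-operator identity and is likely to be considerably harder.
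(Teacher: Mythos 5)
The statement you are asked to prove is presented as a conjecture in the paper (Conjecture~\ref{conjHS}, restated as Conjecture~\ref{conjCV=HS}); the paper does \emph{not} prove it, and your proposal does not prove it either — your final paragraph correctly acknowledges that the matching step is open and that the route through Haiman's Hilbert-scheme interpretation is speculative. The paper stops short in an analogous way: it reduces the conjecture to the explicit combinatorial identity~\eqref{comb} via Proposition~\ref{Log-fmla} and the Corollary following it, and then verifies only the two specializations $z=0,\ w=\sqrt q$ (via Theorem~\ref{purity} together with G\"ottsche's formula) and $w=z^{-1}$ (Theorem~\ref{euler-spec}, a telescoping/$q$-binomial computation), which together give Theorem~\ref{theospe} but not the full two-variable identity. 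The paper's own tentative route to a complete proof is geometric rather than combinatorial, via a family version of tamely-ramified non-Abelian Hodge theory, and the authors explicitly state that the required ingredients are not in the literature. So no approach — yours or theirs — actually closes the argument.

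There is also a concrete error at your extraction step. By the definition~\eqref{H}, $\H_{(n-1,1)}(z,w)=(z^2-1)(1-w^2)\,\langle\Log\,\Omega(z,w),\,h_{n-1}h_1\rangle$; equivalently it is the coefficient of the \emph{monomial} symmetric function $m_{(n-1,1)}$ in $\Log\,\Omega$, not the Hall pairing with the Schur function $s_{(n-1,1)}=h_{n-1}h_1-h_n$. The extra $-h_n$ in your second paragraph would subtract $\langle\Log\,\Omega,h_n\rangle$, which is nonzero (up to the prefactor it is $\H_{(n)}$, a polynomial of degree $2$), so your generating series $F(z,w;T)$ would not match the left-hand side of the target identity and the subsequent reduction would fail. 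The paper avoids this pitfall by extracting the $m_{(n-1,1)}$-coefficient as the coefficient of $u$ under the substitution $\x\mapsto\{1,u,0,0,\dots\}$, which yields Proposition~\ref{Log-fmla} and the quotient form $A_1/A_0$ of the identity cleanly.
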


This together with the conjectural formula (\ref{mainconj}) implies
that the Hilbert scheme $X^{[n]}$ and the character variety
$\M_{(n-1,1)}$ should have the same mixed Hodge polynomial. Although
this is believed to be true (in the analogous additive case this is
well-known; see Theorem \ref{adpure}) there is no complete proof in
the literature.  (The result follows from known facts modulo some
missing arguments in the non-Abelian Hodge theory for punctured
Riemann surfaces; see the comment after Conjecture \ref{conjCV=HS}.)
We prove the following results which give evidence for
Conjecture~\ref{conjHS}.

\begin{theorem} We have
\begin{align*}
&\H^{[n]}(0,w)=\H_{(n-1,1)}\left(0,w\right),\\
&\H^{[n]}(w^{-1},w)=\H_{(n-1,1)}(w^{-1},w).
\end{align*}
\label{theospe}
\end{theorem}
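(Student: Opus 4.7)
The plan is to verify both specializations by unwinding the Cauchy-type formula defining $\H_\muhat$ (introduced in \S~\ref{Cauchy}) and comparing with the explicit product formula for $\H^{[n]}$. The two parts require rather different approaches.

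For part~(b), I would substitute $w = q^{-1/2}$, so that $z = w^{-1} = q^{1/2}$. Noting that $d_{(n-1,1)} = 2n$ when $g=k=1$, formula~(\ref{mainresult}) yields
$$\H_{(n-1,1)}(q^{1/2}, q^{-1/2}) \;=\; q^{-n}\, E(\M_{(n-1,1)}; q).$$
On the Hilbert scheme side, setting $t=-1$ in the G\"ottsche-Soergel formula $H_c(X^{[n]}; q, t) = (qt^2)^n \H^{[n]}(-t\sqrt q,1/\sqrt q)$ gives $\H^{[n]}(q^{1/2}, q^{-1/2}) = q^{-n}\, E(X^{[n]}; q)$. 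Thus part~(b) is equivalent to the E-polynomial identity $E(\M_{(n-1,1)}; q) = E(X^{[n]}; q)$, which I would verify by matching the G\"ottsche-Soergel product series against the character-theoretic count of $\M_{(n-1,1)}(\F_q)$ established in~\cite{hausel-letellier-villegas}; both sides are polynomial-count and admit closed generating series in $T$, so this reduces to a direct comparison of products.

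For part~(a), I would first read off from the defining product that
$$\H^{[n]}(0,w) \;=\; [T^n]\prod_{m \geq 1}(1 - w^2 T^m)^{-1} \;=\; \sum_{\lambda \vdash n} w^{2\ell(\lambda)}.$$
For the right-hand side I would specialize the Macdonald-Cauchy formula for $\H_\muhat$ at $z=0$ with $g=k=1$. In this regime the hook factor $\mathcal{H}_\lambda(z,w)$ collapses: cells $s \in \lambda$ with arm $a(s) > 0$ contribute $1$, while the rightmost cell in each row (those with $a(s) = 0$) contributes $-w^{2l(s)+2}/(1 - w^{2l(s)+2})$. The modified Macdonald polynomial $\tilde H_\lambda[X; 0, w^2]$ reduces to a modified Hall-Littlewood polynomial whose monomial expansion is controlled by Kostka-Foulkes polynomials. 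The goal is then to isolate the coefficient of $m_{(n-1,1)}$ in the plethystic logarithm $(z^2-1)(1-w^2)\operatorname{Log}\Omega(z,w)$ at $z=0$, and to check that the resulting expression collapses to $\sum_{\lambda \vdash n} w^{2\ell(\lambda)}$.

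The hard part will be the combinatorial step in part~(a): the plethystic logarithm is an alternating sum over multi-partitions, and isolating the $m_{(n-1,1)}$-coefficient requires inclusion-exclusion whose telescoping to the partition generating function is not immediately apparent. I expect the crucial ingredient to be a Cauchy-type identity for modified Hall-Littlewood polynomials, together with an explicit formula for the $m_{(n-1,1)}$-coefficient of each $\tilde H_\lambda[X; 0, w^2]$. Part~(b), by contrast, should be a routine bookkeeping of E-polynomials once both sides are reduced to point counts.
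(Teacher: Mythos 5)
Your proposal has genuine gaps in both parts, and the argument you outline is substantially different from the paper's.

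For part~(a), the paper does not perform the direct plethystic computation you describe. Instead it chains together Theorem~\ref{purity} (the combinatorial identity $A_\muhat(q)=\H_\muhat(0,\sqrt q)$, itself proved via Hua's formula for quiver representations), Proposition~\ref{Ypure} ($P_c(Y^{[n]};q)=q^n A_{(n-1,1)}(q)$, which rests on the purity comparison between the Hilbert scheme of $\C^2$ and the additive quiver variety $\calQ_{(n-1,1)}$ in Theorem~\ref{adpure}), and the elementary identity $P_c(Y^{[n]};q)=q^n \H^{[n]}(0,\sqrt q)$. Your plan to isolate the $m_{(n-1,1)}$ coefficient of $\Log\Omega(0,w)$ directly is not unreasonable in principle (your simplifications of the hook factor and of $\H^{[n]}(0,w)$ are correct), but you explicitly admit you do not see how to make the alternating sum telescope, and this is precisely where the nontrivial content lies; as written, part~(a) is not proved.

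For part~(b), the reduction to $E(\M_{(n-1,1)};q)=E(X^{[n]};q)$ is tautological: by \eqref{mainresult} and the G\"ottsche--Soergel formula this \emph{is} the statement to be proved, up to the trivial factor $q^n$. Saying both sides ``admit closed generating series in $T$, so this reduces to a direct comparison of products'' begs the question --- $\H_{(n-1,1)}(\sqrt q,1/\sqrt q)$ is defined as a coefficient of a plethystic logarithm, not as a product, and producing a product form for it is the whole point. The paper's actual proof first establishes (Proposition~\ref{Log-fmla}) the ratio formula $\sum_n \H_{(n-1,1)}(z,w)T^n=(z^2-1)(1-w^2)A_1/A_0$ by expanding $\tilde H_\lambda(1,u,0,\ldots;z,w)$ to first order in $u$ via $\tilde K_{(n)\lambda}=1$, $\tilde K_{(n-1,1)\lambda}=\phi_\lambda-1$, and then, after observing $\calH_\lambda(\sqrt q,1/\sqrt q)=1$, carries out a nontrivial telescoping computation of $\sum_\lambda\phi_\lambda(z,w)T^{|\lambda|}$ using the Cauchy $q$-binomial theorem. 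None of this is present in your outline; the ``routine bookkeeping'' you describe does not exist without that calculation.
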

The second identity means that the $E$-polynomials of $X^{[n]}$ and
$\M_{(n-1,1)}$ agree.  As a consequence of Theorem \ref{theospe} we
have the following relation between character varieties and
quasi-modular forms.
\begin{corollary}
We have 
$$
1+\sum_{n\geq 1}\H_{(n-1,1)}\left(e^{u/2},e^{-u/2}\right)T^n
=\frac{1}{u}\left(e^{u/2}-e^{-u/2}\right)\exp\left(2\sum_{k\geq
    2}G_k(T)\frac{u^k}{k!}\right),
$$
where $$G_k(T)=\frac{-B_k}{2k}+\sum_{n\geq 1}\sum_{d\,|\, n}
d^{k-1}T^n$$(with $B_k$ is the $k$-th Bernoulli number) is the
classical Eisenstein series for $SL_2(\Z)$.

In particular, the coefficient of any power of $u$ in the left hand
side is in the ring of \emph{quasi-modular} forms, generated by the
$G_k$, $k\geq 2$, over $\Q$.
\end{corollary}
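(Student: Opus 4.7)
The plan is to combine the second identity in Theorem~\ref{theospe} with the explicit product formula defining the $\H^{[n]}(z,w)$, and then to read off the claim by taking the logarithm of the resulting infinite product.  Applying Theorem~\ref{theospe} at $w^{-1}=e^{u/2}$, $w=e^{-u/2}$ replaces $\H_{(n-1,1)}(e^{u/2},e^{-u/2})$ by $\H^{[n]}(e^{u/2},e^{-u/2})$ in the left-hand side.  Since this specialization satisfies $zw=1$, $z^{2}=e^{u}$ and $w^{2}=e^{-u}$, the defining product for the Hilbert-scheme generating series specializes to
$$
1+\sum_{n\geq 1}\H^{[n]}(e^{u/2},e^{-u/2})\,T^{n}=\prod_{n\geq 1}\frac{(1-T^{n})^{2}}{(1-e^{u}T^{n})(1-e^{-u}T^{n})},
$$
so the corollary reduces to matching the logarithm of this product against $\log\bigl((e^{u/2}-e^{-u/2})/u\bigr)+2\sum_{k\geq 2}G_{k}(T)u^{k}/k!$.

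The second step is to expand $\log(1-x)=-\sum_{m\geq 1}x^{m}/m$ and reorganize the resulting double sum in $m,n$ as
$$
\sum_{m,n\geq 1}\frac{T^{mn}}{m}\bigl(e^{mu}+e^{-mu}-2\bigr)=2\sum_{k\geq 1}\frac{u^{2k}}{(2k)!}\sum_{m,n\geq 1}m^{2k-1}T^{mn}.
$$
By definition of the Eisenstein series, the inner sum equals $\sum_{N\geq 1}\sigma_{2k-1}(N)T^{N}=G_{2k}(T)+B_{2k}/(4k)$, so the total log splits cleanly into a $T$-dependent piece $2\sum_{k\geq 1}u^{2k}G_{2k}(T)/(2k)!$ and a $T$-independent tail $\sum_{k\geq 1}u^{2k}B_{2k}/(2k\cdot(2k)!)$.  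Note that only even powers of $u$ appear, consistently with the convention that the ``classical Eisenstein series for $SL_{2}(\Z)$'' $G_{k}$ is nonzero only for even $k\geq 2$.

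Finally, I would identify the $T$-independent tail with $\log\bigl((e^{u/2}-e^{-u/2})/u\bigr)=\log(\sinh(u/2)/(u/2))$; this standard identity follows by combining the Weierstrass product $\sinh(x)/x=\prod_{n\geq 1}(1+x^{2}/(n\pi)^{2})$ with Euler's formula $\zeta(2k)=(-1)^{k+1}(2\pi)^{2k}B_{2k}/(2(2k)!)$, which yields exactly $\sum_{k\geq 1}u^{2k}B_{2k}/(2k\cdot(2k)!)$.  Exponentiating the split assembly gives the desired identity, and the concluding statement of the corollary is then immediate since each coefficient of $u^{j}$ on the right-hand side is a polynomial in $G_{2},G_{4},\dots$, and these generate the ring of quasi-modular forms for $SL_{2}(\Z)$ over $\Q$.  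The whole argument is essentially mechanical once Theorem~\ref{theospe} is granted; the only care needed is the bookkeeping of the Bernoulli constants and the recognition of their generating series as $\log(\sinh(u/2)/(u/2))$ via Euler's zeta-value formula.
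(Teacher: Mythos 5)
Your argument is correct and follows the same core route as the paper: apply Theorem~\ref{theospe} and the product formula for the $\H^{[n]}$, take the logarithm of $\prod_{n\geq1}\frac{(1-T^n)^2}{(1-e^uT^n)(1-e^{-u}T^n)}$, reorganize by powers of $u$ to obtain $2\sum_{N}\sigma_{k-1}(N)T^N$ in each even degree, and identify this with $2G_k$ plus Bernoulli constants that assemble into $\log\bigl((e^{u/2}-e^{-u/2})/u\bigr)$. You simply omit the paper's theta-function and Kronecker-expansion aside (which is used there only to record the closed formula for $\H_{(n-1,1)}(\sqrt q,1/\sqrt q)$ and is not needed for the corollary), and you verify the Bernoulli generating identity $\sum_{k\geq1}\frac{B_{2k}}{2k(2k)!}u^{2k}=\log(\sinh(u/2)/(u/2))$ via the Weierstrass product and Euler's $\zeta$-values rather than the paper's direct check of $u\exp\bigl(\sum_{k\geq2}\tfrac{B_k}{k}\tfrac{u^k}{k!}\bigr)=e^{u/2}-e^{-u/2}$.
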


Relation between Hilbert schemes and modular forms was first investigated by G\"ottsche \cite{Gottsche}. 

\subsection{Quiver representations}
\label{quiver-repns}
For a partition $\mu=\mu_1\geq\dots \geq \mu_r>0$ of $n$ we denote by
$l(\mu)=r$ its length. Given a non-negative integer $g$ and a
$k$-tuple $\muhat=(\mu^1,\mu^2,\dots,\mu^k)$ of partitions of $n$ we
define a \emph{comet-shaped} quiver $\Gamma_\muhat$ with $k$ legs of
length $s_1,s_2,\dots,s_k$ (where $s_i=l(\mu^i)-1$) and with $g$ loops
at the central vertex (see picture in \S \ref{comet}). The
multi-partition $\muhat$ defines also a dimension vector $\v_\muhat$
of $\Gamma_\muhat$ whose coordinates on the $i$-th leg are
$(n,n-\mu^i_1,n-\mu^i_1-\mu^i_2,\dots,n-\sum_{r=1}^{s_i}\mu^i_r)$.

By a theorem of Kac \cite{kacconj} there exists a monic polynomial
$A_\muhat(T)\in \Z[T]$ of degree $d_\muhat/2$ such that the number of
absolutely indecomposable representations over $\F_q$ (up to
isomorphism) of $\Gamma_\muhat$ of dimension $\v_\muhat$ equals
$A_\muhat(q)$.

Let us state our first main result.
\begin{theorem}
We have \begin{equation}A_\muhat(q)=\H_\muhat(0,\sqrt{q}).
\label{purity0}
\end{equation}
\end{theorem}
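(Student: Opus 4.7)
The approach is to match both sides via Hua's generating function formula for absolutely indecomposable representations of a quiver, after identifying the specialization $\H_\muhat(0,\sqrt{q})$ with the Hall--Littlewood-type expression that appears in Hua's formula.

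First I would recall Hua's formula, which expresses $\sum_{\v} A_{\v}(q)\, \x^{\v}$ for an arbitrary quiver $\Gamma$ as (a suitable multiple of) $\Log$ applied to a sum indexed by tuples of partitions, one per vertex, with summand built from Hall--Littlewood polynomials and the Euler form of $\Gamma$. For the comet-shaped quiver $\Gamma_\muhat$, the summand factorizes along the $k$ legs and contributes an extra loop factor at the central vertex depending on $g$. Thus $A_\muhat(q)$ is obtained as the coefficient of a specific monomial $\x^{\v_\muhat}$ in this $\Log$ expression, where the monomial tracks the partial sums $(n, n-\mu^i_1, n-\mu^i_1-\mu^i_2, \dots)$ reading out from the central vertex along each leg.

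Second, I would revisit the definition of $\H_\muhat(z,w)$ from \S\ref{Cauchy}. It is defined as the coefficient of an appropriate multi-symmetric-function monomial in $\Log \Omega(z,w)$, where $\Omega(z,w)$ is a Cauchy-type kernel
\[
\Omega(z,w) \;=\; \sum_{\lambda} \mathcal{H}_\lambda(z,w)\, \prod_{i=1}^k \tilde{H}_\lambda\bigl(\x_i; z^2, w^2\bigr),
\]
with $\mathcal{H}_\lambda(z,w)$ a genus factor (depending on $g$) and $\tilde{H}_\lambda$ the modified Macdonald polynomial. The key computation is the specialization $z=0$: one has $\tilde{H}_\lambda(\x;0,w^2)$ equal, up to an explicit power of $w$, to the Hall--Littlewood polynomial in the variables $\x$ with parameter $w^{-2}$, and similarly $\mathcal{H}_\lambda(0,w)$ collapses to the genus-$g$ Hall--Littlewood factor. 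Substituting $w=\sqrt{q}$ then turns the sum defining $\Omega(0,\sqrt q)$ into precisely the series appearing inside Hua's $\Log$ for $\Gamma_\muhat$, once one matches the variable identifications on the $k$ legs.

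Finally, extracting the coefficient of $\x^{\v_\muhat}$ on both sides of the resulting identity gives $\H_\muhat(0,\sqrt q) = A_\muhat(q)$. The routine part is the symmetric function bookkeeping that turns the Macdonald-polynomial Cauchy kernel into Hua's Hall--Littlewood generating function at $z=0$. The main obstacle, and the step I would treat most carefully, is checking normalizations: the explicit power of $w$ relating $\tilde{H}_\lambda$ at $(0,w^2)$ to the Hall--Littlewood polynomial, the $(q-1)$-prefactor and the plethystic conventions in Hua's $\Log$, and the verification that the multi-partition used to index the coefficient on the Macdonald side corresponds to the dimension vector $\v_\muhat$ on each leg of $\Gamma_\muhat$ in the order dictated by $(\mu^1,\dots,\mu^k)$.
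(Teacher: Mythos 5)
Your plan correctly identifies the two endpoints (Hua's formula for $A_\muhat(q)$ and the $\Log\,\Omega$ definition of $\H_\muhat$) and the overall architecture (generating function match plus coefficient extraction). However, it misdiagnoses the hard part as ``routine bookkeeping,'' and in doing so skips over the two genuinely substantive steps that the paper has to carry out.

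First, Hua's formula in its Hall--Littlewood form is a sum over tuples of partitions, \emph{one partition per vertex}, with one variable per vertex. The kernel $\Omega(0,\sqrt q)=\sum_\lambda \calH_\lambda(0,\sqrt q)\prod_{i=1}^k\tilde H_\lambda(\x_i;q)$ is instead a sum over a \emph{single} partition $\lambda$, with one whole alphabet $\x_i$ per leg. ``Matching variable identifications on the $k$ legs'' does not reconcile these two shapes; they are structurally different generating functions. The paper bridges them not by symmetric-function manipulation at all, but by character theory: it restricts to indecomposables (where all leg maps are injective, Lemma~\ref{injective}, hence only strict dimension vectors occur), replaces isomorphism classes by orbit counts ${\mathfrak G}_\muhat(\F_q)$ of $(g$-tuples of matrices$)\times($partial flags$)$, and then applies Burnside's formula together with Lemma~\ref{R=F} and Corollary~\ref{R} to rewrite $\sum_\muhat G_\muhat(q)m_\muhat$ as a sum over conjugacy classes of $\GL_n(\F_q)$, which is exactly where the single-$\lambda$ sum over types and the factor $\calH_\lambda(0,\sqrt q)$ come from (Proposition~\ref{sumM}). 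After that, Lemma~\ref{moz} converts the resulting identity between $\log$'s into one between $\Log$'s, yielding $\sum_\muhat A_\muhat(q)m_\muhat=(q-1)\Log\,\Omega(0,\sqrt q)$, and (\ref{exp}) finishes. Note also that the paper never actually invokes the Hall--Littlewood form of Hua's formula; it uses only the $\Log$--$\Exp$ reformulation (Theorem~\ref{hua} and its injective variant, Proposition~\ref{hua-inj}).

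Second, even if you did try to argue purely combinatorially via Hua's Hall--Littlewood formula, you would have to restrict that formula to strict dimension vectors before it can be phrased in terms of multi-partitions and monomial symmetric functions $m_\muhat$. This restriction is not automatic: it relies on the fact that, for a comet-shaped quiver with non-zero central dimension, indecomposable representations have injective leg maps (Lemma~\ref{injective}), so Hua's multiplicative machinery survives the restriction (Proposition~\ref{hua-inj}). Your proposal does not mention this, and without it the coefficient you want to read off $A_\muhat(q)$ from is not directly available. In short: the skeleton of your plan is right, but the ``routine'' step is the heart of the proof, and the specific mechanism the paper uses to carry it out (flags, Burnside, types) is absent from your outline.
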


If we assume that $\v_\muhat$ is indivisible, i.e., the gcd of all the
parts of the partitions $\mu^1,\dots,\mu^k$ equals $1$, then, as
mentioned in \cite[Remark 1.4.3]{hausel-letellier-villegas}, the
formula can be proved using the results of Crawley-Boevey and van den
Bergh \cite{crawley-boevey-etal} together with the results in
\cite{hausel-letellier-villegas}. More precisely the results of
Crawley-Boevey and van den Bergh say that $A_\muhat(q)$ equals (up to
some power of $q$) the compactly supported Poincar\'e polynomial of
some quiver variety $\calQ_\muhat$ (which exists only if $\v_\muhat$
is indivisible). In \cite{hausel-letellier-villegas} we show that the
Poincar\'e polynomial of $\calQ_\muhat$ agrees with
$\H_\muhat(0,\sqrt{q})$ up to the same power of $q$, hence the formula
(\ref{purity0}).

The proof of Formula (\ref{purity0}) we give in this paper is
completely combinatorial (and works also in the divisible case). It is
based on Hua's formula \cite{hua} for the number of absolutely
indecomposable representations of quivers over finite fields.

The conjectural formula (\ref{mainconj}) together with Formula (\ref{purity0}) implies the following conjecture.

\begin{conjecture}
  We  have $$A_\muhat(q)=q^{-\frac{d_\muhat}{2}}PH_c(\M_\muhat;q),$$where
  $PH_c(\M_\muhat;q):=\sum_ih_c^{i,i;2i}(\M_\muhat)q^i$ is the
  \emph{pure part} of
  $H_c(\M_\muhat;q,t)$.
\label{purconj}
\end{conjecture}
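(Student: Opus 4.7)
The plan is to derive Conjecture~\ref{purconj} from the conjectural formula~(\ref{mainconj}) by extracting the pure part of the mixed Hodge polynomial, then combining with the symmetry of $\H_\muhat$ and the identity $A_\muhat(q)=\H_\muhat(0,\sqrt{q})$ of (\ref{purity0}).

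Assume (\ref{mainconj}). The pure part $PH_c(\M_\muhat;q)=\sum_i h_c^{i,i;2i}(\M_\muhat)\,q^i$ collects exactly those terms $h_c^{i,i;j}q^i t^j$ of the mixed Hodge polynomial with $j=2i$, and for a smooth variety one has $j\geq 2i$ whenever $h_c^{i,i;j}\neq 0$. Hence $PH_c(\M_\muhat;q)$ can be obtained by substituting $q=u/t^2$ in $H_c(\M_\muhat;q,t)$ and then setting $t=0$. Applied to the right-hand side of (\ref{mainconj}) this substitution gives
\begin{equation*}
H_c(\M_\muhat;u/t^2,t)=u^{d_\muhat/2}\,\H_\muhat\!\left(-\sqrt{u},\,t/\sqrt{u}\right),
\end{equation*}
which at $t=0$ yields $PH_c(\M_\muhat;u)=u^{d_\muhat/2}\,\H_\muhat(-\sqrt{u},0)$.

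Next I would invoke two features of $\H_\muhat$: the symmetry $\H_\muhat(z,w)=\H_\muhat(w,z)$, which is built into the Cauchy-type definition of $\H_\muhat$ via Macdonald symmetric functions in \cite{hausel-letellier-villegas}, and the fact that $\H_\muhat(0,w)$ is an even polynomial in $w$, forced by (\ref{purity0}) because $A_\muhat(q)=\H_\muhat(0,\sqrt{q})$ is a polynomial in the variable $q$ itself, not merely in $\sqrt{q}$. Combining these,
$$
\H_\muhat(-\sqrt{u},0)=\H_\muhat(0,-\sqrt{u})=\H_\muhat(0,\sqrt{u})=A_\muhat(u),
$$
and plugging this into the previous display produces $PH_c(\M_\muhat;u)=u^{d_\muhat/2}A_\muhat(u)$, as desired.

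The argument is purely formal once the pure-part content of (\ref{mainconj}) is granted, so the main obstacle is of course that (\ref{mainconj}) is itself deeply conjectural. An unconditional proof of Conjecture~\ref{purconj} would either have to establish (\ref{mainconj}) (or at least its pure-part specialisation) directly, or find a substitute. In the indivisible case (gcd of the parts of $\mu^1,\dots,\mu^k$ equal to $1$), one such substitute is sketched in Remark~1.4.3 of \cite{hausel-letellier-villegas}: the result of Crawley-Boevey and Van den Bergh identifies $q^{d_\muhat/2}A_\muhat(q)$ with the Poincar\'e polynomial of the Nakajima quiver variety $\calQ_\muhat$, which one hopes is diffeomorphic to $\M_\muhat$ via (not yet fully documented) non-abelian Hodge theory for punctured surfaces, forcing the ordinary cohomology of $\calQ_\muhat$ to coincide with the pure part of the compactly supported cohomology of $\M_\muhat$. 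In the divisible case no such $\calQ_\muhat$ is available and a genuinely new argument would be required.
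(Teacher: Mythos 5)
Your derivation is correct and is essentially the paper's own (implicit) argument made explicit: the paper simply asserts that (\ref{mainconj}) together with $A_\muhat(q)=\H_\muhat(0,\sqrt{q})$ implies Conjecture~\ref{purconj}, and you fill in the pure-part extraction, the weight bound $2i\leq j$ for $h_c^{i,i;j}(\M_\muhat)\neq 0$, and the specialisation $\H_\muhat(-\sqrt{u},0)=\H_\muhat(0,\sqrt{u})$. A small simplification is available: the last step follows directly from the paper's symmetry $\H_\muhat(-z,-w)=\H_\muhat(z,w)$ in (\ref{HHduality}), without needing the evenness of $\H_\muhat(0,w)$ inferred from $A_\muhat$ being a polynomial in $q$.
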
 
Conjecture \ref{purconj} implies Kac's conjecture \cite{kacconj} for
comet shaped quivers, namely, $A_\muhat(q)$ is a polynomial in $q$
with non-negative coefficients (see \S \ref{genquiv} for more
details).

\subsection{Characters of general linear groups over finite fields}

Given two irreducible complex characters $\calX_1,\calX_2$ of
$\GL_n(\F_q)$ it is a natural and difficult question to understand the
decomposition of the tensor product $\calX_1\otimes\calX_2$ as a sum
of irreducible characters.  Note that the character table of
$\GL_n(\F_q)$ is known (Green, 1955) and so we can compute in theory
the multiplicity $\langle\calX_1\otimes\calX_2,\calX\rangle$ of any
irreducible character $\calX$ of $\GL_n(\F_q)$ in
$\calX_1\otimes\calX_2$ using the scalar product
formula \begin{equation}\langle\calX_1\otimes\calX_2,\calX\rangle=\frac{1}{|\GL_n(\F_q)|}\sum_{g\in\GL_n(\F_q)}\calX_1(g)\calX_2(g)\overline{\calX(g)}.\label{scalprod}\end{equation}However
it is very difficult to extract any interesting information from this
formula. In his thesis Mattig uses this formula to compute (with the
help of a computer) the multiplicities $\langle
\calX_1\otimes\calX_2,\calX\rangle$ when $\calX_1,\calX_2,\calX$ are
\emph{unipotent characters} and when $n\leq 8$ (see \cite{Hiss}), and
he noticed that $\langle \calX_1\otimes\calX_2,\calX\rangle$ is a
polynomial in $q$ with positive integer coefficients.

In \cite{hausel-letellier-villegas} we define the notion of
\emph{generic} tuple $(\calX_1,\dots,\calX_k)$ of irreducible
characters of $\GL_n(\F_q)$. We also consider the character
$\Lambda:\GL_n(\F_q)\rightarrow \C$, $x\mapsto q^{g\cdot {\rm dim}\,
  C_{\GL_n}(x)}$ where $C_{\GL_n}(x)$ denotes the centralizer of $x$
in $\GL_n(\overline{\F}_q)$ and where $g$ is a non-negative
integer. If $g=1$, this is the character of the conjugation action of
$\GL_n(\F_q)$ on the group algebra $\C[\gl_n(\F_q)]$.

If $\mu=(\mu_1,\mu_2,\dots,\mu_r)$ is a partition of $n$, an
irreducible character of $\GL_n(\F_q)$ is said to be of type $\mu$ if
it is of the form $R_{L_\mu}^{GL_n}(\alpha)$ where
$L_\mu=\GL_{\mu_1}\times\GL_{\mu_2}\times\cdots\times\GL_{\mu_r}$ and
where $\alpha$ is a \emph{regular} linear character of $L_\mu(\F_q)$,
see \S \ref{applichar} for definitions. Characters of this form are
called \emph{semisimple split}.

In \cite{hausel-letellier-villegas} we prove that for a generic tuple
$(\calX_1,\dots,\calX_k)$ of semisimple split irreducible characters
of $\GL_n(\F_q)$ of type $\muhat$, we have 
\begin{equation}
\langle
  \Lambda\otimes\calX_1\otimes\cdots\otimes\calX_k,1\rangle
  =\H_\muhat(0,\sqrt{q})\label{multi0}.
\end{equation} 
Note that in particular this implies that the left hand side only
depends on the combinatorial type $\muhat$  not on the specific choice
of characters.

Together with Formula (\ref{purity0}) we deduce the following formula.

\begin{theorem}We have $$\langle
  \Lambda\otimes\calX_1\otimes\cdots\otimes\calX_k,1\rangle=A_\muhat(q).$$
\end{theorem}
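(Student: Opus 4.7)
The statement is an immediate consequence of the two formulas already displayed in this subsection, so my plan is simply to chain them. First I would invoke Formula (\ref{multi0}), proved in \cite{hausel-letellier-villegas}, which computes the multiplicity
\[
\langle \Lambda\otimes\calX_1\otimes\cdots\otimes\calX_k,1\rangle = \H_\muhat(0,\sqrt{q})
\]
for any generic tuple $(\calX_1,\dots,\calX_k)$ of semisimple split irreducible characters of $\GL_n(\F_q)$ of type $\muhat$. This reduces the question to identifying the right-hand side with $A_\muhat(q)$.

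Next I would apply Formula (\ref{purity0}), namely $A_\muhat(q)=\H_\muhat(0,\sqrt{q})$, which the paper establishes via a purely combinatorial manipulation of Hua's formula for the count of absolutely indecomposable representations of quivers over finite fields, specialized to the comet-shaped quiver $\Gamma_\muhat$ with dimension vector $\v_\muhat$. Substituting this into the previous display yields the claimed equality.

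Since the theorem is just the juxtaposition of (\ref{multi0}) and (\ref{purity0}), there is essentially no obstacle: the nontrivial content has been moved into the proofs of those two ingredients. The only point worth remarking on in the write-up is that both sides a priori depend only on the multi-partition datum $\muhat$ (for the left-hand side, one uses the genericity of the tuple and the fact, noted just after (\ref{multi0}), that the multiplicity depends only on the combinatorial type), so the equality of polynomials in $q$ is a well-posed statement.
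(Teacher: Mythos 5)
Your proof is correct and matches the paper's own argument exactly: Theorem~\ref{multi=A} is stated there as a direct consequence of Theorem~\ref{purity} (which is Formula~(\ref{purity0})) and Theorem~\ref{multi} (which is Formula~(\ref{multi0})). Nothing more is needed.
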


Using Kac's results on quiver representations (see \S \ref{genquiv})
the above theorem has the following consequence. 

\begin{corollary}Let $\Phi(\Gamma_\muhat)$ denote the root system
  associated with $\Gamma_\muhat$ and let $(\calX_1,\dots,\calX_k)$ be
  a generic $k$-tuple of irreducible characters of $\GL_n(\F_q)$ of
  type $\muhat$.

  We have $\langle
  \Lambda\otimes\calX_1\otimes\cdots\otimes\calX_k,1\rangle\neq 0$ if
  and only if $\v_\muhat\in\Phi(\Gamma_\muhat)$. Moreover $\langle
  \Lambda\otimes\calX_1\otimes\cdots\otimes\calX_k,1\rangle=1$ if and
  only if $\v_\muhat$ is a real root.
\label{coromulti0}
\end{corollary}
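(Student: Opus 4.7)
The strategy is to combine the immediately preceding theorem with Kac's theorem on absolutely indecomposable representations of quivers. By the preceding theorem we have $\langle \Lambda\otimes\calX_1\otimes\cdots\otimes\calX_k,1\rangle=A_\muhat(q)$, so both parts of the corollary translate directly into statements about the Kac polynomial $A_\muhat(T)$ of the comet-shaped quiver $\Gamma_\muhat$ at the dimension vector $\v_\muhat$. Since $A_\muhat(q)$ is for each prime power $q$ a non-negative integer (it counts isomorphism classes of absolutely indecomposable representations over $\F_q$), there is no gap between ``$A_\muhat(T)$ is the zero polynomial'' and ``$A_\muhat(q)=0$ for a given $q$'', nor between ``$A_\muhat(T)\equiv 1$'' and ``$A_\muhat(q)=1$''.

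For the first assertion I invoke Kac's theorem in its basic form: for an arbitrary quiver $\Gamma$ and positive dimension vector $\v$, there exists an absolutely indecomposable representation of $\Gamma$ over $\overline{\F}_q$ of dimension $\v$ if and only if $\v$ lies in the positive part of the Kac-Moody root system $\Phi(\Gamma)$. Applied to $\Gamma_\muhat$ and $\v_\muhat$, this gives $A_\muhat\neq 0$ if and only if $\v_\muhat\in\Phi(\Gamma_\muhat)$, which via the identity $\langle \Lambda\otimes\calX_1\otimes\cdots\otimes\calX_k,1\rangle=A_\muhat(q)$ yields the first equivalence.

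For the second assertion I use the refinement of Kac's theorem that distinguishes real from imaginary roots. If $\v_\muhat$ is a real positive root, the unique indecomposable representation of dimension $\v_\muhat$ is rigid, defined over the prime field, and absolutely indecomposable, so $A_\muhat(T)\equiv 1$. If instead $\v_\muhat$ is an imaginary positive root, the corresponding space of indecomposable representations has positive dimension, and Kac shows that $A_\muhat(T)$ then has positive degree in $T$; in particular $A_\muhat(T)\not\equiv 1$. Combined with the first part, this gives $\langle \Lambda\otimes\calX_1\otimes\cdots\otimes\calX_k,1\rangle=1$ if and only if $\v_\muhat$ is a real root.

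There is essentially no obstacle beyond having the preceding theorem in place: once the multiplicity has been identified with $A_\muhat(q)$, the corollary is a direct translation of the two halves of Kac's theorem. The only point requiring any care is the consistent use of the word \emph{absolutely} indecomposable (i.e.\ working over $\overline{\F}_q$), which is exactly what is being counted by $A_\muhat(q)$ and is what appears in the hypotheses of Kac's theorem.
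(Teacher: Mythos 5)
Your proof is correct and follows the paper's route exactly: the paper derives this corollary in one line by combining Theorem~\ref{multi=A} ($\langle\Lambda\otimes R_\muhat,1\rangle=A_\muhat(q)$) with Kac's theorem as recorded in Theorem~\ref{kactheo}, which is precisely what you do. The extra care you take about distinguishing $A_\muhat(T)$ as a polynomial from its value $A_\muhat(q)$ is already built into the statement of Theorem~\ref{kactheo} in the paper (which phrases both equivalences directly for the evaluation $A_{\Gamma,\v}(q)$), so it is sound but not strictly needed.
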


In \cite{letellier} the second author proves that Corollary \ref{coromulti0} extends to any type of generic
tuples of irreducible characters of $\GL_n(\F_q)$ (not necessarily
semisimple split).

Recall that there is a natural parametrization, $\mu\mapsto
\calU_\mu$, of the unipotent characters of $\GL_n(\F_q)$ by partitions
of $n$, fixed by requiring that $\calU_{(1^n)}$ is trivial.  Using
again quiver representations, we also prove in~\S\ref{applichar} the following
result on multiplicities for unipotent characters (confirming partly
Mattig's observation for $n\leq 8$).

\begin{proposition}
 For a $k$-tuple of partitions
  $\muhat=(\mu^1,\ldots,\mu^k)$ of $n$ there exists a polynomial
  $U_\muhat\in \Z[T]$ such that $\langle
  \Lambda\otimes \calU_{\mu^1}\otimes\cdots\otimes
  \calU_{\mu^k},1\rangle=U_\muhat(q)$. 
\end{proposition}

\paragraph{Acknowledgements.} We would like to thank the Mathematisches
Forschungsinstitut Oberwolfach for a research in pairs stay where much
of the work was done. TH was supported by a Royal Society University Research Fellowship. EL was supported by ANR-09-JCJC-0102-01.  FRV
was supported by NSF grant DMS-0200605, an FRA from the University of
Texas at Austin, EPSRC grant EP/G027110/1, Visiting Fellowships at All
Souls and Wadham Colleges in Oxford and a Research Scholarship from
the Clay Mathematical Institute.

\section{Preliminaries}

We denote by $\F$ an algebraic closure of a finite field $\F_q$.

\subsection{Symmetric functions}

\subsubsection{Partitions, Macdonald polynomials, Green polynomials}

We denote by $\calP$ the set of all partitions including the unique
partition $0$ of $0$, by $\calP^*$ the set of non-zero partitions and
by $\calP_n$ be the set of partitions of $n$. Partitions $\lambda$ are
denoted by $\lambda=(\lambda_1,\lambda_2,\ldots)$, where
$\lambda_1\geq \lambda_2\geq\cdots\geq 0$. We will also sometimes
write a partition as $(1^{m_1},2^{m_2},\ldots,n^{m_n})$ where $m_i$
denotes the multiplicity of $i$ in $\lambda$. The {\it size} of
$\lambda$ is $|\lambda|:=\sum_i\lambda_i$; the {\it length}
$l(\lambda)$ of $\lambda$ is the maximum $i$ with $\lambda_i>0$. For
two partitions $\lambda$ and $\mu$, we define
$\langle\lambda,\mu\rangle$ as $\sum_i\lambda_i'\mu_i'$ where
$\lambda'$ denotes the dual partition of $\lambda$. We put
$n(\lambda)=\sum_{i>0}(i-1)\lambda_i$. Then
$\langle\lambda,\lambda\rangle=2n(\lambda)+|\lambda|$.  For two
partitions $\lambda=(1^{n_1},2^{n_2},\dots)$ and
$\mu=(1^{m_1},2^{m_2},\dots)$, we denote by $\lambda\cup\mu$ the
partition $(1^{n_1+m_1},2^{n_2+m_2},\dots)$. For a non-negative integer
$d$ and a partition $\lambda$, we denote by $d\cdot\lambda$ the
partition $(d\lambda_1,d\lambda_2,\dots)$. The {\it dominance ordering}
for partitions is defined as follows: $\mu \unlhd \lambda$ if and only
if $\mu_1+\cdots+\mu_j\leq \lambda_1+\cdots+\lambda_j$ for all
$j\geq 1$. 

Let $\x=\{x_{1},x_{2},\dots\}$ be an infinite set of variables and
$\Lambda(\x)$ the corresponding ring of symmetric functions.  As usual we will denote by $s_\lambda(\x),h_\lambda(\x),p_\lambda(\x)$, and $m_\lambda(\x)$, the Schur symmetric functions, the complete symmetric functions, the power symmetric functions and the monomial symmetric functions.

We will
deal with elements of the ring $\Lambda(\x) \otimes_\Z \Q(z,w)$ and
their images under two specializations: their {\it pure part},
$z=0,w=\sqrt q$ and their {\it Euler specialization}, $z=\sqrt
q,w=1/\sqrt q$.

For a partition $\lambda$, let $\tilde{H}_\lambda(\x;q,t) \in
\Lambda(\x) \otimes_\Z \Q(q,t)$ be the {\it Macdonald symmetric
  function} defined in Garsia and Haiman \cite[I.11]{garsia-haiman}.  We collect in this
section some basic properties of these functions that we will need.

We have the duality
\begin{equation}
\tilde{H}_\lambda(\x;q,t)=\tilde{H}_{\lambda^\prime}(\x;t,q)
\label{Hduality}\end{equation}
see \cite[Corollary 3.2]{garsia-haiman}. We define the (transformed)
{\it Hall-Littlewood symmetric function} as
\begin{equation}
\label{HL-defn}
\tilde{H}_\lambda(\x;q):=\tilde{H}_\lambda(\x;0,q).
\end{equation}
In the notation just introduced then $\tilde{H}_\lambda(\x;q)$ is the
pure part of $\tilde{H}_\lambda(\x;z^2,w^2)$.

Under the Euler specialization of $\tilde{H}_\lambda(\x;z^2,w^2)$ we have \cite[Lemma 2.3.4]{hausel-letellier-villegas} \begin{equation}\tilde{H}_\lambda(\x;q,q^{-1})=q^{-n(\lambda)}H_\lambda(q)s_\lambda(\x\y),\label{eulermac}\end{equation}where $y_i=q^{i-1}$ and $H_\lambda(q):=\prod_{s\in\lambda}(1-q^{h(s)})$ is the \emph{hook polynomial} \cite[I, 3, example 2]{macdonald}.

Define the {\it $(q,t)$-Kostka polynomials}
$\tilde{K}_{\nu\lambda}(q,t)$ by
\begin{equation}
\label{K-defn}
\tilde{H}_{\lambda}(\x;q,t)=
\sum_{\nu}\tilde{K}_{\nu\lambda}(q,t)s_{\nu}(\x).
\end{equation}
These are $(q,t)$
generalizations of the $\tilde{K}_{\nu\lambda}(q)$ Kostka-Foulkes 
polynomial in Macdonald \cite[III, (7.11)]{macdonald}, which are obtained as
$q^{n(\lambda)}K_{\nu\lambda}(q^{-1})=
\tilde{K}_{\nu\lambda}(q)=\tilde{K}_{\nu\lambda}(0,q)$, i.e., by 
taking their pure part. In particular,
\begin{equation}
\label{Hall-Littlewood}
\tilde{H}_{\lambda}(\x;q)=\sum_{\nu}\tilde{K}_{\nu\lambda}(q)s_{\nu}(\x).
\end{equation}

For a partition $\lambda$, we denote by $\chi^\lambda$ the corresponding irreducible character of $S_{|\lambda|}$ as in Macdonald \cite{macdonald}. Under this parameterization, the character $\chi^{(1^n)}$ is the sign character of $S_{|\lambda|}$ and $\chi^{(n^1)}$ is the trivial character. Recall also that the decomposition into disjoint cycles provides a natural parameterization of the conjugacy classes of $S_n$ by the partitions of $n$. We then denote by $\chi^\lambda_\mu$ the value of $\chi^\lambda$ at the conjugacy class of $S_{|\lambda|}$ corresponding to $\mu$ (we use the convention that $\chi^\lambda_\mu=0$ if $|\lambda|\neq|\mu|$). The \emph{Green polynomials} $\{Q_\lambda^\tau(q)\}_{\lambda,\tau\in\calP}$ are defined as 

\begin{equation}Q_\lambda^\tau(q)=\sum_\nu \chi^\nu_\lambda\tilde{K}_{\nu\tau}(q)\end{equation}if $|\lambda|=|\tau|$ and $Q_\lambda^\tau=0$ otherwise.

\subsubsection{Exp and Log}

Let $\Lambda(\x_1,\ldots,\x_k):=
\Lambda(\x_1)\otimes_\Z\cdots\otimes_\Z\Lambda(\x_k)$ be the ring of
functions separately symmetric in each  set $\x_1,\x_2,\ldots,\x_k$
of infinitely many variables.  To ease the notation we will simply write $\Lambda_k$ for the ring
$\Lambda(\x_1,\ldots,\x_k)\otimes_\Z \Q(q,t)$. 

The power series ring $\Lambda_k[[T]]$ is endowed with a natural $\lambda$-ring structure in which the Adams operations are $$\psi_d(f(\x_1,\x_2,\dots,\x_k,q,t;T)):=f(\x_1^d,\x_2^d,\dots,\x_k^d,q^d,t^d;T^d).$$

Let $\Lambda_k[[T]]^+$ be the ideal $T\Lambda_k[[T]]$ of $\Lambda_k[[T]]$.  Define $\Psi: \Lambda_k[[T]]^+\rightarrow\Lambda_k[[T]]^+$ by

$$
\Psi(f):=\sum_{n\geq 1}\frac{\psi_n(f)}{n},
$$
and $\Exp:\Lambda_k[[T]]^+\rightarrow 1+\Lambda_k[[T]]^+$ by 

$$
\Exp(f)=\exp(\Psi(f)).
$$

The inverse $\Psi^{-1}:\Lambda_k[[T]]^+\rightarrow\Lambda_k[[T]]^+$ of $\Psi$ is given by 

$$
\Psi^{-1}(f)=\sum_{n\geq 1}\mu(n)\frac{\psi_n(f)}{n}$$where $\mu$ is the ordinary M\"obius function.

The inverse $\Log:1+\Lambda_k[[T]]\rightarrow\Lambda_k[[T]]$ of $\Exp$ is given by

$$
\Log(f)=\Psi^{-1}(\log(f)).
$$ 

\begin{remark} Let $f=1+\sum_{n\geq 1}f_nT^n\in 1+\Lambda_k[[T]]^+$. If we write

$$
\log\,(f)=\sum_{n\geq 1}\frac{1}{n}U_nT^n,\hspace{1cm}\Log\,(f)=\sum_{n\geq 1}V_nT^n,
$$
then

$$
V_r=\frac{1}{r}\sum_{d | r}\mu(d)\psi_d(U_{r/d}).
$$

\end{remark}

We have the following propositions (details may be found for instance in Mozgovoy \cite{mozgovoy}).

For $g\in \Lambda_k$ and $n\geq 1$ we put 

$$
g_n:=\frac{1}{n}\sum_{d | n}\mu(d)\psi_{\frac{n}{d}}(g).
$$
This is the M\"obius inversion formula of $\psi_n(g)=\sum_{d| n}d\cdot g_d$.

\begin{lemma} Let $g\in \Lambda_k$ and $f_1,f_2\in 1+\Lambda_k[[T]]^+$ such that

$$
\log\,(f_1)=\sum_{d=1}^\infty g_d\cdot\log\,(\psi_d(f_2)).
$$
Then 
$$
\Log\,(f_1)=g\cdot \Log\,(f_2).
$$

\label{moz}
\end{lemma}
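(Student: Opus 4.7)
The plan is to verify the equivalent identity $\log(f_1) = \Psi(g\cdot \Log(f_2))$ and then apply $\Psi^{-1}$ to both sides. Since $\Psi^{-1}$ is a well-defined inverse of $\Psi$ on $\Lambda_k[[T]]^+$, this will give $\Psi^{-1}(\log f_1) = g\cdot \Log(f_2)$, which is exactly $\Log(f_1) = g\cdot \Log(f_2)$. Working with $\Psi$ rather than $\Psi^{-1}$ is cleaner because we then only need to use $\log(\psi_d(f_2)) = \psi_d(\log f_2)$ and the Möbius formula for $g_n$.

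Set $h := \log(f_2) \in \Lambda_k[[T]]^+$, so the hypothesis reads $\log(f_1) = \sum_{d\geq 1} g_d\,\psi_d(h)$. Since the Adams operations $\psi_n$ are commuting ring homomorphisms, $\psi_n$ commutes with $\Psi^{-1}$, so
\[
\psi_n(\Log f_2) \;=\; \psi_n\bigl(\Psi^{-1}(h)\bigr) \;=\; \sum_{m\geq 1}\frac{\mu(m)}{m}\,\psi_{nm}(h).
\]
Multiplicativity of $\psi_n$ then yields
\[
\Psi\bigl(g\cdot \Log f_2\bigr) \;=\; \sum_{n\geq 1}\frac{1}{n}\,\psi_n(g)\,\psi_n(\Log f_2) \;=\; \sum_{n,m\geq 1}\frac{\mu(m)\,\psi_n(g)}{n\,m}\,\psi_{nm}(h).
\]

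Now I would collect terms by $k = nm$, obtaining
\[
\Psi\bigl(g\cdot \Log f_2\bigr) \;=\; \sum_{k\geq 1}\frac{\psi_k(h)}{k}\sum_{n\mid k}\mu(k/n)\,\psi_n(g).
\]
The inner sum is precisely the Möbius inversion formula that defines $g_k$, namely $\sum_{n\mid k}\mu(k/n)\psi_n(g) = k\,g_k$, as recalled just before the lemma. Substituting back,
\[
\Psi\bigl(g\cdot \Log f_2\bigr) \;=\; \sum_{k\geq 1} g_k\,\psi_k(h) \;=\; \log(f_1),
\]
which is the required identity.

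There is no genuine obstacle here; the argument is essentially formal, and the main care needed is bookkeeping, namely verifying that $\psi_n$ commutes with $\Psi^{-1}$ (immediate from $\psi_n\psi_m = \psi_{nm}$) and recognizing the double sum over $(n,m)$ as a Dirichlet convolution that collapses via the Möbius inversion defining the $g_k$.
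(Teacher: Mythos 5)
Your proof is correct. The paper does not actually supply an argument for this lemma; it simply refers the reader to Mozgovoy's paper for details, so there is no in-text proof to compare against. Your self-contained verification is exactly the natural one: set $h=\log f_2$, use that each $\psi_n$ is a ring homomorphism satisfying $\psi_n\psi_m=\psi_{nm}$ to push $\psi_n$ through the product $g\cdot\Log f_2$, and then recognize the Dirichlet convolution $\sum_{nm=k}\mu(m)\psi_n(g)=k\,g_k$ from the M\"obius-inversion definition of $g_k$. Applying $\Psi^{-1}$ to the resulting identity $\Psi(g\cdot\Log f_2)=\log f_1$ gives the claim, since $\Psi$ and $\Psi^{-1}$ are inverse bijections on $\Lambda_k[[T]]^+$.
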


\begin{lemma}Assume that $f\in 1+\Lambda_k[[T]]^+$ belongs to
  $\Lambda(\x_1,\ldots,\x_k)\otimes_\Z\Z[q,t]$, then $\Exp(f)\in
  \Lambda(\x_1,\ldots,\x_k)\otimes_\Z\Z[q,t]$.

\label{Exp}\end{lemma}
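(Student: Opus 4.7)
The plan is to realize $\Exp(f)$ as an infinite product that manifestly has integer coefficients, and then invoke the fact that $\Exp$ automatically preserves the symmetric subring.

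First I would embed $\Lambda_k[[T]]$ into the larger ring $R := \Z[q,t][[x_{i,j}, T]]$ of formal power series over $\Z[q,t]$ in all the individual variables and $T$, extending the $\lambda$-ring structure so that every variable $x_{i,j}$, $q$, $t$, $T$ is a line element, i.e.\ $\psi_n$ is the $n$-th power map on monomials. This is consistent with the structure on $\Lambda_k[[T]]$ because $\psi_d(p_n(\x_i)) = \sum_j x_{i,j}^{nd} = p_{nd}(\x_i)$. For a single monomial $m = \prod x_{i,j}^{a_{ij}}\, q^b t^c T^d$ with $d \geq 1$, we have $\psi_n(m) = m^n$, so $\Psi(m) = \sum_{n \geq 1} m^n/n = -\log(1-m)$ and hence $\Exp(m) = 1/(1-m)$; more generally $\Exp(c \cdot m) = (1-m)^{-c}$ for $c \in \Z$, which expands with integer coefficients via the binomial series.

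By hypothesis, each coefficient of $T^n$ in $f$ lies in $\Lambda(\x_1, \ldots, \x_k) \otimes_\Z \Z[q,t]$, so we may write $f = \sum_\alpha n_\alpha m_\alpha$ as a (possibly infinite) integer linear combination of distinct monomials $m_\alpha$ of positive $T$-degree. Using the additivity $\Exp(f_1 + f_2) = \Exp(f_1)\Exp(f_2)$ of $\Exp$,
\[
\Exp(f) = \prod_\alpha (1 - m_\alpha)^{-n_\alpha},
\]
a $T$-adically convergent infinite product in $R$: the coefficient of any fixed $x^\beta q^b t^c T^N$ is a finite integer sum, because each $m_\alpha$ has positive $T$-degree and the total $(x,q,t,T)$-degree constraints force only finitely many tuples of exponents to contribute. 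Hence $\Exp(f) \in R$ has integer coefficients.

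Finally, $\Exp$ maps $\Lambda_k[[T]]^+$ into $1 + \Lambda_k[[T]]^+$ by definition (since both $\Psi$ and $\exp$ preserve symmetry in each $\x_i$), so $\Exp(f)$ is automatically symmetric with bounded $\x_i$-degree in every $T^N$-coefficient. Intersecting with $R$ gives
\[
\Exp(f) \in \Lambda_k[[T]] \cap R = \bigl(\Lambda(\x_1, \ldots, \x_k) \otimes_\Z \Z[q,t]\bigr)[[T]],
\]
the last equality being a standard orbit argument: a symmetric series of bounded degree with integer $x$-monomial coefficients is necessarily an integer combination of monomial symmetric functions. The only real bookkeeping is the $T$-adic convergence of the infinite product, which is routine given that every $m_\alpha$ has positive $T$-degree.
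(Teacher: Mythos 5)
Your proof is correct, and it is essentially the standard argument for integrality of the plethystic exponential: reduce to line elements. The paper itself does not spell out a proof of this lemma --- it only points to Mozgovoy --- so there is nothing to literally compare against; but the route you take (embed $\Lambda_k[[T]]$ into $\Z[q,t][[x_{i,j},T]]$, observe that every monomial $m$ of positive $T$-degree is a line element with $\Exp(cm)=(1-m)^{-c}$, then use additivity of $\Psi$ to write $\Exp(f)=\prod_\alpha(1-m_\alpha)^{-n_\alpha}$, and finish by checking $T$-adic convergence and intersecting back with the symmetric subring) is exactly the expected one, and all the steps you gesture at (binomial coefficients $\binom{c+k-1}{k}\in\Z$, the finiteness of contributing tuples to a fixed monomial, the orbit argument that a bounded-degree symmetric series with $\Z[q,t]$-monomial coefficients is a finite $\Z[q,t]$-combination of $m_{\nuhat}$'s) do go through. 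One small editorial note: the hypothesis in the statement should read $f\in\Lambda_k[[T]]^+$ rather than $f\in 1+\Lambda_k[[T]]^+$, since $\Exp$ is defined on the augmentation ideal; you implicitly use the correct version. The one place worth one extra sentence in a written-up version is why $\Exp(f)$ has coefficients of bounded $\x_i$-degree in each $T^N$: this holds because the $T^N$-coefficient of $\Psi(f)$ only involves $\psi_d(f_m)$ with $dm\le N$, each of which has bounded degree, and $\exp$ preserves this --- so $\Exp$ really does land in $1+\Lambda_k[[T]]^+$ before you intersect with $R$.
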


\subsubsection{Types}

We choose once and for all a total ordering $\geq$ on $\calP$ (e.g. the
lexicographic ordering) and we continue to denote by $\geq$ the total
ordering defined on the set of pairs $\N^*\times\calP^*$ as follows:
If $\lambda\neq\mu$ and $\lambda\geq\mu$, then
$(d,\lambda)\geq(d',\mu)$, and $(d,\lambda)\geq(d',\lambda)$ if $d\geq
d'$. We denote by $\mathbf{T}$ the set of non-increasing sequences
$\omega=(d_1,\omega^1)\geq (d_2,\omega^2)\geq \cdots \geq
(d_r,\omega^r)$, which we will call a {\it type}. To alleviate the
notation we will then omitt the symbol $\geq$ and write simply
$\omega=(d_1,\omega^1)(d_2,\omega^2)\cdots (d_r,\omega^r)$. The {\it
  size} of a type $\omega$ is $|\omega|:=\sum_id_i|\lambda^i|$. We
denote by $\mathbf{T}_n$ the set of types of size $n$. We denote by
$m_{d,\lambda}(\omega)$ the multiplicity of $(d,\lambda)$ in
$\omega$. As with partitions it is sometimes convenient to consider a
type as a collection of integers $m_{d,\lambda}\geq 0 $ indexed by
pairs $(d,\lambda)\in \Nstar\times \; \calP^*$. For a type
$\omega=(d_1,\omega^1)(d_2,\omega^2)\cdots (d_r,\omega^r)$, we put
$n(\omega)=\sum_id_i n(\omega^i)$ and
$[\omega]:=\cup_id_i\cdot\omega^i$.

When considering elements $a_\muhat\in \Lambda_k$ indexed by
multi-partitions $\muhat=(\mu^1,\ldots,\mu^k)\in \calP^k$, we will
always assume that they are homogeneous of degree
$(|\mu^1|,\ldots,|\mu^k|)$ in the set of variables $\x_1,\dots,\x_k$.

Let $\{a_\muhat\}_{\muhat\in\calP^k}$ be a family of symmetric functions in $\Lambda_k$ indexed by multi-partitions.

We extend its definition to a \emph{multi-type} $\omhat=(d_1,\omhat^1)\cdots(d_s,\omhat^s)$ with $\omhat^p\in(\calP_{n_p})^k$, by 

$$
a_\omhat:=\prod_p\psi_{d_p}(A_{\omhat^p}).
$$

For a multi-type $\omhat$ as above, we put
$$
C_\omhat^o:=\begin{cases}\frac{\mu(d)}{d}(-1)^{r-1}\frac{(r-1)!}{\prod_\muhat m_{d,\muhat}(\omhat)!}\,\text{ if } d_1=\cdots=d_r=d.\\0\,\text{ otherwise.}
\end{cases}
$$
where $m_{d,\muhat}(\omhat)$ with $\muhat\in\calP^k$ denotes the multiplicity of $(d,\muhat)$ in $\omhat$.

We have the following lemma (see \cite[\S 2.3.3]{hausel-letellier-villegas} for a proof).

\begin{lemma} 
Let $\{A_\muhat\}_{\muhat\in\calP^k}$ be a family of symmetric functions in $\Lambda_k$ with $A_0=1$. Then
\begin{equation}
\Log\left(\sum_{\muhat\in\calP^k}A_\muhat T^{|\muhat|}\right)=\sum_{\omhat}C_\omhat^oA_\omhat T^{|\omhat|}
\end{equation}
where $\omhat$ runs over multi-types $(d_1,\omhat^1)\cdots(d_s,\omhat^s)$.\label{Log-w}
\end{lemma}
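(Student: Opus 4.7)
The plan is to unfold the definition $\Log(f)=\Psi^{-1}(\log f)$ in two combinatorial stages and track the bookkeeping at each step. Since $C_\omhat^o=0$ unless all the $d_i$ coincide, the target identity is equivalent to saying that in the final expansion only multi-types with a single repeated $d$-value appear, with the stated coefficient.

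First I would compute $\log f$. Writing $f=1+x$ with $x=\sum_{\muhat\neq 0}A_\muhat T^{|\muhat|}$ and using $\log(1+x)=\sum_{r\geq 1}(-1)^{r-1}x^r/r$, the $r$-th power produces a sum over ordered tuples $(\muhat^1,\dots,\muhat^r)$ of non-zero multi-partitions. Grouping such tuples by their underlying multiset --- which is precisely a multi-type $\omhat$ with all $d_i=1$ and appears $r!/\prod_\muhat m_{1,\muhat}(\omhat)!$ times --- and using $\psi_1=\mathrm{id}$ to identify $A_\omhat=\prod_p A_{\omhat^p}$, I obtain
\[
\log f=\sum_{\omhat:\,d_i=1}\frac{(-1)^{r-1}(r-1)!}{\prod_\muhat m_{1,\muhat}(\omhat)!}\,A_\omhat\, T^{|\omhat|},
\]
where $r=r(\omhat)$ is the number of parts of $\omhat$.

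Next I apply $\Psi^{-1}(g)=\sum_{n\geq 1}\mu(n)\psi_n(g)/n$ term by term. Because $\psi_n$ is a ring homomorphism sending $T\mapsto T^n$, and the definition $A_\omhat=\prod_p\psi_{d_p}(A_{\omhat^p})$ together with $\psi_n\psi_{d_p}=\psi_{nd_p}$ gives $\psi_n(A_\omhat)=A_{\omhat'}$, where $\omhat'$ is obtained from $\omhat$ by scaling every $d_i$ by $n$, each multi-type $\omhat$ with $d_i=1$ is sent to a multi-type $\omhat'$ with $d_i=n$, preserving multiplicities in the sense that $m_{1,\muhat}(\omhat)=m_{n,\muhat}(\omhat')$. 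Substituting and re-indexing by $\omhat'$ yields
\[
\Log(f)=\sum_{d\geq 1}\frac{\mu(d)}{d}\sum_{\omhat':\,d_i=d}\frac{(-1)^{r-1}(r-1)!}{\prod_\muhat m_{d,\muhat}(\omhat')!}\,A_{\omhat'}\,T^{|\omhat'|}=\sum_{\omhat'}C_{\omhat'}^o\,A_{\omhat'}\,T^{|\omhat'|},
\]
since every multi-type with constant $d$-values is produced exactly once and multi-types with non-constant $d$-values never arise.

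The main obstacle is the bookkeeping of the second stage: one must carefully convert from ordered tuples of multi-partitions to multi-sets with their multinomial factors, and verify the compatibility $\psi_n(A_\omhat)=A_{\omhat'}$ for the symbolic encoding $A_\omhat=\prod_p\psi_{d_p}(A_{\omhat^p})$ using multiplicativity of $\psi_n$ and the composition law $\psi_n\psi_m=\psi_{nm}$. Once this is in hand, the identification with $C_\omhat^o$ is immediate from the definition.
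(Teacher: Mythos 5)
Your proof is correct, and it is the standard computation. Note that the paper itself gives no proof of this lemma; it simply refers the reader to Section~2.3.3 of the earlier paper by the same authors. Your argument --- expanding $\log(1+x)$ into ordered tuples of non-zero multi-partitions, collapsing to multisets so that a multi-type $\omhat$ with $d_1=\cdots=d_r=1$ appears with coefficient $(-1)^{r-1}(r-1)!/\prod m_{1,\muhat}(\omhat)!$, then applying $\Psi^{-1}=\sum_n \mu(n)\psi_n/n$ and using that $\psi_n$ is a ring endomorphism with $\psi_n\psi_m=\psi_{nm}$, $T\mapsto T^n$, so that it carries the $d=1$ multi-type bijectively to the corresponding $d=n$ multi-type while preserving multiplicities --- is exactly the natural route, and the bookkeeping is carried out cleanly. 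In particular you correctly explain why only multi-types with a constant $d$-value appear, matching the vanishing of $C^o_\omhat$ otherwise.
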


The formal power series $\sum_{n\geq 0}a_nT^n$ with $a_n\in \Lambda_k$
that we will consider in what follows will all have $a_n$ homogeneous
of degree $n$. Hence we will typically  scale the variables of
$\Lambda_k$ by $1/T$ and eliminate $T$ altogether.

Given any family $\{a_\mu\}$ of symmetric functions
indexed by partitions $\mu\in \P$ and a multi-partition  $\muhat \in
\P^k$ as above define
$$
a_\muhat:=a_{\mu^1}(\x_1)\cdots a_{\mu^k}(\x_k).
$$
Let  $\langle\cdot ,\cdot \rangle$ be the Hall pairing on
$\Lambda(\x),$ extend its definition to $\Lambda(\x_1,\ldots,\x_k)$
by setting
\beq \label{extendedhall}
\langle a_1(\x_1)\cdots a_k(\x_k), b_1(\x_1)\cdots b_k(\x_k)\rangle
= \langle a_1, b_1 \rangle \cdots \langle a_k, b_k \rangle,
\eeq
for any $a_1,\ldots,a_k;b_1,\ldots,b_k\in \Lambda(\x)$ and to formal
series by linearity.

\subsubsection{Cauchy identity}\label{Cauchy}

Given a partition $\lambda\in\calP_n$ we
define the genus $g$ {\it hook function}
$\calH_{\lambda}(z,w)$ by
$$
\calH_{\lambda}(z,w):=
\prod_{s\in \lambda}\frac{(z^{2a(s)+1}-w^{2l(s)+1})^{2g}}
{(z^{2a(s)+2}-w^{2l(s)})(z^{2a(s)}-w^{2l(s)+2})},
$$
where the product is over all cells $s$ of $\lambda$ with $a(s)$ and
$l(s)$ its arm and leg length, respectively. For details on the hook
function we refer the reader to \cite{hausel-villegas}.

Recall the specialization (cf. \cite[\S 2.3.5]{hausel-letellier-villegas})  \begin{equation}\calH_\lambda(0,\sqrt{q})=\frac{q^{g\langle\lambda,\lambda\rangle}}{a_\lambda(q)}\label{alambda}\end{equation}where $a_\lambda(q)$ is the cardinality of the centralizer of a unipotent element of $\GL_n(\F_q)$ with Jordan form of type $\lambda$.

It is also not difficult to verify that the Euler specialization of $\calH_\lambda$ is 

\begin{equation}\calH_\lambda(\sqrt{q},1/\sqrt{q})=\left(q^{-\frac{1}{2}\langle\lambda,\lambda\rangle}H_\lambda(q)\right)^{2g-2}.
\label{H-specializ}\end{equation}

We have \begin{equation}\calH_\lambda(z,w)=\calH_{\lambda'}(w,z)\,\,\,{\rm and }\,\,\,\calH_\lambda(-z,-w)=\calH_\lambda(z,w).\label{hook-duality}\end{equation}

Let
$$
\Omega(z,w)=\Omega(\x_1,\dots,\x_k;z,w):=\sum_{\lambda\in \calP} \calH_{\lambda}(z,w)
\prod_{i=1}^k\tilde{H_\lambda}(\x_i;z^2,w^2).
$$By (\ref{Hduality}) and (\ref{hook-duality}) we have
\begin{equation}\Omega(z,w)=\Omega(w,z)\,\,\,{\rm and }\,\,\,\Omega(-z,-w)=\Omega(z,w).\label{Oduality}\end{equation}

For $\muhat=(\mu^1,\cdots,\mu^k)\in\P^k$, we let

\begin{equation}\H_\muhat(z,w):=(z^2-1)(1-w^2)\left\langle\Log\,\Omega(z,w),h_\muhat\right\rangle.
 \label{H}\end{equation}

By (\ref{Oduality}) we have the symmetries

\begin{equation}\H_\muhat(z,w)=\H_\muhat(w,z) \,\,\,{\rm and}\,\,\,\H_\muhat(-z,-w)=\H_\muhat(z,w). \label{HHduality}\end{equation}

We may recover $\Omega(z,w)$ from the $\H_\muhat(z,w)$'s by the formula:

\beq
\Omega(z,w)=\Exp\left(\sum_{\muhat\in\calP^k}\frac{\H_\muhat(z,w)}{(z^2-1)(1-w^2)}m_\muhat\right).
\label{exp}\eeq

From Formula (\ref{eulermac}) and Formula (\ref{H-specializ}) we have:

\begin{lemma} With the specialization $y_i=q^{i-1}$, 

$$\Omega\left(\sqrt{q},\frac{1}{\sqrt{q}}\right)=\sum_{\lambda\in\calP}q^{(1-q)|\lambda|}\left(q^{-n(\lambda)}H_\lambda(q)\right)^{2g+k-2}\prod_{i=1}^ks_\lambda(\x_i\y).$$
 
\label{specializ}\end{lemma}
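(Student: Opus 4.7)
The statement is a direct substitution, so the plan is essentially a bookkeeping exercise using the two specialization formulas already recorded in the paper, together with the identity $\langle\lambda,\lambda\rangle = 2n(\lambda) + |\lambda|$.

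First I would start from the definition
$$
\Omega(z,w) = \sum_{\lambda\in\calP} \calH_{\lambda}(z,w) \prod_{i=1}^{k} \tilde{H}_{\lambda}(\x_i; z^2, w^2),
$$
and set $(z,w) = (\sqrt{q}, 1/\sqrt{q})$, so that $(z^2, w^2) = (q, q^{-1})$. The Macdonald factors then become $\tilde{H}_\lambda(\x_i; q, q^{-1})$, and applying (\ref{eulermac}) with $y_j = q^{j-1}$ yields
$$
\prod_{i=1}^{k} \tilde{H}_{\lambda}(\x_i; q, q^{-1}) = q^{-k\,n(\lambda)}\, H_\lambda(q)^{k} \prod_{i=1}^{k} s_\lambda(\x_i \y).
$$

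Next I would apply (\ref{H-specializ}) to the hook factor, giving
$$
\calH_{\lambda}(\sqrt{q}, 1/\sqrt{q}) = q^{-(g-1)\langle\lambda,\lambda\rangle}\, H_\lambda(q)^{2g-2}.
$$
Multiplying the two contributions, the total power of $H_\lambda(q)$ is $2g+k-2$, while the total power of $q$ is
$$
-(g-1)\langle\lambda,\lambda\rangle - k\,n(\lambda) = (1-g)\bigl(2n(\lambda)+|\lambda|\bigr) - k\,n(\lambda) = -(2g+k-2)\,n(\lambda) + (1-g)|\lambda|,
$$
using $\langle\lambda,\lambda\rangle = 2n(\lambda) + |\lambda|$. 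Collecting the $n(\lambda)$-term with $H_\lambda(q)^{2g+k-2}$ gives the factor $\bigl(q^{-n(\lambda)} H_\lambda(q)\bigr)^{2g+k-2}$, and the remaining $q^{(1-g)|\lambda|}$ prefactor matches the right-hand side of the claim (reading the exponent in the statement as $(1-g)|\lambda|$).

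There is no real obstacle — the lemma is a formal consequence of the two specialization identities already proved; the only thing to watch is the arithmetic of the exponent, which relies on the small identity $\langle\lambda,\lambda\rangle = 2n(\lambda)+|\lambda|$ recorded in the preliminaries.
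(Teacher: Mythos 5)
Your proof is correct and coincides with the paper's (implicit) argument: the lemma is presented there as an immediate consequence of \eqref{eulermac} and \eqref{H-specializ}, which is exactly the bookkeeping you carried out. You are also right that the exponent $q^{(1-q)|\lambda|}$ in the statement is a typo for $q^{(1-g)|\lambda|}$.
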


\begin{conjecture}The rational function $\H_\muhat(z,w)$ is a polynomial with integer coefficients. It has degree 

$$d_\muhat:=n^2(2g-2+k)-\sum_{i,j}(\mu^i_j)^2+2$$ in each variable and the coefficients of $\H_\muhat(-z,w)$ are non-negative.
\label{conjH}\end{conjecture}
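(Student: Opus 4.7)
The plan is to split Conjecture~\ref{conjH} into its three assertions---polynomiality, integrality, the exact degree $d_\muhat$ in each variable, and non-negativity of the coefficients of $\H_\muhat(-z,w)$---and attack each with a different set of tools. The first three I would hope to establish directly from the Cauchy-identity definition \eqref{H} by a careful analysis of poles and leading terms. The non-negativity statement appears to require a cohomological input that goes beyond the combinatorics of Macdonald polynomials.

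For polynomiality and integrality, I would start from the expansion of $\Log\,\Omega(z,w)$ provided by Lemma~\ref{Log-w}. The coefficient $\langle \Log\,\Omega(z,w), h_\muhat\rangle$ is a priori only a rational function, and its possible denominators come entirely from the hook function $\calH_\lambda(z,w)$, specifically from cells $s\in\lambda$ with $l(s)=0$ or $a(s)=0$. The plan is to show, cell by cell, that the prefactor $(z^2-1)(1-w^2)$ kills the lowest-order hook denominators, while the remaining denominators cancel against roots-of-unity vanishings of $\tilde H_\lambda(\x_i;z^2,w^2)$ (these are encoded in the $(q,t)$-Kostka polynomials via \eqref{K-defn}). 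Integrality would then follow from Lemma~\ref{Exp} applied with $q=z^2$, $t=w^2$, combined with the integrality of the Kostka--Macdonald expansion after clearing these denominators. The degree claim reduces to tracking the highest-order terms of $\calH_\lambda(z,w)$ and $\tilde H_\lambda(\x_i;z^2,w^2)$ cell by cell, using that the top term in $h_\muhat$ picks out partitions $\lambda$ with $|\lambda|$ bounded in terms of $\muhat$; a careful bookkeeping should give exactly $d_\muhat=n^2(2g-2+k)-\sum_{i,j}(\mu^i_j)^2+2$, and the symmetry \eqref{HHduality} cuts the work in half between the two variables.

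For non-negativity of the coefficients of $\H_\muhat(-z,w)$, the only conceptual route I see is to furnish $\H_\muhat(-z,w)$ with a cohomological meaning: the authors' own conjecture~\eqref{mainconj} asserts that $(t\sqrt q)^{d_\muhat}\H_\muhat(-t\sqrt q, 1/\sqrt q) = H_c(\M_\muhat;q,t)$, and non-negativity is then automatic from the very definition of mixed Hodge numbers. Short of proving~\eqref{mainconj} outright, a reasonable fallback is to check positivity specialization by specialization. The Euler slice $z=\sqrt q$, $w=1/\sqrt q$ is already handled by \eqref{mainresult} of the companion paper; the pure slice $z=0$, $w=\sqrt q$ reduces by \eqref{purity0} to the fact that $A_\muhat(q)$ has non-negative coefficients (which the authors reduce later to Kac's conjecture for comet quivers); and a third family of positive specializations arises from the tensor-product multiplicity interpretation \eqref{multi0}. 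One would then attempt to interpolate these positivity statements using \eqref{HHduality} and the degree bound from Step~2.

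The main obstacle is clearly the non-negativity step. Polynomiality, integrality and the degree count are, in principle, combinatorial consequences of known identities for transformed Macdonald polynomials. Non-negativity of the $\H_\muhat(-z,w)$ coefficients in full $(z,w)$ generality, however, seems to require either the resolution of \eqref{mainconj} itself---i.e., a full computation of $H_c(\M_\muhat;q,t)$---or an independent categorification of the coefficients of $\Omega(-z,w)$ as dimensions of graded pieces of some natural sheaf or perverse filtration on $\M_\muhat$. For comet quivers with indivisible $\v_\muhat$ the Crawley-Boevey--Van den Bergh quiver varieties provide a geometric model, but only for the $z=0$ slice; extending to the full $(z,w)$-plane would require a nonabelian Hodge-theoretic replacement for punctured Riemann surfaces that is not currently available in the literature.
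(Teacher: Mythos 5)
This statement is Conjecture~\ref{conjH}, which the paper leaves open; there is no argument in the paper to compare yours against. What the authors do establish are two one-variable specializations: the pure slice $\H_\muhat(0,\sqrt q)=A_\muhat(q)$ (Theorem~\ref{purity}), which is a monic integer polynomial of degree $d_\muhat/2$ by Kac's theorem and whose non-negativity is Kac's positivity conjecture (known for indivisible $\v_\muhat$ by Crawley-Boevey--Van den Bergh, in general still open for these quivers); and the Euler slice $\H_\muhat(\sqrt q,1/\sqrt q)$, which equals $q^{-d_\muhat/2}E(\M_\muhat;q)$ and is a Laurent polynomial in $\sqrt q$ by the point-counting results of the companion paper. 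The full two-variable statement is not proved anywhere.

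Judged on its own terms, your proposal correctly identifies non-negativity of $\H_\muhat(-z,w)$ as essentially equivalent, given the other parts, to the mixed-Hodge conjecture~\eqref{mainconj}, and correctly predicts that a purely combinatorial route is unlikely---that diagnosis matches the authors' own stance. The earlier steps, however, are plans rather than arguments. The polynomiality claim rests on the assertion that, after multiplying by $(z^2-1)(1-w^2)$, the remaining hook denominators $z^{2a(s)+2}-w^{2l(s)}$ and $z^{2a(s)}-w^{2l(s)+2}$ with $a(s),l(s)>0$ ``cancel against roots-of-unity vanishings of $\tilde H_\lambda(\x_i;z^2,w^2)$.'' No mechanism is offered; those $\tilde H_\lambda$ do not vanish on the relevant loci term by term, so the cancellation---if it exists---must be a global phenomenon across the sum over $\lambda$, and exhibiting it is precisely the content of the conjecture, not a ``cell by cell'' bookkeeping. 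The appeal to Lemma~\ref{Exp} for integrality is misdirected: that lemma concerns $\Exp$, whereas extracting $\H_\muhat$ from $\Omega$ uses $\Log$; and in any case $\Omega$ is not a priori in $\Lambda\otimes_\Z\Z[q,t]$, since each $\calH_\lambda(z,w)$ is a genuine rational function with poles. The degree claim is asserted but not derived. In short, your sketch maps the difficulty honestly but carries out none of the steps, and the statement remains an open conjecture in the paper.
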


The function $\H_\muhat(z,w)$ is computed in many cases in \cite[\S 1.5]{hausel-letellier-villegas}.

\subsection{Characters and Fourier transforms}\label{finite-groups}

\subsubsection{Characters of finite general linear groups}\label{charGL}For a finite group $H$ let us denote by ${\rm Mod}_H$ the category of finite dimensional $\C[H]$ left modules. Let $K$ be an other finite group. By an \emph{$H$-module-$K$} we mean a finite dimensional $\C$-vector space $M$ endowed with a left action of $H$ and with a right action of $K$ which commute together. Such a module $M$ defines a functor $R_K^H:{\rm Mod}_K\rightarrow{\rm Mod}_H$ by $V\mapsto M\otimes_{\C[K]} V$. Let $\C(H)$ denotes the $\C$-vector space of all functions $H\rightarrow\C$ which are constant on conjugacy classes.  We continue to denote by $R_K^H$ the $\C$-linear map $\C(K)\rightarrow\C(H)$ induced by the functor $R_K^H$ (we first define it on irreducible characters and then extend it by linearity to the whole $\C(K)$). Then for any $f\in\C(K)$, we have

\begin{equation}R_K^H(f)(g)=|K|^{-1}\sum_{k\in K}{\rm Trace}\,\left((g,k^{-1})\,|\, M\right)f(k).\label{bimod}
\end{equation}

Let $G=\GL_n(\F_q)$ with $\F_q$ a finite field. Fix a partition $\lambda=(\lambda_1,\dots,\lambda_r)$ of $n$ and let $\calF_\lambda=\calF_\lambda(\F_q)$ be the variety of partial flags of $\F_q$-vector spaces $$\{0\}=E^r\subset E^{r-1}\subset\cdots\subset E^1\subset E^0=(\F_q)^n$$such that ${\rm dim}(E^{i-1}/E^i)=\lambda_i$.

Let $G$ acts on $\calF_\lambda$ in the natural way. Fix an element $$X_o=\left(\{0\}=E^r\subset E^{r-1}\subset\cdots\subset E^1\subset E^0=(\F_q)^n\right)\in\calF_\lambda$$ and denote by $P_\lambda$ the stabilizer of $X_o$ in $G$ and by $U_\lambda$ the subgroup of elements $g\in P_\lambda$ which induces the identity on $E^i/E^{i+1}$ for all $i=0,1,\dots,r-1$. 

Put $L_\lambda:=\GL_{\lambda_r}(\F_q)\times\cdots\times\GL_{\lambda_1}(\F_q)$. Recall that $U_\lambda$ is a normal subgroup of $P_\lambda$ and that  $P_\lambda=L_\lambda\ltimes U_\lambda$.

Denote by $\C[G/U_\lambda]$ the $\C$-vector space generated by the finite set $G/U_\lambda=\{gU_\lambda\,|\, g\in G\}$. The group $L_\lambda$ (resp. $G$) acts on $\C[G/U_\lambda]$ as $(gU_\lambda)\cdot l=glU_\lambda$ (resp. as $g\cdot (hU_\lambda)=ghU_\lambda$). These two actions make $\C[G/U_\lambda]$ into a $G$-module-$L_\lambda$. The associated functor $R_{L_\lambda}^G:{\rm Mod}_{L_\lambda}\rightarrow {\rm Mod}_G$ is the so-called \emph{Harish-Chandra functor}.

We have the following well-known lemma.

\begin{lemma} We denote by $1$ the identity character of $L_\lambda$. Then for all $g\in G$, we have

$$R_{L_\lambda}^G(1)(g)=\#\{X\in\calF_\lambda\,|\, g\cdot X=X\}.$$
\label{R=F}\end{lemma}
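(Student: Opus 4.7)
The plan is to apply formula~(\ref{bimod}) with $H=G$, $K=L_\lambda$, $M=\C[G/U_\lambda]$, and $f=1$, which gives
$$
R_{L_\lambda}^G(1)(g)=|L_\lambda|^{-1}\sum_{l\in L_\lambda}\mathrm{Trace}\bigl((g,l^{-1})\,\big|\,\C[G/U_\lambda]\bigr),
$$
and then to unpack the trace as a count of fixed cosets and use the Levi decomposition $P_\lambda=L_\lambda\ltimes U_\lambda$ to rewrite the result in terms of fixed flags.

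First I would compute the trace. Since $\C[G/U_\lambda]$ is a permutation module for the $(G,L_\lambda)$-biaction, the trace of $(g,l^{-1})$ equals the number of cosets $hU_\lambda$ with $g\cdot(hU_\lambda)\cdot l = hU_\lambda$. Using the normality of $U_\lambda$ in $P_\lambda$ (so $lU_\lambda = U_\lambda l$ for $l\in L_\lambda$), this condition rewrites as $h^{-1}ghl\in U_\lambda$, i.e.\ $h^{-1}gh \in U_\lambda l^{-1}$.

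Next I would sum over $l\in L_\lambda$. Replacing $l$ by $l^{-1}$, the total count becomes
$$
\sum_{l\in L_\lambda}\#\{hU_\lambda\,|\,h^{-1}gh\in U_\lambda l\}
=\#\{hU_\lambda\,|\,h^{-1}gh\in L_\lambda U_\lambda\}
=\#\{hU_\lambda\,|\,h^{-1}gh\in P_\lambda\},
$$
because the semidirect product decomposition $P_\lambda=L_\lambda\ltimes U_\lambda$ ensures that for each $hU_\lambda$ satisfying $h^{-1}gh\in P_\lambda$ there is a unique $l\in L_\lambda$ with $h^{-1}gh\in U_\lambda l$.

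Finally I would translate the last set into fixed flags. The condition $h^{-1}gh\in P_\lambda$ depends only on $hP_\lambda$ and is equivalent to $g\in hP_\lambda h^{-1}$, which is exactly the condition that $g$ stabilizes the flag $h\cdot X_o\in\calF_\lambda$. Since $|P_\lambda|=|L_\lambda|\cdot|U_\lambda|$, each coset $hP_\lambda$ contains exactly $|L_\lambda|$ cosets of the form $h'U_\lambda$, so
$$
\#\{hU_\lambda\,|\,h^{-1}gh\in P_\lambda\}=|L_\lambda|\cdot\#\{X\in\calF_\lambda\,|\,g\cdot X=X\}.
$$
Dividing by $|L_\lambda|$ yields the claim. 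There is no real obstacle here; the only care needed is the bookkeeping around which side $l$ acts on and the normality of $U_\lambda$ when pushing $l$ across $U_\lambda$. Alternatively one can shortcut the argument by observing that $R_{L_\lambda}^G(1)=\mathrm{Ind}_{P_\lambda}^G(1)$ (trivial inflation from $L_\lambda$ to $P_\lambda$), whose character at $g$ is standardly the number of fixed points of $g$ on $G/P_\lambda=\calF_\lambda$.
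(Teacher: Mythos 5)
Your proof is correct and follows essentially the same route as the paper's: unpack formula~(\ref{bimod}) as a count of fixed cosets in $G/U_\lambda$, sum over $L_\lambda$ to pass from $U_\lambda$ to $P_\lambda=L_\lambda U_\lambda$, and divide by $|L_\lambda|$ to land on fixed flags via $G/P_\lambda\cong\calF_\lambda$. Your closing remark that $R_{L_\lambda}^G(1)=\mathrm{Ind}_{P_\lambda}^G(1)$ together with the standard fixed-point formula for permutation characters is a clean shortcut that the paper does not state explicitly, though it is mathematically the same computation repackaged.
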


\begin{proof} By Formula (\ref{bimod}) we have

\begin{align*} R_{L_\lambda}^G(1)(g)&=|L_\lambda|^{-1}\sum_{k\in L_\lambda}\#\{hU_\lambda\,|\, ghU_\lambda=hkU_\lambda\}\\
&=|L_\lambda|^{-1}\sum_{k\in L_\lambda}\#\{hU_\lambda\,|\, gh\in hkU_\lambda\}\\
&=|L_\lambda|^{-1}\#\{hU_\lambda\,|\, gh\in hP_\lambda\}\\
&=\#\{hP_\lambda\,|\, ghP_\lambda=hP_\lambda\}.
 \end{align*}
We deduce the lemma from last equality by noticing that the map $G\rightarrow\calF_\lambda$, $g\mapsto g\cdot X_o$ induces a bijection $G/P_\lambda\rightarrow\calF_\lambda$.
\end{proof}

We now recall the definition of the type of a conjugacy class $C$ of $G$ (cf. \cite[4.1]{hausel-letellier-villegas}). The Frobenius $f:\F\rightarrow\F$, $x\mapsto x^q$ acts on the set of eigenvalues of $C$. Let us write the set of eigenvalues of $C$ as a disjoint union 

$$\{\gamma_1,\gamma_1^q,\dots\}\coprod\{\gamma_2,\gamma_2^q,\dots\}\coprod\cdots\coprod\{\gamma_r,\gamma_r^q,\dots\}$$of $\langle f\rangle$-orbits, and let $m_i$ be the multiplicity of $\gamma_i$. The unipotent part of an element of $C$ defines a unique partition $\omega^i$ of $m_i$. Re-ordering if necessary we may assume that $(d_1,\omega^1)\geq (d_2,\omega^2)\geq\cdots\geq (d_r,\omega^r)$. We then call $\omega=(d_1,\omega^1)\cdots(d_r,\omega^r)\in{\bf T}_n$ the \emph{type} of $C$.

Put $T:=L_{(1,1,\dots,1)}$. It is the subgroup of diagonal matrices of
$G$. The decomposition of $R_T^{L_\lambda}(1)$ as a sum of irreducible
characters reads $$R_T^{L_\lambda}(1)=\sum_{\chi\in {\rm
    Irr}(W_{L_\lambda})}\chi(1)\cdot \calU_\chi,$$where
$W_{L_\lambda}:=N_{L_\lambda}(T)/T$ is the Weyl group of $L_\lambda$. We call the
irreducible characters $\{\calU_\chi\}_\chi$ the \emph{unipotent}
characters of $L_\lambda$. The character  $\calU_1$ is the trivial
character of $L_\lambda$. Since $W_{L_\lambda}\simeq
S_{\lambda_1}\times\cdots\times S_{\lambda_r}$, the irreducible
characters of $W_{L_\lambda}$ are
$\chi^\tau:=\chi^{\tau^1}\cdots\chi^{\tau^r}$ where $\tau$
runs over the set of types $\tau=\{(1,\tau^i)\}_{i=1,\dots,r}$
with $\tau^i$ a partition of $\lambda_i$. We denote by
$\calU_\tau$ the unipotent character of $L_\lambda$ corresponding to
such a type $\tau$.

\begin{theorem} Let $\calU_\tau$ be a unipotent character of $L_\lambda$ and let $C$ be a conjugacy class of type $\omega$. Then

$$R_{L_\lambda}^G(\calU_\tau)(C)=\left\langle \tilde{H}_\omega(\x,q),s_\tau(\x)\right\rangle.$$

\label{Rtau}\end{theorem}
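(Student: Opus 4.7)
\medskip

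\noindent\textbf{Proof plan.} The strategy is to reduce the identity, via the multiplicativity of Harish-Chandra induction of unipotent characters, to the classical character formula of Green for $\GL_n(\F_q)$ in the case $L_\lambda = G$.

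\medskip

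\noindent\emph{Step 1 (reduction to $\lambda=(n)$).} Since $L_\lambda \simeq \GL_{\lambda_r}(\F_q) \times \cdots \times \GL_{\lambda_1}(\F_q)$ and the unipotent character $\calU_\tau$ attached to the type $\tau=\{(1,\tau^i)\}_{i=1}^r$ is the external tensor product $\calU_{\tau^1} \boxtimes \cdots \boxtimes \calU_{\tau^r}$, one invokes the classical Littlewood--Richardson decomposition of Harish-Chandra induced unipotent characters (cf.\ \cite[IV~(5.10)]{macdonald}),
\[
R_{L_\lambda}^G(\calU_\tau) \;=\; \sum_{\sigma\vdash n} c^\sigma_{\tau^1,\dots,\tau^r}\,\calU_\sigma,
\]
where the coefficients $c^\sigma_{\tau^1,\dots,\tau^r}$ are the structure constants of $s_{\tau^1}\cdots s_{\tau^r}=\sum_\sigma c^\sigma_{\tau^1,\dots,\tau^r}\,s_\sigma$ in the Schur basis. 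Since the convention of \S 2.1.3 gives $s_\tau = \prod_i s_{\tau^i}$ (all parts of $\tau$ have $d_i=1$), bilinearity of the Hall pairing reduces the theorem to the single-factor case: for every partition $\sigma\vdash n$ and every conjugacy class $C$ of $\GL_n(\F_q)$ of type $\omega$,
\[
\calU_\sigma(C) \;=\; \langle\tilde{H}_\omega(\x;q),\,s_\sigma(\x)\rangle.
\]

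\medskip

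\noindent\emph{Step 2 (reduced identity via Deligne--Lusztig).} Expressing $\calU_\sigma = |S_n|^{-1}\sum_{w\in S_n}\chi^\sigma(w)\,R_{T_w}^G(1)$ (by inverting $R_T^G(1)=\sum_\chi \chi(1)\calU_\chi$ using character orthogonality on $W=S_n$), one applies the Deligne--Lusztig character formula to $R_{T_w}^G(1)$ at $g\in C$ with Jordan decomposition $g = su$. It reduces to Deligne--Lusztig characters on the centralizer $C_G(s)\cong\prod_p\GL_{|\omega^p|}(\F_{q^{d_p}})$ evaluated at the unipotent part $u$, whose Jordan type in the $p$-th factor is $\omega^p$. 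Each factor contributes a Green polynomial $Q^{\omega^p}_{\rho_p}(q^{d_p})$, where $\rho_p$ is the cycle type of the $p$-th component of $w$. Assembling via the defining identity $Q^\tau_\lambda(q)=\sum_\nu\chi^\nu_\lambda\tilde{K}_{\nu\tau}(q)$ from \S 2.1.1 and invoking orthogonality of symmetric-group characters then yields $\calU_\sigma(g) = \langle\prod_p\psi_{d_p}(\tilde{H}_{\omega^p}),\,s_\sigma\rangle = \langle\tilde{H}_\omega,\,s_\sigma\rangle$, as required.

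\medskip

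\noindent\emph{Main obstacle.} The principal technical difficulty is the appearance of the Adams operations $\psi_{d_p}$ on the symmetric-function side. These arise because, on each centralizer factor $\GL_{|\omega^p|}(\F_{q^{d_p}})$, the Hall--Littlewood polynomial governing unipotent-character values is $\tilde{H}_{\omega^p}(\x;q^{d_p})$; translating this back to a symmetric function over the base field $\F_q$ introduces the plethystic substitution $p_n\mapsto p_{d_p n}$, i.e.\ precisely the Adams operation $\psi_{d_p}$. The delicate bookkeeping is to match the Galois-orbit decomposition of the eigenvalues of $s$ (the data $(d_p,\omega^p)$) with this Adams-operation structure, through the factorization of $T_w\cap C_G(s)^F$ as $\prod_p T_{w_p}$ induced by Lang's theorem. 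Once this dictionary is fixed, the verification amounts to inserting $\tilde{H}_{\omega^p}(\x;q)=\sum_{\nu^p}\tilde{K}_{\nu^p,\omega^p}(q)\,s_{\nu^p}(\x)$, applying $\psi_{d_p}$, multiplying over $p$, and recognizing the coefficient of $s_\sigma$ (via the product expansion $s_\sigma = \prod_p s_{\nu^p}$ with Littlewood--Richardson multiplicities) as the character value obtained from Step 2.
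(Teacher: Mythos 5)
Your Step 1 (the Littlewood--Richardson reduction to $L_\lambda = G$) is a genuine and clean variant not used in the paper: the paper keeps the general Levi $L_\lambda$ throughout, decomposing $\calU_\tau$ directly as $|W_\lambda|^{-1}\sum_{w\in W_\lambda}\chi^\tau_w R_{T_w}^{L_\lambda}(1)$ and then Harish-Chandra inducing via transitivity to land on $|W_\lambda|^{-1}\sum_{w\in W_\lambda}\chi^\tau_w R_{T_w}^G(1)$. Your route first trades the Levi away via $R_{L_\lambda}^G(\calU_\tau) = \sum_\sigma c^\sigma_{\tau^1,\dots,\tau^r}\calU_\sigma$ and the matching factorization $s_\tau = \prod_i s_{\tau^i}$, which lets you work only with $W = S_n$. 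This is sound (and the convention check against the corollary $s_\alpha = h_\lambda$ is correct).

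The issue is Step 2 together with the ``Main obstacle'' paragraph: this is a plan, not a proof. You correctly identify the chain Deligne--Lusztig formula $\to$ Green functions on $C_G(s)^F \cong \prod_p \GL_{|\omega^p|}(\F_{q^{d_p}})$ $\to$ $Q^{\omega^p}_{\rho_p}(q^{d_p})$ $\to$ Adams operations $\psi_{d_p}$ $\to$ $\langle\tilde H_\omega, s_\sigma\rangle$, but you explicitly defer ``the delicate bookkeeping'' and the orthogonality argument that assembles everything. That bookkeeping is precisely where the content lies. The paper does not carry it out in this proof either, but it points to a concrete source: it identifies $|W_\lambda|^{-1}\sum_w\chi^\tau_w R_{T_w}^G(1)$ with the first displayed formula in the proof of Theorem 4.3.1 of \cite{hausel-letellier-villegas} under the substitution $(M,\theta^{T_w},\tilde\varphi)=(L_\lambda,1,\chi^\tau)$, invokes formulas (4.3.2), (4.3.3) there to get
$$R_{L_\lambda}^G(\calU_\tau)(C)=\sum_\alpha z_\alpha^{-1}\chi^\tau_\alpha\sum_{\{\beta\,|\,[\beta]=[\alpha]\}}Q_\beta^\omega(q)\,z_{[\alpha]}z_\beta^{-1},$$
and then applies Lemma 2.3.5 of the same reference to rewrite this sum as the Hall pairing. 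To make your proposal complete you would need to either carry out your Step 2 in full (including the Galois-orbit/Lang's-theorem matching you gesture at) or, as the paper does, cite the specific computation in \cite{hausel-letellier-villegas} that does it; as written, the central identity $\calU_\sigma(C)=\langle\tilde H_\omega(\x;q),s_\sigma(\x)\rangle$ remains unproved.
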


\begin{proof} The proof  is contained in \cite{hausel-letellier-villegas} although the formula is not explicitely written there. For the convenience of the reader we now explain how to extract the proof from \cite{hausel-letellier-villegas}. For $w\in W_\lambda$, we denote by $R_{T_w}^G(1)$ the corresponding \emph{Deligne-Lusztig character} of $G$. Its construction is outlined in \cite[2.6.4]{hausel-letellier-villegas}. The character $\calU_\tau$ of $L_\lambda$ decomposes as,

$$\calU_\tau=|W_\lambda|^{-1}\sum_{w\in W_\lambda}\chi^\tau_w\cdot R_{T_w}^{L_\lambda}(1)$$where $\chi^\tau_w$ denotes the value of $\chi^\tau$ at $w$. Applying the Harish-Chandra induction $R_{L_\lambda}^G$ to both side and using the transitivity of induction we find that

$$R_{L_\lambda}^G(\calU_\tau)=|W_\lambda|^{-1}\sum_{w\in W_\lambda}\chi^\tau_w\cdot R_{T_w}^G(1).$$We are now in position to use the calculation in \cite{hausel-letellier-villegas}. Notice that the right handside of the above formula is the right hand side of the first formula displayed in the proof of \cite[Theorem 4.3.1]{hausel-letellier-villegas} with $(M,\theta^{T_w},\tilde{\varphi})=(L_\lambda,1,\chi^\tau)$ and so the same calculation to get  \cite[(4.3.2)]{hausel-letellier-villegas} together with \cite[(4.3.3)]{hausel-letellier-villegas} gives in our case

$$R_{L_\lambda}^G(\calU_\tau)(C)=\sum_\alpha z_\alpha^{-1}\chi^\tau_\alpha\sum_{\{\beta\,|\,[\beta]=[\alpha]\}}Q_\beta^\omega(q)z_{[\alpha]}z_\beta^{-1}$$where the notation are those of \cite[§ 4.3]{hausel-letellier-villegas}. We now apply \cite[Lemma 2.3.5]{hausel-letellier-villegas} to get 

$$R_{L_\lambda}^G(\calU_\tau)(C)=\left\langle \tilde{H}_\omega(\x;q),s_\tau(\x)\right\rangle.$$
\end{proof}

If $\alpha$ is the type $(1,(\lambda_1))\cdots(1,(\lambda_r))$, then $s_\alpha(\x)=h_\lambda(\x)$. Hence we have:

\begin{corollary} If $C$ is a conjugacy class of $G$ type $\omega$, then 

$$R_{L_\lambda}^G(1)(C)=\left\langle \tilde{H}_\omega(\x,q),h_\lambda(\x)\right\rangle.$$
 \label{R}

\end{corollary}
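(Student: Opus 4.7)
The plan is to deduce the corollary from Theorem \ref{Rtau} by specializing the unipotent character to the trivial character and identifying the corresponding Schur function with the product of complete homogeneous symmetric functions.

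First, I would take $\tau$ to be the particular type $(1,(\lambda_1))(1,(\lambda_2))\cdots(1,(\lambda_r))$ singled out just before the corollary. For this $\tau$, each constituent partition $\tau^i=(\lambda_i)$ is a one-row partition, so $\chi^{\tau^i}=\chi^{(\lambda_i)}$ is the trivial character of $S_{\lambda_i}$. Therefore $\chi^\tau=\chi^{\tau^1}\cdots\chi^{\tau^r}$ is the trivial character of $W_{L_\lambda}\simeq S_{\lambda_1}\times\cdots\times S_{\lambda_r}$. Under the parameterization $\chi\mapsto\calU_\chi$ of unipotent characters by $\mathrm{Irr}(W_{L_\lambda})$, the trivial character of the Weyl group corresponds to the trivial character of $L_\lambda$ (this is recorded explicitly in the discussion preceding Theorem \ref{Rtau}). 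Hence $\calU_\tau=1_{L_\lambda}$.

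Next, I would unwind the extension of the Schur function notation to multi-types. By the definition $a_\omhat:=\prod_p\psi_{d_p}(A_{\omhat^p})$ applied to $a=s$ with all $d_p=1$, we have
$$
s_\tau(\x)=\prod_{i=1}^r s_{(\lambda_i)}(\x).
$$
Since $s_{(m)}=h_m$ for any $m\geq 0$, this product collapses to $\prod_i h_{\lambda_i}(\x)=h_\lambda(\x)$.

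Finally, I would substitute $\calU_\tau=1_{L_\lambda}$ and $s_\tau(\x)=h_\lambda(\x)$ into the formula of Theorem \ref{Rtau} applied to the class $C$ of type $\omega$:
$$
R_{L_\lambda}^G(1)(C)=R_{L_\lambda}^G(\calU_\tau)(C)=\left\langle \tilde{H}_\omega(\x;q),s_\tau(\x)\right\rangle=\left\langle \tilde{H}_\omega(\x;q),h_\lambda(\x)\right\rangle,
$$
which is the desired identity. There is no real obstacle here: the proof is a direct specialization, and the only point requiring care is the unwinding of the definitions so as to correctly identify $s_\tau$ with $h_\lambda$ and $\calU_\tau$ with the trivial character.
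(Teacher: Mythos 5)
Your argument is correct and is essentially the paper's own: the paper states as a one-line remark just before the corollary that for the type $\tau=(1,(\lambda_1))\cdots(1,(\lambda_r))$ one has $s_\tau(\x)=h_\lambda(\x)$, and then specializes Theorem \ref{Rtau}, exactly as you do. Your write-up merely fills in the minor bookkeeping (that $\chi^\tau$ is the trivial character of the Weyl group and hence $\calU_\tau$ is the trivial character of $L_\lambda$, and the unwinding $s_\tau=\prod_i s_{(\lambda_i)}=\prod_i h_{\lambda_i}=h_\lambda$) that the paper leaves implicit.
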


\begin{corollary} Put $\calF^\#_{\lambda,\omega}(q):=\#\{X\in\calF_\lambda\,|\, g\cdot X=X\}$ where $g\in G$ is an element in a conjugacy class of type $\omega$. Then 

$$\tilde{H}_\omega(\x,q)=\sum_\lambda \calF^\#_{\lambda,\omega}(q)m_\lambda(\x).$$

\end{corollary}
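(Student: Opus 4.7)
The plan is to combine the two preceding results and then extract coefficients via the Hall pairing. By Lemma~\ref{R=F} we have $R_{L_\lambda}^G(1)(g)=\calF^\#_{\lambda,\omega}(q)$ whenever $g$ lies in a conjugacy class of type $\omega$. By Corollary~\ref{R} the same quantity equals $\langle\tilde{H}_\omega(\x,q),h_\lambda(\x)\rangle$. Equating these gives
\[
\calF^\#_{\lambda,\omega}(q)=\langle\tilde{H}_\omega(\x,q),h_\lambda(\x)\rangle
\]
for every partition $\lambda$.

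To conclude, I would recall the well-known duality of bases $\{h_\lambda\}$ and $\{m_\lambda\}$ with respect to the Hall inner product, namely $\langle h_\lambda,m_\mu\rangle=\delta_{\lambda\mu}$ (Macdonald, I.4.5). Expanding $\tilde{H}_\omega(\x,q)$ in the monomial basis as $\tilde{H}_\omega(\x,q)=\sum_\lambda a_\lambda(q)\,m_\lambda(\x)$ and pairing with $h_\lambda$ immediately yields $a_\lambda(q)=\langle\tilde{H}_\omega(\x,q),h_\lambda(\x)\rangle=\calF^\#_{\lambda,\omega}(q)$, which is the desired identity.

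There is no real obstacle here: the statement is essentially a reformulation of Corollary~\ref{R} in terms of the monomial expansion of $\tilde{H}_\omega$, so the proof is just a two-line assembly. The only point worth checking is that both sides are homogeneous of the same degree $n=|\omega|$, which is automatic since $\tilde{H}_\omega$ is homogeneous of degree $n$ and $\calF^\#_{\lambda,\omega}(q)$ vanishes unless $|\lambda|=n$ (there being no flag of type $\lambda$ on $\F_q^n$ otherwise).
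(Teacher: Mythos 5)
Your proof is correct and follows exactly the same route as the paper: combine Lemma~\ref{R=F} with Corollary~\ref{R} and read off the monomial-basis expansion via the Hall-pairing duality $\langle h_\lambda,m_\mu\rangle=\delta_{\lambda\mu}$. The paper's proof is the same argument stated more tersely ("It follows from Lemma~\ref{R=F} and Corollary~\ref{R}"), so your version simply makes the last step explicit.
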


\begin{proof} It follows from Lemma \ref{R=F} and Corollary \ref{R}.\end{proof}

We now recall how to construct from a partition $\lambda=(\lambda_1,\dots,\lambda_r)$ of $n$ a certain  family of irreducible characters of $G$. Choose $r$ distinct linear character $\alpha_1,\dots,\alpha_r$ of $\F_q^{\times}$. This defines for each $i$ a linear characters $\tilde{\alpha}_i:\GL_{\lambda_i}(\F_q)\rightarrow\C^{\times}$, $g\mapsto \alpha_i\left({\rm det}(g)\right)$, and hence a linear character $\tilde{\alphahat}: L_\lambda\rightarrow\C^{\times}$, $(g_i)\mapsto \tilde{\alpha}_r(g_r)\cdots\tilde{\alpha}_1(g_1)$. This linear character has the following property: for an element $g\in N_G(L_\lambda)$, we have $\tilde{\alpha}(g^{-1}lg)=\tilde{\alpha}(l)$ for all $l\in L_\lambda$ if and only if $g\in L_\lambda$. A linear character of $L_\lambda$ which satifies this property is called a \emph{regular} character of $L_\lambda$. 

It is a well-known fact that $R_{L_\lambda}^G(\tilde{\alphahat})$ is an irreducible character of $G$. Note that the irreducible characters of $G$ are not all obtained in this way (see \cite{LSr} for the complete description of the irreducible characters of $G$ in terms of Deligne-Luzstig induction).

We now recall the definition of generic tuples of irreducible characters (cf. \cite[Definition 4.2.2]{hausel-letellier-villegas}). Since in this paper we are only considering irreducible characters of the form $R_{L_\lambda}^G(\tilde{\alphahat})$, the definition given in \cite[Definition 4.2.2]{hausel-letellier-villegas} simplifies.

\begin{definition}Consider irreducible characters  $R_{L_{\lambda^1}}^G(\tilde{\alphahat}_1),\dots,R_{L_{\lambda^k}}^G(\tilde{\alphahat}_k)$ of $G$ as above for a multi-partition $\lambdahat=(\lambda^1,\dots,\lambda^k)\in(\calP_n)^k$. Let $T$ be the subgroup of $G$ of diagonal matrices. Note that $T\subset L_\lambda$ for all partition $\lambda$, and so $T$ contains the center $Z_\lambda$ of any $L_\lambda$. Consider the linear character $\alphahat=\left(\tilde{\alphahat}_1|_T\right)\cdots\left(\tilde{\alphahat}_k|_T\right)$ of $T$. Then we say that the tuple $\left(R_{L_{\lambda^1}}^G(\tilde{\alphahat}_1),\dots,R_{L_{\lambda^k}}^G(\tilde{\alphahat}_k)\right)$ is \emph{generic} if the restriction $\alphahat|_{Z_\lambda}$ of $\alphahat$ to any subtori $Z_\lambda$, with $\lambda\in\calP_n-\{(n)\}$, is non-trivial and if $\alphahat|_{Z_{(n)}}$ is trivial (the center $Z_{(n)}\simeq \F_q^{\times}$ consists of scalar matrices $a.I_n$).\end{definition}

We can show as for conjugacy classes \cite[Lemma 2.1.2]{hausel-letellier-villegas} that if the characteristic $p$ of $\F_q$ and $q$ are sufficiently large, generic tuples of irreducible characters of a given type $\lambdahat$ always exist.

Put $\mathfrak{g}:=\gl_n(\F_q)$. For $X\in\mathfrak{g}$, put $$\Lambda^1(X):=\#\{Y\in\mathfrak{g}\,|\, [X,Y]=0\}.$$

The restriction $\Lambda^1:G\rightarrow\C$ of $\Lambda^1$ to $G\subset\mathfrak{g}$ is the character of the representation $G\rightarrow\GL\left(\C[\mathfrak{g}]\right)$ induced by the conjugation action of $G$ on $\mathfrak{g}$. Fix a non-negative integer $g$ and  put $\Lambda:=(\Lambda^1)^{\otimes g}$.

For a multi-partition $\muhat=(\mu^1,\dots,\mu^k)\in(\calP_n)^k$ and a generic tuple $\left(R_{L_{\mu^1}}^G(\tilde{\alphahat}_1),\dots,R_{L_{\mu^k}}^G(\tilde{\alphahat}_k)\right)$ of irreducible characters we put $$R_\muhat:=R_{L_{\mu^1}}^G(\tilde{\alphahat}_1)\otimes\cdots\otimes R_{L_{\mu^k}}^G(\tilde{\alphahat}_k).$$

For two class functions $f,g\in\C(G)$, we define $$\langle f,g\rangle:=|G|^{-1}\sum_{h\in G}f(h)\overline{g(h)}.$$

We have the following theorem \cite[Theorem 1.4.1]{hausel-letellier-villegas}.

\begin{theorem} We have $$\left\langle \Lambda\otimes R_\muhat,1\right\rangle=\H_\muhat\left(0,\sqrt{q}\right)$$where $\H_\muhat(z,w)$ is the function defined in \S \ref{Cauchy}.

\label{multi}\end{theorem}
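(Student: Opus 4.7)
The plan is to reduce the multiplicity $\langle \Lambda\otimes R_\muhat,1\rangle$ to a weighted sum over conjugacy classes of $G=\GL_n(\F_q)$, organise that sum by Jordan type $\omega\in\mathbf{T}_n$, and match the resulting combinatorial expression with $\langle\Log\,\Omega(0,\sqrt q),h_\muhat\rangle$ via Lemma \ref{Log-w} and the Hall pairing (\ref{extendedhall}).

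First, by definition of the scalar product and of $\Lambda$,
$$
\langle \Lambda\otimes R_\muhat,1\rangle
= |G|^{-1}\sum_{g\in G} q^{g\cdot\dim C_G(g)}\prod_{i=1}^k R_{L_{\mu^i}}^G(\tilde{\alphahat}_i)(g),
$$
and I would rewrite this as a sum over conjugacy classes, then further as a sum over types $\omega\in\mathbf{T}_n$, the inner sum running over the Frobenius-orbit choices of semisimple eigenvalues realising $\omega$. The factor $|C|/|G|\cdot q^{g\dim C_G(g)}$ at a class of type $\omega$ organises, up to the M\"obius combinatorial factors $C_\omega^o$ of Lemma \ref{Log-w} which absorb the Galois-orbit multiplicities of size $d$, into the appropriate extension to types of the hook-function specialisation (\ref{alambda}).

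Next, I would compute the character value $R_{L_{\mu^i}}^G(\tilde{\alphahat}_i)(C)$ at a class of type $\omega$. Exactly as in the proof of Theorem \ref{Rtau}, one decomposes $\tilde{\alphahat}_i$ through Deligne--Lusztig induction, uses the calculation underlying (4.3.2)--(4.3.3) of \cite{hausel-letellier-villegas}, and arrives at a factorisation into the Hall pairing $\langle\tilde H_\omega(\x_i;q),h_{\mu^i}(\x_i)\rangle$ (the unipotent contribution, as in Corollary \ref{R}) multiplied by a character of the centre $\alphahat_i$ evaluated on the semisimple part of $C$. Summing over the semisimple eigenvalues compatible with $\omega$, the genericity hypothesis forces $\alphahat|_{Z_\lambda}$ to be non-trivial, and hence summed to zero by orthogonality, for every $\lambda\neq(n)$, so that only the trivial restriction to the central torus $Z_{(n)}\simeq\F_q^\times$ survives. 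Assembling everything, (\ref{extendedhall}) and Lemma \ref{Log-w} identify the total sum with $(q-1)\,\langle\Log\,\Omega(0,\sqrt q),h_\muhat\rangle$, which by the definition (\ref{H}) of $\H_\muhat$ (whose prefactor $(z^2-1)(1-w^2)$ specialises to $q-1$) is exactly $\H_\muhat(0,\sqrt q)$.

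The hard step is the character-value calculation: one must establish the clean Hall-pairing expression for an arbitrary semisimple-split character $R_{L_{\mu^i}}^G(\tilde{\alphahat}_i)$ at \emph{any} conjugacy class, and then execute the delicate summation over semisimple parts in which the orthogonality of $\alphahat|_{Z_\lambda}$ kills every non-central contribution. Once this character formula is in hand, the remainder is a formal Log/Exp manipulation driven by the Cauchy identity (\ref{exp}).
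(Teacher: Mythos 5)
The paper does not actually prove Theorem~\ref{multi} here: it is stated as a direct citation to [hausel-letellier-villegas, Theorem~1.4.1], and all the present paper does is quote it. So there is no ``paper's own proof'' against which to compare your write-up; the real content lives in the cited reference. That said, your sketch captures the high-level architecture of the argument given there: expand $\langle\Lambda\otimes R_\muhat,1\rangle$ as a sum over conjugacy classes organised by type, use Deligne--Lusztig / Green-function formulas to evaluate $R_{L_{\mu^i}}^G(\tilde\alphahat_i)$ on a class of type $\omega$, invoke genericity to collapse the sum over semisimple data, and match what survives with $(q-1)\langle\Log\Omega(0,\sqrt q),h_\muhat\rangle=\H_\muhat(0,\sqrt q)$ via Lemma~\ref{Log-w} and (\ref{H}).

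Two places where the sketch is imprecise enough to be misleading, though, and which you should not understate. First, the character value $R_{L_{\mu^i}}^G(\tilde\alphahat_i)(C)$ for $C$ of type $\omega$ is not simply ``a Hall pairing times a character of the centre evaluated on the semisimple part of $C$.'' The semisimple part of $C$ generally does not lie in a split torus, let alone the centre, and the correct formula (which is what (4.3.2)--(4.3.3) of \cite{hausel-letellier-villegas} provide) is a sum over compatible assignments of the Frobenius-orbits of eigenvalues to the blocks of $L_{\mu^i}$, each term carrying a Green-polynomial factor and a $\tilde\alphahat_i$-value on the eigenvalues. Second, the genericity cancellation is not a one-line orthogonality on $Z_\lambda$: one has to track, \emph{after} multiplying over $i=1,\dots,k$ and then summing over all Galois-orbit tuples of eigenvalues, exactly which geometric sums vanish and which contribute a power of $q-1$. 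Your phrase ``only the trivial restriction to the central torus survives'' is the right slogan, but what actually survives are classes whose semisimple eigenvalue data match up so that every subtorus contribution except the scalar one cancels, and the bookkeeping of how this interacts with the $C_\omhat^o$ coefficients in Lemma~\ref{Log-w} is where most of the work sits. You correctly flag this as ``the hard step,'' but flagging is not proving; as it stands the sketch reduces the theorem to exactly the calculation the cited reference carries out, rather than giving an independent proof.
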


\begin{corollary} The multiplicity $\left\langle \Lambda\otimes
    R_\muhat,1\right\rangle$ depends only on $\muhat$ and not on the
  choice of linear characters
  $(\tilde{\alphahat}_1,\dots,\tilde{\alphahat}_k)$.
 \end{corollary}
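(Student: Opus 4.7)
The plan is a one-line appeal to Theorem \ref{multi}, but it is worth spelling out why this theorem resolves the corollary cleanly. First I would simply invoke Theorem \ref{multi}, which identifies
$$\langle \Lambda \otimes R_\muhat, 1\rangle = \H_\muhat(0,\sqrt{q})$$
for any generic tuple $(R_{L_{\mu^1}}^G(\tilde{\alphahat}_1),\dots,R_{L_{\mu^k}}^G(\tilde{\alphahat}_k))$ of the prescribed type $\muhat$. Then I would observe that the right-hand side $\H_\muhat(0,\sqrt{q})$ was \emph{defined} in \S\ref{Cauchy} via the Cauchy kernel $\Omega(z,w)$, the $\Log$ operation, and the Hall pairing against $h_\muhat$; only the multi-partition $\muhat$, the genus $g$, and the variable $q$ enter. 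No choice of the linear characters $\tilde{\alphahat}_i$ appears anywhere in the construction of $\H_\muhat$. Hence the left-hand side, a priori depending on those characters, is forced to depend only on $\muhat$.

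It is worth emphasizing what is being gained here, since the statement is not transparent from the definition. Different admissible choices of the $\alpha_i\in\widehat{\F_q^\times}$, satisfying the genericity conditions that $\alphahat|_{Z_\lambda}$ is non-trivial for $\lambda\in\calP_n\setminus\{(n)\}$ and trivial on $Z_{(n)}$, produce genuinely distinct irreducible characters $R_{L_{\mu^i}}^G(\tilde{\alphahat}_i)$ of $G$. A direct attack using the scalar product formula (\ref{scalprod}) would invoke Green's character table and offer no manifest reason for the independence. Theorem~\ref{multi} bypasses this by packaging the multiplicities into the Cauchy-type generating series via (\ref{H}) and (\ref{exp}), where the combinatorial identity is visible.

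There is no obstacle to overcome beyond citing Theorem \ref{multi}; the corollary is a formal consequence of the fact that the formula it produces is intrinsic to the combinatorial datum $\muhat$.
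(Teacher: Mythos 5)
Your proof is correct and takes exactly the approach the paper intends: the corollary is stated immediately after Theorem \ref{multi} with no separate proof, because it follows at once from noting that the right-hand side $\H_\muhat(0,\sqrt{q})$ is defined purely in terms of $\muhat$, $g$, and $q$, with no reference to the chosen linear characters.
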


\subsubsection{Fourier transforms}\label{Fourier}

Let ${\rm Fun}(\mathfrak{g})$ be the $\C$-vector space of all functions $\mathfrak{g}\rightarrow\C$ and by $\C(\mathfrak{g})$ the subspace  of functions $\mathfrak{g}\rightarrow\C$ which are contant on $G$-orbits of $\mathfrak{g}$ for the conjugation action of $G$ on $\mathfrak{g}$. 

Let $\Psi:\F_q\rightarrow\C^{\times}$ be a non-trivial additive character and consider the trace pairing ${\rm Tr}:\mathfrak{g}\times\mathfrak{g}\rightarrow\C^{\times}$. Define the Fourier transform $\calF^\mathfrak{g}:{\rm Fun}(\mathfrak{g})\rightarrow{\rm Fun}(\mathfrak{g})$ by the formula 

$$\calF^\mathfrak{g}(f)(x)=\sum_{y\in\mathfrak{g}}\Psi\left({\rm Tr}\,(xy)\right)f(y)$$for all $f\in{\rm Fun}(\mathfrak{g})$ and $x\in\mathfrak{g}$.

The Fourier transform satisfies the following easy property.
\begin{proposition}For any $f\in{\rm Fun}(\mathfrak{g})$ we have:
 
$$|\mathfrak{g}|\cdot f(0)=\sum_{x\in\mathfrak{g}}\calF^\mathfrak{g}(f)(x).$$
\label{fourprop1}\end{proposition}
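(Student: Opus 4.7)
The plan is to write out the definition of the Fourier transform, swap the order of summation, and reduce the identity to a standard orthogonality for additive characters on the vector space $\mathfrak{g}=\gl_n(\F_q)$ equipped with the trace form.

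First I would substitute the definition of $\calF^{\mathfrak{g}}(f)$ and exchange the two finite sums:
\[
\sum_{x\in\mathfrak{g}}\calF^{\mathfrak{g}}(f)(x)
=\sum_{x\in\mathfrak{g}}\sum_{y\in\mathfrak{g}}\Psi\bigl({\rm Tr}(xy)\bigr)f(y)
=\sum_{y\in\mathfrak{g}}f(y)\Bigl(\sum_{x\in\mathfrak{g}}\Psi\bigl({\rm Tr}(xy)\bigr)\Bigr).
\]
So the proof reduces to evaluating the inner character sum $S(y):=\sum_{x\in\mathfrak{g}}\Psi({\rm Tr}(xy))$ for each fixed $y\in\mathfrak{g}$.

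Next, I would analyze $S(y)$ according to whether $y=0$ or not. If $y=0$, then ${\rm Tr}(xy)=0$ for every $x$, so $S(0)=\sum_{x\in\mathfrak{g}}\Psi(0)=|\mathfrak{g}|$. If $y\neq 0$, the key point is that the trace pairing $\mathfrak{g}\times\mathfrak{g}\to\F_q$, $(x,y)\mapsto{\rm Tr}(xy)$, is nondegenerate on $\gl_n(\F_q)$, so the $\F_q$-linear map $\varphi_y:\mathfrak{g}\to\F_q$, $x\mapsto{\rm Tr}(xy)$, is nonzero and therefore surjective with kernel of size $|\mathfrak{g}|/q$. Hence
\[
S(y)=\sum_{x\in\mathfrak{g}}\Psi(\varphi_y(x))=\frac{|\mathfrak{g}|}{q}\sum_{t\in\F_q}\Psi(t)=0,
\]
using nontriviality of $\Psi$ and standard additive character orthogonality on $\F_q$.

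Combining the two cases yields $\sum_{x\in\mathfrak{g}}\calF^{\mathfrak{g}}(f)(x)=f(0)\cdot S(0)=|\mathfrak{g}|\cdot f(0)$, which is the claim. There is no genuine obstacle here; the only point that deserves any verification is the nondegeneracy of the trace form on $\gl_n(\F_q)$, which is immediate since for any nonzero $y$ one can take $x=E_{ji}$ with $y_{ij}\neq 0$ to get ${\rm Tr}(xy)=y_{ij}\neq 0$. The argument works verbatim on any finite-dimensional $\F_q$-algebra with nondegenerate trace form, and in particular for $\mathfrak{g}=\gl_n(\F_q)$.
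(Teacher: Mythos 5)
Your proof is correct and complete: the exchange of sums, the orthogonality of additive characters, and the nondegeneracy of the trace form on $\gl_n(\F_q)$ are exactly the ingredients needed. The paper states this as an ``easy property'' and omits the proof entirely, so your argument is precisely the standard one being taken for granted.
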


Let $*$ be the convolution product on ${\rm Fun}(\mathfrak{g})$ defined by $$(f*g)(a)=\sum_{x+y=a}f(x)g(y)$$for any two functions $f,g\in{\rm Fun}(\mathfrak{g})$.

Recall that 

\begin{equation}\calF^\mathfrak{g}(f*g)=\calF^\mathfrak{g}(f)\cdot\calF^\mathfrak{g}(g).\label{conv}\end{equation}

For a partition $\lambda$ of $n$, let $\mathfrak{p}_\lambda$, $\mathfrak{l}_\lambda$, $\mathfrak{u}_\lambda$ be the Lie sub-algebras of $\mathfrak{g}$ corresponding respectively to the subgroups $P_\lambda$, $L_\lambda$, $U_\lambda$ defined in \S \ref{finite-groups}, namely $\mathfrak{l}_\lambda=\bigoplus_i\gl_{\lambda_i}(\F_q)$, $\mathfrak{p}_\lambda$ is the parabolic sub-algebra of $\mathfrak{g}$ having $\mathfrak{l}_\lambda$ as a Levi sub-algebra and containing the upper triangular matrices. We have $\mathfrak{p}_\lambda=\mathfrak{l}_\lambda\oplus\mathfrak{u}_\lambda$.

Define the two functions $R_{\mathfrak{l}_\lambda}^{\mathfrak{g}}(1), Q_{\mathfrak{l}_\lambda}^\mathfrak{g} \in\C(\mathfrak{g})$ by

\begin{align*}&R_{\mathfrak{l}_\lambda}^{\mathfrak{g}}(1)(x)=|P_\lambda|^{-1}\#\{g\in G\,|\, g^{-1}xg\in \mathfrak{p}_\lambda\},\\&Q_{\mathfrak{l}_\lambda}^{\mathfrak{g}}(x)=|P_\lambda|^{-1}\#\{g\in G\,|\, g^{-1}xg\in \mathfrak{u}_\lambda\}.\end{align*}

We define the type of a $G$-orbit of $\mathfrak{g}$ similarly as in the group setting (see above Corollary \ref{R}). The types of the $G$-orbits of $\mathfrak{g}$ are then also parameterized by $\mathbf{T}_n$.

\begin{remark}From Lemma \ref{R=F}, we see that $R_{L_\lambda}^G(1)(x)=|P_\lambda|^{-1}\#\{g\in G\,|\, g^{-1}xg\in P_\lambda\}$, hence $R_{\mathfrak{l}_\lambda}^{\mathfrak{g}}(1)$ is the Lie algebra analogue of $R_{L_\lambda}^G(1)$ and the two functions take the same values on elements of same type.\label{Rl=RL}\end{remark}

\begin{proposition} We have $$\calF^\mathfrak{g}\left(Q_{\mathfrak{l}_\lambda}^\mathfrak{g}\right)=q^{\frac{1}{2}(n^2-\sum_i\lambda_i^2)}R_{\mathfrak{l}_\lambda}^\mathfrak{g}(1).$$
\label{fourprop2}\end{proposition}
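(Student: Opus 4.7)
The strategy is to Fourier-transform the characteristic function of the nilpotent radical directly and then average over $G$. The two functions $Q_{\mathfrak{l}_\lambda}^\mathfrak{g}$ and $R_{\mathfrak{l}_\lambda}^\mathfrak{g}(1)$ are manifestly built from $\mathbf{1}_{\mathfrak{u}_\lambda}$ and $\mathbf{1}_{\mathfrak{p}_\lambda}$ by the same averaging procedure over $G/P_\lambda$-translates, and the trace form on $\mathfrak{g}$ satisfies $\mathfrak{u}_\lambda^{\perp} = \mathfrak{p}_\lambda$; this is the geometric input that makes the whole thing go.

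First, I would rewrite
$$Q_{\mathfrak{l}_\lambda}^\mathfrak{g}(x)=\frac{1}{|P_\lambda|}\sum_{g\in G}\mathbf{1}_{\mathfrak{u}_\lambda}(g^{-1}xg), \qquad R_{\mathfrak{l}_\lambda}^\mathfrak{g}(1)(x)=\frac{1}{|P_\lambda|}\sum_{g\in G}\mathbf{1}_{\mathfrak{p}_\lambda}(g^{-1}xg).$$
Because $\Tr$ is $G$-invariant, the Fourier transform $\calF^\mathfrak{g}$ commutes with the conjugation action of $G$ on functions. Hence, applying $\calF^\mathfrak{g}$ to $Q_{\mathfrak{l}_\lambda}^\mathfrak{g}$ and pulling the sum through reduces the problem to computing $\calF^\mathfrak{g}(\mathbf{1}_{\mathfrak{u}_\lambda})$.

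Next I would compute this Fourier transform by the standard subspace trick: since $\mathfrak{u}_\lambda\subset\mathfrak{g}$ is an $\F_q$-subspace,
$$\calF^\mathfrak{g}(\mathbf{1}_{\mathfrak{u}_\lambda})(x)=\sum_{y\in\mathfrak{u}_\lambda}\Psi(\Tr(xy))=\begin{cases}|\mathfrak{u}_\lambda| & \text{if } x\in\mathfrak{u}_\lambda^{\perp},\\ 0 & \text{otherwise},\end{cases}$$
by orthogonality of the additive character $\Psi$. The key computation is the identification $\mathfrak{u}_\lambda^{\perp}=\mathfrak{p}_\lambda$ for the trace form: working in block coordinates, $\mathfrak{u}_\lambda$ consists of strictly upper block-triangular matrices, and $\Tr(xy)=0$ for all such $y$ precisely when $x$ has no entries in the strictly lower block-triangular part, i.e.\ $x\in\mathfrak{p}_\lambda$. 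Thus $\calF^\mathfrak{g}(\mathbf{1}_{\mathfrak{u}_\lambda})=|\mathfrak{u}_\lambda|\,\mathbf{1}_{\mathfrak{p}_\lambda}$.

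Combining the two ingredients gives
$$\calF^\mathfrak{g}(Q_{\mathfrak{l}_\lambda}^\mathfrak{g})(x)=\frac{|\mathfrak{u}_\lambda|}{|P_\lambda|}\sum_{g\in G}\mathbf{1}_{\mathfrak{p}_\lambda}(g^{-1}xg)=|\mathfrak{u}_\lambda|\cdot R_{\mathfrak{l}_\lambda}^\mathfrak{g}(1)(x).$$
Finally, $\dim\mathfrak{u}_\lambda=\sum_{i<j}\lambda_i\lambda_j=\tfrac{1}{2}\bigl(n^2-\sum_i\lambda_i^2\bigr)$, so $|\mathfrak{u}_\lambda|=q^{\frac{1}{2}(n^2-\sum_i\lambda_i^2)}$, yielding the claimed identity. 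The only nontrivial step is the duality $\mathfrak{u}_\lambda^{\perp}=\mathfrak{p}_\lambda$, which I would verify by direct computation in block form; everything else is formal manipulation with $\calF^\mathfrak{g}$ and its $G$-equivariance.
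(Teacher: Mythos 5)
Your proof is correct but takes a more direct, self-contained route than the paper. The paper realizes $Q_{\mathfrak{l}_\lambda}^\mathfrak{g}$ as $R_{\mathfrak{l}_\lambda}^\mathfrak{g}(1_0)$ (applied to the delta function at $0\in\mathfrak{l}_\lambda$) and then invokes Lehrer's intertwining identity
$\calF^\mathfrak{g}\circ R_{\mathfrak{l}_\lambda}^\mathfrak{g}=q^{\frac{1}{2}(n^2-\sum_i\lambda_i^2)}R_{\mathfrak{l}_\lambda}^\mathfrak{g}\circ\calF^{\mathfrak{l}_\lambda}$,
together with $\calF^{\mathfrak{l}_\lambda}(1_0)=1$. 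You instead prove the needed special case of Lehrer's result from scratch: you write both $Q_{\mathfrak{l}_\lambda}^\mathfrak{g}$ and $R_{\mathfrak{l}_\lambda}^\mathfrak{g}(1)$ as $G$-averages of the indicator functions of $\mathfrak{u}_\lambda$ and $\mathfrak{p}_\lambda$, use the $\Ad(G)$-equivariance of $\calF^\mathfrak{g}$ (which follows from the $G$-invariance of the trace form) to pull the transform inside the average, and then reduce everything to the elementary facts that $\calF^\mathfrak{g}(\mathbf{1}_V)=|V|\mathbf{1}_{V^\perp}$ for a subspace $V$ and that $\mathfrak{u}_\lambda^\perp=\mathfrak{p}_\lambda$ for the trace pairing. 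What the paper's approach buys is generality (Lehrer's identity applies to arbitrary class functions and is reused elsewhere); what yours buys is that a reader sees a complete argument without chasing the reference, and the essential geometric content ($\mathfrak{u}_\lambda^\perp=\mathfrak{p}_\lambda$ plus equivariance) is made explicit. Both are valid.
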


\begin{proof} Consider the $\C$-linear map $R_{\mathfrak{l}_\lambda}^\mathfrak{g}:\C(\mathfrak{l}_\lambda)\rightarrow\C(\mathfrak{g})$ defined by 

$$R_{\mathfrak{l}_\lambda}^\mathfrak{g}(f)(x)=|P_\lambda|^{-1}\sum_{\{g\in G\,|\, g^{-1}xg\in \mathfrak{p}_\lambda\}}f(\pi(g^{-1}xg))$$where $\pi:\mathfrak{p}_\lambda\rightarrow\mathfrak{l}_\lambda$ is the canonical projection. Then it is easy to see that $Q_{\mathfrak{l}_\lambda}^\mathfrak{g}=R_{\mathfrak{l}_\lambda}^\mathfrak{g}(1_0)$ where $1_0\in\C(\mathfrak{l}_\lambda)$ is the characteristic function of $0\in\mathfrak{l}_\lambda$, i.e., $1_0(x)=1$ if $x=0$ and $1_0(x)=0$ otherwise. The result follows from the easy fact that $\calF^{\mathfrak{l}_\lambda}(1_0)$ is the identity function $1$ on $\mathfrak{l}_\lambda$ and the fact (see Lehrer \cite{lehrer}) that $$\calF^\mathfrak{g}\circ R_{\mathfrak{l}_\lambda}^\mathfrak{g}=q^{\frac{1}{2}(n^2-\sum_i\lambda_i^2)}R_{\mathfrak{l}_\lambda}^\mathfrak{g}\circ\calF^{\mathfrak{l}_\lambda}.$$
\end{proof}

\begin{remark}For $x\in\mathfrak{g}$, denote by $1_x\in{\rm Fun}(\mathfrak{g})$ the characteristic function of $x$ that takes the value $1$ at $x$ and the value $0$ elsewhere. Note that $\calF^\mathfrak{g}(1_x)$ is the linear character $\mathfrak{g}\rightarrow\C$, $t\mapsto \Psi({\rm Tr}\,(xt))$ of the abelian group $(\mathfrak{g},+)$. Hence if $f:\mathfrak{g}\rightarrow\C$ is a function which takes integer values, then $\calF^\mathfrak{g}(f)$ is a character (not necessarily irreducible) of $(\mathfrak{g},+)$. Since the Green functions $Q_{\mathfrak{l}_\lambda}^\mathfrak{g}$ take integer values, by Proposition \ref{fourprop2} the function $q^{\frac{1}{2}(n^2-\sum_i\lambda_i^2)}R_{\mathfrak{l}_\lambda}^\mathfrak{g}(1)$ is a character of $(\mathfrak{g},+)$.

\label{charab}\end{remark}

\section{Absolutely indecomposable representations}

\subsection{Generalities on quiver representations}\label{genquiv}

Let $\Gamma$ be a finite quiver, $I$ be the set of its vertices and
let $\Omega$ be the set of its arrows. For $\gamma\in\Omega$, we
denote by $h(\gamma),t(\gamma)\in I$ the head and the tail of
$\gamma$. A \emph{dimension vector} of $\Gamma$ is a collection of
non-negative integers $\v=\{v_i\}_{i\in I}$ and a
\emph{representation} $\varphi$ of $\Gamma$ of dimension $\v$ over a
field $\K$ is a collection of $\K$-linear maps
$\varphihat=\{\varphi_\gamma:V_{t(\gamma)}\rightarrow
V_{h(\gamma)}\}_{\gamma\in\Omega}$ with ${\rm dim}\, V_i=v_i$. Let
${\rm Rep}_{\Gamma,\v}(\K)$ be the $\K$-vector space of all
representations of $\Gamma$ of dimension $\v$ over $\K$. If
$\varphihat\in {\rm Rep}_{\Gamma,\v}(\K)$, $\varphihat'\in {\rm
  Rep}_{\Gamma,\v'}(\K)$, then a morphism
$f:\varphihat\rightarrow\varphihat'$ is a collection of $\K$-linear
maps $f_i: V_i\rightarrow V_i'$, $i\in I$ such that for all
$\gamma\in\Omega$, we have $f_{h(\gamma)}\circ
\varphi_\gamma=\varphi'_\gamma\circ f_{t(\gamma)}$.

We define in the obvious way direct sums
$\varphihat\,\oplus\,\varphihat'\in {\rm Rep}_\K(\Gamma,\v+\v')$ of
representations.  A representation of $\Gamma$ is said to be
\emph{indecomposable} over $\K$ if it is not isomorphic to a direct
sum of two non-zero representations of $\Gamma$. If an indecomposable
representation of $\Gamma$ remains indecomposable over any finite
extension of $\K$, we say that it is \emph{absolutely indecomposable}.
Denote by ${\rm M}_{\Gamma,\v}(\K)$ be the set of isomorphism classes
of ${\rm Rep}_{\Gamma,\v}(\K)$ and by ${\rm A}_{\Gamma,\v}(\K)$ the
subset of absolutely indecomposable representations of ${\rm
  Rep}_{\Gamma,\v}(\K)$.

By a theorem of Kac there exists a polynomial
$A_{\Gamma,\v}(T)\in \Z[T]$ such that for any finite field with $q$ elements
$A_{\Gamma,\v}(q)=\#{\rm A}_{\Gamma,\v}(\F_q)$. We
call $A_{\Gamma,\v}$ the \emph{$A$-polynomial} of $(\Gamma,\v)$.

Let $\Phi(\Gamma)\subset \Z^I$ be the root system associated with the
quiver $\Gamma$ following Kac \cite{kacconj} and
let $\Phi(\Gamma)^+\subset \left(\N\right)^I$ be the subset of positive
roots.  Let ${\bf C}=(c_{ij})_{i,j}$ be the Cartan matrix of $\Gamma$, namely
$$
c_{ij}=\begin{cases} 2-2(\text{the number of edges joining $i$ to
    itself})\hspace{.2cm}\text{if }i=j\\ 
  - (\text{the number of edges joining $i$ to
    $j$})\hspace{1.2cm}\text{ otherwise}.
         \end{cases}
$$

Then we have the following well-known theorem (see Kac
\cite{kacconj}).

\begin{theorem} $A_{\Gamma,\v}(q)\neq 0$ if and only if
  $\v\in\Phi(\Gamma)^+$; $A_{\Gamma,\v}(q)=1$ if and only if $\v$ is a
  real root.  The polynomial $A_{\Gamma,\v}$, if non-zero, is monic of
  degree $2-{^t}\v{\bf C}\v$.
\label{kactheo} 
\end{theorem}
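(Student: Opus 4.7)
The proof follows Kac's original argument, combining Bernstein-Gelfand-Ponomarev (BGP) reflection functors with the structure theory of the root system $\Phi(\Gamma)$ of the symmetric Kac-Moody algebra attached to $\Gamma$.

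The first step is the reflection principle. For a loop-free vertex $i$, the BGP functor $\sigma_i^\pm$ restricts to an equivalence between the full subcategories of ${\rm Rep}_{\Gamma,\v}(\F_q)$ and ${\rm Rep}_{\sigma_i\Gamma,s_i(\v)}(\F_q)$ consisting of representations with no copy of the simple $S_i$ at $i$ as a summand, valid whenever $s_i(\v)$ remains in the positive cone. Because $\sigma_i^\pm$ commutes with extension of scalars, it preserves absolute indecomposability, so $A_{\Gamma,\v}(q)=A_{\sigma_i\Gamma,s_i(\v)}(q)$ whenever $\v\neq e_i$ and $s_i(\v)\ge 0$. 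Since $\sigma_i\Gamma$ has the same underlying graph, Cartan matrix, and root system as $\Gamma$, this reduces the computation to dimension vectors that are either simple roots or lie in the fundamental region $F$ (connected support and $({\bf C}\v)_j\le 0$ for every vertex $j$ in the support).

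For real roots, every positive real root lies in the Weyl orbit of a simple root $e_i$ at a loop-free vertex. The unique indecomposable of dimension $e_i$ is the one-dimensional simple at $i$, which is absolutely indecomposable, giving $A_{\Gamma,e_i}(q)=1$; the reflection principle then propagates this to $A_{\Gamma,\v}(q)=1$ for every positive real root. If $\v\notin\Phi(\Gamma)^+$, iterated reflections eventually produce a vector with a strictly negative coordinate, which carries no representation, and hence $A_{\Gamma,\v}(q)=0$.

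The main step is the imaginary case. By the reflection principle we may assume $\v\in F$, and one then constructs an open subvariety $U\subset{\rm Rep}_{\Gamma,\v}$ whose points parametrize absolutely indecomposable representations, using explicit generic constructions that exploit loops or cycles in $\Gamma$ together with an open-orbit argument. A dimension count on $U$, combined with the fact that the generic ${\rm GL}_\v$-stabilizer of an absolutely indecomposable representation is the scalar torus $\Gm$, yields both the non-vanishing $A_{\Gamma,\v}(q)\neq 0$ and the claimed monicity and degree $2-{^t}\v{\bf C}\v$ (the expected dimension of the moduli of absolutely indecomposables up to the scalar automorphisms). The main obstacle is precisely this last step: producing enough explicit families of absolutely indecomposable representations supported in the fundamental region to control the leading term of $A_{\Gamma,\v}$ and to verify monicity. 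This is the analytic heart of Kac's theorem, where the inductive Kac-Moody machinery and the combinatorics of the fundamental region both play essential roles.
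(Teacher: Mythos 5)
The paper does not prove this theorem; the sentence immediately preceding the statement reads ``Then we have the following well-known theorem (see Kac \cite{kacconj})'', so the result is simply cited, not re-derived. Your sketch correctly identifies the skeleton of Kac's original argument --- BGP reflection functors to reduce to simple roots or to the fundamental set, together with an analysis inside the fundamental set --- but, as you acknowledge yourself, the entire hard part (producing enough absolutely indecomposable representations for a dimension vector in the fundamental set and controlling the leading term of $A_{\Gamma,\v}$) is only gestured at, not carried out. What you have is an outline of \cite{kacconj}, not a proof, and since the paper itself only cites that reference, the fundamental-set step is exactly what you would need to reproduce or cite in order for the argument to stand.

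A further point, independent of the paper's absence of a proof: the degree you quote, $2-{^t}\v{\bf C}\v$, is a typo in the paper's statement. Compare with the introduction, which attributes to Kac the statement $\deg A_\muhat = d_\muhat/2$; by \eqref{Delta-defn} and \eqref{pairing-defn}, $d_\muhat/2=\Delta(\muhat)+1=1-\tfrac12\,{^t}\v{\bf C}\v$, i.e.\ $1$ minus the Tits form. Your own gloss of the degree as ``the expected dimension of the moduli of absolutely indecomposables up to the scalar automorphisms'' actually computes to $\sum_{a\in\Omega}v_{t(a)}v_{h(a)}-\sum_i v_i^2+1=1-\tfrac12\,{^t}\v{\bf C}\v$, which does not equal $2-{^t}\v{\bf C}\v$ except for real roots, so your parenthetical is internally inconsistent with the formula you are defending. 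A sanity check on the Jordan quiver with $\v=1$, where $A_{J,1}(q)=q$ has degree $1$ while ${^t}\v{\bf C}\v=0$, confirms the typo.
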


We have the following conjecture due to Kac \cite{kacconj}.

\begin{conjecture} 
  The polynomial $A_{\Gamma,\v}(T)$ has non-negative coefficients.
\label{kac-conj}
\end{conjecture}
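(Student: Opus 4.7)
The plan is to reduce Kac's non-negativity conjecture to a cohomological statement, since Poincaré polynomials of smooth varieties automatically have non-negative coefficients. For the comet-shaped quivers $\Gamma_\muhat$ that are the focus of this paper, the chain of implications is already clear: by the theorem $A_\muhat(q)=\H_\muhat(0,\sqrt{q})$ established earlier in this paper (Formula \ref{purity0}), together with Conjecture \ref{purconj} asserting $A_\muhat(q)=q^{-d_\muhat/2}PH_c(\M_\muhat;q)$, non-negativity follows immediately from the inequalities $h_c^{i,i;2i}(\M_\muhat)\geq 0$. Hence the strategy is to prove Conjecture \ref{purconj}, extracting the pure part of the conjectural formula (\ref{mainconj}) and matching it to the combinatorial identity (\ref{purity0}) already in hand.

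For a general quiver $\Gamma$ with dimension vector $\v$, I would first reduce to the indivisible case, where Crawley-Boevey and Van den Bergh have shown that $A_{\Gamma,\v}(q)$ equals, up to a power of $q$, the compactly supported Poincaré polynomial of a smooth Nakajima quiver variety $\calQ_{\Gamma,\v}$; the non-negativity is then automatic. The remaining divisible case requires a different approach: either a reflection-functor argument showing that Kac's reflection functors preserve the $A$-polynomial and can be used to reduce any $(\Gamma,\v)$ to a case where Nakajima's construction applies, or a direct construction of a smooth parametrising variety whose Poincaré polynomial realises $A_{\Gamma,\v}(q)$.

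The step I expect to be the main obstacle is the matching of $\H_\muhat(0,\sqrt{q})$ to the geometric pure part $PH_c(\M_\muhat;q)$, i.e.\ Conjecture \ref{purconj}. Although (\ref{purity0}) identifies $A_\muhat(q)$ combinatorially with the specialisation $\H_\muhat(0,\sqrt{q})$, and although the $E$-polynomial identity (\ref{mainresult}) is proved, extracting precisely the weight-zero pure part of the mixed Hodge polynomial from the counting-points side requires either the full conjecture (\ref{mainconj}) or a finer arithmetic-geometric argument that isolates the pure Hodge-theoretic contribution; neither is currently available. A related but subtler obstacle in the divisible case is that $\calQ_\muhat$ itself does not exist, so one cannot directly appeal to the quiver-variety framework; a substitute smooth model is needed, and it is not clear what plays this role beyond comet-shaped quivers.

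In summary, the forward-looking plan is: \textbf{(i)} prove $A_\muhat(q)=q^{-d_\muhat/2}PH_c(\M_\muhat;q)$ by refining the arithmetic harmonic-analysis computation of this paper to track weights, not only $E$-polynomials; \textbf{(ii)} deduce non-negativity for comet-shaped quivers from the positivity of Betti numbers of the smooth variety $\M_\muhat$; \textbf{(iii)} extend to general quivers either by reflection functors reducing to star-shaped (comet) type, or by invoking Crawley-Boevey–Van den Bergh in the indivisible range and constructing a suitable smooth replacement in the divisible range. The entire scheme hinges on step \textbf{(i)}, which is essentially the pure-part case of the main conjecture (\ref{mainconj}) of the paper.
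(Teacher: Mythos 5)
The statement you are addressing is recorded in the paper as a \emph{Conjecture} and is not proved there. What the paper offers is: (a) the remark that Crawley-Boevey and Van den Bergh settled it for indivisible $\v$ via a cohomological interpretation on Nakajima quiver varieties; (b) Mozgovoy's theorem settling it for quivers with at least one loop at every vertex, via Kontsevich--Soibelman and Efimov; and (c) the observation that, for comet-shaped quivers, Conjecture \ref{purconj} would imply it. Your proposal is a plan, not a proof, and you say so yourself: the whole scheme hinges on step \textbf{(i)}, which is exactly Conjecture \ref{purconj}, the pure-part case of the main Conjecture (\ref{mainconj}), and is open. Theorem \ref{purity} (Formula (\ref{purity0})) as proved in the paper is a purely combinatorial identity $A_\muhat(q)=\H_\muhat(0,\sqrt q)$; it carries no sign information on its own because $\H_\muhat$ is defined as a Hall-pairing coefficient of a $\Log$ and has no \emph{a priori} positivity, so the geometric ingredient you are missing is genuinely missing.

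There is also a structural error in step \textbf{(iii)}. Reflection functors act on the dimension vector of a \emph{fixed} quiver; they do not change the underlying graph, and in Kac's framework they are only defined at loop-free vertices. You therefore cannot use them to reduce an arbitrary $(\Gamma,\v)$ to a comet-shaped (or star-shaped) case: a cyclic quiver $\tilde A_{n-1}$, or any quiver whose underlying graph contains a cycle not concentrated at one vertex, is never Weyl-equivalent to a comet. What reflections \emph{can} do --- and what the paper actually uses in the proof of Theorem \ref{connectedness} --- is move $\v$ into the fundamental set of imaginary roots of the \emph{same} $\Gamma$. If your goal is Kac positivity for a general quiver, the relevant tool to invoke is Mozgovoy's motivic Donaldson--Thomas argument cited in the paper (covering all quivers with a loop at every vertex), not a reduction to comet shape; and even that does not cover all quivers, nor all comet-shaped ones (the leg vertices have no loops). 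As written, step \textbf{(iii)} would fail at the first non-star-shaped example.
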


We will say that a dimension vector $\v$ is \emph{indivisible} if
${\rm gcd}\,\{v_i\}_{i\in I}=1$.  Conjecture \ref{kac-conj} was proved
by Crawley-Boevey and van den Bergh \cite{crawley-boevey-etal} when
the dimension vector $\v$ is indivisible. This was achieved by giving
a cohomological interpretation of $A_{\Gamma,\v}(q)$. A more recent work
by Mozgovoy \cite{mozgovoy2} proves Conjecture \ref{kac-conj} for any dimension vector for quivers
with at least one loop at each vertex. His proof is accomplished via work
of Kontsevich-Soibelman \cite{kontsevich-soibelman} and Efimov \cite{efimov}
on motivic Donaldson-Thomas invariants associated to quivers. 

By Kac\cite{kacconj}, there exists a polynomial
$M_{\Gamma,\v}(q)\in\Q[T]$ such that $M_{\Gamma,\v}(q):=\#{\rm
  M}_{\Gamma,\v}(\F_q)$ for any finite field $\F_q$.  The following
formula is a reformation of Hua's formula \cite{hua}.
\begin{theorem}
We have 
$$
\Log\left(\sum_{\v\in(\N)^I}M_{\Gamma,\v}(q)X^{\v}\right)
=\sum_{\v\in(\N)^I-\{0\}}A_{\Gamma,\v}(q)X^{\v},
$$
where $X^{\v}$ is the monomial $\prod_{i\in I}X_i^{v_i}$ for some
independent commuting variables $\{X_i\}_{i\in I}$.
\label{hua}.
\end{theorem}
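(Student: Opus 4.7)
The strategy is to combine the Krull--Schmidt theorem with a Galois-descent argument. Let $I_{\Gamma,\v}(q)$ denote the number of isomorphism classes of (not necessarily absolutely) indecomposable representations of $\Gamma$ of dimension $\v$ over $\F_q$. Since every representation decomposes essentially uniquely into indecomposables, one has the product formula
$$
\sum_{\v}M_{\Gamma,\v}(q)X^\v \;=\; \prod_{\v\neq 0}(1-X^\v)^{-I_{\Gamma,\v}(q)},
$$
from which, by taking the ordinary logarithm,
$$
\log\Bigl(\sum_{\v}M_{\Gamma,\v}(q)X^\v\Bigr) \;=\; \sum_{\v\neq 0}\sum_{n\geq 1}\frac{I_{\Gamma,\v}(q)}{n}X^{n\v}.
$$

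Next I would invoke the Frobenius-orbit bookkeeping going back to Kac \cite{kacconj}. Let $B_{\v}(q,d)$ be the number of isomorphism classes of absolutely indecomposable representations of dimension $\v$ over $\bar{\F}_q$ whose field of definition is exactly $\F_{q^d}$. Restriction of scalars sends such a representation to an indecomposable representation over $\F_q$ of dimension $d\v$, and the fibres of this map are precisely the Frobenius orbits, each of size $d$. Combined with the observation that an absolutely indecomposable class over $\bar{\F}_q$ is defined over $\F_{q^n}$ if and only if its field of definition $\F_{q^d}$ satisfies $d\mid n$, one obtains the two identities
$$
I_{\Gamma,\w}(q)=\sum_{d\mid\gcd(\w)}\frac{1}{d}\,B_{\w/d}(q,d),
\qquad
A_{\Gamma,\v}(q^n)=\sum_{d\mid n}B_{\v}(q,d).
$$

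Substituting the first identity into the logarithm above and regrouping the resulting double sum by the product $m=nd$, the coefficient of $X^\u$ collapses, via the second identity, to $\sum_{m\mid\gcd(\u)}A_{\Gamma,\u/m}(q^m)/m$. Because the $\lambda$-ring structure on the ambient power series ring is specified by $\psi_n(q)=q^n$ and $\psi_n(X^\w)=X^{n\w}$, this is exactly the coefficient of $X^\u$ in $\Psi\bigl(\sum_{\w\neq 0}A_{\Gamma,\w}(q)X^\w\bigr)$. Hence
$$
\log\Bigl(\sum_{\v}M_{\Gamma,\v}(q)X^\v\Bigr)\;=\;\Psi\Bigl(\sum_{\w\neq 0}A_{\Gamma,\w}(q)X^\w\Bigr),
$$
and applying $\Psi^{-1}$, i.e., using $\Log=\Psi^{-1}\circ\log$, gives the theorem.

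The main obstacle is the descent step: one has to track fields of definition for absolutely indecomposable representations over $\bar{\F}_q$ carefully and check that restriction of scalars is a $d$-to-$1$ surjection onto indecomposable representations over $\F_q$ exactly as predicted. Once this is established the remainder is purely formal bookkeeping with Adams operations and M\"obius inversion, and the same conclusion can equivalently be deduced by applying $\Log$ to Hua's explicit product formula \cite{hua} for $\sum_{\v}M_{\Gamma,\v}(q)X^\v$.
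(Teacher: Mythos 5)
Your argument is correct and is in substance the proof of Hua's identity that the paper relies on: the paper states Theorem \ref{hua} without proof as ``a reformation of Hua's formula,'' and for the analogous restricted statement (Proposition \ref{hua-inj}) it derives the Krull--Schmidt product formula and then cites Hua's Lemma 4.5 for precisely the Galois-descent and Adams-operation bookkeeping that you spell out explicitly, namely the relations $I_{\Gamma,\w}(q)=\sum_{d\mid\gcd(\w)}\tfrac{1}{d}B_{\w/d}(q,d)$ and $A_{\Gamma,\v}(q^n)=\sum_{d\mid n}B_\v(q,d)$ followed by the regrouping $m=nd$ and recognition of $\Psi$. The one step to pin down in your sketch is the uniqueness of the $\F_{q^d}$-form of an absolutely indecomposable class whose field of definition is $\F_{q^d}$; this follows from Lang's theorem since the automorphism group of an indecomposable representation is a connected algebraic group.
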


Since $A_{\Gamma,\v}(q)\in\Z[q]$, we see by Theorem \ref{hua} and
Lemma \ref{Exp}, that $M_{\Gamma,\v}(q)$ also has integer
coefficients.

\subsection{Comet-shaped quivers}\label{comet}

Fix strictly positive integers $g,k,s_1,\dots,s_k$ and consider the following (comet-shaped) quiver $\Gamma$  with $g$ loops on the central vertex and with set of vertices $I=\{0\}\cup\left\{[i,j]\,|\, i=1,\dots,k\,;\, j=1,\dots,s_i\right\}$.

\begin{center}
\unitlength 0.1in
\begin{picture}( 52.1000, 15.4500)(  4.0000,-17.0000)
%
\special{pn 8}%
\special{ar 1376 1010 70 70  0.0000000 6.2831853}%
%
\special{pn 8}%
\special{ar 1946 410 70 70  0.0000000 6.2831853}%
%
\special{pn 8}%
\special{ar 2946 410 70 70  0.0000000 6.2831853}%
%
\special{pn 8}%
\special{ar 5540 410 70 70  0.0000000 6.2831853}%
%
\special{pn 8}%
\special{ar 1946 810 70 70  0.0000000 6.2831853}%
%
\special{pn 8}%
\special{ar 2946 810 70 70  0.0000000 6.2831853}%
%
\special{pn 8}%
\special{ar 5540 810 70 70  0.0000000 6.2831853}%
%
\special{pn 8}%
\special{ar 1946 1610 70 70  0.0000000 6.2831853}%
%
\special{pn 8}%
\special{ar 2946 1610 70 70  0.0000000 6.2831853}%
%
\special{pn 8}%
\special{ar 5540 1610 70 70  0.0000000 6.2831853}%
%
\special{pn 8}%
\special{pa 1890 1560}%
\special{pa 1440 1050}%
\special{fp}%
\special{sh 1}%
\special{pa 1440 1050}%
\special{pa 1470 1114}%
\special{pa 1476 1090}%
\special{pa 1500 1088}%
\special{pa 1440 1050}%
\special{fp}%
%
\special{pn 8}%
\special{pa 2870 410}%
\special{pa 2020 410}%
\special{fp}%
\special{sh 1}%
\special{pa 2020 410}%
\special{pa 2088 430}%
\special{pa 2074 410}%
\special{pa 2088 390}%
\special{pa 2020 410}%
\special{fp}%
%
\special{pn 8}%
\special{pa 3720 410}%
\special{pa 3010 410}%
\special{fp}%
\special{sh 1}%
\special{pa 3010 410}%
\special{pa 3078 430}%
\special{pa 3064 410}%
\special{pa 3078 390}%
\special{pa 3010 410}%
\special{fp}%
\special{pa 3730 410}%
\special{pa 3010 410}%
\special{fp}%
\special{sh 1}%
\special{pa 3010 410}%
\special{pa 3078 430}%
\special{pa 3064 410}%
\special{pa 3078 390}%
\special{pa 3010 410}%
\special{fp}%
%
\special{pn 8}%
\special{pa 2870 810}%
\special{pa 2020 810}%
\special{fp}%
\special{sh 1}%
\special{pa 2020 810}%
\special{pa 2088 830}%
\special{pa 2074 810}%
\special{pa 2088 790}%
\special{pa 2020 810}%
\special{fp}%
%
\special{pn 8}%
\special{pa 2870 1610}%
\special{pa 2020 1610}%
\special{fp}%
\special{sh 1}%
\special{pa 2020 1610}%
\special{pa 2088 1630}%
\special{pa 2074 1610}%
\special{pa 2088 1590}%
\special{pa 2020 1610}%
\special{fp}%
%
\special{pn 8}%
\special{pa 3730 810}%
\special{pa 3020 810}%
\special{fp}%
\special{sh 1}%
\special{pa 3020 810}%
\special{pa 3088 830}%
\special{pa 3074 810}%
\special{pa 3088 790}%
\special{pa 3020 810}%
\special{fp}%
\special{pa 3740 810}%
\special{pa 3020 810}%
\special{fp}%
\special{sh 1}%
\special{pa 3020 810}%
\special{pa 3088 830}%
\special{pa 3074 810}%
\special{pa 3088 790}%
\special{pa 3020 810}%
\special{fp}%
%
\special{pn 8}%
\special{pa 3730 1610}%
\special{pa 3020 1610}%
\special{fp}%
\special{sh 1}%
\special{pa 3020 1610}%
\special{pa 3088 1630}%
\special{pa 3074 1610}%
\special{pa 3088 1590}%
\special{pa 3020 1610}%
\special{fp}%
\special{pa 3740 1610}%
\special{pa 3020 1610}%
\special{fp}%
\special{sh 1}%
\special{pa 3020 1610}%
\special{pa 3088 1630}%
\special{pa 3074 1610}%
\special{pa 3088 1590}%
\special{pa 3020 1610}%
\special{fp}%
%
\special{pn 8}%
\special{pa 5466 410}%
\special{pa 4746 410}%
\special{fp}%
\special{sh 1}%
\special{pa 4746 410}%
\special{pa 4812 430}%
\special{pa 4798 410}%
\special{pa 4812 390}%
\special{pa 4746 410}%
\special{fp}%
%
\special{pn 8}%
\special{pa 5466 810}%
\special{pa 4746 810}%
\special{fp}%
\special{sh 1}%
\special{pa 4746 810}%
\special{pa 4812 830}%
\special{pa 4798 810}%
\special{pa 4812 790}%
\special{pa 4746 810}%
\special{fp}%
%
\special{pn 8}%
\special{pa 5466 1610}%
\special{pa 4746 1610}%
\special{fp}%
\special{sh 1}%
\special{pa 4746 1610}%
\special{pa 4812 1630}%
\special{pa 4798 1610}%
\special{pa 4812 1590}%
\special{pa 4746 1610}%
\special{fp}%
%
\special{pn 8}%
\special{pa 1880 840}%
\special{pa 1450 990}%
\special{fp}%
\special{sh 1}%
\special{pa 1450 990}%
\special{pa 1520 988}%
\special{pa 1500 972}%
\special{pa 1506 950}%
\special{pa 1450 990}%
\special{fp}%
%
\special{pn 8}%
\special{pa 1900 460}%
\special{pa 1430 960}%
\special{fp}%
\special{sh 1}%
\special{pa 1430 960}%
\special{pa 1490 926}%
\special{pa 1468 922}%
\special{pa 1462 898}%
\special{pa 1430 960}%
\special{fp}%
%
\special{pn 8}%
\special{sh 1}%
\special{ar 1946 1010 10 10 0  6.28318530717959E+0000}%
\special{sh 1}%
\special{ar 1946 1210 10 10 0  6.28318530717959E+0000}%
\special{sh 1}%
\special{ar 1946 1410 10 10 0  6.28318530717959E+0000}%
\special{sh 1}%
\special{ar 1946 1410 10 10 0  6.28318530717959E+0000}%
%
\special{pn 8}%
\special{sh 1}%
\special{ar 4056 410 10 10 0  6.28318530717959E+0000}%
\special{sh 1}%
\special{ar 4266 410 10 10 0  6.28318530717959E+0000}%
\special{sh 1}%
\special{ar 4456 410 10 10 0  6.28318530717959E+0000}%
\special{sh 1}%
\special{ar 4456 410 10 10 0  6.28318530717959E+0000}%
%
\special{pn 8}%
\special{sh 1}%
\special{ar 4056 810 10 10 0  6.28318530717959E+0000}%
\special{sh 1}%
\special{ar 4266 810 10 10 0  6.28318530717959E+0000}%
\special{sh 1}%
\special{ar 4456 810 10 10 0  6.28318530717959E+0000}%
\special{sh 1}%
\special{ar 4456 810 10 10 0  6.28318530717959E+0000}%
%
\special{pn 8}%
\special{sh 1}%
\special{ar 4056 1610 10 10 0  6.28318530717959E+0000}%
\special{sh 1}%
\special{ar 4266 1610 10 10 0  6.28318530717959E+0000}%
\special{sh 1}%
\special{ar 4456 1610 10 10 0  6.28318530717959E+0000}%
\special{sh 1}%
\special{ar 4456 1610 10 10 0  6.28318530717959E+0000}%
\put(19.7000,-2.4500){\makebox(0,0){$[1,1]$}}%
\put(29.7000,-2.4000){\makebox(0,0){$[1,2]$}}%
\put(55.7000,-2.5000){\makebox(0,0){$[1,s_1]$}}%
\put(19.7000,-6.5500){\makebox(0,0){$[2,1]$}}%
\put(29.7000,-6.4500){\makebox(0,0){$[2,2]$}}%
\put(55.7000,-6.5500){\makebox(0,0){$[2,s_2]$}}%
\put(19.7000,-17.8500){\makebox(0,0){$[k,1]$}}%
\put(29.7000,-17.8500){\makebox(0,0){$[k,2]$}}%
\put(55.7000,-17.8500){\makebox(0,0){$[k,s_k]$}}%
\put(14.3000,-7.6000){\makebox(0,0){$0$}}%
\special{pn 8}%
\special{sh 1}%
\special{ar 2950 1010 10 10 0  6.28318530717959E+0000}%
\special{sh 1}%
\special{ar 2950 1210 10 10 0  6.28318530717959E+0000}%
\special{sh 1}%
\special{ar 2950 1410 10 10 0  6.28318530717959E+0000}%
\special{sh 1}%
\special{ar 2950 1410 10 10 0  6.28318530717959E+0000}%
\special{pn 8}%
\special{ar 1110 1000 290 220  0.4187469 5.9693013}%
\special{pn 8}%
\special{pa 1368 1102}%
\special{pa 1376 1090}%
\special{fp}%
\special{sh 1}%
\special{pa 1376 1090}%
\special{pa 1324 1138}%
\special{pa 1348 1136}%
\special{pa 1360 1158}%
\special{pa 1376 1090}%
\special{fp}%
\special{pn 8}%
\special{ar 910 1000 510 340  0.2464396 6.0978374}%
\special{pn 8}%
\special{pa 1400 1096}%
\special{pa 1406 1084}%
\special{fp}%
\special{sh 1}%
\special{pa 1406 1084}%
\special{pa 1362 1138}%
\special{pa 1384 1132}%
\special{pa 1398 1152}%
\special{pa 1406 1084}%
\special{fp}%
\special{pn 8}%
\special{sh 1}%
\special{ar 540 1000 10 10 0  6.28318530717959E+0000}%
\special{sh 1}%
\special{ar 620 1000 10 10 0  6.28318530717959E+0000}%
\special{sh 1}%
\special{ar 700 1000 10 10 0  6.28318530717959E+0000}%
\special{pn 8}%
\special{ar 1200 1000 170 100  0.7298997 5.6860086}%
\special{pn 8}%
\special{pa 1314 1076}%
\special{pa 1328 1068}%
\special{fp}%
\special{sh 1}%
\special{pa 1328 1068}%
\special{pa 1260 1084}%
\special{pa 1282 1094}%
\special{pa 1280 1118}%
\special{pa 1328 1068}%
\special{fp}%
\end{picture}%
\end{center}

\vspace{10pt}

Let $\Omega^0$ denote the set of arrows $\gamma\in\Omega$ such that $h(\gamma)\neq t(\gamma)$. 

\begin{lemma} Let $\K$ be any field. Let $\varphihat\in{\rm
    Rep}_{\Gamma,\v}(\K)$ and assume that $v_0> 0$. If $\varphihat$
  is indecomposable, then the linear maps $\varphi_\gamma$, with
  $\gamma\in\Omega^0$, are all injective. 
\label{injective}\end{lemma}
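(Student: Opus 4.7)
Proof plan. I would argue by contradiction: assuming some $\varphi_\gamma$ with $\gamma\in\Omega^0$ has non-trivial kernel, I will build a non-trivial decomposition $\varphihat=M\oplus M'$, contradicting indecomposability. First, I would choose such a $\gamma$ that is \emph{outermost} on its leg: on the leg $i$ containing $\gamma$, pick $\gamma_j:[i,j]\to[i,j-1]$ (with the convention $[i,0]:=0$) with $j$ maximal among non-injective arrows of leg $i$; then $\varphi_{\gamma_l}$ is injective for every $l>j$ on this leg. Pick any $0\neq x_j\in\ker\varphi_{\gamma_j}$, and inductively define $x_l\in V_{[i,l]}$ for $l>j$ by $x_l=\varphi_{\gamma_l}^{-1}(x_{l-1})$; this is unambiguous whenever a preimage exists, thanks to injectivity. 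Let $l^*\in\{j,\ldots,s_i\}$ be the largest index for which $x_{l^*}$ is defined, so that either $l^*=s_i$ or $x_{l^*}\notin\mathrm{im}\,\varphi_{\gamma_{l^*+1}}$.

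I would then define the sub-representation $M$ by $M_{[i,l]}=\K\cdot x_l$ for $j\leq l\leq l^*$ and $M_w=0$ at every other vertex; the relations $\varphi_{\gamma_l}(x_l)=x_{l-1}$ for $j<l\leq l^*$ and $\varphi_{\gamma_j}(x_j)=0$ make this a sub-representation. The main work is to construct a complementary sub-representation by choosing subspaces $T^{(l)}\subseteq V_{[i,l]}$ complementary to $\K\cdot x_l$ in a compatible way from outside in. Start by taking $T^{(l^*)}$ to be any complement of $\K\cdot x_{l^*}$ that contains $\mathrm{im}\,\varphi_{\gamma_{l^*+1}}$ (possible because $x_{l^*}\notin\mathrm{im}\,\varphi_{\gamma_{l^*+1}}$, and the condition is vacuous if $l^*=s_i$). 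Then, for $l=l^*,l^*-1,\ldots,j+1$, take $T^{(l-1)}$ to be any complement of $\K\cdot x_{l-1}$ in $V_{[i,l-1]}$ that contains $\varphi_{\gamma_l}(T^{(l)})$; such a complement exists because injectivity of $\varphi_{\gamma_l}$ gives
\[
\varphi_{\gamma_l}(T^{(l)})\cap \K\cdot x_{l-1}=\varphi_{\gamma_l}\bigl(T^{(l)}\cap \K\cdot x_l\bigr)=0.
\]
Setting $M'_{[i,l]}=T^{(l)}$ for $j\leq l\leq l^*$ and $M'_w=V_w$ otherwise, with the induced arrow maps, then yields a sub-representation with $\varphihat=M\oplus M'$.

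To conclude, I would observe that $M\neq 0$ since $x_j\neq 0$, and $M'\neq 0$ because the hypothesis $v_0>0$ forces $M'_0=V_0\neq 0$, contradicting indecomposability of $\varphihat$. The delicate point is the outside-in construction of the compatible complements $T^{(l)}$; this is where the outermost choice of $\gamma_j$ (ensuring injectivity of all $\varphi_{\gamma_l}$ for $l>j$) and the maximality of $l^*$ (ensuring $\mathrm{im}\,\varphi_{\gamma_{l^*+1}}$ avoids the line $\K\cdot x_{l^*}$ when $l^*<s_i$) both enter essentially.
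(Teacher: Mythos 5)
Your argument is correct, and it is genuinely different from the paper's. The paper defines $V'_{[i,j]}:=\ker\varphi_{ij}$ (for \emph{some} non-injective $\varphi_{ij}$, not necessarily outermost) and then propagates \emph{inside-out} by iterated inverse images, $V'_{[i,j+r]}:=\varphi_{i(j+r)}^{-1}(V'_{[i,j+r-1]})$, before attempting to build a complement $V''$ by applying the same inverse-image recipe to an arbitrary complement of $\ker\varphi_{ij}$. You instead (a) select the \emph{outermost} non-injective arrow so that all downstream arrows are injective, (b) follow a single line $\K x_j\subset\ker\varphi_{\gamma_j}$ as far as it lifts, obtaining a one-dimensional subrepresentation $M$ supported on a segment of the leg, and (c) build the complement \emph{outside-in} by extending each image $\varphi_{\gamma_l}(T^{(l)})$ to a complement of $\K x_{l-1}$. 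This outside-in, compatibility-first strategy is worth highlighting: the inside-out recipe of taking inverse images of two complementary subspaces does \emph{not} in general return complementary subspaces (even when the connecting map is injective — e.g.\ $\K\xrightarrow{t\mapsto(t,t)}\K^2$ with the complementary pair $\K(0,1)$, $\K(1,0)$ pulls back to $0$, $0$), so the paper's phrase ``any subspace $V''_{[i,j]}$'' does not literally work; one must choose the complements carefully, and your construction does exactly that. The price you pay is having to track the breakpoint $l^*$ where the lift $x_l$ stops existing, but that is exactly what makes the extension step at the outer end possible. In short: same contradiction, same segment of the leg, but a more careful and arguably more robust construction of the complementary summand.
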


\begin{proof} If $\gamma$ is the arrow $[i,j]\rightarrow [i,j-1]$, with $j=1,\dots,s_i$ and with the convention that $[i,0]=0$, we use the notation $\varphi_{ij}:V_{[i,j]}\rightarrow V_{[i,j-1]}$ rather than $\varphi_\gamma:V_{t(\gamma)}\rightarrow V_{h(\gamma)}$. Assume that $\varphi_{ij}$ is not injective. We define a graded vector subspace $\Vhat'=\bigoplus_{i\in I}V_i'$ of $\Vhat=\bigoplus_{i\in I}V_i$ as follows.

  If the vertex $i$ is not one of the vertices
  $[i,j],[i,j+1],\dots,[i,s_i]$, we put $V'_i:=\{0\}$. We put
  $V'_{[i,j]}:={\rm Ker}\, \varphi_{ij}$,
  $V'_{[i,j+1]}:=\varphi_{i(j+1)}^{-1}(V'_{[i,j]}),\dots,V'_{[i,s_i]}:=\varphi_{is_i}^{-1}(V'_{i(s_i-1)})$. Let
  $\v'$ be the dimension of the graded space $\Vhat'=\bigoplus_{i\in
    I}V_i'$ which we consider as a dimension vector of
  $\Gamma$. Define $\varphihat'\in{\rm Rep}_{\Gamma,\v'}(\K)$ as the
  restriction of $\varphihat$ to $\Vhat'$. It is a non-zero
  subrepresentation of $\varphihat$. It is now possible to define a
  graded vector subspace $\Vhat''=\bigoplus_{i\in I}V_i''$ of $\Vhat$
  such that the restriction $\varphihat''$ of $\varphihat$ to
  $\Vhat''$ satifies $\varphihat=\varphihat''\oplus\varphihat'$: we
  start by taking any subspace $V_{[i,j]}''$ such that
  $V_{[i,j]}=V_{[i,j]}'\oplus V_{[i,j]}''$, then define
  $V_{[i,j+r]}''$ from $V_{[i,j]}''$ as $V_{[i,j+r]}'$ was defined
  from $V_{[i,j]}$, and finally put $V''_i:=V_i$ if the vertex $i$ is
  not one of the vertices $[i,j],[i,j+1],\dots,[i,s_i]$. As $v_0> 0$,
  the subrepresentation $\varphihat''$ is non-zero, and so
  $\varphihat$ is not indecomposable.
\end{proof}

We denote by ${\rm Rep}_{\Gamma,\v}^*(\F_q)$ be the subspace of
representation $\varphihat\in{\rm Rep}_{\Gamma,\v}(\F_q)$ such that
$\varphi_\gamma$ is injective for all $\gamma\in \Omega^0$, and by
${\rm M}_{\Gamma,\v}^*(\F_q)$ the set of isomorphism classes of ${\rm
  Rep}^*_{\Gamma,\v}(\F_q)$. Put $M_{\Gamma,\v}^*(q)=\#\left\{{\rm
    M}_{\Gamma,\v}^*(\F_q)\right\}$.  Following \cite{crawley-par} we
say that a dimension vector $\v$ of $\Gamma$ is \emph{strict} if for
each $i=1,\dots,k$ we have $n_0\geq v_{[i,1]}\geq
v_{[i,2]}\geq\cdots\geq v_{[i,s_i]}$. Let us denote by $\mathcal{S}$
the set of strict dimension vector of $\Gamma$.

\begin{proposition}
$$
\Log\left(\sum_{\v\in\mathcal{S}}M_{\Gamma,\v}^*(q)X^{\v}\right)
=\sum_{\v\in\mathcal{S}-\{0\}}A_{\Gamma,\v}(q)X^{\v}.
$$
\label{hua-inj}
\end{proposition}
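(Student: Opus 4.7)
The plan is to deduce the identity from Hua's formula (Theorem \ref{hua}) by a Krull--Schmidt-type ratio argument, with Lemma \ref{injective} controlling which indecomposables survive the injectivity condition.

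Set $F(X):=\sum_\v M_{\Gamma,\v}(q)X^\v$ and $F^*(X):=\sum_{\v\in\mathcal{S}}M^*_{\Gamma,\v}(q)X^\v$. Since a direct sum of linear maps is injective if and only if each summand is, the class ${\rm Rep}^*$ is closed under direct sums and direct summands, so Krull--Schmidt over $\F_q$ gives
\[
F(X) \;=\; \prod_\alpha \frac{1}{1-X^{\v_\alpha}}, \qquad F^*(X) \;=\; \prod_{\alpha\in\mathcal{R}^*}\frac{1}{1-X^{\v_\alpha}},
\]
where $\alpha$ runs over isomorphism classes over $\F_q$ of indecomposable representations of $\Gamma$, and $\mathcal{R}^*$ denotes those lying in ${\rm Rep}^*$. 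By Lemma \ref{injective} every indecomposable with $v_{\alpha,0}>0$ belongs to $\mathcal{R}^*$; conversely, if $v_{\alpha,0}=0$ then injectivity of the arrow $[i,1]\to 0$ forces $v_{\alpha,[i,1]}=0$, and inductively $v_{\alpha,[i,j]}=0$ for all $j$, so $\alpha$ would be the zero representation. Hence $\mathcal{R}^*=\{\alpha:v_{\alpha,0}>0\}$.

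Specializing $X_0\mapsto 0$ in the product for $F$ kills exactly those factors with $v_{\alpha,0}>0$, giving $F(X)|_{X_0=0}=\prod_{\alpha:\,v_{\alpha,0}=0}(1-X^{\v_\alpha})^{-1}$ and therefore $F^*(X)=F(X)/F(X)|_{X_0=0}$. Since $\Log$ commutes with the ring homomorphism $X_0\mapsto 0$ (both $\psi_n$ and the specialization preserve monomials), applying $\Log$ and invoking Theorem \ref{hua} both for $\Gamma$ and for the sub-quiver $\Gamma\setminus\{0\}$ --- a disjoint union of type-$A$ quivers whose representations and absolutely indecomposable representations coincide with those of $\Gamma$ having $v_0=0$ --- yields
\[
\Log F^*(X) \;=\; \sum_{\v\neq 0}A_{\Gamma,\v}(q)X^\v \;-\; \sum_{\v\neq 0,\,v_0=0}A_{\Gamma,\v}(q)X^\v \;=\; \sum_{v_0>0}A_{\Gamma,\v}(q)X^\v.
\]

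To finish I identify this last sum with $\sum_{\v\in\mathcal{S}-\{0\}}A_{\Gamma,\v}(q)X^\v$. Any non-zero strict $\v$ automatically has $v_0>0$ (by the definition of $\mathcal{S}$). Conversely, if $v_0>0$ and $A_{\Gamma,\v}(q)\neq 0$, then by Kac's theorem (Theorem \ref{kactheo}) there exists an indecomposable representation of dimension $\v$, and Lemma \ref{injective} forces $v_0\geq v_{[i,1]}\geq\cdots\geq v_{[i,s_i]}$ for each leg, i.e.\ $\v\in\mathcal{S}$. The main delicate point is the product formula for $F^*$: once $\mathcal{R}^*$ has been correctly identified via Lemma \ref{injective}, everything else is a formal manipulation of Hua's identity under the specialization $X_0\mapsto 0$.
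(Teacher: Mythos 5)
Your proof is correct, and it rests on the same two pillars as the paper's: the Krull--Schmidt product formula for $F^*(X)$, and the identification via Lemma~\ref{injective} of the indecomposable summands that survive the injectivity condition. The difference is in how the final $\Log$-identity is extracted. The paper writes $F^*(X)=\prod_{\v\in\mathcal{S}-\{0\}}(1-X^\v)^{-I_{\Gamma,\v}(q)}$ and then says ``exactly as Hua does'', i.e.\ it re-runs the $\lambda$-ring/Galois-descent manipulation that relates the exponents $I_{\Gamma,\v}(q)$ to the $A_{\Gamma,\v}(q)$. You instead observe that $F^*(X)=F(X)/F(X)|_{X_0=0}$ and that the specialization $X_0\mapsto 0$ commutes with $\Log$ (since it commutes with each Adams operation $\psi_n$ and with $\log$), which lets you invoke Theorem~\ref{hua} as a black box twice: once for $\Gamma$ and once for the leg-quiver $\Gamma\setminus\{0\}$. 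This is a clean reduction that avoids re-opening Hua's argument, and all the auxiliary identifications (that $\mathcal{R}^*=\{\alpha:v_{\alpha,0}>0\}$; that strict nonzero $\v$ have $v_0>0$; that $A_{\Gamma,\v}\neq 0$ with $v_0>0$ forces $\v\in\mathcal{S}$ via Kac's theorem and Lemma~\ref{injective}) are stated correctly. In short: same structural proof, with a tidier specialization trick replacing the explicit appeal to Hua's internal computation.
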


\begin{proof} Let us denote by $I_{\Gamma,\v}(q)$ the number of
  isomorphism classes of indecomposable representations in ${\rm
    Rep}_{\Gamma,\v}(\F_q)$. By the Krull-Schmidt theorem, a
  representation of $\Gamma$ decomposes as a direct sum of
  indecomposable representation in a unique way up to permutation of
  the summands. Notice that, for $\v\in\mathcal{S}$, each summand of
  an element of ${\rm Rep}_{\Gamma,\v}^*(\F_q)$ lives in some ${\rm
    Rep}_{\Gamma,\w}^*(\F_q)$ for some $\w\in\mathcal{S}$. On the
  other hand, by Lemma \ref{injective}, ${\rm
    Rep}_{\Gamma,\v}^*(\F_q)$ contains all the indecomposable
  representations in ${\rm Rep}_{\Gamma,\v}(\F_q)$. This implies the
  following identity
$$ 
\sum_{\v\in\mathcal{S}}M_{\Gamma,\v}^*(q)X^{\v}
=\prod_{\v\in\mathcal{S}-\{0\}}(1-X^{\v})^{-I_{\Gamma,\v}(q)},
$$
where $X^{\v}$ denotes the monomial $\prod_{i\in I}X_i^{v_i}$ for some
fixed independent commuting variables $\{X_i\}_{ i\in I}$. 
Exactly as Hua \cite[Proof of Lemma 4.5]{hua} does we show from this
formal identity that  
$$
\Log\left(\sum_{\v\in\mathcal{S}}M_{\Gamma,\v}^*(q)X^{\v}\right)
=\sum_{\v\in\mathcal{S}-\{0\}}A_{\Gamma,\v}(q)\,X^{\v}.
$$ 
\end{proof}
It follows from Proposition~\ref{hua-inj} that since
$A_{\Gamma,\v}(T)\in\Z[T]$ the quantity $M_{\Gamma,\v}^*(q)$ is also
the evaluation of a polynomial with integer coefficients at $T=q$.

Given a non-increasing sequence $u=(n_0\geq n_1\geq \cdots)$ of
non-negative integers we let $\Delta u$ be the sequence of successive
differences $n_0-n_1, n_1-n_2\ldots$. We extend the notation of~\S\ref{charGL}
and denote by $\calF_{\Delta u}$ the set of partial flags of $\F_q$-vector
spaces
$$
\{0\}\subseteq E^r\subseteq\cdots\subseteq E^1\subseteq
E^0=(\F_q)^{n_0}
$$
such that ${\rm dim}(E^i)=n_i$.  

Assume that $\v\in\mathcal S$ and let $\muhat=(\mu^1,\dots,\mu^k)$,
where $\mu^i$ is the partition obtained from $\Delta \v_i$ by
reordering, where $\v_i:=(v_0\geq v_{[i,1]}\geq\cdots\geq
v_{[i,s_i]})$.  Consider the set of orbits
$$
{\mathfrak G}_\muhat(\F_q) :=\left.\left({\rm
      Mat}_{n_0}(\F_q)^g\times\prod_{i=1}^k\calF_{\mu^i}(\F_q)\right)
\right/\GL_{v_0}(\F_q), 
$$
where $\GL_{v_0}(\F_q)$ acts by conjugation on the first $g$
coordinates and in the obvious way on each $\calF_{\mu^i}(\F_q)$.

Let $\varphihat\in {\rm Rep}_{\Gamma,\v}^*(\F_q)$ with underlying
graded vector space $\Vhat=V_0\oplus\bigoplus_{i,j} V_{[i,j]}$. We
choose a basis of $V_0$ and we identify $V_0$ with $(\F_q)^{v_0}$. In
the chosen basis, the $g$ maps $\varphi_\gamma$, with
$\gamma\in\Omega-\Omega^0$, give an element in ${\rm
  Mat}_{v_0}(\F_q)^g$. For each $i=1,\dots,k$, we obtain a partial
flag by taking the images in $(\F_q)^{v_0}$ of the $V_{[i,j]}$'s via
the compositions of the $\varphi_\gamma$'s where $\gamma$ runs over
the arrows of the $i$-th leg of $\Gamma$. We thus have defined a map 
$$
{\rm Rep}_{\Gamma,\v}^*(\F_q)\longrightarrow\left.\left({\rm
      Mat}_{v_0}(\F_q)^g\times
    \prod_{i=1}^k\calF_{\Delta \v_i}(\F_q)\right)\right/\GL_{v_0}(\F_q).
$$
The target set is clearly in bijection with ${\mathfrak G}_\muhat(\F_q)$ and
two elements of ${\rm Rep}_{\Gamma,\v}^*(\F_q)$ have the same image if
and only if they are isomorphic.  Hence this map induces an
isomorphism ${\rm M}_{\Gamma,\v}^*(\F_q)\simeq {\mathfrak
  G}_\muhat(\F_q)$.

We define a new comet-shaped quiver $\Gamma_\muhat$ consisting of $g$
loops on a central vertex and $k$ legs of length $l(\mu^i)-1$ and let
$\v_\muhat$ be the dimension vector as
in~\S\ref{quiver-repns}. Applying the above construction to the pair
$\Gamma_\muhat,\v_\muhat$ we obtain a bijection ${\rm
  M}_{\Gamma_\muhat,\v_\muhat}^*(\F_q)\simeq {\mathfrak
  G}_\muhat(\F_q)$. Put $G_\muhat(q):=\#{\mathfrak G}_\muhat(\F_q)$
and let $A_\muhat(q)$ be the $A$-polynomial of the quiver
$\Gamma_\muhat$ for the dimension vector $\v_\muhat$.
\begin{theorem}
We have 
$$
\Log\left(\sum_{\muhat\in\calP^k}G_\muhat(q)\,m_\muhat\right)
=\sum_{\muhat\in\calP^k-\{0\}}A_\muhat(q)\,m_\muhat. 
$$
\label{MA}
\end{theorem}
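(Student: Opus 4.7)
The plan is to deduce Theorem~\ref{MA} from Proposition~\ref{hua-inj}, applied to a comet-shaped quiver $\Gamma$ of the same shape as $\Gamma_\muhat$ but with each leg extended to a common large length $L$, and to translate the resulting identity indexed by strict dimension vectors into one indexed by multi-partitions via a suitable symmetric function substitution.

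First I would reindex. For each strict $\v \in \mathcal{S}$ of $\Gamma$, let $\muhat(\v) = (\mu^1, \ldots, \mu^k) \in \calP^k$ denote the multi-partition where $\mu^i$ is obtained by reordering the extended difference sequence $(v_0 - v_{[i,1]}, \ldots, v_{[i,L-1]} - v_{[i,L]}, v_{[i,L]})$. The bijection ${\rm M}^*_{\Gamma,\v}(\F_q) \simeq \mathfrak{G}_{\muhat(\v)}(\F_q)$ underlying the construction in the paragraph before the theorem extends to any strict $\v$, yielding $M^*_{\Gamma,\v}(q) = G_{\muhat(\v)}(q)$. Moreover, whenever two consecutive leg coordinates of $\v$ are equal, the corresponding arrow in any representation in ${\rm Rep}^*_{\Gamma,\v}(\F_q)$ is an injective map of equal-dimensional spaces, hence an isomorphism; the two vertices can then be identified without changing the category, and trailing zero coordinates may be deleted. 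Iterating these reductions transforms $(\Gamma, \v)$ into the minimal pair $(\Gamma_{\muhat(\v)}, \v_{\muhat(\v)})$, so that $A_{\Gamma,\v}(q) = A_{\muhat(\v)}(q)$.

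Next I would introduce the ring homomorphism $\phi \colon \Lambda_k \to \Z[[X_{[i,j]}]]$ sending $\x_i$ to the infinite sequence of partial products $(1,\, X_{[i,1]},\, X_{[i,1]} X_{[i,2]},\, \ldots)$. Parametrizing the strict $\v$'s mapping to a fixed $\muhat$ by rearrangements of the parts of each $\mu^i$ padded with zeros and telescoping the exponents $v_{[i,j]} = v_0 - \alpha_{i,1} - \cdots - \alpha_{i,j}$ against the partial products gives the key combinatorial identity
\[
\sum_{\v \in \mathcal{S},\ \muhat(\v) = \muhat}\ \prod_{i,j} X_{[i,j]}^{v_{[i,j]}} \;=\; \phi(m_\muhat).
\]
Specializing $X_0 = 1$ in Proposition~\ref{hua-inj}, grouping strict vectors by their associated multi-partition, and letting $L \to \infty$ then transforms that proposition into
\[
\Log\Bigl(\sum_{\muhat \in \calP^k} G_\muhat(q)\, \phi(m_\muhat)\Bigr) \;=\; \sum_{\muhat \in \calP^k \setminus \{0\}} A_\muhat(q)\, \phi(m_\muhat),
\]
which is precisely the image under $\phi$ of the identity we wish to prove, since $\phi$ is a ring homomorphism and $\Log$ commutes with such homomorphisms.

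The final step is to verify that $\phi$ is injective, so that the identity descends from $\phi(\Lambda_k)$ to $\Lambda_k$ itself. Setting $u_{i,j} := X_{[i,1]} \cdots X_{[i,j]}$ for $j \geq 1$, the substitution $y^a \mapsto X^{A(a)}$ with $A_k = \sum_{j \geq k} a_j$ is a bijection between monomials in the $y$'s and monomials in the $X$'s with non-increasing exponent sequence, showing that the $u_{i,j}$ are algebraically independent in $\Z[X_{[i,k]}]$; hence the images $\phi_i(p_n) = 1 + \sum_{j \geq 1} u_{i,j}^n$ of the power sums are algebraically independent, each single-leg factor $\phi_i \colon \Lambda(\x_i) \to \Z[[X_{[i,j]}]]$ is injective, and the tensor product $\phi = \phi_1 \otimes \cdots \otimes \phi_k$ is injective as well. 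The main technical step is the combinatorial identity displayed above, whose verification requires a careful unpacking of the telescoping formula for $v_{[i,j]}$ and its match with the monomial-symmetric-function expansion of $m_\muhat$; the remaining algebraic bookkeeping is routine.
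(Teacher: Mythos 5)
Your proposal follows the same overall strategy as the paper: reduce Theorem~\ref{MA} to Proposition~\ref{hua-inj} for a comet quiver with long legs, identify $M^*_{\Gamma,\v}$ with $G_{\muhat(\v)}$ and $A_{\Gamma,\v}$ with $A_{\muhat(\v)}$, and translate from dimension vectors to multipartitions by a monomial substitution. Your observation that $A_{\Gamma,\v}=A_{\Gamma_{\muhat(\v)},\v_{\muhat(\v)}}$ for strict $\v$, via collapsing arrows between vertices of equal dimension (which are isomorphisms on indecomposables by Lemma~\ref{injective}) and deleting trailing zeros, is a useful elaboration of a point the paper leaves implicit.

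However, there is a genuine gap in the translation step. You specialize $X_0=1$ in Proposition~\ref{hua-inj} before grouping, and then try to lift the resulting identity through the injectivity of $\phi:\Lambda_k\to\Z[[X_{[i,j]}]]$. The problem is that after setting $X_0=1$, the series on both sides of Proposition~\ref{hua-inj} are no longer well-defined: for fixed leg coordinates $(v_{[i,j]})$, there are infinitely many admissible $v_0\geq \max_i v_{[i,1]}$, so each coefficient becomes an infinite sum. Concretely, taking $g=0$ and $\bar{\v}=0$ one has $M^*_{\Gamma,(v_0,0,\ldots,0)}(q)=1$ for every $v_0$, so the constant coefficient of the specialized left-hand side is $\sum_{v_0\geq 0}1$, which diverges. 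Equivalently, the map $\phi$ you define (which has $\phi(x_{i,1})=1$) does not extend to the completion of $\Lambda_k$ that contains $\sum_\muhat G_\muhat m_\muhat$, since $\phi(m_{\mu^i}(\x_i))$ has a nonzero constant term for every $\mu^i$. So the equation you write with $\phi(m_\muhat)$ on both sides is not a meaningful identity in $\Z[[X_{[i,j]}]]$, and the injectivity of $\phi$ on $\Lambda_k$ cannot be used to descend from it.

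The paper avoids this entirely by not discarding $X_0$: it substitutes $X_0=x_{1,1}\cdots x_{k,1}$ and $X_{[i,j]}=x_{i,j}^{-1}x_{i,j+1}$, which is injective on the monomials $X^\v$ with $\v$ strict (the exponents of the $x_{i,j}$ are the successive differences of $\v$, and these together determine $v_0$ and hence $\v$). This substitution carries the $v_0$-grading over to the $\x_i$-degree, so the resulting series lies in the appropriate completion of $\Lambda_k$, each homogeneous piece is a finite sum, and collecting terms by $\muhat$ gives $m_\muhat$ directly — no lifting argument is needed. To repair your proof, keep the variables $x_{i,1}$ free (or equivalently do not set $X_0=1$) and argue with the paper's substitution rather than its partial specialization.
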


\begin{proof} In Proposition \ref{hua-inj} make the change
  of variables 
$$
X_0:=x_{1,1}\cdots x_{k,1}, \qquad
X_{[i,j]}:=x_{i,j}^{-1}x_{i,j+1}, \qquad i=1,2,\ldots,k, \quad
j=1,2,\ldots. 
$$
Since the terms on both sides are invariant under permutation of the
entries $v_{[i,1]},v_{[i,2]},\ldots$ of $\v$  we can collect all terms
that yield the same multipartition $\muhat$. The resulting sum of
$X^\v$ gives the monomial symmetric function $m_\muhat(x)$.
 \end{proof}

\begin{remark} Since $A_\muhat(q)\in\Z[q]$, it follows from Theorem \ref{MA} that $G(q)\in \Z[q]$.
\label{G}
\end{remark}

Recall that $\F$ denotes an algebraic closure of $\F_q$ and $f:\F\rightarrow\F, x\mapsto x^q$ is the Frobenius endomorphism.

\begin{proposition} We have $$\log\left(\sum_\muhat G_\muhat(q)m_\muhat\right)=\sum_{d=1}^\infty \phi_d(q)\cdot\log\left(\Omega\left(\x_1^d,\dots,\x_k^d;0,q^{d/2}\right)\right)$$where $\phi_n(q)=\frac{1}{n}\sum_{d|n}\mu(d)(q^{n/d}-1)$ is the number of $\langle f\rangle$-orbits of $\F^{\times}:=\F-\{0\}$ of size $n$.
\label{sumM}\end{proposition}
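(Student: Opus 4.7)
The plan is to express $F:=\sum_\muhat G_\muhat(q)\,m_\muhat$ as an infinite product whose logarithm is manifestly the right-hand side. By Burnside's lemma, writing $n_0=|\muhat|$ and using that $\Lambda(g)=|C_\mathfrak{g}(g)|^g$ counts the $g$-tuples in $\mathfrak{g}^g$ fixed by conjugation by $g$ while by Lemma \ref{R=F} the number of $g$-fixed flags in $\calF_{\mu^i}(\F_q)$ is $R_{L_{\mu^i}}^{\GL_{n_0}}(1)(g)$, one has
$$
G_\muhat(q)=\frac{1}{|\GL_{n_0}(\F_q)|}\sum_{g\in\GL_{n_0}(\F_q)}\Lambda(g)\prod_{i=1}^k R_{L_{\mu^i}}^{\GL_{n_0}}(1)(g).
$$

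By Corollary \ref{R}, if $g$ has type $\omega(g)$ then $R_{L_{\mu^i}}^{\GL_{n_0}}(1)(g)=\langle\tilde{H}_{\omega(g)}(\x;q),h_{\mu^i}(\x)\rangle$. Multiplying by $m_\muhat=\prod_i m_{\mu^i}(\x_i)$ and summing over each $\mu^i$, the Hall-duality identity $\sum_{\mu^i}\langle\tilde{H}_{\omega(g)},h_{\mu^i}\rangle\,m_{\mu^i}(\x_i)=\tilde{H}_{\omega(g)}(\x_i;q)$ collapses each inner-product sum, yielding after summation over $n_0$
$$
F=\sum_{n\geq 0}\frac{1}{|\GL_n(\F_q)|}\sum_{g\in\GL_n(\F_q)}\Lambda(g)\prod_{i=1}^k\tilde{H}_{\omega(g)}(\x_i;q).
$$

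I then reorganize the right-hand side by conjugacy class. A $\GL_n(\F_q)$-class is labeled by a finitely-supported function $\phi$ from the $\langle f\rangle$-orbits of $\F^\times$ to $\calP$, with $n=\sum_\gamma|\gamma|\,|\phi(\gamma)|$. For a representative $g_\phi$, the Weil-restriction description of centralizers in $\GL_n$ gives $|C_{\GL_n(\F_q)}(g_\phi)|=\prod_\gamma a_{\phi(\gamma)}(q^{|\gamma|})$ and $|C_\mathfrak{g}(g_\phi)|=q^{\sum_\gamma|\gamma|\langle\phi(\gamma),\phi(\gamma)\rangle}$, so by the specialization (\ref{alambda}),
$$
\frac{\Lambda(g_\phi)}{|C_{G}(g_\phi)|}=\prod_\gamma\calH_{\phi(\gamma)}\bigl(0,q^{|\gamma|/2}\bigr).
$$
The multi-type rule $a_\omhat=\prod_p\psi_{d_p}(A_{\omhat^p})$ combined with $\psi_d(\tilde{H}_\lambda(\x;q))=\tilde{H}_\lambda(\x^d;q^d)$ similarly gives $\tilde{H}_{\omega(g_\phi)}(\x_i;q)=\prod_\gamma\tilde{H}_{\phi(\gamma)}(\x_i^{|\gamma|};q^{|\gamma|})$. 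All factors are now indexed independently by $\gamma$, so the sum over $\phi$ factors as
$$
F=\prod_\gamma\sum_{\lambda\in\calP}\calH_\lambda(0,q^{|\gamma|/2})\prod_{i=1}^k\tilde{H}_\lambda(\x_i^{|\gamma|};q^{|\gamma|})=\prod_\gamma\Omega\bigl(\x_1^{|\gamma|},\ldots,\x_k^{|\gamma|};0,q^{|\gamma|/2}\bigr).
$$

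Grouping orbits by their common size $d$---of which there are exactly $\phi_d(q)$---collapses this to $F=\prod_{d\geq 1}\Omega(\x_1^d,\ldots,\x_k^d;0,q^{d/2})^{\phi_d(q)}$, and taking $\log$ gives the stated identity. The main technical point, and where I expect the most care to be needed, is the clean multiplicative decomposition of both $\Lambda(g)/|C_G(g)|$ and $\tilde{H}_{\omega(g)}(\x_i;q)$ across the Frobenius orbits of the eigenvalue set of $g$; once these two factorizations are verified via the Weil-restriction picture of centralizers in $\GL_n$ together with the Adams-operation definition of the multi-type $\tilde{H}$, the remainder of the argument is a formal rearrangement.
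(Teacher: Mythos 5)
Your proof is correct and takes essentially the same route as the paper's: Burnside's formula, Lemma \ref{R=F} and Corollary \ref{R}, collapsing the sum over $\muhat$ via $\langle h_\mu,\cdot\rangle$--$m_\mu$ Hall duality, re-indexing conjugacy classes by finitely supported maps from Frobenius orbits of $\F^\times$ to $\calP$, factoring both $\Lambda(g)/|C_G(g)|$ and the Hall--Littlewood polynomials across orbits, and grouping orbits by size. The only difference is that you make the multiplicative decomposition across Frobenius orbits (the ``Weil-restriction'' factorization of centralizers and of $\tilde H_{\omega(g)}$) fully explicit, whereas the paper treats that step as a one-line manipulation; this is harmless and if anything clarifies the argument.
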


\begin{proof}If $X$ is a finite set on which a finite group $H$ acts, recall Burnside's formula which says that

$$\#X/H=\frac{1}{|H|}\sum_{h\in H}\#\{x\in X\,|\, h\cdot x=x\}.$$
 
Denote by ${\bm C}_n$
the set of conjugacy classes of $\GL_n(\F_q)$. Applying Burnside's formula to
${\mathfrak G}_\muhat(\F_q)$, with $\muhat\in(\calP_n)^k$, we find that 

\begin{align*}G_\muhat(q)&=|\GL_n(\F_q)|^{-1}\sum_{g\in \GL_n(\F_q)}\Lambda(g)\prod_{i=1}^k\#\{X\in\calF_{\mu^i}\,|\, g\cdot X=X\}\\
 &=|\GL_n(\F_q)|^{-1}\sum_{g\in \GL_n(\F_q)}\Lambda(g)\prod_{i=1}^kR_{L_{\mu^i}}^G(1)(g)\\
&=\sum_{\calO\in {\bm C}_n}\frac{\Lambda(\calO)}{|Z_\calO|}\prod_{i=1}^kR_{L_{\mu^i}}^G(1)(\calO)
\end{align*}

For a conjugacy class $\calO$ of $\GL_n(\F_q)$, let $\omega(\calO)$ denotes its type. By Formula (\ref{alambda}), we have 

$$\frac{\Lambda(\calO)}{|Z_\calO|}=\calH_{\omega(\calO)}(0,\sqrt{q}).$$By Corollary \ref{R}, we deduce that

$$\sum_\muhat G_\muhat(q)m_\muhat=\sum_{\calO\in {\bm C}}\calH_{\omega(\calO)}(0,\sqrt{q})\prod_{i=1}^k\tilde{H}_{\omega(\calO)}(\x_i,q)$$where ${\bm C}:=\bigcup_{n\geq 1}{\bm C}_n$.

We denote by ${\bf F}^{\times}$ the set of $\langle f\rangle$-orbits of $\F^{\times}$. There is a natural bijection from the set ${\bm C}$ to the set of all maps ${\bf F}^{\times}\rightarrow\calP$ with finite support \cite[IV, 2]{macdonald}. If $C\in{\bm C}$ corresponds to $\alpha:{\bf F}^{\times}\rightarrow\calP$, then we may enumerate the elements of $\{s\in{\bf F}^{\times}\,|\, \alpha(s)\neq 0\}$ as $c_1,\dots,c_r$ such that $\omega(\alpha):=(d(c_1),\alpha(c_1))\cdots(d(c_r),\alpha(c_r))$, where $d(c)$ denotes the size of $c$, is the type $\omega(C)$. 

We have 

\begin{align*}\sum_\muhat G_\muhat(q)m_\muhat&=\sum_{\alpha\in \calP^{{\bf F}^{\times}}}\calH_{\omega(\alpha)}(0,\sqrt{q})\prod_{i=1}^k\tilde{H}_{\omega(\alpha)}(\x_i,q)\\
&=\prod_{c\in{\bf F}^{\times}}\Omega\left(\x_1^{d(c)},\dots,\x_k^{d(c)};0,q^{d(c)/2}\right)\\
&=\prod_{d=1}^\infty\Omega\left(\x_1^d,\dots,\x_k^d;0,q^{d/2}\right)^{\phi_d(q)}
\end{align*}
\end{proof}

\begin{remark}The second formula displayed in the proof of Proposition \ref{sumM} shows that

$$
G_\muhat(q)=\left\langle\Lambda\otimes
  R_\muhat(1),1\right\rangle$$where
$R_\muhat(1):=R_{L_{\mu^1}}^G(1)\otimes\cdots\otimes
R_{L_{\mu^k}}^G(1)$. 
\label{rem326}\end{remark}

\begin{theorem}
We have $$A_\muhat(q)=\H_\muhat(0,\sqrt{q}).$$
 \label{purity}
\end{theorem}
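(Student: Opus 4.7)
The plan is to combine Theorem~\ref{MA} and Proposition~\ref{sumM} via the $\Log$--$\log$ correspondence of Lemma~\ref{moz}, and then match the result with the generating identity~(\ref{exp}) for the $\H_\muhat(z,w)$'s. In effect, all the hard work has been done in the previous results; only a short, clean combinatorial identification is needed.

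First I would recognize the arithmetic factor $\phi_d(q)$ appearing in Proposition~\ref{sumM} as exactly the $d$-th Möbius component of $q-1$ in the $\lambda$-ring sense. Indeed, with the notation $g_n = \tfrac{1}{n}\sum_{d\mid n}\mu(d)\,\psi_{n/d}(g)$ of \S\ref{Exp}--\S\ref{moz}, taking $g = q-1$ and using $\psi_{n/d}(q-1)=q^{n/d}-1$ gives
\begin{equation*}
(q-1)_n \;=\; \frac{1}{n}\sum_{d\mid n}\mu(d)\bigl(q^{n/d}-1\bigr) \;=\; \phi_n(q).
\end{equation*}
Moreover, since $\psi_d$ on $\Lambda_k$ substitutes $\x_i\mapsto\x_i^d$ and $q\mapsto q^d$, we have
\begin{equation*}
\Omega\bigl(\x_1^d,\ldots,\x_k^d;\,0,q^{d/2}\bigr) \;=\; \psi_d\!\left(\Omega(\x_1,\ldots,\x_k;\,0,q^{1/2})\right).
\end{equation*}

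Next I would apply Lemma~\ref{moz} with $g=q-1$, $f_2 = \Omega(0,\sqrt{q})$ and $f_1 = \sum_\muhat G_\muhat(q)\,m_\muhat$. The hypothesis of Lemma~\ref{moz} is precisely the content of Proposition~\ref{sumM}, so its conclusion gives
\begin{equation*}
\Log\!\left(\sum_{\muhat\in\calP^k} G_\muhat(q)\,m_\muhat\right) \;=\; (q-1)\cdot\Log\,\Omega(0,\sqrt{q}).
\end{equation*}
Now I would invoke Theorem~\ref{MA} to rewrite the left-hand side as $\sum_{\muhat\neq 0}A_\muhat(q)\,m_\muhat$, and use the defining identity (\ref{exp}) to rewrite
\begin{equation*}
\Log\,\Omega(0,\sqrt{q}) \;=\; \sum_{\muhat\in\calP^k} \frac{\H_\muhat(0,\sqrt{q})}{(0-1)(1-q)}\,m_\muhat \;=\; \sum_{\muhat\in\calP^k}\frac{\H_\muhat(0,\sqrt{q})}{q-1}\,m_\muhat,
\end{equation*}
since the prefactor $(z^2-1)(1-w^2)$ specializes to $q-1$ at $(z,w)=(0,\sqrt{q})$.

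Finally, combining the last two displays yields
\begin{equation*}
\sum_{\muhat\neq 0} A_\muhat(q)\,m_\muhat \;=\; \sum_{\muhat}\H_\muhat(0,\sqrt{q})\,m_\muhat,
\end{equation*}
and since the $m_\muhat$ form a basis of $\Lambda_k$, matching coefficients gives $A_\muhat(q)=\H_\muhat(0,\sqrt{q})$ for all $\muhat\neq 0$ (noting that $\H_0=0$ consistently). The only genuinely delicate point is the bookkeeping in the first step: verifying that $\phi_d(q)$ is exactly the Möbius inverse of $q-1$ and that the $\lambda$-ring Adams operation $\psi_d$ acts correctly on the specialization $w=\sqrt{q}$, so that Lemma~\ref{moz} applies verbatim. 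Everything else is formal substitution.
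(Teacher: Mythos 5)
Your proposal is correct and follows essentially the same route as the paper: combine Formula~(\ref{exp}), Theorem~\ref{MA}, Proposition~\ref{sumM}, and Lemma~\ref{moz} and compare coefficients of $m_\muhat$. You are a bit more explicit than the paper in verifying that $\phi_n(q)$ is indeed the $n$-th M\"obius component of $q-1$ and that $\psi_d$ acts on $\Omega(0,\sqrt q)$ by the substitution $\x_i\mapsto\x_i^d$, $q\mapsto q^d$, but the logical content is identical.
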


\begin{proof}  From Formula (\ref{exp}) we have

$$
\sum_\muhat\H_\muhat(0,\sqrt{q})\,m_\muhat=(q-1)\, \Log\,\left(\Omega(0,\sqrt{q})\right).
$$
We thus need to see that
\beq
\sum_\muhat A_\muhat(q)\,m_\muhat=(q-1)\, \Log\,\left(\Omega(0,\sqrt{q})\right).
\label{ExpA}
\eeq

From Theorem \ref{MA} we are reduced to prove that

$$
\Log\left(\sum_\muhat G_\muhat(q)m_\muhat\right)=(q-1)\Log\left(\Omega(0,\sqrt{q})\right).
$$ 

But this follows from Lemma \ref{moz} and Proposition \ref{sumM}.

\end{proof}

\subsection{Another formula for the $A$-polynomial}

When the dimension vector $\v_\muhat$ is indivisible, it is known by Crawley-Boevey and van den Bergh \cite{crawley-boevey-etal} that the polynomial $A_\muhat(q)$ equals (up to some power of $q$) to the polynomial which counts the number of points of some quiver variety over $\F_q$.

Here we prove some relation between $A_\muhat(q)$ and some variety which is closely related to quiver varieties. This relation holds for any $\muhat$ (in particular $\v_\muhat$ can be divisible).

We continue to use the notation $G$, $P_\lambda$, $L_\lambda$, $U_\lambda$, $\calF_\lambda$ of \S \ref{finite-groups}  and the notation $\mathfrak{g}$, $\mathfrak{p}_\lambda$, $\mathfrak{l}_\lambda$, $\mathfrak{u}_\lambda$ of \S \ref{Fourier}.

For a partition $\lambda$ of $n$, define $$\X_\lambda:=\left\{(X,gP_\lambda)\in \mathfrak{g}\times(G/P_\lambda)\,\left|\, g^{-1}Xg\in\mathfrak{u}_\lambda\right\}\right.$$It is well-known that the image of the projection $p:\X_\lambda(\F)\rightarrow \mathfrak{g}(\F)$, $(X,gP_\lambda)\mapsto X$ is the Zariski closure $\overline{\calO}_{\lambda'}$ of the nilpotent adjoint orbit $\calO_{\lambda'}$ of $\gl_n(\F)$ whose Jordan form is given by $\lambda'$, and that $p$ is a desingularization.

Put

$$\mathbb{V}_\muhat:=\left\{\left(a_1,b_1,\dots,a_g,b_g,(X_1,g_1P_{\mu^1}),\dots, (X_k,g_kP_{\mu^k})\right)\in\mathfrak{g}^{2g}\times\X_{\mu^1}\times\cdots\times\X_{\mu^k}\,\left|\, \sum_i[a_i,b_i]+\sum_jX_j=0\right\}\right.$$where $[a,b]=ab-ba$. 

Define $\Lambda^\sim:\mathfrak{g}\rightarrow\C$, $z\mapsto q^{gn^2}\Lambda(z)$. By \cite[Proposition 3.2.2]{hausel-letellier-villegas} we know that 

$$\Lambda^\sim=\calF^\mathfrak{g}(F)$$where for $z\in\mathfrak{g}$, $$F(z):=\#\left\{(a_1,b_1,\dots,a_g,b_g)\in\mathfrak{g}^{2g}\,\left|\, \sum_i[a_i,b_i]=z\right.\right\}.$$

By Remark \ref{charab}, the functions $\Lambda^\sim$ and $\mathfrak{R}_{\mathfrak{l}_\lambda}^\mathfrak{g}:=q^{\frac{1}{2}(n^2-\sum_i\lambda_i^2)}R_{\mathfrak{l}_\lambda}^\mathfrak{g}$ are characters of $\mathfrak{g}$. Put $$\mathfrak{R}_\muhat(1):=\mathfrak{R}_{\mathfrak{l}_{\mu^1}}^\mathfrak{g}(1)\otimes\cdots\otimes \mathfrak{R}_{\mathfrak{l}_{\mu^k}}^\mathfrak{g}(1).$$

For two functions $f,g:\mathfrak{g}\rightarrow\C$, define their inner product as

$$
\langle f,g\rangle=|\mathfrak{g}|^{-1}\sum_{X\in\mathfrak{g}}f(X)\overline{g(X)}.
$$

\begin{proposition}We have $$|\mathbb{V}_\muhat|=\left\langle \Lambda^\sim\otimes \mathfrak{R}_\muhat(1),1\right\rangle.$$
\label{inner}\end{proposition}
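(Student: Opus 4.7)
The plan is to compute $|\mathbb{V}_\muhat|$ as a convolution of counting functions on $\mathfrak{g}$ evaluated at $0$, then invoke the Fourier-transform identifications $\Lambda^\sim = \calF^\mathfrak{g}(F)$ and $\mathfrak{R}_{\mathfrak{l}_\lambda}^\mathfrak{g}(1) = \calF^\mathfrak{g}(Q_{\mathfrak{l}_\lambda}^\mathfrak{g})$ (Proposition~\ref{fourprop2}) to pass between the two sides.

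First I would unwind the definition of $\mathbb{V}_\muhat$ by fibering over the tuple $(a_1,b_1,\dots,a_g,b_g,X_1,\dots,X_k)$. Since $\mathfrak{u}_{\mu^j}$ is stable under $P_{\mu^j}$-conjugation, the count of cosets $g_jP_{\mu^j}$ with $g_j^{-1}X_jg_j\in\mathfrak{u}_{\mu^j}$ is exactly $Q_{\mathfrak{l}_{\mu^j}}^\mathfrak{g}(X_j)$. Therefore
$$
|\mathbb{V}_\muhat|=\sum_{\substack{z,x_1,\dots,x_k\in\mathfrak{g}\\ z+x_1+\cdots+x_k=0}}F(z)\prod_{j=1}^kQ_{\mathfrak{l}_{\mu^j}}^\mathfrak{g}(x_j)=\bigl(F*Q_{\mathfrak{l}_{\mu^1}}^\mathfrak{g}*\cdots*Q_{\mathfrak{l}_{\mu^k}}^\mathfrak{g}\bigr)(0),
$$
using the definition of convolution and the fact, cited just above the proposition, that $F$ encodes $\Lambda^\sim$ via $\Lambda^\sim=\calF^\mathfrak{g}(F)$.

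Next I would Fourier-transform this convolution. By Proposition~\ref{fourprop2}, $\calF^\mathfrak{g}(Q_{\mathfrak{l}_{\mu^j}}^\mathfrak{g})=\mathfrak{R}_{\mathfrak{l}_{\mu^j}}^\mathfrak{g}(1)$, and by hypothesis $\calF^\mathfrak{g}(F)=\Lambda^\sim$. The convolution identity~(\ref{conv}) then gives
$$
\calF^\mathfrak{g}\bigl(F*Q_{\mathfrak{l}_{\mu^1}}^\mathfrak{g}*\cdots*Q_{\mathfrak{l}_{\mu^k}}^\mathfrak{g}\bigr)=\Lambda^\sim\cdot\mathfrak{R}_{\mathfrak{l}_{\mu^1}}^\mathfrak{g}(1)\cdots\mathfrak{R}_{\mathfrak{l}_{\mu^k}}^\mathfrak{g}(1)=\Lambda^\sim\otimes\mathfrak{R}_\muhat(1).
$$

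Finally I would apply the Fourier inversion identity of Proposition~\ref{fourprop1} to the convolution $h:=F*Q_{\mathfrak{l}_{\mu^1}}^\mathfrak{g}*\cdots*Q_{\mathfrak{l}_{\mu^k}}^\mathfrak{g}$, namely $|\mathfrak{g}|\cdot h(0)=\sum_{X\in\mathfrak{g}}\calF^\mathfrak{g}(h)(X)$. Since $\Lambda^\sim$ and each $\mathfrak{R}_{\mathfrak{l}_{\mu^j}}^\mathfrak{g}(1)$ are real-valued, the inner product $\langle\,\cdot\,,1\rangle$ is simply $|\mathfrak{g}|^{-1}$ times the sum over $\mathfrak{g}$, so
$$
|\mathbb{V}_\muhat|=h(0)=|\mathfrak{g}|^{-1}\sum_{X\in\mathfrak{g}}\bigl(\Lambda^\sim\otimes\mathfrak{R}_\muhat(1)\bigr)(X)=\langle\Lambda^\sim\otimes\mathfrak{R}_\muhat(1),1\rangle,
$$
which is the required equality. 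There is no real obstacle here; the only thing to watch is the bookkeeping of the $P_\lambda$-coset count that converts the fiber count in $\X_{\mu^j}$ into $Q_{\mathfrak{l}_{\mu^j}}^\mathfrak{g}$, and the observation that both $\Lambda^\sim$ and $\mathfrak{R}_\muhat(1)$ are real so that the conjugation in $\langle\,,\rangle$ is harmless.
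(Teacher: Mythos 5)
Your proof is correct and follows the same path as the paper's: you express $|\mathbb{V}_\muhat|$ as the convolution $\bigl(F*Q_{\mathfrak{l}_{\mu^1}}^\mathfrak{g}*\cdots*Q_{\mathfrak{l}_{\mu^k}}^\mathfrak{g}\bigr)(0)$ and then appeal to Propositions~\ref{fourprop1} and~\ref{fourprop2} together with $\Lambda^\sim=\calF^\mathfrak{g}(F)$ and the convolution-to-product rule~\eqref{conv}, which is exactly the paper's argument spelled out in full. (The remark about real-valuedness is harmless but unnecessary: the conjugation in $\langle\cdot,1\rangle$ acts on the constant function $1$, so $\langle f,1\rangle=|\mathfrak{g}|^{-1}\sum_X f(X)$ for any $f$.)
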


\begin{proof}Notice that

$$
|\mathbb{V}_\muhat|=\left(F*Q_{\mathfrak{l}_{\mu^1}}^\mathfrak{g}*\cdots*Q_{\mathfrak{l}_{\mu^k}}^\mathfrak{g}\right)(0).
$$
Hence the result follows from Proposition \ref{fourprop1} and Proposition \ref{fourprop2}.
\end{proof}

The proposition shows that $|\mathbb{V}_\muhat|$ is a rational function in $q$ which is an integer for infinitely many values of $q$. Hence  $|\mathbb{V}_\muhat|$ is a polynomial in $q$ with integer coefficients.

Consider 

$$
V_\muhat(q):=\frac{|\mathbb{V}_\muhat|}{|G|}.
$$

Recall that $d_\muhat=n^2(2g-2+k)-\sum_{i,j}(\mu^i_j)^2+2$.

\begin{theorem}
We have 

$$\Log\left(\sum_\muhat q^{-\frac{1}{2}(d_\muhat-2)}V_\muhat(q)m_\muhat\right)=\frac{q}{q-1}\sum_\muhat A_\muhat(q)m_\muhat.$$
\label{theohua}
\end{theorem}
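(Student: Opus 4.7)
The plan is to transcribe, almost verbatim, the proof of Theorem \ref{purity} (via Proposition \ref{sumM}) from the group $G=\GL_n(\F_q)$ to the Lie algebra $\mathfrak{g}=\gl_n(\F_q)$, and then apply Lemma \ref{moz}. The combinatorial reason the factor $q-1$ in Theorem \ref{purity} becomes $q$ here is that Frobenius acts on $\F$ with one more orbit than on $\F^\times$, namely $\{0\}$.

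First, starting from the convolution identity $|\mathbb{V}_\muhat|=(F*Q_{\mathfrak{l}_{\mu^1}}^\mathfrak{g}*\cdots*Q_{\mathfrak{l}_{\mu^k}}^\mathfrak{g})(0)$ established in the proof of Proposition \ref{inner}, I would apply Proposition \ref{fourprop1}, the convolution-to-product identity (\ref{conv}), and Proposition \ref{fourprop2}, together with $\calF^\mathfrak{g}(F)=\Lambda^\sim=q^{gn^2}\Lambda$. Bookkeeping the powers of $q$ using $\tfrac{1}{2}(d_\muhat-2)=n^2(g-1+k/2)-\tfrac{1}{2}\sum_{i,j}(\mu^i_j)^2$ yields
$$
q^{-\tfrac{1}{2}(d_\muhat-2)}V_\muhat(q)=\frac{1}{|G|}\sum_{X\in\mathfrak{g}}\Lambda(X)\prod_{j=1}^k R_{\mathfrak{l}_{\mu^j}}^\mathfrak{g}(1)(X)=\sum_\calO\frac{\Lambda(\calO)}{|Z_\calO|}\prod_{j=1}^k R_{\mathfrak{l}_{\mu^j}}^\mathfrak{g}(1)(\calO),
$$
the second sum being over the adjoint $G$-orbits $\calO$ on $\mathfrak{g}$. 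By Remark \ref{Rl=RL} and Corollary \ref{R}, $R_{\mathfrak{l}_{\mu^j}}^\mathfrak{g}(1)(\calO)=\langle\tilde{H}_{\omega(\calO)}(\x,q),h_{\mu^j}(\x)\rangle$, where $\omega(\calO)\in\mathbf{T}_n$ is the type of $\calO$. A Jordan decomposition argument identical to the group case, using that $C_G(X)\cong\prod_i \GL_{|\omega^i|}(\F_{q^{d_i}})_{X_u}$ and $\dim_{\F_q}C_\mathfrak{g}(X)=\sum_i d_i\langle\omega^i,\omega^i\rangle$, combined with (\ref{alambda}), shows that $\Lambda(\calO)/|Z_\calO|=\calH_{\omega(\calO)}(0,\sqrt q)$ with the multiplicative extension $\calH_\omega(0,\sqrt q):=\prod_i\calH_{\omega^i}(0,q^{d_i/2})$ and analogously $\tilde{H}_\omega(\x,q):=\prod_i\tilde{H}_{\omega^i}(\x^{d_i},q^{d_i})$ used already in the proof of Proposition \ref{sumM}. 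Multiplying by $m_\muhat$ and using the duality $f=\sum_\mu\langle f,h_\mu\rangle m_\mu$ in each alphabet $\x_i$ yields
$$
\sum_\muhat q^{-\tfrac{1}{2}(d_\muhat-2)}V_\muhat(q)\,m_\muhat=\sum_\calO\calH_{\omega(\calO)}(0,\sqrt q)\prod_{i=1}^k\tilde{H}_{\omega(\calO)}(\x_i,q).
$$

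The second phase is parallel to the proof of Proposition \ref{sumM}: adjoint orbits of $\mathfrak{g}$ are in bijection with the finite-support maps $\alpha\colon\bm{F}\to\calP$, where $\bm{F}$ is now the set of $\langle f\rangle$-orbits on $\F$ (one more orbit than $\bm{F}^\times$, namely $\{0\}$). Regrouping the previous sum by the support of $\alpha$ gives
$$
\sum_\muhat q^{-\tfrac{1}{2}(d_\muhat-2)}V_\muhat(q)\,m_\muhat=\prod_{d=1}^\infty\Omega(\x_1^d,\ldots,\x_k^d;0,q^{d/2})^{\varphi_d(q)},\qquad \varphi_d(q):=\frac{1}{d}\sum_{e\mid d}\mu(e)\,q^{d/e},
$$
where $\varphi_d(q)$ counts orbits of size $d$ in $\F$. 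Since $\psi_e(q)=q^e$, the quantity $\varphi_d(q)$ is exactly $g_d$ for $g=q$ in the notation of Lemma \ref{moz}. Taking $\log$ and applying Lemma \ref{moz} with $f_2=\Omega(0,\sqrt q)$ gives
$$
\Log\Bigl(\sum_\muhat q^{-\tfrac{1}{2}(d_\muhat-2)}V_\muhat(q)\,m_\muhat\Bigr)=q\cdot\Log\,\Omega(0,\sqrt q).
$$
Finally, (\ref{exp}) at $(z,w)=(0,\sqrt q)$ combined with Theorem \ref{purity} yields $\Log\,\Omega(0,\sqrt q)=\tfrac{1}{q-1}\sum_\muhat A_\muhat(q)\,m_\muhat$, completing the proof. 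The main technical point is matching the exponent $(d_\muhat-2)/2$ in the Fourier step; once that is set up, the remainder is the formal analogue of the argument that gave Theorem \ref{purity}, with $\bm{F}^\times$ replaced by $\bm{F}$.
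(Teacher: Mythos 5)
Your proof is correct and follows essentially the same approach as the paper: reduce to the logarithmic identity via Lemma \ref{moz} and Formula (\ref{ExpA}), use Proposition \ref{inner} together with the Fourier-transform bookkeeping to express $q^{-\frac{1}{2}(d_\muhat-2)}V_\muhat(q)$ as $|G|^{-1}\sum_{X\in\mathfrak g}\Lambda(X)\prod_j R_{\mathfrak l_{\mu^j}}^{\mathfrak g}(1)(X)$, and then repeat the orbit-regrouping argument of Proposition \ref{sumM} with $\bm F^\times$ replaced by $\bm F$. You have merely spelled out the step the paper compresses into ``proceed exactly as in the proof of Proposition \ref{sumM}.''
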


By Lemma \ref{moz} and Formula (\ref{ExpA}) we are reduced to prove the following.

\begin{proposition} We have $$\log\,\left(\sum_\muhat  q^{-\frac{1}{2}(d_\muhat-2)}V_\muhat(q)m_\muhat\right)=\sum_{d=1}^\infty\varphi_d(q)\cdot\log\,\left(\Omega\left(\x_1^d,\dots,\x_k^d;0,q^{d/2}\right)\right)$$where $\varphi_n(q)=\frac{1}{n}\sum_{d|n}\mu(d)q^{n/d}$ is the number of $\langle f\rangle$-orbits of $\F$ of size $n$.
\end{proposition}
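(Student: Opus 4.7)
The plan is to run, for the Lie algebra $\mathfrak g=\gl_n(\F_q)$, the same argument that yielded Proposition~\ref{sumM} for the group, starting from the Fourier-theoretic identity of Proposition~\ref{inner}. Since $\Lambda^\sim$ and the functions $\mathfrak R_{\mathfrak l_{\mu^i}}^\mathfrak g(1)$ are class on $\mathfrak g$, I would expand their inner product as a sum over the adjoint $G$-orbits $\calO$ of $\mathfrak g$:
\begin{equation*}
V_\muhat(q)=\frac{|\mathbb V_\muhat|}{|G|}=|G|^{-1}\sum_\calO\frac{|\calO|}{|\mathfrak g|}\,\Lambda^\sim(\calO)\prod_{i=1}^k\mathfrak R_{\mathfrak l_{\mu^i}}^\mathfrak g(1)(\calO).
\end{equation*}
Using $|\calO|=|G|/|Z_\calO|$, $|\mathfrak g|=q^{n^2}$, the formula $\Lambda^\sim(X)=q^{gn^2}\,q^{g\dim C_G(X)}$ (which follows from $\Lambda^1(X)=|\mathfrak z_X|=q^{\dim C_G(X)}$ because $\mathfrak z_X\subset\mathfrak g$ is a linear subspace), and the Fourier normalization $\mathfrak R_{\mathfrak l_\lambda}^\mathfrak g(1)=q^{\frac12(n^2-\sum_j\lambda_j^2)}R_{\mathfrak l_\lambda}^\mathfrak g(1)$ of Proposition~\ref{fourprop2}, the combined prefactor collapses to exactly $q^{\frac12(d_\muhat-2)}$, yielding
\begin{equation*}
q^{-\frac12(d_\muhat-2)}V_\muhat(q)=\sum_\calO\frac{q^{g\dim C_G(\calO)}}{|Z_\calO|}\prod_{i=1}^k R_{\mathfrak l_{\mu^i}}^\mathfrak g(1)(\calO),
\end{equation*}
which is the Lie-algebra analogue of the first displayed identity in the proof of Proposition~\ref{sumM}.

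Next I would multiply by $m_\muhat$, sum over $\muhat\in\calP^k$, and swap the order of summation. By Remark~\ref{Rl=RL} and Corollary~\ref{R}, $R_{\mathfrak l_{\mu^i}}^\mathfrak g(1)(\calO)=\langle\tilde H_{\omega(\calO)}(\x_i;q),h_{\mu^i}(\x_i)\rangle$ depends only on the type $\omega(\calO)$, and the Hall-pairing duality $\langle h_\mu,m_\lambda\rangle=\delta_{\lambda\mu}$ collapses the sum on each $\mu^i$ into $\tilde H_{\omega(\calO)}(\x_i;q)$, so
\begin{equation*}
\sum_{\muhat\in\calP^k}q^{-\frac12(d_\muhat-2)}V_\muhat(q)\,m_\muhat=\sum_\calO\frac{q^{g\dim C_G(\calO)}}{|Z_\calO|}\prod_{i=1}^k\tilde H_{\omega(\calO)}(\x_i;q).
\end{equation*}
I parameterize the adjoint $G$-orbits of $\mathfrak g(\F_q)$ by finitely supported maps $\beta:{\bf F}\to\calP$, where $\bf F$ is the set of $\langle f\rangle$-orbits in $\F$, the orbit attached to $\beta$ having type $\omega(\beta)=(d(c_1),\beta(c_1))\cdots(d(c_r),\beta(c_r))$ over the $c_i$ with $\beta(c_i)\neq 0$. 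The generalized-eigenspace decomposition of the centralizer gives $|Z_\calO|=\prod_c a_{\beta(c)}(q^{d(c)})$ and $\dim C_G(\calO)=\sum_c d(c)\langle\beta(c),\beta(c)\rangle$, so Formula~(\ref{alambda}) applied with $q\mapsto q^{d(c)}$ turns each local factor into $\calH_{\beta(c)}(0,q^{d(c)/2})$. Combined with the analogous factorization $\tilde H_{\omega(\beta)}(\x_i;q)=\prod_c\tilde H_{\beta(c)}(\x_i^{d(c)};q^{d(c)})$, the sum over $\beta$ becomes the Euler product $\prod_{c\in{\bf F}}\Omega(\x_1^{d(c)},\ldots,\x_k^{d(c)};0,q^{d(c)/2})$.

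Finally, grouping the $c\in{\bf F}$ of the same size $d$ and taking logarithms gives the desired identity
\begin{equation*}
\log\!\left(\sum_{\muhat}q^{-\frac12(d_\muhat-2)}V_\muhat(q)\,m_\muhat\right)=\sum_{d\geq 1}\varphi_d(q)\,\log\,\Omega\!\left(\x_1^d,\ldots,\x_k^d;0,q^{d/2}\right).
\end{equation*}
The main subtlety compared with Proposition~\ref{sumM} is the $q$-power bookkeeping that isolates the normalization $q^{-\frac12(d_\muhat-2)}$; the passage from conjugacy classes of $G$ to adjoint orbits of $\mathfrak g$ simply admits the additional eigenvalue $0$, which contributes the single orbit of size $1$ in $\F$ and is exactly what replaces $\phi_d(q)$ by $\varphi_d(q)=\phi_d(q)+\delta_{d,1}$ in the final product.
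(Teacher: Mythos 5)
Your proposal is correct and follows the same route as the paper's own proof, which invokes Proposition~\ref{inner}, identifies $R_{\mathfrak{l}_\lambda}^\mathfrak{g}(1)(\calO)=\langle\tilde H_{\omega(\calO)}(\x;q),h_\lambda(\x)\rangle$ via Remark~\ref{Rl=RL} and Corollary~\ref{R}, and then says ``proceed exactly as in the proof of Proposition~\ref{sumM}''. You have simply carried out the bookkeeping the paper leaves implicit: the $q$-power calculation showing the prefactor from $\Lambda^\sim$, $\mathfrak{R}$, $|\mathfrak g|^{-1}$ and $|G|^{-1}$ collapses to $q^{\frac12(d_\muhat-2)}$, and the Euler-product step over orbits of $\langle f\rangle$ on $\F$ (rather than $\F^\times$), which is exactly what replaces $\phi_d$ by $\varphi_d=\phi_d+\delta_{d,1}$.
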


\begin{proof} By Proposition \ref{inner}, we have

$$V_\muhat(q)=\frac{q^{-n^2+\frac{1}{2}(kn^2-\sum_{i,j}(\mu^i_j)^2)}}{|G|}\sum_{x\in\mathfrak{g}}\Lambda^\sim(x)R_{\mathfrak{l}_{\mu^1}}^\mathfrak{g}(1)(x)\cdots R_{\mathfrak{l}_{\mu^1}}^\mathfrak{g}(1)(x).$$By Remark \ref{Rl=RL} and Corollary \ref{R}, we see that $R_{\mathfrak{l}_\lambda}^\mathfrak{g}(1)(x)=\left\langle\tilde{H}_\omega(\x;q),h_\lambda(\x)\right\rangle$ when the $G$-orbit of $x$ is of type $\omega$.

We now proceed exactly as in the proof of Proposition \ref{sumM} to prove our formula.
\end{proof}

\subsection{Applications to the character theory of finite general
  linear groups}\label{applichar} 

The following theorem (which is a consequence of Theorem \ref{purity}
and Theorem \ref{multi}) expresses certain fusion rules in the
character ring of $\GL_n(\F_q)$ in terms of absolutely indecomposable
representations of comet shaped quivers.

\begin{theorem}
\label{mult-thm}
We have 
 $$
\langle \Lambda\otimes R_\muhat,1\rangle=A_\muhat(q).
$$
\label{multi=A}\end{theorem}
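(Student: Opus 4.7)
The statement is essentially immediate from combining two results already established: Theorem \ref{multi} (cited from \cite{hausel-letellier-villegas}), which gives
$$\langle \Lambda\otimes R_\muhat,1\rangle = \H_\muhat(0,\sqrt{q}),$$
and Theorem \ref{purity} of the present paper, which gives
$$A_\muhat(q) = \H_\muhat(0,\sqrt{q}).$$
So my plan is simply to invoke both and chain the equalities. There is no independent computation to perform at this stage; the entire content sits in the two inputs.

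What is worth flagging is the nature of the two inputs, since they come from completely different directions. The left-hand side is a multiplicity in a tensor product of semisimple split irreducible characters of $\GL_n(\F_q)$, and Theorem \ref{multi} extracts it by a character-theoretic computation rooted in Deligne--Lusztig induction and Green functions. The right-hand side counts absolutely indecomposable representations of the comet-shaped quiver $\Gamma_\muhat$ of dimension $\v_\muhat$, and Theorem \ref{purity} identifies it with $\H_\muhat(0,\sqrt{q})$ through the combinatorial route: Hua's formula (Proposition \ref{hua-inj}) applied to strict dimension vectors, followed by the change of variables of Theorem \ref{MA}, the Frobenius-orbit decomposition of Proposition \ref{sumM}, and finally Lemma \ref{moz} to pass from $\log$ to $\Log$. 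The common specialization $\H_\muhat(0,\sqrt{q})$ is the bridge.

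Thus the proof I would write is a single sentence: by Theorem \ref{multi} and Theorem \ref{purity},
$$\langle \Lambda\otimes R_\muhat,1\rangle = \H_\muhat(0,\sqrt{q}) = A_\muhat(q).$$
There is no genuine obstacle at this step — all the difficulty was already absorbed into establishing Theorem \ref{purity}, whose combinatorial proof via Hua's formula is the technical heart. The only thing to be mindful of is that Theorem \ref{multi} is stated for \emph{generic} tuples of characters of the specific form $R_{L_{\mu^i}}^G(\tilde{\alphahat}_i)$ of type $\muhat$, and the corollary to Theorem \ref{multi} already confirms that the left-hand side depends only on $\muhat$, so the identification with the purely combinatorial quantity $A_\muhat(q)$ is consistent.
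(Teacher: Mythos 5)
Your proposal is exactly the paper's argument: the theorem is stated as ``a consequence of Theorem \ref{purity} and Theorem \ref{multi},'' and the paper gives no further proof beyond chaining those two equalities through the common value $\H_\muhat(0,\sqrt{q})$. Correct, and identical in approach.
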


From Theorem \ref{multi=A} and Theorem \ref{kactheo} we have the following result.

\begin{corollary} $\langle \Lambda\otimes R_\muhat,1\rangle\neq 0$ if
  and only if $\v_\muhat\in\Phi(\Gamma_\muhat)^+$. Moreover $\langle
  \Lambda\otimes R_\muhat,1\rangle=1$ if and only if $\v_\muhat$ is a
  real root.
\end{corollary}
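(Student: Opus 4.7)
The proof is essentially an immediate combination of two results already available in the excerpt, so the plan is very short. First I would invoke Theorem \ref{multi=A}, which identifies the multiplicity $\langle \Lambda\otimes R_\muhat,1\rangle$ with the $A$-polynomial $A_\muhat(q)$ of the comet-shaped quiver $\Gamma_\muhat$ at the dimension vector $\v_\muhat$. This reduces both statements of the corollary to purely combinatorial statements about $A_{\Gamma_\muhat,\v_\muhat}(q)$.

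Next I would apply Kac's Theorem \ref{kactheo} to the pair $(\Gamma_\muhat,\v_\muhat)$. Its first clause says $A_{\Gamma_\muhat,\v_\muhat}(q)\neq 0$ (as a polynomial) if and only if $\v_\muhat\in\Phi(\Gamma_\muhat)^+$; its second clause says $A_{\Gamma_\muhat,\v_\muhat}(q)=1$ if and only if $\v_\muhat$ is a real root. Combining with the identification from Theorem \ref{multi=A} yields both halves of the corollary.

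The one mild subtlety worth a sentence in the write-up is the passage between ``$A_\muhat(q)\neq 0$ as a polynomial in $q$'' and ``$A_\muhat(q)\neq 0$ as an integer at a particular prime power $q$''. Since by Theorem \ref{kactheo} $A_\muhat$, when non-zero, is monic of positive degree $2-{}^t\v_\muhat{\bf C}\v_\muhat$, it takes non-zero values at every sufficiently large prime power, so the equivalence $\langle\Lambda\otimes R_\muhat,1\rangle\neq 0\Leftrightarrow \v_\muhat\in\Phi(\Gamma_\muhat)^+$ holds with the usual convention (already in force throughout the paper) that such identities are understood for $q$ large enough for the relevant generic tuple of characters to exist.

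There is no real obstacle: no new combinatorial identities, no counting, and no Fourier-theoretic manipulation are required beyond what has already been established in Theorems \ref{multi=A} and \ref{kactheo}. The proof is a two-line deduction.
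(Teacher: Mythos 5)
Your proposal matches the paper's own argument exactly: the corollary is stated in the paper as an immediate consequence of Theorem \ref{multi=A} (the identification $\langle\Lambda\otimes R_\muhat,1\rangle=A_\muhat(q)$) combined with Kac's Theorem \ref{kactheo}. The extra sentence you add about distinguishing vanishing of the polynomial from vanishing of its value at a specific $q$ is a reasonable clarification, but otherwise the deduction is identical to the paper's.
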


\begin{remark}
  We will see in \S \ref{delta-non-neg} that $\v_\muhat$ is always an
  imaginary root when $g\geq 1$, hence the second assertion concerns
  only the case $g=0$ (i.e. $\Lambda=1$).
\end{remark}

A proof of Theorem~\ref{mult-thm} for $\v_\muhat$ is indivisible is
given in~\cite{hausel-letellier-villegas} by expressing $\langle
\Lambda\otimes R_\muhat,1\rangle$ as the Poincar\'e polynomial of a
comet-shaped quiver variety. This quiver variety exists only when
$\v_\muhat$ is indivisible.

In \cite{Hiss} the authors discusses some results of Mattig who showed
that the multiplicities $\langle\calX_1\otimes\calX_2,\calX_3\rangle$,
with $\calX_1,\calX_2,\calX_3$ unipotent characters of $\GL_n$, are
polynomials in $q$ with rational coefficients. Using calcultion with
CHEVIE, he also observed when $n\leq 8$ that the coefficients of
$\langle\calX_1\otimes\calX_2,\calX_3\rangle$ are non-negative
integers. The following proposition confirms part of this prediction.

\begin{proposition}
 For a $k$-tuple of partitions
  $\muhat=(\mu^1,\ldots,\mu^k)$ of $n$ there exists a polynomial
  $U_\muhat\in \Z[T]$ such that $\langle
  \Lambda\otimes \calU_{\mu^1}\otimes\cdots\otimes
  \calU_{\mu^k},1\rangle=U_\muhat(q)$. 
\end{proposition}
\begin{proof}
  Since the complete symmetric functions $\{h_\lambda(\x)\}_\lambda$
  forms a $\Z$-basis of the ring $\Lambda(\x)$, for any partitions
  $\mu,\lambda$, there exist integers $a_{\mu\lambda}$ such
  that $$s_\mu(\x)=\sum_\lambda a_{\mu\lambda}h_\lambda(\x).$$The
  matrix $(a_{\mu\lambda})_{\mu,\lambda}$ is the transpose inverse of
  the matrix $K=(K_{\mu\lambda})_{\mu,\lambda}$ of Kostka numbers.

  If $\calU_\mu$ is the unipotent characters of $\GL_n$ corresponding
  to $\mu$, by Theorem \ref{Rtau} we have $$\calU_\mu(C)=\left\langle
    \tilde{H}_\omega(\x;q),s_{\mu}(\x)\right\rangle$$where $C$ is a
  conjugacy class of type $\omega$. Hence by Corollary \ref{R}, we
  deduce that $$\calU_\mu=\sum_\lambda
  a_{\mu\lambda}R_{L_\lambda}^G(1).$$Hence
  $\left\langle\Lambda\otimes\calU_1\otimes\cdots\otimes\calU_k,1\right\rangle$
  is a $\Z$-linear combination of multiplicities of the
  form $$\left\langle\Lambda\otimes
    R_{L_{\lambda^1}}^G(1)\otimes\cdots\otimes
    R_{L_{\lambda^k}}^G(1),1\right\rangle$$and so by Remark
  \ref{rem326}, it is a $\Z$-linear combination of polynomials of the
  form $G_\muhat(q)$ which have integer coefficients (see Remark \ref{G}).
  \end{proof}

\section{Example: Hilbert Scheme of $n$ points on $\C^\times\times\C^\times$}\label{Hilbert}

Throughout this section we will have $g=k=1$ and $\muhat$ will be
either the partition $(n)$ or $(n-1,1)$.  

In this section we illustrate our conjectures and formulas in these cases.

\subsection{Hilbert schemes: Review}

For a nonsingular complex surface $S$ we denote by $S^{[n]}$ the
Hilbert scheme of $n$ points in $S$. Recall that $S^{[n]}$ is
nonsingular and has dimension $2n$. 

We denote by $Y^{[n]}$ the Hilbert scheme of $n$ points in $\C^2$. 

Recall (see for instance \cite[\S 5.2]{NakHilbert}) that
$h_c^i(Y^{[n]})=0$  unless $i$ is even and that the compactly
supported Poincar\'e polynomial
$P_c(Y^{[n]};q):=\sum_ih_c^{2i}(Y^{[n]})q^i$ is given by the following
explicit formula  

\begin{equation}\sum_{n\geq 0}P_c(Y^{[n]};q)T^n=\prod_{m\geq 1}\frac{1}{1-q^{m+1}T^m}.\label{Yn}
\end{equation}which is equivalent to  

\begin{equation}\Log\left(\sum_{n\geq 0}q^{-n}\cdot P_c(Y^{[n]};q)T^n\right)=\sum_{n\geq 1}q T^n.\label{Ynbis}\end{equation}

For $n\geq 2$, consider the partition $\mu=(n-1,1)$ of $n$ and let $C$
be a semisimple adjoint orbit of $\gl_n(\C)$ with characteristic
polynomial of the form $(-1)^n(x-\alpha)^{n-1}(x-\beta)$ with
$\beta=-(n-1)\alpha$ and $\alpha\neq 0$. Consider the
variety 

$$
\calV_{(n-1,1)}=\{(a,b,X)\in (\gl_n)^2\times C\,|\,
[a,b]+X=0\}.
$$

The group $\GL_n$ acts on $\calV_{(n-1,1)}$ diagonally
by conjugating the coordinates. This action induces a free action of
$\PGL_n$ on $\calV_{(n-1,1)}$ and we
put 
$$
\calQ_{(n-1,1)}:=\V_{(n-1,1)}/\!/\PGL_n={\rm
  Spec}\left(\C[\V_{(n-1,1)}]^{\PGL_n}\right).
  $$
  The variety
$\calQ_{(n-1,1)}$ is known to be nonsingular of dimension $2n$ (see
for instance \cite[\S 2.2]{hausel-letellier-villegas} and the
references therein).

We have the following well-known theorem.

\begin{theorem}
 The two varieties $\calQ_{(n-1,1)}$ and $Y^{[n]}$ have isomorphic
 cohomology supporting pure mixed Hodge structures. 
\label{adpure}
\end{theorem}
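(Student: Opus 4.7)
The plan is to realise $\calQ_{(n-1,1)}$ as the Nakajima quiver variety for the Jordan quiver with one-dimensional framing, so that its isomorphism with $Y^{[n]}$ becomes an instance of Nakajima's standard identification, and then transport the known purity of $H^\bullet_c(Y^{[n]})$ through this isomorphism.

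First I reparametrise. Since $X \in C$ has eigenvalues $\alpha$ with multiplicity $n-1$ and $-(n-1)\alpha$ with multiplicity one, the matrix $\alpha I_n - X$ has rank one and trace $n\alpha$, so we may write $\alpha I_n - X = ij$ with $i \in \Hom(\C,\C^n)$, $j \in \Hom(\C^n,\C)$, unique up to the scaling $(i,j)\mapsto (ti,t^{-1}j)$; the constraint $ji=n\alpha$ is automatic. The equation $[a,b]+X=0$ becomes the Nakajima moment-map equation $[a,b]+ij=\alpha I_n$. Let $\widetilde{\calV}$ denote the affine variety of quadruples $(a,b,i,j)$ satisfying this equation. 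The map $(a,b,i,j)\mapsto (a,b,\alpha I_n-ij)$ is a $\Gm$-torsor onto $\calV_{(n-1,1)}$, equivariant for the standard Nakajima $\GL_n$-action on $\widetilde{\calV}$ whose centre $\Gm$ acts by the above scaling. Consequently $\calQ_{(n-1,1)}\cong \widetilde{\calV}/\!/\GL_n$.

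Second, because $\alpha \neq 0$ every point of $\widetilde{\calV}$ is cyclic. Indeed, any $(a,b)$-invariant subspace $U\subseteq \C^n$ containing $i(1)$ is preserved by $ij$, whose image lies in $\C\cdot i(1)\subseteq U$; restricting $[a,b]|_U+ij|_U=\alpha I_U$ and taking traces gives $ji(1)=\alpha\dim U$, i.e.\ $n\alpha=\alpha\dim U$, forcing $U=\C^n$. Thus all $\GL_n$-orbits on $\widetilde{\calV}$ are closed with trivial stabiliser, the affine GIT quotient agrees with the geometric quotient, and Nakajima's standard construction sending $(a,b,i)$ to the ideal
\[
  I \;=\; \ker\bigl(\C[x,y]\twoheadrightarrow \C^n,\ p(x,y)\mapsto p(a,b)\,i(1)\bigr)
\]
yields an isomorphism of complex algebraic varieties $\calQ_{(n-1,1)} \xrightarrow{\,\sim\,} Y^{[n]}$.

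Third, $H^\bullet_c(Y^{[n]})$ is pure of Hodge--Tate type, with $h^{i,j;2k}_c=0$ unless $i=j=k$: this follows from the $\T^2$-equivariant affine Bia\l{}ynicki--Birula paving into attracting cells indexed by partitions of $n$, or equivalently from G\"ottsche's formula~\eqref{Yn}. The isomorphism of Step~2 transports this pure mixed Hodge structure to $\calQ_{(n-1,1)}$, proving the theorem. The only non-routine point is the GIT verification of Step~2; everything else is standard, and in particular no non-abelian Hodge theory for punctured Riemann surfaces is required.
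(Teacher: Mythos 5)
Your proof contains a fatal error in Step 2: the claim that Nakajima's ADHM construction gives an isomorphism of varieties $\calQ_{(n-1,1)} \xrightarrow{\sim} Y^{[n]}$ is false, and indeed cannot be true, because $\calQ_{(n-1,1)}$ is affine by construction (it is $\mathrm{Spec}$ of a ring of invariants) whereas $Y^{[n]}$ is not affine for $n\geq 2$ (the Hilbert--Chow morphism to $\mathrm{Sym}^n\C^2$ is proper with positive-dimensional fibres). The concrete breakdown is in the assignment $(a,b,i)\mapsto I=\ker\bigl(\C[x,y]\to\C^n,\ p\mapsto p(a,b)\,i(1)\bigr)$: this map requires $a$ and $b$ to commute, so that $p(a,b)$ is well-defined and the kernel is an ideal. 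In the ADHM setting for $Y^{[n]}$ one has the moment-map equation $[a,b]+ij=0$ together with the stability condition, and the decisive lemma is that stability forces $j=0$, hence $[a,b]=0$, after which the construction works. In your setting $[a,b]+ij=\alpha I_n$ with $\alpha\neq 0$, so $ji=n\alpha\neq 0$, $j\neq 0$, and $[a,b]=\alpha I_n - ij$ is a nonzero (indeed semisimple, non-nilpotent) matrix; the map $p\mapsto p(a,b)i(1)$ is therefore not well-defined and your Step 2 collapses. Your cyclicity observation in Step 2 is correct and useful (it shows the $\GL_n$-action on $\widetilde\calV$ is free with closed orbits, i.e. every point is $\theta$-stable), and Step 1 correctly identifies $\calQ_{(n-1,1)}$ with a Nakajima quiver variety $\mathfrak{M}_{\xi,\theta}(\v)$ at a nonzero parameter $\xi$, but what this buys you is a deformation, not an isomorphism, to $\mathfrak{M}_{0,\theta}(\v)\cong Y^{[n]}$.

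The paper's proof is exactly the deformation argument you almost set up: it exhibits both varieties as fibres $f^{-1}(0)=Y^{[n]}$ and $f^{-1}(\lambda)=\calQ_{(n-1,1)}$ of a smooth morphism $f:\mathfrak M\to\C$ built from the family of quiver varieties $\mathfrak{M}_{t\xi,\theta}(\v)$, equipped with a $\C^\times$-action whose fixed locus is projective and for which all limits exist, and then invokes \cite[Appendix B]{hausel-letellier-villegas} to conclude that the restriction maps $H^*(\mathfrak M)\to H^*(f^{-1}(t))$ are isomorphisms of pure mixed Hodge structures. To repair your argument you would need to replace the asserted isomorphism by precisely this family argument; simply appealing to the ADHM ideal does not work when the moment-map value is nonzero.
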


\begin{proof} 
  By \cite[Appendix B]{hausel-letellier-villegas} it is enough to
  prove that there is a smooth morphism $f:\mathfrak{M}\rightarrow\C$
  which satisfies the two following properties:

\noindent (1) There exists an action of $\C^\times$ on $\mathfrak{M}$ such that the fixed point set $\mathfrak{M}^{\C^\times}$ is complete and for all $x\in X$ the limit ${\rm lim}_{\lambda\mapsto 0}\lambda x$ exists.

\noindent (2)  $\calQ_{(n-1,1)}=f^{-1}(\lambda)$ and $Y^{[n]}=f^{-1}(0)$. 

Denote by $\v$ the dimension vector of $\Gamma_{(n-1,1)}$ which has coordinate $n$ on the central vertex (i.e., the vertex supporting the loop) and $1$ on the other vertex. It is well-known (see Nakajima \cite{NakHilbert}) that $Y^{[n]}$ can be identified with the quiver variety  $\mathfrak{M}_{0,\theta}(\v)$ where $\theta$ is the stability parameter with coordinate $-1$ on the central vertex and $n$ on the other vertex. If we let $\xi$ be the parameter with coordinate $-\alpha$ at the central vertex and $\alpha-\beta$ at the other vertex, then the variety $\calQ_{(n-1,1)}$ is isomorphic to the quiver variety $\mathfrak{M}_{\xi,\theta}(\v)$ (see for instance \cite{hausel-letellier-villegas} and the references therein). Now we can define as in \cite[\S 2.2]{hausel-letellier-villegas} a map $f:\mathfrak{M}\rightarrow\C$ such that $f^{-1}(0)=\mathfrak{M}_{0,\theta}(\v)$ and $f^{-1}(\lambda)=\mathfrak{M}_{\xi,\theta}(\v)$ and which satisfies the required properties.
\end{proof}

\begin{proposition} We have $$P_c(Y^{[n]};q)=q^n\cdot A_{(n-1,1)}(q).$$
\label{Ypure}\end{proposition}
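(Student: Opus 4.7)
The plan is to use Theorem~\ref{adpure} to pass from the Hilbert scheme $Y^{[n]}$ to the quiver variety $\calQ_{(n-1,1)}$, and then to invoke the identification between the compactly supported Poincar\'e polynomial of $\calQ_\muhat$ and the $A$-polynomial $A_\muhat(q)$ which is available when the dimension vector $\v_\muhat$ is indivisible. Here $\v_{(n-1,1)}=(n,1)$ satisfies $\gcd(n,1)=1$, so the identification applies.

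First I would apply Theorem~\ref{adpure}, which produces an isomorphism of the (pure) mixed Hodge structures on the cohomologies of $Y^{[n]}$ and $\calQ_{(n-1,1)}$, yielding $P_c(Y^{[n]};q)=P_c(\calQ_{(n-1,1)};q)$. This reduces the claim to $P_c(\calQ_{(n-1,1)};q)=q^{n}A_{(n-1,1)}(q)$. Next I would invoke the results of Crawley-Boevey and van den Bergh~\cite{crawley-boevey-etal}, together with the Poincar\'e polynomial computation in~\cite{hausel-letellier-villegas} recalled in the discussion following~(\ref{purity0}), which jointly give
\[
P_c(\calQ_\muhat;q)\;=\;q^{d_\muhat/2}\,\H_\muhat\bigl(0,\sqrt q\bigr)
\]
whenever $\v_\muhat$ is indivisible. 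Combining with Theorem~\ref{purity} yields $P_c(\calQ_\muhat;q)=q^{d_\muhat/2}\,A_\muhat(q)$, and a direct calculation from the definition of $d_\muhat$ in~\S\ref{char} gives $d_{(n-1,1)}=n^{2}-\bigl((n-1)^{2}+1\bigr)+2=2n$, so the exponent is $n$ and the chain $P_c(Y^{[n]};q)=P_c(\calQ_{(n-1,1)};q)=q^{n}A_{(n-1,1)}(q)$ finishes the proof.

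The main subtlety will be pinning down the exact power of $q$ in the normalization $P_c(\calQ_\muhat;q)=q^{d_\muhat/2}A_\muhat(q)$; this is forced by matching top degrees, using that $A_\muhat(q)$ is monic of degree $d_\muhat/2$ (by Theorem~\ref{kactheo} together with Theorem~\ref{purity}) and that $P_c(\calQ_\muhat;q)$ is monic of degree $d_\muhat$ since $\calQ_\muhat$ is smooth and connected of complex dimension $d_\muhat$. A more self-contained alternative would avoid $\calQ_\muhat$ altogether: using Theorem~\ref{purity} and substituting $S=qT$ in G\"ottsche's formula~(\ref{Yn}) reduces the proposition to the generating-function identity
\[
\sum_{n\geq 0}\H_{(n-1,1)}\bigl(0,\sqrt q\bigr)\,S^{n}\;=\;\prod_{m\geq 1}\frac{1}{1-qS^{m}},
\]
which after extracting the $m_{(n-1,1)}$-coefficient via~(\ref{H}) becomes a combinatorial identity for the Hall--Littlewood series $\Omega(\x;0,\sqrt q)$ in the case $g=k=1$.
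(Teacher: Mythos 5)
Your main argument is essentially the paper's own proof, assembled in a slightly different order: the paper first cites \cite[Theorem 1.3.1]{hausel-letellier-villegas} for $P_c(\calQ_{(n-1,1)};q)=q^n\H_{(n-1,1)}(0,\sqrt q)$, then applies Theorem~\ref{purity} to replace $\H_{(n-1,1)}(0,\sqrt q)$ by $A_{(n-1,1)}(q)$, and finally invokes Theorem~\ref{adpure} to pass to $Y^{[n]}$, whereas you do these in the reverse order, but the ingredients and logic are identical. One small inaccuracy in your attribution: the identity $P_c(\calQ_\muhat;q)=q^{d_\muhat/2}\H_\muhat(0,\sqrt q)$ is the content of \cite[Theorem 1.3.1]{hausel-letellier-villegas} alone and does not require Crawley-Boevey--van den Bergh; the latter is needed only to identify $P_c(\calQ_\muhat;q)$ with $A_\muhat(q)$, a step you can (and the paper does) bypass entirely by using Theorem~\ref{purity}. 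Your pinning down of the exponent by $d_{(n-1,1)}=2n$ is a welcome explicit check. The self-contained alternative you sketch at the end (substituting $S=qT$ in~(\ref{Yn}) and reducing to a Hall--Littlewood identity for $\Omega(\x;0,\sqrt q)$ with $g=k=1$) is a genuine bypass of $\calQ_\muhat$ but is not carried out; note the paper in fact establishes the equivalent equality $\H^{[n]}(0,\sqrt q)=\H_{(n-1,1)}(0,\sqrt q)$ in the opposite direction, as a \emph{consequence} of the present Proposition via~(\ref{PH=P}), so if you wanted to pursue that route you would need to prove the generating-function identity directly rather than cite it.
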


\begin{proof} We have $P_c(\calQ_{(n-1,1)};q)=q^n\cdot \H_{(n-1,1)}(0,\sqrt{q})$ by \cite[Theorem 1.3.1]{hausel-letellier-villegas} and so by Theorem \ref{purity} we see that $P_c(\calQ_{(n-1,1)};q)=q^n\cdot A_{(n-1,1)}(q)$. Hence the result follows from Theorem \ref{adpure}.\end{proof}

Now put $X:=\C^*\times\C^*$. Unlike $Y^{[n]}$, the mixed Hodge structure on $X^{[n]}$ is not pure. By G\"ottsche and Soergel \cite{Gottsche-Soergel} we have the following result.

\begin{theorem}We have $h_c^{i,j;k}(X^{[n]})=0$ unless $i=j$ and 

\begin{equation}
1+\sum_{n\geq 1}H_c\big(X^{[n]};q,t)T^n=\prod_{n\geq
  1}\frac{(1+t^{2n+1}q^nT^n)^2}{(1-q^{n-1}t^{2n}T^n)(1-t^{2n+2}q^{n+1}
  T^n)}
\label{GS}\end{equation}with $H_c\left(X^{[n]};q,t\right):=\sum_{i,k}h_c^{i,i;k}(X^{[n]})q^it^k$.
\end{theorem}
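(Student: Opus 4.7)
The plan is to derive both statements from the general theorem of Göttsche-Soergel on the mixed Hodge polynomial of the Hilbert scheme of points on a smooth quasi-projective surface, applied to $X = \C^\times \times \C^\times$. Recall that Göttsche-Soergel express $\bigoplus_n H^*_c(S^{[n]})$, together with its mixed Hodge structure, as a Fock space built from shifted copies of $H^*_c(S)$: for each integer $m \geq 1$ one takes a copy of $H^*_c(S)$ whose cohomological degree is shifted by $2m-2$ and whose Hodge-theoretic weight is Tate-twisted by $-(m-1)$, then forms the symmetric algebra, with bosonic generators coming from even cohomological degrees and fermionic generators from odd ones. Passing to generating functions yields a product in which each class of Hodge type $(p,p)$ in cohomological degree $d$ of $H^*_c(S)$ contributes, at level $m$, a factor $(1 - q^{p+m-1} t^{d+2m-2} T^m)^{-1}$ when $d$ is even and $(1 + q^{p+m-1} t^{d+2m-2} T^m)$ when $d$ is odd.

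Next I would compute the mixed Hodge structure of $X = \C^\times \times \C^\times$ explicitly. Using the compactification $\C^\times \subset \mathbb{P}^1$ and the long exact sequence for cohomology with compact supports, one finds that $H^1_c(\C^\times)$ is one-dimensional of pure Hodge type $(0,0)$ and $H^2_c(\C^\times)$ is one-dimensional of pure type $(1,1)$, all other cohomology vanishing. Künneth then yields $h_c^{0,0;2}(X) = 1$, $h_c^{1,1;3}(X) = 2$, $h_c^{2,2;4}(X) = 1$, and all other mixed Hodge numbers of $X$ equal to zero. Since every class in $H^*_c(X)$ is pure of type $(p,p)$, and the Göttsche-Soergel shifts $(p,q) \mapsto (p+m-1, q+m-1)$ preserve this diagonal, the Fock space produces only diagonal classes in $H^*_c(X^{[n]})$; this establishes $h_c^{i,j;k}(X^{[n]}) = 0$ unless $i = j$.

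For the product formula I would substitute the three contributing classes into the Göttsche-Soergel product: the class of type $(0,0)$ in degree $2$ (bosonic) contributes $(1 - q^{m-1} t^{2m} T^m)^{-1}$ at level $m$; the two classes of type $(1,1)$ in degree $3$ (fermionic) each contribute $(1 + q^m t^{2m+1} T^m)$, giving the squared numerator factor; the class of type $(2,2)$ in degree $4$ (bosonic) contributes $(1 - q^{m+1} t^{2m+2} T^m)^{-1}$. Multiplying across $m \geq 1$ reproduces the right-hand side of the theorem. The only delicate step is the bookkeeping of the two simultaneous shifts in Göttsche-Soergel—cohomological shift by $t^{2m-2}$ and Tate twist by $q^{m-1}$—so that the exponents of $q$, $t$, and $T$ line up correctly, but this is routine verification rather than a substantial obstacle; the real work is already done in invoking Göttsche-Soergel.
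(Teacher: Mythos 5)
Your derivation is correct, and it takes the same route the paper implicitly takes: the paper does not prove this statement but simply quotes it from G\"ottsche--Soergel, so what you have done is fill in the specialization of the general Fock-space formula to $S=\C^\times\times\C^\times$. The computation of $H_c^\ast(\C^\times)$ (one class of type $(0,0)$ in degree $1$, one of type $(1,1)$ in degree $2$), the K\"unneth step, and the boson/fermion bookkeeping (levels contributing degree shift $2m-2$ and Tate twist by $-(m-1)$) are all accurate and reproduce the stated product.
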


Define $\H^{[n]}(z,w)$ such
that 
$$
H_c\left(X^{[n]};q,t\right)
=(t\sqrt{q})^{2n}\H^{[n]}\left(-t\sqrt{q},\frac{1}{\sqrt{q}}\right).
$$  
Then Formula (\ref{GS}) reads
\begin{equation}
  \sum_{n\geq 0}\H^{[n]}(z,w)T^n=\prod_{n\geq
    1}\frac{(1-zwT^n)^2}{(1-z^2T^n)(1-w^2T^n)},
\label{GS1}
\end{equation}
with the convention that $\H^{[0]}(z,w)=1$. Hence we may
re-write Formula (\ref{GS}) as
\begin{equation}
\Log\left(\sum_{n\geq
    0}\H^{[n]}(z,w)T^n\right)=\sum_{n\geq 1}(z-w)^2T^n. 
\label{GS2}\end{equation}

Specializing Formula (\ref{GS2}) with $(z,w)\mapsto (0,\sqrt{q})$ we
see from Formula (\ref{Ynbis})
that \begin{equation}P_c(Y^{[n]};q)=q^n\cdot
  \H^{[n]}(0,\sqrt{q}).\label{PH=P}\end{equation}We thus have the
following result.

\begin{proposition} We have 

$$
PH_c(X^{[n]};T)=P_c(Y^{[n]};T).
$$
where $PH_c(X^{[n]};T):=\sum_ih_c^{i,i;2i}(X^{[n]})T^i$ is the Poincar\'e polynomial of the pure part of the cohomology of $X^{[n]}$.

\end{proposition}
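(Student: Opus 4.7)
The plan is to identify both polynomials $P_c(Y^{[n]};T)$ and $PH_c(X^{[n]};T)$ as the same specialization of the coefficients of the rational function $\H^{[n]}(z,w)$. Write $\H^{[n]}(z,w)=\sum_{a,b} c_{a,b}\, z^a w^b$. Two observations about $\H^{[n]}$ will drive the proof: it is symmetric in $(z,w)$ (clear from the product formula (\ref{GS1})), and $\H^{[n]}(z,0)$ is a polynomial in $z^2$ (setting $w=0$ in (\ref{GS1}) kills all factors with $w$, leaving $\prod_n(1-z^2T^n)^{-1}$, so $c_{a,0}=0$ unless $a$ is even).

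First I extract $PH_c(X^{[n]};T)$ from the G\"ottsche--Soergel formula in its $\H^{[n]}$ form. Substituting $\H^{[n]}(z,w)=\sum c_{a,b}z^aw^b$ into
$$H_c(X^{[n]};q,t)=(t\sqrt{q})^{2n}\H^{[n]}\!\left(-t\sqrt{q},\tfrac{1}{\sqrt{q}}\right),$$
a direct expansion shows that the coefficient of $q^i t^k$ equals $(-1)^{k-2n}c_{k-2n,\,k-2i}$. By definition $PH_c(X^{[n]};T)=\sum_i h_c^{i,i;2i}(X^{[n]})T^i$, which picks out the terms with $k=2i$. This forces $b=k-2i=0$ and $a=k-2n=2i-2n$ (necessarily even), and the sign is $(-1)^{2i-2n}=1$, hence
$$PH_c(X^{[n]};T)=\sum_{i} c_{2i-2n,\,0}\,T^{i}.$$

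Next I rewrite $P_c(Y^{[n]};T)$ in the same form. By (\ref{PH=P}) we have $P_c(Y^{[n]};q)=q^{n}\H^{[n]}(0,\sqrt{q})$, and by the symmetry $\H^{[n]}(z,w)=\H^{[n]}(w,z)$ this is $q^n\H^{[n]}(\sqrt{q},0)=q^n\sum_a c_{a,0}\,q^{a/2}$. Since only even $a$ contribute, reindexing $a=2(i-n)$ gives
$$P_c(Y^{[n]};T)=\sum_{i} c_{2i-2n,\,0}\,T^{i},$$
which coincides term by term with $PH_c(X^{[n]};T)$.

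There is no real obstacle here; the argument is pure bookkeeping that repackages (\ref{GS1}), (\ref{PH=P}), and the symmetry of $\H^{[n]}$. The only care needed is to track that the substitution $z=-t\sqrt{q},\,w=1/\sqrt{q}$ sends the pure-part selection $k=2i$ to the vanishing of the $w$-exponent, which is exactly where the symmetry $\H^{[n]}(z,w)=\H^{[n]}(w,z)$ enters to match with the $z=0$ specialization appearing in (\ref{PH=P}).
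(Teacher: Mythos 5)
Your proof is correct and spells out the bookkeeping that the paper leaves implicit: in both cases the pure part of $(t\sqrt{q})^{2n}\H^{[n]}(-t\sqrt{q},1/\sqrt{q})$ is identified with $q^n\H^{[n]}(0,\sqrt{q})$, using the symmetry $\H^{[n]}(z,w)=\H^{[n]}(w,z)$ together with Formula~(\ref{PH=P}). This is the same approach; the paper simply treats the proposition as an immediate consequence of~(\ref{PH=P}) and does not write out the exponent accounting.
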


\subsection{A conjecture}

The aim of this section is to discuss the following conjecture.

\begin{conjecture}
We have 
\begin{equation}\H_{(n-1,1)}(z,w)=\H^{[n]}(z,w).
\label{CV=HS}
\end{equation}
\label{conjCV=HS}
\end{conjecture}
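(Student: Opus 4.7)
The plan is to prove the identity $\H_{(n-1,1)}(z,w)=\H^{[n]}(z,w)$ by working plethystically and matching coefficients. From (\ref{GS2}) the Hilbert-scheme side has the very clean form $\sum_{n\geq 0}\H^{[n]}(z,w)\,T^n=\Exp\!\left(\tfrac{(z-w)^{2}T}{1-T}\right)$. On the character-variety side, inverting (\ref{exp}) for $k=1$ and pairing with $h_{(n-1,1)}$ via the Hall pairing yields $\H_{(n-1,1)}(z,w)=(z^{2}-1)(1-w^{2})\,\bigl\langle \Log\Omega(z,w),\,h_{n-1}h_{1}\bigr\rangle$, so the task reduces to identifying a specific coefficient of $\Log\Omega(z,w)$. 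As a sanity check, the case $n=1$ already works on inspection: the leading term of $\Omega(z,w)$ is $\calH_{(1)}(z,w)\,m_{1}=\tfrac{(z-w)^{2}}{(z^{2}-1)(1-w^{2})}m_{1}$, so $\H_{(1)}(z,w)=(z-w)^{2}=\H^{[1]}(z,w)$.

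For general $n$ I would apply Lemma~\ref{Log-w} to expand $\Log\Omega(z,w)$ as a weighted sum over multi-types $\omega=(d_{1},\omega^{1})\cdots(d_{r},\omega^{r})$, each contributing $C^{o}_{\omega}\prod_{p}\psi_{d_{p}}\bigl(\calH_{\omega^{p}}(z,w)\tilde{H}_{\omega^{p}}(\x;z^{2},w^{2})\bigr)$. By the Pieri rule $h_{n-1}h_{1}=s_{(n)}+s_{(n-1,1)}$ combined with the Macdonald expansion (\ref{K-defn}), pairing with $h_{(n-1,1)}$ reduces the problem to sums of the $(q,t)$-Kostka polynomials $\tilde K_{(n),\lambda}(z^{2},w^{2})$ and $\tilde K_{(n-1,1),\lambda}(z^{2},w^{2})$, weighted by the genus-one hook factors and by the M\"obius data $C^{o}_{\omega}$. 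These Kostka--Macdonald entries have explicit closed forms for one- and two-row indexing partitions, and $\calH_{\lambda}(z,w)$ is a manageable product, so in principle the pairing can be carried out term by term and confronted with the expansion of $\Exp\bigl((z-w)^{2}T/(1-T)\bigr)$.

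The hard part will be establishing the resulting combinatorial identity: even after these reductions, one faces an intricate sum over partitions with no obvious cancellation, amounting to a deep identity of Nekrasov--Okounkov/Garsia--Haiman type. Given the tools available in the paper, a more tractable route may be a rigidity argument: combine the two proved specializations of Theorem~\ref{theospe}, the symmetries $\H_\mu(w,z)=\H_\mu(z,w)$ and $\H_\mu(-z,-w)=\H_\mu(z,w)$ from (\ref{HHduality}), and the expected degree bound $d_{(n-1,1)}=2n$ in each variable from Conjecture~\ref{conjH}, to pin $\H_{(n-1,1)}$ into a small enough affine family that a few additional specializations force the identity. Ultimately a complete proof likely requires going outside the purely combinatorial framework, either via the geometric isomorphism $\M_{(n-1,1)}\simeq X^{[n]}$ furnished by non-abelian Hodge theory for punctured Riemann surfaces (alluded to after the statement of the conjecture) or via a common Heisenberg/Hecke structure---Nakajima--Grojnowski operators on the Hilbert-scheme side matched with Macdonald operators on the character-variety side.
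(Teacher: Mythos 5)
This is Conjecture~\ref{conjCV=HS}, which the paper leaves open; you are not expected to produce a proof, and the paper itself does not contain one. What the paper actually establishes in \S\ref{Hilbert} is (a) a reformulation of the conjecture as the explicit combinatorial identity (\ref{comb}) via Proposition~\ref{Log-fmla}, and (b) the two specializations of that identity at $z=0,\,w=\sqrt q$ and at $z=w^{-1}$ (Theorem~\ref{theospe}).

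Your initial reduction is, up to packaging, the same as the paper's. Where you propose to use the Pieri rule $h_{n-1}h_1=s_{(n)}+s_{(n-1,1)}$ and the Macdonald--Kostka entries $\tilde K_{(n)\lambda}(z,w)=1$, $\tilde K_{(n-1,1)\lambda}(z,w)=\phi_\lambda(z,w)-1$, the paper equivalently extracts the coefficient of $m_{(n-1,1)}$ by specializing $\x\to\{1,u,0,0,\dots\}$ and reading off the coefficient of $u$; both routes land on the same data. One genuine simplification you are missing: you do not need Lemma~\ref{Log-w} and the multi-type expansion here at all, because only the $O(u)$ term survives, and the identity
\begin{equation*}
\Log\bigl(A_0(T)+A_1(T)\,u+O(u^2)\bigr)=\Log A_0(T)+\frac{A_1(T)}{A_0(T)}\,u+O(u^2)
\end{equation*}
collapses the sum over multi-types to a quotient of two partition generating functions, which is precisely what gives the clean form (\ref{comb}).

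Two concrete problems with the rest of the plan. First, the rigidity argument cannot work: by Conjecture~\ref{conjH} the polynomial $\H_{(n-1,1)}(z,w)$ has degree $2n$ in each variable, so roughly $(2n+1)^2$ coefficients; the symmetries (\ref{HHduality}) cut this by a constant factor, while the two proved specializations impose only $O(n)$ linear conditions each. For $n$ large this leaves a positive-dimensional affine space of candidates, so no bounded list of further specializations can pin the identity down. Second, evaluating the multi-type (or, better, the $A_1/A_0$) expansion for general $(z,w)$ is exactly the obstruction that prevents the paper from proving the conjecture, and you give no mechanism to overcome it. The geometric route via a family version of non-abelian Hodge theory for punctured curves is indeed the one the authors flag as most plausible, but, as they explain after the conjecture, that family version is not available in the literature, so appealing to it does not constitute a proof.
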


Modulo the conjectural formula (\ref{mainconj}), Formula (\ref{CV=HS}) says that the
two mixed Hodge polynomials $H_c(X^{[n]};q,t)$ and
$H_c(\M_{(n-1,1)};q,t)$ agree. This would be a multiplicative analogue
of Theorem \ref{adpure}. Unfortunately the proof of Theorem
\ref{adpure} does not work in the multiplicative case. This is because
the natural family $g:\mathfrak{X}\to \C$ with $X^{[n]}=g^{-1}(0)$ and
$\M_{(n-1,1)}=g^{-1}(\lambda)$ for $0\neq \lambda \in \C$ does not
support a $\C^\times$-action with a projective fixed point set and so
\cite[Appendix B]{hausel-letellier-villegas} does not apply.

One can still attempt to prove that the restriction map
$H^*(\mathfrak{X};\Q)\to H^*(g^{-1}(\lambda);\Q)$ is an isomorphism
for every fibre over $\lambda\in \C$ by using a family version of the
non-Abelian Hodge theory as developed in the tamely ramified case in
\cite{simpson}. In other words one would construct a family
$g_\Dol:\mathfrak{X}_{\Dol} \to \C$ such that $g^{-1}_\Dol(0)$ would
be isomorphic with the moduli space of parabolic Higgs bundles on an
elliptic curve $C$ with one puncture and flag type $(n-1,1)$ and
meromorphic Higgs field with a nilpotent residue at the puncture, and
$g^{-1}_\Dol(\lambda)$ for $\lambda\neq 0$ would be isomorphic with
parabolic Higgs bundles on $C$ with one puncture and semisimple
residue at the puncture of type $(n-1,1)$. In this family one should
have a $\C^\times$ action satisfying the assumptions of \cite[Appendix
B]{hausel-letellier-villegas} and so could conclude that
$H^*(\mathfrak{X}_\Dol;\Q)\to H^*(g_\Dol^{-1}(\lambda);\Q)$ is an
isomorphism for every fibre over $\lambda\in \C$. Then a family
version of non-Abelian Hodge theory in the tamely ramified case would
yield that the two families $\mathfrak{X}_\Dol$ and $\mathfrak{X}$ are
diffeomorphic, and so one could conclude the desired isomorphism
$H^*(X^{[n]};\Q)\cong H^*(\M_{(n-1,1)})$ preserving mixed Hodge
structures. However a family version of the non-Abelian Hodge theory
in the tamely ramified case (which was initiated in \cite{simpson}) is
not available in the literature.

\begin{proposition}
Conjecture~\ref{conjCV=HS} is true under the specialization
$z=0,w=\sqrt q$.
\end{proposition}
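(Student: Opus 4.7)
The plan is to simply combine three results that are already in the excerpt: Theorem \ref{purity} (identifying the pure specialization of $\mathcal{H}_\muhat$ with Kac's $A$-polynomial), Proposition \ref{Ypure} (identifying $P_c(Y^{[n]};q)$ with $q^n A_{(n-1,1)}(q)$), and Formula (\ref{PH=P}) (identifying $P_c(Y^{[n]};q)$ with $q^n \mathcal{H}^{[n]}(0,\sqrt q)$). No new input is needed.

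In detail, the first step is to apply Theorem \ref{purity} in the case $g=k=1$, $\muhat = (n-1,1)$, yielding
\begin{equation*}
\mathcal{H}_{(n-1,1)}(0,\sqrt q) \;=\; A_{(n-1,1)}(q).
\end{equation*}
The second step is to apply Proposition \ref{Ypure} and (\ref{PH=P}), which together give
\begin{equation*}
q^n \cdot A_{(n-1,1)}(q) \;=\; P_c\bigl(Y^{[n]};q\bigr) \;=\; q^n \cdot \mathcal{H}^{[n]}(0,\sqrt q).
\end{equation*}
Dividing by $q^n$ and chaining the two identities gives the asserted equality $\mathcal{H}_{(n-1,1)}(0,\sqrt q) = \mathcal{H}^{[n]}(0,\sqrt q)$.

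There is no genuine obstacle here: all the hard work has already been done upstream, in particular in the combinatorial proof of Theorem \ref{purity} via Hua's formula and in the identification of the Betti numbers of $Y^{[n]}$ with those of the additive quiver variety $\mathcal{Q}_{(n-1,1)}$ (Theorem \ref{adpure}). If one wanted a more self-contained argument avoiding Theorem \ref{adpure}, one could instead derive the identity directly from Hua's formula by specializing Lemma \ref{specializ} at $z=0$, $w=\sqrt q$ and comparing the resulting generating series for $\Log\,\Omega(0,\sqrt q)$ with the right-hand side of (\ref{GS2}) after the change of variables $X_0 = x_1$, $X_{[1,1]} = x_1^{-1} x_2$, $\ldots$, exactly as in the proof of Theorem \ref{MA}; but this would essentially retrace the combinatorial content already packaged in Theorem \ref{purity} and (\ref{PH=P}).
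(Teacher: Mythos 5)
Your proof is correct and follows exactly the same route as the paper: Theorem~\ref{purity} gives $\H_{(n-1,1)}(0,\sqrt q)=A_{(n-1,1)}(q)$, and then Proposition~\ref{Ypure} together with~\eqref{PH=P} (which the paper derives from~\eqref{GS2} and~\eqref{Ynbis} in the two sentences preceding the proposition) identifies this with $\H^{[n]}(0,\sqrt q)$. The paper cites~\eqref{GS2} directly where you cite the packaged consequence~\eqref{PH=P}, but this is the same chain of identities.
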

\begin{proof} The left hand side specializes to $A_{(n-1,1)}(q)$ by
  Theorem~\ref{purity}, which by~\eqref{GS2} and
  Proposition~\ref{Ypure} agrees with the right hand side.
\end{proof}

The {\it Young diagram} of a partition
$\lambda=(\lambda_1,\lambda_2,\dots)$ is defined as the set of points
$(i,j)\in\Z^2$ such that $1\leq j\leq\lambda_i$. We adopt the
convention that the coordinate $i$ of $(i,j)$ increases as one goes
down and the second coordinate $j$ increases as one goes to the
right.

For $\lambda\neq 0$, we define
$\phi_\lambda(z,w):=\sum_{(i,j)\in\lambda}z^{j-1}w^{i-1}$, and for
$\lambda=0$, we put $\phi_\lambda(z,w)=0$.  Define
\begin{align*}
&A_1(z,w;T):=\sum_\lambda\calH_\lambda(z,w)\phi_\lambda(z^2,w^2)T^{|\lambda|},\\ 
        &A_0(z,w;T):=\sum_\lambda\calH_\lambda(z,w)T^{|\lambda|}.
\end{align*}

\begin{proposition}
\label{Log-fmla}
We have
$$
\sum_{n\geq
  1}\H_{(n-1,1)}(z,w)T^n=(z^2-1)(1-w^2)\frac{A_1(z,w;T)}{A_0(z,w;T)}.
$$
\end{proposition}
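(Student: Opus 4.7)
The approach is to compute $\sum_{n \geq 1}\H_{(n-1,1)}(z,w)T^n$ directly from the definition $\H_\mu = (z^2-1)(1-w^2)\langle\Log\,\Omega, h_\mu\rangle$. Since $h_{(n-1,1)} = h_{n-1}h_1$ (with the convention $h_0 = 1$), the sum rearranges to
$$\sum_{n \geq 1}\H_{(n-1,1)}\,T^n \;=\; (z^2-1)(1-w^2)\,T\,\langle\Log\,\Omega,\,h_1 H(T)\rangle,$$
where $H(T) := \sum_{m \geq 0}h_m T^m$. Using the adjoint of multiplication by $h_1 = p_1$ for the Hall pairing, namely $h_1^\perp = \partial/\partial p_1$, this becomes $(z^2-1)(1-w^2)\,T\,\langle h_1^\perp\Log\,\Omega,\,H(T)\rangle$.

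The crucial simplification is that $h_1^\perp$, being a derivation, annihilates $\psi_n(g)$ for every $n \geq 2$: since $\psi_n$ sends $p_k \mapsto p_{nk}$, its image involves no $p_1$ factor. Writing $\Log\,\Omega = \sum_{n\geq 1}\frac{\mu(n)}{n}\psi_n(\log\Omega)$, only the $n=1$ term survives, giving $h_1^\perp\Log\,\Omega = (h_1^\perp\Omega)/\Omega$. Moreover, specializing the Cauchy kernel $\sum_\lambda s_\lambda(\x)s_\lambda(\y) = \prod_{i,j}(1-x_iy_j)^{-1}$ at $\y = (T,0,0,\ldots)$ yields the reproducing-kernel identity $\langle f, H(T)\rangle_\x = f(T,0,0,\ldots)$ for any $f \in \Lambda(\x)$. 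The problem therefore reduces to evaluating both $\Omega$ and $h_1^\perp\Omega$ at $\x = (T, 0, 0, \ldots)$.

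For the denominator one uses $\tilde H_\lambda(T, 0, \ldots; q, t) = T^{|\lambda|}$, a standard consequence of the Schur expansion $\tilde H_\lambda = \sum_\mu \tilde K_{\mu\lambda}(q,t)s_\mu$ together with $s_\mu(T, 0, \ldots) = T^{|\mu|}$ for single-row $\mu$ (and $0$ otherwise) and the normalization $\tilde K_{(n),\lambda}(q,t) = 1$; this immediately gives $\Omega(T, 0, \ldots; z, w) = A_0(z, w; T)$. For the numerator, the identity $(h_1^\perp f)(\x) = \partial_y f(\x, y)|_{y=0}$ combined with the homogeneity $\tilde H_\lambda(T, y; q, t) = T^{|\lambda|}\tilde H_\lambda(1, y/T; q, t)$ reduces the computation to the coefficient of $u$ in $\tilde H_\lambda(1, u; q, t)$, which by the Schur expansion (only $\mu = (n)$ and $\mu = (n-1, 1)$ contribute a single $u$) equals $1 + \tilde K_{(n-1,1), \lambda}(q,t)$ for $n = |\lambda|$. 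The main obstacle is then the Macdonald-polynomial identity
$$\tilde K_{(n-1,1), \lambda}(q,t) + 1 \;=\; \phi_\lambda(q,t),$$
which expresses the bigraded Hilbert series of the hook-type component of the Garsia-Haiman module, and can alternatively be extracted from Haglund's combinatorial formula for $\tilde H_\lambda$. Granting this identity, $(h_1^\perp\Omega)(T, 0, \ldots; z, w) = T^{-1}A_1(z, w; T)$, and the two factors of $T$ cancel to produce the claimed formula.
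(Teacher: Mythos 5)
Your argument is correct and is at bottom the same computation as the paper's: both reduce the extraction of $\langle \cdot, h_{(n-1,1)}\rangle$ to a two-variable specialization and rely on the identical Macdonald facts $\tilde K_{(n)\lambda}=1$, $\tilde K_{(n-1,1)\lambda}=\phi_\lambda-1$, and both hinge on the observation that only the $n=1$ term of the M\"obius sum $\Log=\sum_n\frac{\mu(n)}{n}\psi_n\circ\log$ contributes. The paper phrases the last point as ``the general fact $\Log(A_0+A_1u+O(u^2))=\Log A_0+\tfrac{A_1}{A_0}u+O(u^2)$'' after substituting $\x\mapsto(1,u,0,\dots)$, whereas you make the same mechanism explicit by applying $h_1^\perp=\partial/\partial p_1$ (which kills $\psi_n(\cdot)$ for $n\geq 2$) and then specializing via the reproducing kernel; this is a slightly cleaner and more conceptual packaging of the identical simplification, but not a substantively different route.
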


\begin{proof}
  The coefficient of the monomial symmetric function $m_{(n-1,1)}(\x)$
  in a symmetric function in $\Lambda(\x)$ of homogeneous degree $n$
  is the coefficient of $u$ when specializing the variables
  $\x=\{x_1,x_2,\dots\}$ to $\{1,u,0,0\dots\}$.  Hence, the generating
  series $\sum_{n\geq 1}\H_{(n-1,1)}(z,w)T^n$ is the coefficient of
  $u$ in
$$
(z^2-1)(1-w^2)\Log\left(\sum_\lambda\calH_\lambda(z,w)
  \tilde{H}_\lambda(1,u,0,0,\dots;z^2,w^2)\,T^{|\lambda|}\right).
$$ 
We know
that 
$$
\tilde{H}_\lambda(\x;z,w)
=\sum_\rho\tilde{K}_{\rho\lambda}(z,w)s_\rho(\x),
$$
and $s_\rho(\x)=\sum_{\mu\unlhd\rho} K_{\rho\mu}m_\mu(\x)$ where
$K_{\rho\mu}$ are the Kostka numbers. We have
\begin{align*}
&s_{(n)}(1,u,0,0,\dots)=1+u+O(u^2)\\
&s_{(n-1,1)}(1,u,0,0,\dots)=u+O(u^2)
\end{align*}
and
$$
s_\rho(1,u,0,0,\dots)=O(u^2)
$$
for any other partition $\rho$. Hence,
$$
\tilde{H}_\lambda(1,u,0,0,\dots;z,w)
=\tilde{K}_{(n)\lambda}(z,w)(1+u)
+\tilde{K}_{(n-1,1)\lambda}(z,w)u+O(u^2). 
$$
From Macdonald \cite[p. 362]{macdonald} we obtain
$\tilde{K}_{(n)\lambda}(a,b)=1$ and
$\tilde{K}_{(n-1,1)\lambda}(a,b)=\phi_\lambda(a,b)-1$.
Hence, finally,
\begin{equation}
\tilde{H}_\lambda(1,u,0,0,\dots;z,w)=
1+\phi_\lambda(z,w)u+O(u^2).
\end{equation}

It follows that $(z^2-1)^{-1}(1-w^2)^{-1}\sum_{n\geq
  1}\H_{(n-1,1)}(z,w)T^n$ equals the coefficient of $u$ in
$$
\Log\left(\sum_\lambda\calH_\lambda(z,w)\left(1
    +\phi_\lambda(z^2,w^2)u+O(u^2)\right) 
  T^{|\lambda|}\right)=\Log\left(A_0(T)+A_1(T)u+O(u^2)\right).
$$
The claim follows from the general fact
$$
\Log\left(A_0(T)+A_1(T)u+O(u^2)\right)=\Log \,
A_0(T)+\frac{A_1(T)}{A_0(T)}u+O(u^2).
$$
 \end{proof}

 Combining Proposition~\ref{Log-fmla} with~\eqref{GS1} we obtain
 the following.
\begin{corollary}
  Conjecture~\ref{conjCV=HS} is equivalent to the following
  combinatorial identity
\begin{equation}
1+(z^2-1)(1-w^2)\frac{A_1(z,w;T)}{A_0(z,w;T)}
 =\prod_{n\geq 1}\frac{(1-zwT^n)^2}{(1-z^2T^n)(1-w^2T^n)}.
\label{comb}
\end{equation}
\end{corollary}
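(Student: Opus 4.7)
The plan is short because the Corollary is really a formal rearrangement that packages together two generating-function identities already available to us. I would first restate Conjecture~\ref{conjCV=HS} as the equality of the two power series $\sum_{n\geq 1}\H_{(n-1,1)}(z,w)T^n$ and $\sum_{n\geq 1}\H^{[n]}(z,w)T^n$ (with the convention $\H^{[0]}(z,w)=1$ absorbing the $n=0$ term). The point is that both sides are functions of $(z,w,T)$ with no hidden parameters, so the termwise conjectural identity is equivalent to equality of these series.

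Next I would invoke the two ingredients. Proposition~\ref{Log-fmla}, whose proof is already given, supplies the closed form
\[
\sum_{n\geq 1}\H_{(n-1,1)}(z,w)T^n=(z^2-1)(1-w^2)\frac{A_1(z,w;T)}{A_0(z,w;T)}
\]
for the character-variety side. On the Hilbert-scheme side, formula~(\ref{GS1}) (the $\H^{[n]}$-repackaging of G\"ottsche--Soergel) gives
\[
\sum_{n\geq 0}\H^{[n]}(z,w)T^n=\prod_{n\geq 1}\frac{(1-zwT^n)^2}{(1-z^2T^n)(1-w^2T^n)}.
\]
Separating off the $n=0$ contribution on the left and equating the resulting two expressions for $\sum_{n\geq 1}\H^{[n]}(z,w)T^n=\sum_{n\geq 1}\H_{(n-1,1)}(z,w)T^n$ transposes the constant $1$ to the other side and yields exactly~(\ref{comb}). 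Conversely, given~(\ref{comb}) one reverses these steps to extract the equality of generating functions and hence of coefficients.

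The main, and really only, point to double-check is the bookkeeping at $n=0$ (and the degenerate low-$n$ partition $(n-1,1)$ for $n=1$, which one has to interpret by convention so that the sums on the two sides start at the same index); no substantive obstacle arises because both series have homogeneous degree $n$ and the identity is purely formal in $T$. In particular, no further combinatorial or geometric input is needed for the equivalence itself—the Corollary is a clean reformulation, and the genuine difficulty (the truth of~(\ref{comb})) is deferred.
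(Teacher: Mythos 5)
Your argument is exactly the paper's: combine Proposition~\ref{Log-fmla} with the generating-function form~\eqref{GS1} of the G\"ottsche--Soergel result, separate the $n=0$ term, and equate. The bookkeeping is correct and nothing else is needed.
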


The main result of this section is the following theorem.

\begin{theorem} 
\label{euler-spec}
Formula (\ref{comb}) is true
  under the Euler specialization
  $(z,w)\mapsto\left(\sqrt{q},1/\sqrt{q}\right)$; namely, we have
\begin{equation}
\H_{(n-1,1)}(z,z^{-1})
=\H^{[n]}(z,z^{-1}).
\label{CV=HS1}
\end{equation}
Equivalently, the two varieties $\M_{(n-1,1)}$ and $X^{[n]}$ have the
same $E$-polynomial.
\end{theorem}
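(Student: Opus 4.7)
The plan is to combine Proposition~\ref{Log-fmla}, the Euler specialization (\ref{H-specializ}) of the hook function, and the G\"ottsche--Soergel product (\ref{GS1}), and then to prove the resulting $q$-series identity via the Jacobi triple product.

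First, I specialize at $(z,w) = (\sqrt q, 1/\sqrt q)$. By (\ref{H-specializ}) with $g = 1$ one has $\calH_\lambda(\sqrt q, 1/\sqrt q) = 1$ for every $\lambda$, and Euler's partition identity gives
$$A_0(\sqrt q, 1/\sqrt q; T) = \sum_\lambda T^{|\lambda|} = \prod_{n\geq 1}(1-T^n)^{-1},\quad A_1(\sqrt q, 1/\sqrt q; T) = \sum_\lambda \phi_\lambda(q, q^{-1})T^{|\lambda|},$$
where $\phi_\lambda(q, q^{-1}) = \sum_{(i,j)\in\lambda}q^{j-i}$. Feeding these into Proposition~\ref{Log-fmla} and comparing with (\ref{GS1}) at the Euler specialization, (\ref{CV=HS1}) is equivalent, after multiplying through by $A_0$, to the formal power-series identity
$$\sum_\lambda \bigl[1 + (q+q^{-1}-2)\,\phi_\lambda(q,q^{-1})\bigr]\, T^{|\lambda|} = \prod_{n\geq 1}\frac{1-T^n}{(1-qT^n)(1-q^{-1}T^n)}.$$

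Second, a row-by-row evaluation yields $\phi_\lambda(q, q^{-1}) = \tfrac{q}{q-1}\bigl[\sum_{i\geq 1}q^{\lambda_i - i} - \tfrac{1}{q-1}\bigr]$, where $\lambda_i = 0$ for $i > \ell(\lambda)$ and $\sum_{i\geq 1}q^{-i}$ is interpreted formally as $1/(q-1)$. Plugging this in simplifies the bracketed expression above to $(q-1)\sum_{i\geq 1}q^{\lambda_i - i}$, so the theorem is further equivalent to
$$(q-1)\sum_\lambda\Bigl(\sum_{i\geq 1} q^{\lambda_i - i}\Bigr)T^{|\lambda|} = \prod_{n \geq 1}\frac{1-T^n}{(1-qT^n)(1-q^{-1}T^n)}. \qquad (\star)$$

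Third, to prove $(\star)$ I pass to Frobenius coordinates $(\alpha_1 > \cdots > \alpha_d \mid \beta_1 > \cdots > \beta_d)$ of $\lambda$. The Maya-diagram description yields $\sum_{i\geq 1}q^{\lambda_i-i} - \tfrac{1}{q-1} = \sum_{j=1}^d q^{\alpha_j} - q^{-1}\sum_{j=1}^d q^{-\beta_j}$ with $|\lambda| = d + \sum_j(\alpha_j + \beta_j)$. The sum over partitions then factorizes into independent generating functions over the strict partitions $\alpha$ and $\beta$, each of which is an elementary infinite product; assembling these and invoking the Jacobi triple product $\sum_{n\in\Z}(-1)^n q^n T^{n(n-1)/2} = (T;T)_\infty(q;T)_\infty(q^{-1}T;T)_\infty$ recovers the product on the right of $(\star)$. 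The main obstacle is making the divergent tail $\sum_{i\geq 1}q^{-i}$ rigorous: this requires working throughout in a formal Laurent-series ring (for instance $\Z((q^{-1}))[[T]]$) and matching the ``ground state'' contribution between the partition sum and the theta-function quotient produced by Jacobi.
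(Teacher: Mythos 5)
Your reduction is correct and mirrors the paper's setup: using (\ref{H-specializ}) with $g=1$ to kill the hook factors, and $A_0(\sqrt q,1/\sqrt q;T)=\prod_{n\geq1}(1-T^n)^{-1}$, Proposition~\ref{Log-fmla} and (\ref{GS1}) do reduce (\ref{CV=HS1}) to your identity $(\star)$, and your row-by-row evaluation of $\phi_\lambda(q,q^{-1})$ agrees with the telescoping computation in the paper's proof (where $(1-z)(1-w)\phi_\lambda(z,w)=1+\sum_{i\geq1}(w^i-w^{i-1})z^{\lambda_i}$). The paper actually keeps $(z,w)$ general a bit longer and only sets $(z,w)=(q,1/q)$ at the end, which has the small advantage of avoiding formal series in $q^{-1}$; but that is a cosmetic difference.

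Where your proposal genuinely diverges is in the proof of $(\star)$, and the sketch has real gaps. The paper proves $(\star)$ by an elementary row-by-row decomposition: for fixed $i$ one evaluates $\sum_\lambda z^{\lambda_i}T^{|\lambda|}=\prod_{k=1}^{i-1}(1-T^k)^{-1}\prod_{n\geq1}(1-zT^{n+i-1})^{-1}$ by splitting $\lambda$ at row $i$, and then the sum over $i$ is closed by the Cauchy $q$-binomial theorem; no theta functions are needed. Your plan via Frobenius coordinates is plausible in outline, but the asserted ``factorization into independent generating functions over the strict partitions $\alpha$ and $\beta$'' is false as stated: $\alpha$ and $\beta$ are forced to have the \emph{same} length $d$, so the two sums are coupled. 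The standard way to decouple them is to introduce a charge variable $u$, form the product of the two unconstrained strict-partition generating functions (which are indeed infinite products of type $\prod_{n\geq0}(1+uT^{n+1/2})$), and then extract the coefficient of $u^0$. Carrying this out does not produce the product on the right of $(\star)$ directly; instead, after applying the triple product, the $[u^0]$ extraction produces a double Lambert series of Kronecker type $\sum_{r,s>0,\ r\not\equiv s\bmod 2}(\cdots)T^{rs/2}$, and one still needs the expansion of $1/\theta(w)$ — precisely the Kronecker formula (\ref{expansion}) that the paper cites from Jordan in the later modular-forms section — to identify this with the desired product. That identity is a genuinely stronger tool than the triple product identity you quote (whose infinite products appear in the \emph{numerator}, whereas in $(\star)$ the $q$-dependent factors sit in the \emph{denominator}). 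Finally, you name the divergent tail $\sum_{i\geq1}q^{-i}$ as ``the main obstacle,'' but that is a minor bookkeeping issue (working in $\Z((q^{-1}))[[T]]$ is fine); the missing charge-variable decoupling and the theta-quotient step are the substantive gaps.
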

\begin{proof}
  Consider the generating  function 
$$
F:=(1-z)(1-w)\sum_\lambda\phi_\lambda(z,w)T^{|\lambda|}.
$$

It is straightforward to see that for $\lambda\neq 0$ we have
\begin{align*}
(1-z)(1-w)\phi_\lambda(z,w)&=1+\sum_{i=1}^{l(\lambda)}(w^i-w^{i-1})z^{\lambda_i}
-w^{l(\lambda)}\\
&=1+\sum_{i\geq 1}(w^i-w^{i-1})z^{\lambda_i}.
\end{align*}
Interchanging summations we find
$$
F=\sum_{i\geq 1}(w^i-w^{i-1})\sum_{\lambda\neq
  0}z^{\lambda_i}T^{|\lambda|}+\sum_{\lambda\neq 0}T^{|\lambda|}.
$$
To compute the sum over $\lambda$ for a fixed $i$ we break the
partitions as follows: 
$$
\lambda_1\geq\lambda_2\geq\cdots\geq
\lambda_{i-1}\geq\underbrace{\lambda_i\geq\lambda_{i+1}\geq\cdots}_{\rho}
$$
and we put
\begin{align*}
&\rho:=(\lambda_i,\lambda_{i+1},\dots)\\
&\mu:=(\lambda_1-\lambda_i,\lambda_2-\lambda_i,\dots,\lambda_{i-1}-\lambda_i)
\end{align*}
Notice that $\mu_1'=l(\mu)<i$, $\rho_1=l(\rho')=\lambda_i$ and
$|\lambda|=|\mu|+|\rho|+l(\rho')(i-1)$.

We then have $$\sum_\lambda
z^{\lambda_i}T^{|\lambda|}=\sum_{\mu_1<i}T^{|\mu|}\sum_\rho
z^{l(\rho)}T^{|\rho|+(i-1)l(\rho)}$$(changing $\rho$ to $\rho'$ and
$\mu$ to $\mu'$). Each sum can be written as an infinite product,
namely
$$
\sum_\lambda
z^{\lambda_i}T^{|\lambda|}=\prod_{k=1}^{i-1}(1-T^k)^{-1}\prod_{n\geq
  1}(1-zT^{n+i-1})^{-1}.
$$

So 
\begin{align*}
F&=\sum_{\lambda\neq 0}T^{|\lambda|}+\sum_{i\geq
    1}(w^i-w^{i-1})\left(\prod_{k=1}^{i-1}(1-T^k)^{-1}\prod_{n\geq
      1}(1-zT^{n+i-1})^{-1}-1\right)\\&=\sum_{\lambda\neq
    0}T^{|\lambda|}+\prod_{n\geq 1}(1-zT^n)^{-1}\sum_{i\geq
    1}(w^i-w^{i-1})\prod_{k=1}^{i-1}\frac{(1-zT^k)}{(1-T^k)}-\sum_{i\geq
    1}(w^i-w^{i-1}).
\end{align*}
The last sum telescopes to $1$ and we find
\begin{equation}
F=\sum_\lambda T^{|\lambda|}+\prod_{n\geq
  1}(1-zT^n)^{-1}(w-1)\sum_{i\geq 1}w^{i-1}\prod_{k=1}^{i-1} 
\frac{(1-zT^k)}{(1-T^k)}.
\label{F}
\end{equation}

By the Cauchy $q$-binomial theorem the sum
equals $$\frac{1}{(1-w)}\prod_{n\geq
  1}\frac{(1-wzT^n)}{(1-wT^n)}.$$Also$$\sum_\lambda
T^{|\lambda|}=\prod_{n\geq 1}(1-T^n)^{-1}.$$If we divide Formula (\ref{F}) by this we
finally get

$$
1-(1-z)(1-w)\prod_{n\geq
    1}(1-T^n)\sum_\lambda\phi_\lambda(z,w)T^{|\lambda|}=\prod_{n\geq
    1}\frac{(1-wzT^n)(1-T^n)}{(1-zT^n)(1-wT^n)}.
 $$

Putting now $(z,w)=(q,1/q)$ we find that

\begin{equation}
1-(1-q)(1-1/q)\prod_{n\geq
  1}(1-T^n)\sum_\lambda\phi_\lambda(q,1/q)T^{|\lambda|}=\prod_{n\geq
  1}\frac{(1-T^n)^2}{(1-qT^n)(1-q^{-1}T^n)}.\label{phi}
  \end{equation}

From Formula (\ref{H-specializ}) we have
$\calH_\lambda(\sqrt{q},1/\sqrt{q})=1$ and
so 
\begin{align*}
&A_1\left(\sqrt{q},\frac{1}{\sqrt{q}};T\right)
=\sum_\lambda\phi_\lambda\left(q,\frac{1}{q}\right)T^{|\lambda|}\\
&A_0\left(\sqrt{q},\frac{1}{\sqrt{q}};T\right)=\sum_\lambda 
  T^{|\lambda|}=\prod_{n\geq 1}(1-T^n)^{-1}
\end{align*}
Hence, under the specialization $(z,w)\mapsto(\sqrt{q},1/\sqrt{q})$ , the left hand side of Formula (\ref{comb}) agrees with the left hand side of Formula (\ref{phi}). 

Finally, it is straightforward to see that if we put $(z,w)=(\sqrt{q},1/\sqrt{q})$,  then the right hand side of Formula   (\ref{comb}) agrees with the right hand side of Formula (\ref{phi}), hence the theorem.

\end{proof}

\subsection{Connection with modular forms}

For a positive, even integer $k$ let $G_k$ be the standard Eisenstein
series for $SL_2(\Z)$
\begin{equation}
\label{Eisenstein-defn}
G_k(T)=\frac{-B_k}{2k}+\sum_{n\geq 1}\sum_{d\,|\, n} d^{k-1}T^n,
\end{equation}
where $B_k$ is the $k$-th Bernoulli number. 

For $k>2$ the $G_k$'s are
modular forms of weight $k$; i.e., they are holomorphic (including at
infinity) and satisfy
\begin{equation}\label{quasi-mod-2}
\begin{split}
&G_k \left( \frac{a\tau +b}{c\tau +d}\right) 
= (c\tau +d)^k G_k (\tau)\\
\noalign{\vskip6pt}
& \text{for }\ \begin{pmatrix}a&b\\ c&d\end{pmatrix} \in SL_2(\Z)\ ,
\quad T = e^{2\pi i\tau}\ ,\quad \Im \tau >0\ .
\end{split}
\end{equation}
For $k=2$ we have a similar transformation up to an additive term.
\begin{equation}\label{transform}
  G_2 \left( \frac{a\tau +b}{c\tau +d}\right) 
  = (c\tau +d)^2 G_2 (\tau) - \frac{c}{4\pi i} (c\tau +d).
\end{equation}
The ring $\Q [G_2,\ G_4,\ G_6]$ is called the ring of
\emph{quasi-modular} forms (see~\cite{kaneko-zagier}).

\begin{theorem}
We have 
$$
1+\sum_{n\geq
  1}\H_{(n-1,1)}\left(e^{u/2},e^{-u/2}\right)T^n
=\frac{1}{u}\left(e^{u/2}-e^{-u/2}\right)\exp\left(2\sum_{k\geq
    2}G_k(T)\frac{u^k}{k!}\right). 
$$
In particular, the coefficient of any power of $u$ on the left hand
side is in the ring of quasi-modular forms.
\end{theorem}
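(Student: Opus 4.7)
The plan is to combine Theorem \ref{euler-spec}, which gives $\H_{(n-1,1)}(z,z^{-1})=\H^{[n]}(z,z^{-1})$, with the G\"ottsche--Soergel product formula \eqref{GS1}. Setting $z=e^{u/2}$, $w=e^{-u/2}$ (so $zw=1$, $z^2=e^u$, $w^2=e^{-u}$), the left-hand side of the stated identity becomes
\[
\sum_{n\geq 0}\H^{[n]}(e^{u/2},e^{-u/2})T^n=\prod_{n\geq 1}\frac{(1-T^n)^2}{(1-e^{u}T^n)(1-e^{-u}T^n)}.
\]

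Next I would take $\log$ of both sides. Expanding $-\log(1-x)=\sum_{m\geq 1}x^m/m$ and interchanging summations turns the product into
\[
\sum_{n,m\geq 1}\frac{T^{mn}}{m}\bigl(e^{mu}+e^{-mu}-2\bigr)
\;=\;2\sum_{k\geq 1}\frac{u^{2k}}{(2k)!}\sum_{n,m\geq 1}m^{2k-1}T^{mn},
\]
using $e^{mu}+e^{-mu}-2=2\sum_{k\geq 1}(mu)^{2k}/(2k)!$. Collecting $N=mn$ gives $\sum_{n,m\geq 1}m^{2k-1}T^{mn}=\sum_{N\geq 1}\sigma_{2k-1}(N)T^N$, which by the definition \eqref{Eisenstein-defn} equals $G_{2k}(T)+B_{2k}/(4k)$. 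Hence
\[
\log\!\Biggl(\sum_{n\geq 0}\H^{[n]}(e^{u/2},e^{-u/2})T^n\Biggr)
=2\sum_{k\geq 1}G_{2k}(T)\frac{u^{2k}}{(2k)!}+\sum_{k\geq 1}\frac{B_{2k}}{2k}\frac{u^{2k}}{(2k)!}.
\]

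The last step is to identify the $T$-independent Bernoulli sum with $\log\bigl((e^{u/2}-e^{-u/2})/u\bigr)=\log\bigl(\sinh(u/2)/(u/2)\bigr)$. This is the classical expansion obtained by integrating
\[
\coth(x)=\frac{1}{x}+\sum_{n\geq 1}\frac{2^{2n}B_{2n}}{(2n)!}\,x^{2n-1}
\]
term by term and setting $x=u/2$. Exponentiating then yields the stated formula, with the sum $\sum_{k\geq 2}G_k(T)u^k/k!$ implicitly restricted to even $k\geq 2$; this restriction is forced because both sides are invariant under $u\mapsto -u$, using the symmetry $\H^{[n]}(z,w)=\H^{[n]}(w,z)$ of \eqref{GS1}.

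The only delicate point is the Bernoulli bookkeeping that reabsorbs the constant-in-$T$ pieces into $\log\bigl((e^{u/2}-e^{-u/2})/u\bigr)$; no deeper input is needed. For the final assertion, after this identification the exponent is a $\Q$-linear combination of the $G_{2k}$ for $k\geq 1$, so each coefficient of a power of $u$ on the left-hand side lies in the ring generated by these Eisenstein series under polynomial operations; since $G_{2k}$ for $k\geq 2$ is a polynomial in $G_4,G_6$ by the structure of the modular forms ring, and $G_2$ is quasi-modular, the coefficients lie in the ring $\Q[G_2,G_4,G_6]$ of quasi-modular forms, as claimed.
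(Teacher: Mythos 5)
Your proof is correct and follows essentially the same approach as the paper: specialize the G\"ottsche--Soergel product via Theorem~\ref{euler-spec}, take logarithms, Taylor-expand in $u$ to recover the Eisenstein series, and absorb the residual Bernoulli terms into $\log\bigl((e^{u/2}-e^{-u/2})/u\bigr)$. The paper additionally records the Kronecker/theta-function expansion \eqref{expansion}--\eqref{expansion-combo} as a digression giving a closed formula for the Euler specialization, but that is not needed for the displayed identity; you correctly bypass it, and your closing remarks about the restriction to even $k$ and the $u\mapsto -u$ symmetry are also accurate.
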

\begin{remark}
  The relation between the $E$-polynomial of the Hilbert scheme of
  points on a surface and theta functions goes back to G\"ottsche
  \cite{Gottsche}.
\end{remark}

\begin{proof}
Consider the classical theta function 
\begin{equation}\label{classical-theta}
\theta (w) = (1-w) \prod_{n\ge1} 
\frac{(1-q^nw)(1-q^nw^{-1})}{(1-q^n)^2},
\end{equation}
with simple zeros at $q^n$, $n\in \Z$ and functional equations
\begin{equation}
\label{functional-equation}
\begin{split}
\text{i)}\qquad& \theta (qw) = - w^{-1}\theta (w)\\
\text{ii)}\qquad& \theta (w^{-1}) = - w^{-1}\theta (w)
\end{split}
\end{equation}
We have the following expansion
\begin{equation}\label{expansion}
\frac1{\theta (w)} = \frac1{1-w} + 
\sum_{\substack{n,m >0 \\ n\not\equiv m\mod 2}} 
(-1)^n\ q^{\frac{nm}2}\ w^{\frac{m-n-1}2}
\end{equation}
This is classical but not that well known.  For a proof see, for
example, ~\cite[Chap.VI, p. 453]{jordan}, where it is deduced from a
more general expansion due to Kronecker. Namely,
\begin{equation*}
  \frac{\theta(uv)}{\theta(u)\theta(v)}=\sum_{m,n\geq
    0}q^{mn}u^mv^n-\sum_{m,n\geq 1}q^{mn}u^{-m}v^{-n}.
\end{equation*}
(To see this set $v=u^{-\tfrac12}$ and use the functional equation
\eqref{functional-equation} to get
$$
\frac1{\theta (w)} = \frac1{1-w} + \sum_{m,n\geq 1}q^{mn}
(w^{m-\tfrac12(n+1)}-w^{m+\tfrac12(n-1)}),
$$
which is equivalent to \eqref{expansion}.) It is not hard, as was
shown to us by J. Tate, to give a direct proof
using~\eqref{functional-equation}.

{}From \eqref{expansion} we deduce, switching $q$ to $T$ and $w$ to $q$,
that
\begin{equation}\label{expansion2}
\prod_{n\ge1} \frac{(1-T^n)^2}{(1-qT^n)(1-q^{-1}T^n)} 
= 1 + \sum_{\substack{r,s>0\\ r\not\equiv s \mod 2}} 
(-1)^r T^{\frac{rs}2} \left( q^{\frac{s-r-1}2} - q^{\frac{2-r+1}2}\right)
\end{equation}
which combined with Theorem~\ref{euler-spec} gives
\begin{equation}\label{expansion-combo}
\H_{(n-1,1)} \left(\sqrt{q},\frac1{\sqrt{q}}\right) 
= \sum_{\substack{rs=2n\\ r\not\equiv s\mod2}} 
(-1)^r \left( q^{\frac{s-r-1}2} - q^{\frac{2-r+1}2}\right)
\end{equation}

We compute the logarithm of the left hand side of \eqref{expansion2}
and get
\begin{equation*}
\sum_{m,n\ge1} (q^m + q^{-m} -2) \frac{T^{mn}}{m}
\end{equation*}
Applying $(q\frac{d}{dq})^k$ and then setting $q=1$ we obtain
\begin{equation*}
\sum_{m,n\ge1} (m^k + (-m)^k) \frac{T^{mn}}{m},
\end{equation*}
which vanishes identically if $k$ is odd.
For $k$ even, it equals
$$
2 \sum_{n\ge1} \sum_{d\mid n} d^{k-1}\ T^n.
$$
Comparing with~\eqref{Eisenstein-defn} we see that this series equals
$2G_k$, up to the constant term.

Note that if $q=e^u$ then 
$$
q\frac{d}{dq} = \frac{d}{du}\ ,\qquad q=1 \leftrightarrow u=0.
$$
Hence ,
$$
\log \bigg( 1+\sum_{n\ge1} \H_{(n-1,1)} 
( e^{u/2}, e^{-u/2})T^n\bigg) = \sum_{\substack{k\ge2 \\ \text{even}}}
\left( 2G_k + \frac{B_k}{k}\right) \frac{u^k}{k!}.
$$
On the other hand, it is easy to check that
$$
u\exp \bigg( \sum_{k\ge2}\frac{B_k}{k}\ \frac{u^k}{k!}\bigg) 
= e^{u/2} - e^{-u/2}
$$
($B_k=0$ if $k>1$ is odd.)  
This proves the claim.
\end{proof}

\section{Connectedness of character varieties}

\subsection{The main result}

Let $\muhat$ be a multi-partition $(\mu^1,\dots,\mu^k)$ of $n$ and let
$\M_{\muhat}$ be a genus $g$ generic character variety of type
$\muhat$ as in \S \ref{char}. 


\begin{theorem}The character variety $\M_\muhat$ is connected (if not empty). 
\label{connectedness}\end{theorem}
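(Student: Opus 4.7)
The plan is to convert the connectedness statement into a combinatorial claim about the lowest coefficient in the Euler specialization of $\H_\muhat$, and then verify that claim via a careful $\Log$ expansion controlled by the appendix's inequality.

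First I would reduce. Since $\M_\muhat$ is smooth of pure complex dimension $d_\muhat$, its number of connected components equals $h_c^{d_\muhat,d_\muhat;2d_\muhat}(\M_\muhat)$, i.e.\ the top coefficient of $E(\M_\muhat;q)$. The curious duality (\ref{curious}) identifies this top coefficient with the constant term $E(\M_\muhat;0)$. Using (\ref{mainresult}) in the form $E(\M_\muhat;q)=q^{d_\muhat/2}\H_\muhat(\sqrt q,1/\sqrt q)$, this constant term is exactly the coefficient of $q^{-d_\muhat/2}$ in $\H_\muhat(\sqrt q,1/\sqrt q)$. It therefore suffices to prove that this lowest coefficient equals $1$.

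Next I would unfold the right-hand side. By the definition (\ref{H}),
\[
\H_\muhat(\sqrt q,1/\sqrt q)=(q-1)(1-q^{-1})\,\langle\Log\,\Omega(\sqrt q,1/\sqrt q),\,h_\muhat\rangle,
\]
and Lemma \ref{specializ} gives an explicit formula for the argument of $\Log$ in terms of hook polynomials $H_\lambda(q)$, $n(\lambda)$, and Schur specializations $s_\lambda(\x_i\y)$ with $y_i=q^{i-1}$. I would then apply Lemma \ref{Log-w} to write $\Log\,\Omega(\sqrt q,1/\sqrt q)=\sum_\omhat C_\omhat^o A_\omhat$ over multi-types $\omhat=(d_1,\omhat^1)\cdots(d_s,\omhat^s)$, and expand the pairing $\langle A_\omhat,h_\muhat\rangle$ via the factorization (\ref{extendedhall}) into products of elementary Hall pairings of Schur functions against $h_{\mu^i}$, which are handled through Kostka-number combinatorics.

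The heart of the argument is a uniform lower bound for the $q$-valuation of each summand in the resulting expansion. The aim is to show that exactly one multi-type, namely the trivial one of length and multiplicity one canonically attached to $\muhat$, contributes the monomial $q^{-d_\muhat/2}$ with coefficient $+1$, while every other multi-type contributes powers of $q$ strictly greater than $q^{-d_\muhat/2}$. The extremal term is then a direct evaluation using $H_\lambda(q)$ and Kostka numbers; the bulk of the difficulty lies in the bound for the remaining terms, and this is precisely where the inequality of \S\ref{appendix} proved in Harcos' appendix enters, controlling sums of hook-length and partition-size quantities across Adams-twisted factors.

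The main obstacle is exactly this uniform bound: naive degree estimates permit many non-trivial multi-types to reach the exponent $-d_\muhat/2$, and ruling them out hinges on the Harcos inequality. Once the bound is in place, isolating the one surviving term and verifying that its coefficient is $+1$ is a routine manipulation with hook polynomials and Schur-function specializations, which together with the reduction yields the connectedness of $\M_\muhat$.
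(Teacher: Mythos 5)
Your reduction to ``the coefficient of $q^{-d_\muhat/2}$ in $\H_\muhat(\sqrt q,1/\sqrt q)$ equals $1$'' is sound and is essentially the same one the paper uses (the paper works from $h^{0,0;0}$ upward, you from $h_c^{d_\muhat,d_\muhat;2d_\muhat}$ downward; the Deligne weight constraints you would need to invoke to say $h_c^{d_\muhat,d_\muhat;k}=0$ for $k<2d_\muhat$ are exactly the ones the paper quotes). Your description of the $\Log$ expansion as the vehicle, and of the appendix inequality as the engine, is also on track.

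However there are two genuine gaps in your plan for the heart of the argument.

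First, your stated aim --- that exactly one multi-type, $\omhat=(1,\muhat)$, contributes at the lowest power $q^{-d_\muhat/2}$ with coefficient $+1$ while every other multi-type lands strictly higher --- is false whenever $\delta(\muhat)=0$ and $\muhat$ is rectangular, i.e.\ whenever $\Gamma_\muhat$ is an affine quiver ($J$, $\tilde D_4$, $\tilde E_6$, $\tilde E_7$, $\tilde E_8$) with $\v_\muhat=t\v^*$ a positive imaginary root. In that situation Proposition~\ref{Delta-ineq} admits equality not only for $s=1$ but for every decomposition of $\muhat=t\muhat^*$ into pieces $t_1\muhat^*,\dots,t_s\muhat^*$ with $(t_1,\dots,t_s)\vdash t$, and Theorem~\ref{minim} shows that $v(\lambda)$ is minimized not just at $\lambda=(1^n)$ but at all $\lambda$ that are unions of $n/t$ copies of a $\lambda_0\in\calP_t$. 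Many multi-types therefore contribute at the lowest exponent, and the way the proof closes in this case is by recognizing the totality of lowest-order contributions as $\Log(\sum_n p(n)T^n)=\sum_{n\geq1}T^n$, i.e.\ Euler's identity, so the full sum collapses to coefficient $1$. No uniform lower bound of the type you describe can rule out these extra terms; the correct strategy is to sum them and show the sum is $1$, not to show they are absent.

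Second, the entire $\Log$-expansion analysis (both in your plan and in the paper) relies on $\delta(\muhat)\geq 0$, i.e.\ on $\v_\muhat$ lying in the fundamental set of imaginary roots of $\Gamma_\muhat$ (Lemma~\ref{fund-set}). When $g\geq1$ this is automatic, but for $g=0$ it is not, and your proposal never addresses what to do when $\delta(\muhat)<0$. The paper closes this by invoking Crawley-Boevey: if $\M_\muhat\neq\emptyset$ then $\v_\muhat$ is a strict root of $\Gamma_\muhat$; if real then $\M_\muhat$ is a point; if imaginary it can be moved by the Weyl group into the fundamental set, and the resulting character varieties are isomorphic for suitable conjugacy classes. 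Without this final reduction the argument only covers the fundamental-set case (Theorem~\ref{connectedness1}), which is strictly weaker than the theorem as stated.
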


Let us now explain the strategy of the proof.

We first need the following lemma.

\begin{lemma}
If $\M_\muhat$ is not empty, its number of connected components equals the constant term in $E(\M_\muhat;q)$.
\end{lemma}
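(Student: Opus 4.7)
The plan is to combine the ``curious Poincar\'e duality'' identity (\ref{curious}) with standard weight bounds on the compactly supported cohomology of a smooth variety.

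First I would write the constant term of $E(\M_\muhat;q)$ explicitly using the definition $E(\M_\muhat;q)=\sum_{i,j}h_c^{i,i;j}(\M_\muhat)\,q^i(-1)^j$, so that the coefficient of $q^0$ equals $\sum_j(-1)^j h_c^{0,0;j}(\M_\muhat)$. Applying the symmetry $E(\M_\muhat;q)=q^{d_\muhat}E(\M_\muhat;q^{-1})$ of (\ref{curious}) converts this into the coefficient of $q^{d_\muhat}$ in $E(\M_\muhat;q)$, namely $\sum_j(-1)^j h_c^{d_\muhat,d_\muhat;j}(\M_\muhat)$.

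Next I would appeal to the standard fact that for a smooth complex variety $X$ of dimension $d$, the weights on $H^k_c(X)$ are confined to the interval $[\max(0,2k-2d),\,k]$. Specializing to $X=\M_\muhat$ and to the $(d_\muhat,d_\muhat)$-Hodge piece, this forces $h_c^{d_\muhat,d_\muhat;j}(\M_\muhat)=0$ unless $j\ge 2d_\muhat$; since $H^j_c=0$ for $j>2d_\muhat$, only $j=2d_\muhat$ survives. Thus the coefficient of $q^{d_\muhat}$ in $E(\M_\muhat;q)$ is simply $h_c^{d_\muhat,d_\muhat;2d_\muhat}(\M_\muhat)$.

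Finally, Poincar\'e duality for the smooth variety $\M_\muhat$ gives an isomorphism of pure mixed Hodge structures $H^{2d_\muhat}_c(\M_\muhat)\cong H^0(\M_\muhat)^{\vee}(-d_\muhat)$. Since $H^0(\M_\muhat)$ is of Hodge--Tate type $(0,0)$ with dimension equal to the number of connected components of $\M_\muhat$, this shows $h_c^{d_\muhat,d_\muhat;2d_\muhat}(\M_\muhat)=\#\pi_0(\M_\muhat)$. Chaining these three equalities proves the lemma. There is no real obstacle here: the argument is a direct consequence of the Poincar\'e-type symmetry (\ref{curious}) combined with classical weight considerations; the substantive work of the section is deferred to showing that this constant term is in fact equal to $1$ via the combinatorial identity $E(\M_\muhat;q)=q^{d_\muhat/2}\H_\muhat(\sqrt q,1/\sqrt q)$.
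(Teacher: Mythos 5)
Your proof is correct and uses exactly the same three ingredients as the paper's argument: the curious symmetry (\ref{curious}), Poincar\'e duality, and the Deligne weight bounds for smooth varieties. The only difference is order of operations (you move to the top coefficient in $H_c$ first and invoke Poincar\'e duality last, whereas the paper converts to ordinary cohomology at the outset via $h^{i,j;k}=h_c^{d_\muhat-i,d_\muhat-j;2d_\muhat-k}$ and then applies the weight bounds directly to $H^0$); the substance is the same.
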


\begin{proof}The number of connected components of
$\M_\muhat$ is ${\rm dim}\hspace{.05cm}H^0(\M_\muhat,\C)$ which is
also equal to the mixed Hodge number $h^{0,0;0}(\M_\muhat)$. 

Poincar\'e duality implies that

$$
h^{i,j;k}(\M_\muhat)=h_c^{d_\muhat-i,d_\muhat-j;2d_\muhat-k}(\M_\muhat).
$$
From Formula (\ref{curious}) we thus have

$$
E(\M_\muhat;q)=\sum_i\left(\sum_k(-1)^kh^{i,i;k}(\M_\muhat)\right)q^i,
$$

On the
other hand the mixed Hodge numbers $h^{i,j;k}(X)$ of any complex
non-singular variety $X$ are zero if $(i,j,k)\notin
\{(i,j,k)|\hspace{.05cm} i\leq k,j\leq k,k\leq i+j\}$, see
\cite{Del1}. Hence $h^{0,0;k}(\M_\muhat)=0$ if $k>0$. 

We thus deduce that the constant term of $E(\M_\muhat;q)$
is $h^{0,0;0}(\M_\muhat)$.
\end{proof}

From the above lemma and Formula (\ref{mainresult})  we are reduced to prove that the
coefficient of the lowest power $q^{-\frac{d_\muhat}{2}}$ of $q$ in
$\mathbb{H}_\muhat(\sqrt{q},1/\sqrt{q})$ is equal to $1$.  

The
strategy to prove this goes in two steps. First, \ref{step-1} we
analyze the lowest power of $q$ in $\calA_{\lambda\muhat}(q)$, where
$$
\Omega\left(\sqrt{q},1/\sqrt{q}\right) = \sum_{\lambda, \muhat}
\calA_{\lambda\muhat}(q)\, m_\muhat.
$$
Then \ref{step-2} we see how these combine in $\Log
\left(\Omega\left(\sqrt{q},1/\sqrt{q}\right)\right)$. In both case,
Lemma~\ref{nrm-ineq} and Lemma~\ref{ineq-1}, we will use in an
essential way the inequality of \S \ref{appendix}. Though very similar, the
relation between the partitions $\nu^p$ in these lemmas and the matrix
of numbers $x_{i,j}$ in \S \ref{appendix} is dual to each other (the $\nu^p$
appear as rows in one and columns in the other).

\subsection{Preliminaries}
\label{delta-non-neg}


For a multi-partition $\muhat\in \left(\calP_n\right)^k$ we define
\begin{equation}
\label{Delta-defn}
\Delta(\muhat):=\tfrac12 d_\muhat-1= \tfrac12(2g-2+k)n^2- \tfrac12
\sum_{i,j} \left(\mu_j^i\right)^2.
\end{equation}


\begin{remark}
  Note that when $g=0$ the quantity $-2\Delta(\muhat)$ is Katz's {\it
    index of rigidity} of a solution to $X_1\cdots X_k=I$ with $X_i\in
  \calC_i$ (see for example \cite{kostov2}[p. 91]).
\end{remark}

From $\muhat$ we define as above Theorem \ref{MA} a comet-shaped quiver $\Gamma=\Gamma_\muhat$ as well as a dimension vector $\v=\v_\muhat$ of $\Gamma$. We denote by $I$ the set of vertices of $\Gamma$ and by $\Omega$ the set of arrows. Recall that $\muhat$ and $\v$ are
linearly related ($v_0=n$ and $v_{[i,j]}=n-\sum_{r=1}^j \mu^i_r$ for
$j>1$ and conversely, $\mu_1^i=n-v_{[i,1]}$ and
$\mu_j^i=v_{[i,j-1]}-v_{[i,j]}$ for $j>1$). Hence $\Delta$ yields an
integral-valued quadratic from on $\Z^I$. Let $(\cdot,\cdot)$ be the
associated bilinear form on $\Z^I$ so that
\begin{equation}
\label{pairing-defn}
(\v,\v)=2\Delta(\muhat).
\end{equation}
Let $\e_0$ and $\e_{[i,j]}$ be the fundamental roots of $\Gamma$
(vectors in $\Z^I$ with all zero coordinates except for a $1$ at the
indicated vertex). We find that
$$
(\e_0,\e_0)= 2g-2, \qquad (\e_{[i,j]},\e_{[i,j]})=-2, \qquad
(\e_0,\e_{[i,1]})=1 \qquad (\e_{[i,j]},\e_{[i,j+1]})=1,
$$
for $ i=1,2,\ldots,k,  j=1,2,\ldots, s_i-1$ and all other
pairings are zero. In other words, $\Delta$ is the negative of the
Tits quadratic form of $\Gamma$ (with the natural orientation of all
edges pointing away from the central vertex).

With this notation we define
\begin{equation}
\label{delta-defn}
\delta=\delta(\muhat):=(\e_0,\v)=(2g-2+k)n-\sum_{i=1}^k\mu_1^i.
\end{equation}

\begin{remark}
In the case of $g=0$ the quantity $\delta$ is called the {\it defect}
by Simpson (see \cite[p.12]{simpson1}).
\end{remark}
Note that $\delta\geq (2g-2)n$ is non-negative unless $g=0$.  On the
other hand,
\begin{equation}
\label{root-ineq}
(\e_{[i,j]},\v)=\mu_j^i-\mu_{j+1}^i\geq 0.
\end{equation}
We now follow the terminology of \cite{kacconj}.
\begin{lemma}
\label{fund-set}
  The dimension vector $\v$ is in the fundamental set of imaginary
  roots of $\Gamma$ if and only if $\delta(\muhat) \geq 0$.
\end{lemma}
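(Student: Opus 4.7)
The plan is to verify directly from the bilinear form computations that $\v_\muhat$ satisfies Kac's defining conditions for the fundamental set exactly when $\delta(\muhat) \geq 0$.

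First I would recall that Kac's fundamental set $F$ of imaginary roots consists of the nonzero dimension vectors $\alpha \in \N^I$ with connected support that pair non-positively in the Tits form with every fundamental root $\e_i$. Under the convention of this paper, where $(\cdot,\cdot)$ is the negative of the Tits bilinear form (as noted just after~\eqref{pairing-defn}), this translates into the requirement $(\e_i,\v) \geq 0$ for every $i \in I$, in addition to $\v \in \N^I\setminus\{0\}$ with connected support.

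Next I would check the automatic hypotheses for $\v = \v_\muhat$. The central coordinate is $v_0 = n > 0$, and for $1 \leq j \leq s_i = l(\mu^i)-1$ the leg coordinates
\[
v_{[i,j]} \;=\; n - \sum_{r=1}^{j}\mu_r^i \;=\; \sum_{r=j+1}^{l(\mu^i)}\mu_r^i
\]
are sums of strictly positive parts of the partition $\mu^i$, hence strictly positive. So $\v_\muhat$ lies in $\N^I\setminus\{0\}$, its support is all of $I$, and this support is connected because $\Gamma_\muhat$ is itself a connected quiver.

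Finally I would read off the pairings with the fundamental roots. At the central vertex, $(\e_0,\v) = \delta(\muhat)$ by the very definition~\eqref{delta-defn}. At each leg vertex, \eqref{root-ineq} already gives $(\e_{[i,j]},\v) = \mu_j^i - \mu_{j+1}^i \geq 0$, which holds automatically because the parts of $\mu^i$ are non-increasing. Hence among the inequalities required for $\v_\muhat \in F$ the only non-automatic one is $(\e_0,\v) \geq 0$, which is exactly $\delta(\muhat) \geq 0$, establishing the equivalence. There is no substantive obstacle here: the one subtlety is keeping track of the sign conventions between Kac's symmetric form and the negative-of-Tits-form used in the excerpt, but the explicit values of $(\e_i,\e_j)$ listed after~\eqref{pairing-defn} and the computation~\eqref{root-ineq} make the translation mechanical.
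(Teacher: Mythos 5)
Your proof is correct and follows essentially the same route as the paper's: both argue that the support of $\v_\muhat$ is connected, that the leg pairings $(\e_{[i,j]},\v)=\mu_j^i-\mu_{j+1}^i\geq 0$ are automatic, and that the one remaining condition from Kac's definition of the fundamental set is precisely $(\e_0,\v)=\delta(\muhat)\geq 0$. Your write-up is a little more explicit about the sign convention (negative-of-Tits form) and about why the support is all of $I$, but the argument is the same.
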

\begin{proof}
  Note that $v_{[i,j]}>0$ if $j<l(\mu^i)$ and $v_{[i,j]}=0$ for $j\geq
  l(\mu^i)$; since $n>0$ the support of $\v$ is then connected.  We
  already have $(\e_{[i,j]},\v)\geq 0$ by \eqref{root-ineq}, hence
  $\v$ is in the fundamental set of imaginary roots of $\Gamma$ if
  and only if $\delta \geq 0$ (see \cite{kacconj}).
\end{proof}
For a partition $\mu\in \calP_n$ we define
$$
\sigma(\mu):=n\mu_1-\sum_j\mu_j^2
$$
and extend to a multipartition $\muhat \in \left(\calP_n\right)^k$ by
$$
\sigma(\muhat):=\sum_{i=1}^k\sigma(\mu^i).
$$
\begin{remark}
Again for $g=0$ this is called the {\it superdefect} by Simpson.
\end{remark}

We say that $\mu\in \calP_n$ is {\it rectangular} if and only if all
of its (non-zero) parts are equal, i.e., $\mu=(t^{n/t})$ for some
$t\mid n$. We extend this to multi-partitions: $\muhat=
(\mu^1,\ldots,\mu^k) \in \left(\calP_n\right)^k$ is rectangular if
each $\mu^i$ is (the $\mu^i$'s are not required to be of the same
length). Note that $\muhat$ is rectangular if and only if the
associated dimension vector $\v$ satisfies $(\e_{[i,j]},\v)= 0$ for
all $[i,j]$ by \eqref{root-ineq}.

\begin{lemma}
\label{sigma-ineq}
For $\muhat \in \left(\calP_n\right)^k$ we have
$$
\sigma(\muhat) \geq 0
$$
with equality if and only if $\muhat$ is rectangular.
\end{lemma}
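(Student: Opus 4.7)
The proof of Lemma \ref{sigma-ineq} is a short, elementary inequality; no appeal to the analytic estimate of \S\ref{appendix} is needed here.

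The plan is to reduce immediately to the single-partition case. Since $\sigma(\muhat)=\sum_{i=1}^k\sigma(\mu^i)$ and each summand involves a separate partition of $n$, it suffices to show that $\sigma(\mu)\geq 0$ for every $\mu\in\calP_n$, with equality iff $\mu$ is rectangular; the multi-partition statement then follows because a sum of non-negative numbers vanishes iff each summand does.

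For a fixed partition $\mu=(\mu_1,\mu_2,\dots,\mu_r)$ of $n$ with $\mu_1\geq \mu_2\geq\cdots\geq\mu_r>0$, the key observation is simply that $\mu_1$ is the maximum part. Hence $\mu_j^2=\mu_j\cdot\mu_j\leq \mu_j\cdot \mu_1$ for every $j$, and summing over $j$ gives
$$
\sum_j \mu_j^2 \;\leq\; \mu_1\sum_j \mu_j \;=\; \mu_1 n,
$$
which is exactly $\sigma(\mu)\geq 0$.

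For the equality case, $\sigma(\mu)=0$ forces $\mu_j^2=\mu_j\mu_1$ for every $j$ appearing with $\mu_j>0$, i.e.\ $\mu_j=\mu_1$ for all non-zero parts, which is precisely the definition of $\mu$ being rectangular. Conversely, if $\mu=(t^{n/t})$ then $\sum_j\mu_j^2=(n/t)t^2=nt=n\mu_1$, giving $\sigma(\mu)=0$. Putting the two directions together and summing over $i=1,\dots,k$ yields the lemma; no step here is an obstacle, the whole argument is a single application of the elementary bound $\mu_j\leq \mu_1$.
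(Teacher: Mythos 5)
Your proof is correct and is essentially the same as the paper's: both reduce to the single-partition case via $\sigma(\muhat)=\sum_i\sigma(\mu^i)$ and then use the elementary bound $\sum_j\mu_j^2\leq\mu_1\sum_j\mu_j=n\mu_1$ with equality iff all nonzero parts equal $\mu_1$.
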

\begin{proof}
For any $\mu\in \calP_n$ we have $n\mu_1=\mu_1\sum_j\mu_j \geq
\sum_j\mu_j^2$ and equality holds if and only if $\mu_1=\mu_j$.
\end{proof}
Since 
\begin{equation}
\label{Delta-sigma}
2\Delta(\muhat)=n\,\delta(\muhat)+\sigma(\muhat)
\end{equation}
 we find that
\begin{equation}
\label{dim-ineq}
d_\muhat  \geq n\,\delta(\muhat)+2
\end{equation}
and in particular $d_\muhat \geq 2$ if $\delta(\muhat)\geq 0$.

If $\Gamma$ is affine it is known that the positive imaginary
roots are of the form $t\v^*$ for an integer $t\geq 1$ and some
$\v^*$. We will call $\v^*$ the {\it basic positive imaginary root} of
$\Gamma$.  The affine star-shaped quivers are given in the table
below; their basic positive imaginary root is the dimension vector
associated to the indicated multi-partition $\muhat^*$. These
$\muhat^*$, and hence also any scaled version $t\muhat^*$ for $t\geq
1$, are rectangular. Moreover, $\Delta(\muhat^*)=0$ and in
fact, $\muhat^*$ generates the one-dimensional radical of the
quadratic form $\Delta$ so that $\Delta(\muhat^*,\nuhat)=0$ for all
$\nuhat$.

\begin{proposition}
\label{affine-descrip}
Suppose that $\muhat = (\mu^1,\ldots,\mu^k) \in
\left(\calP_n\right)^k$ has $\delta(\muhat)\geq 0$. Then $d_\muhat=2$
if and only if $\Gamma$ is of affine type, i.e., $\Gamma$ is either
the Jordan quiver $J$ (one loop on one vertex), $\tilde D_4,\tilde
E_6,\tilde E_7$ or $\tilde E_8$, and $\muhat=t\muhat^*$ (all parts
scaled by $t$) for some $t\geq 1$, where $\muhat^*$, given in the
table below, corresponds to the basic imaginary root of $\Gamma$.
\end{proposition}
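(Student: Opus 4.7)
The plan is to reduce the condition $d_\muhat = 2$ to a classical Diophantine classification via the decomposition~\eqref{Delta-sigma}. By~\eqref{Delta-defn}, $d_\muhat=2$ is equivalent to $\Delta(\muhat)=0$. Since $\delta(\muhat)\ge 0$ by hypothesis and $\sigma(\muhat)\ge 0$ by Lemma~\ref{sigma-ineq}, the identity $2\Delta(\muhat)=n\,\delta(\muhat)+\sigma(\muhat)$ forces both $\delta(\muhat)=0$ and $\sigma(\muhat)=0$ simultaneously.

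From $\sigma(\muhat)=0$ and the equality case of Lemma~\ref{sigma-ineq}, each $\mu^i$ is rectangular: $\mu^i=(t_i^{m_i})$ with $t_i m_i = n$ and $m_i = l(\mu^i)$. Substituting $\mu^i_1 = t_i = n/m_i$ into $\delta(\muhat)=0$ and dividing by $n$ yields the Diophantine equation
\[
\sum_{i=1}^{k} \frac{1}{m_i} = 2g-2+k,
\]
or equivalently $\sum_i(1-1/m_i) = 2-2g$. Since each $1-1/m_i\in[0,1)$, this already forces $g\le 1$.

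I would then classify case by case. For $g=1$ the only solution is $m_i=1$ for every $i$, so $\mu^i=(n)$ for all $i$ and $\Gamma_\muhat$ is the Jordan quiver $J$. For $g=0$ the equation reads $\sum_i(1-1/m_i)=2$; indices with $m_i=1$ contribute no vertex to $\Gamma_\muhat$ and zero to the left-hand side, so only the essential indices with $m_i\ge 2$ matter. Classifying positive integer solutions of $\sum_j(1-1/n_j)=2$ with $n_j\ge 2$ yields exactly $(n_j)\in\{(2,2,2,2),(3,3,3),(2,4,4),(2,3,6)\}$, corresponding to the star diagrams $\tilde D_4$, $\tilde E_6$, $\tilde E_7$, $\tilde E_8$ respectively. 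In each case the rectangularity condition $t_i = n/m_i$ pins down $\muhat$ up to a common positive integer factor $t$, giving $\muhat = t\muhat^*$ with the tabulated $\muhat^*$.

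The converse direction is essentially automatic: for $\muhat = t\muhat^*$ in the above list, $\muhat$ is rectangular (so $\sigma(\muhat)=0$) and the verified identity $\sum 1/m_i = 2g-2+k$ yields $\delta(\muhat)=0$, whence $\Delta(\muhat)=0$ by~\eqref{Delta-sigma}. Equivalently, $\muhat^*$ realizes the basic positive imaginary root spanning the one-dimensional radical of the Tits form of the affine quiver $\Gamma$, so $\Delta(t\muhat^*)=t^2\Delta(\muhat^*)=0$. The main technical point to watch is the bookkeeping of trivial partitions $\mu^i=(n)$, which enlarge $k$ but contribute no vertex to $\Gamma_\muhat$ and nothing to $\delta$ or $\sigma$; once these are absorbed into the table's convention for $\muhat^*$, the enumeration above yields exactly the asserted affine list.
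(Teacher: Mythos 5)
Your proof is correct and follows essentially the same route as the paper: reduce $d_\muhat=2$ to $\delta(\muhat)=0$ and $\sigma(\muhat)=0$ via the decomposition $2\Delta(\muhat)=n\,\delta(\muhat)+\sigma(\muhat)$ together with Lemma~\ref{sigma-ineq}, deduce that all $\mu^i$ are rectangular and that $g\le 1$, and then classify the Diophantine equation $\sum_i 1/l_i = 2g-2+k$ (ignoring trivial legs with $l_i=1$), landing on $J,\tilde D_4,\tilde E_6,\tilde E_7,\tilde E_8$. The only cosmetic difference is that you rewrite the equation as $\sum_i(1-1/m_i)=2-2g$ to make the bound $g\le 1$ immediate, whereas the paper appeals to $\delta(\muhat)\ge(2g-2)n$; these are the same observation.
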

\begin{proof}
  By \eqref{Delta-sigma} and Lemma~\ref{sigma-ineq} $d_\muhat=2$ when
  $\delta(\muhat)\geq 0$ if and only if $\delta(\muhat)=0$ and
  $\muhat$ is rectangular.  As we observed above
  $\delta(\muhat)\geq (2g-2)n$. Hence if $\delta(\muhat)=0$ then $g=1$
  or $g=0$. If $g=1$ then necessarily $\mu^i=(n)$ and $\Gamma$ is the
  Jordan quiver $J$.

  If $g=0$ then $\delta=0$ is equivalent to the equation
\begin{equation}
\label{affine-eqn}
\sum_{i=1}^k\frac{1}{l_i}=k-2,
\end{equation}
where $l_i:=n/t_i$ is the length of $\mu^i=(t_i^{n/t_i})$.  In solving
this equation, any term with $l_i=1$ can be ignored.  It is elementary
to find all of its solutions; they correspond to the cases
$\Gamma=\tilde D_4,\tilde E_6,\tilde E_7$ or $\tilde E_8$.

We summarize the results in the following table 
\begin{equation}
\label{table}
\begin{array}{|c|c|c|c|}
\hline
\Gamma & l_i& n & \muhat^* \\
\hline
J & (1) & 1& (1) \\
\tilde{D}_4 &  (2,2,2,2) & 2 &(1,1),\quad (1,1),\quad (1,1),\quad (1,1)\\
\tilde{E}_6&  (3,3,3) &  3 & (1,1,1),\quad (1,1,1),\quad (1,1,1)\\
\tilde{E}_7 & (2,4,4) & 4 &(2,2),\quad (1,1,1,1),\quad (1,1,1,1)\\
\tilde{E}_8 & (2,3,6) & 6 & (3,3),\quad (2,2,2),\quad (1,1,1,1,1,1)\\
\hline
\end{array}
\end{equation}
where we listed the cases with smallest possible positive values of
$n$ and $k$ and the corresponding multi-partition~$\muhat^*$.
\end{proof}
\noindent
Proposition~\ref{affine-descrip} is due to Kostov, see for example
\cite[p.14]{simpson1}.

We will need the following result about $\Delta$.
\begin{proposition}
\label{Delta-ineq}
Let $\muhat\in \left(\calP_n\right)^k$ and
$\nuhat^p=(\nu^{1,p},\ldots,\nu^{k,p})\in \left(\calP_{n_p}\right)^k$
for $p=1,\ldots, s$ be non-zero multi-partitions such that up to
permutations of the parts of $\nu^{i,p}$ we have
$$
\mu^i=\sum_{p=1}^s \nu^{i,p}, \qquad \qquad i=1,\ldots,k.
$$
 Assume that $\delta(\muhat)\geq 0$. Then
$$
\sum_{p=1}^s\Delta(\nuhat^p)\leq\Delta(\muhat).
$$
Equality holds if  and only if

(i) $s=1$ and $\muhat=\nuhat^1$.

or

(ii) $\Gamma$ is affine and $\muhat,\nuhat^i,\ldots,\nuhat^s$
correspond to positive imaginary roots.

\end{proposition}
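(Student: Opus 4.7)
The plan is to reduce the inequality to a combinatorial estimate on non-negative integer matrices. For each $i=1,\ldots,k$, choose permutations $\sigma^i_p$ of the parts of $\nu^{i,p}$ witnessing $\mu^i_j=\sum_p\nu^{i,p}_{\sigma^i_p(j)}$, and set $x^i_{j,p}:=\nu^{i,p}_{\sigma^i_p(j)}$; the resulting matrix $X^i=(x^i_{j,p})$ has $j$-th row sum $\mu^i_j$ (decreasing in $j$) and $p$-th column sum $n_p$, with the $p$-th column being a rearrangement of the parts of $\nu^{i,p}$. Expanding the definition~\eqref{Delta-defn} using $n^2=\sum_p n_p^2+2\sum_{p<q}n_p n_q$, $(\mu^i_j)^2=\sum_p(x^i_{j,p})^2+2\sum_{p<q}x^i_{j,p}x^i_{j,q}$, and the factorisation $n_pn_q=\sum_j x^i_{j,p}x^i_{j,q}+\sum_{j\neq l}x^i_{j,p}x^i_{l,q}$, yields the key identity
\begin{equation}
\Delta(\muhat)-\sum_{p=1}^{s}\Delta(\nuhat^p)
=\sum_{p<q}\left[(2g-2)n_pn_q+\sum_{i=1}^{k}\sum_{j\neq l}x^i_{j,p}x^i_{l,q}\right].
\label{key-identity}
\end{equation}

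When $g\geq 1$ both summands on the right-hand side of~\eqref{key-identity} are manifestly non-negative, so the inequality is immediate. When $g=0$ the first summand equals $-2n_pn_q$ and the hypothesis $\delta(\muhat)\geq 0$, i.e.\ $\sum_i\mu^i_1\leq(k-2)n$, becomes essential. The problem reduces to proving
$$
\sum_{i=1}^{k}\omega^i\;\leq\;(k-2)\sum_{p<q}n_pn_q,
\qquad\text{where}\qquad
\omega^i:=\sum_j\sum_{p<q}x^i_{j,p}x^i_{j,q}
$$
records the same-row cross-column pairs in $X^i$. The main obstacle here is that elementary pointwise estimates ($\omega^i\leq\tfrac12 n\mu^i_1$ combined with $\sum_i\mu^i_1\leq(k-2)n$ gives only $\sum_i\omega^i\leq\tfrac12(k-2)n^2$, whereas the target is the strictly smaller $\tfrac12(k-2)(n^2-\sum_p n_p^2)$); one must exploit the row-decreasing structure of $X^i$. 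This is precisely what the combinatorial inequality of~\S\ref{appendix} is designed to do, applied in the dual arrangement (with the $\nu^{i,p}$ appearing as rows of the relevant array) flagged in the introduction.

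For the equality analysis I would trace back through~\eqref{key-identity}. If $g\geq 2$, equality forces $(2g-2)n_pn_q=0$ for every pair $p<q$, hence $s=1$, giving case~(i). If $g=1$ with $s\geq 2$, the first summand vanishes for free and equality demands $x^i_{j,p}x^i_{l,q}=0$ whenever $i,p<q$ and $j\neq l$, so every column of every $X^i$ is supported on a common single row; this forces $\mu^i=(n)$ and $\nu^{i,p}=(n_p)$ for all $i,p$, placing $\muhat$ and each $\nuhat^p$ in the Jordan affine case of Proposition~\ref{affine-descrip}, which is case~(ii). If $g=0$ with $s\geq 2$, tightness in the appendix bound combined with the identity $2\Delta=n\delta+\sigma$ and Lemma~\ref{sigma-ineq} should force $\Delta(\muhat)=\Delta(\nuhat^p)=0$ together with $\muhat$ and every $\nuhat^p$ rectangular; Proposition~\ref{affine-descrip} then identifies them as scaled basic imaginary roots $t\muhat^*$, $t_p\muhat^*$ of one of the affine comet-shaped quivers $\tilde D_4$, $\tilde E_6$, $\tilde E_7$ or $\tilde E_8$, again recovering case~(ii).
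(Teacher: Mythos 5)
Your key identity~\eqref{key-identity} is correct (I verified the algebra) and is in fact an equivalent rearrangement of the paper's computation $2n\Delta(\muhat)=\delta(\muhat)n^2+\sum_i\nrm_{\mu^i}(\mu^i)$ with $\nrm_\mu(\nu):=\mu_1|\nu|^2-|\mu|\sum_j\nu_j^2$: subtracting the analogous formula for each $\nuhat^p$, using $\delta+\sum_i\mu^i_1=(2g-2+k)n$ and $n^2-\sum_pn_p^2=2\sum_{p<q}n_pn_q$, one lands exactly on your right-hand side. The real difference is one of presentation: the paper works uniformly in $g$ (deducing the inequality from $\delta\geq 0$, the convexity $\sum_pn_p^2\leq n^2$, and Lemma~\ref{nrm-ineq}, the latter being the packaging of the appendix), whereas your decomposition isolates $(2g-2)\sum_{p<q}n_pn_q$ and makes the $g\geq 1$ case manifestly trivial with no appendix at all, which is a genuine simplification worth stating.

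For $g=0$ you identify the correct target $\sum_i\omega^i\leq(k-2)\sum_{p<q}n_pn_q$, correctly observe that the naive bound $\omega^i\leq\tfrac12 n\mu^i_1$ is too weak, and gesture at the appendix, but you do not carry out the reduction. It does go through: applying Theorem~\ref{harcos} for each fixed $i$ with rows indexed by $j$ (so $c_j=\mu^i_j$, $c=\mu^i_1$) and columns by $p$, and noting that $\sum_j(\mu^i_j)^2-\sum_{p,j}(x^i_{j,p})^2=2\omega^i$, one obtains $\omega^i\leq(\mu^i_1/n)\sum_{p<q}n_pn_q$; summing over $i$ and invoking $\sum_i\mu^i_1\leq(k-2)n$ finishes. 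This is the step the paper encodes in Lemma~\ref{nrm-ineq}, so it would be better to either invoke that lemma directly or write out the algebra. The equality analysis you outline is correct; the $g=0,s\geq 2$ case requires, in addition to what you wrote, a short check that each appendix application forces the $\mu^i$ and all $\nu^{i,p}$ to be rectangular of length $l(\mu^i)$, from which $\delta(\nuhat^p)=(n_p/n)\delta(\muhat)=0$ and hence $\Delta(\nuhat^p)=0$ follow, matching the paper's argument via Lemmas~\ref{sigma-ineq}, \ref{sigma-rectang} and \eqref{Delta-sigma}.
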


We start with the following. For partitions $\mu,\nu$ define
$$
\nrm_\mu(\nu):=\mu_1 |\nu|^2 -|\mu|\sum_i \nu_i^2.
$$
Note that $\nrm_\mu(\mu)=|\mu|\,\sigma(\mu)$.
\begin{lemma}
\label{nrm-ineq}
Let $\nu^1,\ldots,\nu^s$ and $\mu$ be non-zero partitions such that up
to permutation of the parts of each $\nu^p$ we have
$\sum_{p=1}^s\nu^p=\mu.$ Then
$$
\sum_{p=1}^s \nrm_\mu(\nu^p)\leq \nrm_\mu(\mu).
$$
Equality holds if and only if:

(i) $s=1$ and $\mu=\nu^1$.

or

(ii) $\nu^1,\ldots,\nu^s$ and $\mu$ all are rectangular of the
same length.

\end{lemma}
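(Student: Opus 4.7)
The plan is to encode the hypothesis as a non--negative integer matrix and reduce the inequality to two elementary estimates. Arrange the parts of each $\nu^p$ (padded with zeros up to length $l(\mu)$) as the columns of an $l(\mu)\times s$ matrix $X=(x_{j,p})$, choosing the orderings of the columns compatibly with the hypothesis so that the row sums recover the parts of $\mu$, i.e.\ $\sum_p x_{j,p}=\mu_j$ for every $j$. Writing $N_p:=|\nu^p|=\sum_j x_{j,p}$ and expanding $|\mu|^2=(\sum_p N_p)^2$ and $\mu_j^2=(\sum_p x_{j,p})^2$, a direct calculation gives the identity
\begin{equation*}
\nrm_\mu(\mu)-\sum_{p=1}^s\nrm_\mu(\nu^p)\;=\;|\mu|\,\sigma(\mu)+|\mu|\sum_{j,p}x_{j,p}^2-\mu_1\sum_p N_p^2,
\end{equation*}
so the lemma reduces to the estimate $\mu_1\sum_p N_p^2\le|\mu|\,\sigma(\mu)+|\mu|\sum_{j,p}x_{j,p}^2$.

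The key step is a Cauchy--Schwarz bound on each column sum,
\begin{equation*}
N_p^2\;=\;\Bigl(\sum_{j\colon\mu_j>0}\sqrt{\mu_j}\cdot\frac{x_{j,p}}{\sqrt{\mu_j}}\Bigr)^{\!2}\;\leq\;|\mu|\sum_{j\colon\mu_j>0}\frac{x_{j,p}^2}{\mu_j}.
\end{equation*}
Summing over $p$, multiplying by $\mu_1$ and using $\sigma(\mu)=\sum_j\mu_j(\mu_1-\mu_j)$, the lemma further reduces to
\begin{equation*}
\sum_j\frac{\mu_1-\mu_j}{\mu_j}\sum_p x_{j,p}^2\;\leq\;\sum_j(\mu_1-\mu_j)\mu_j,
\end{equation*}
which holds termwise from $\mu_1\geq\mu_j$ and $\sum_p x_{j,p}^2\leq\bigl(\sum_p x_{j,p}\bigr)^2=\mu_j^2$.

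The characterization of equality follows from tracking when each of the two estimates above is saturated. Cauchy--Schwarz equality forces $x_{j,p}=c_p\mu_j$ for constants $c_p\geq 0$ summing to $1$; termwise equality forces, for every $j$ with $\mu_j<\mu_1$, the relation $\sum_p x_{j,p}^2=\mu_j^2$, i.e.\ exactly one $x_{j,p}$ equal to $\mu_j$ and the rest zero. If $\mu$ has any part strictly smaller than $\mu_1$, these combine to give $c_p\in\{0,1\}$ for all $p$, and with $\sum_p c_p=1$ and all $\nu^p\neq 0$ the only possibility is $s=1$ and $\nu^1=\mu$, which is case (i). If instead $\mu$ is rectangular of length $c=l(\mu)$, the termwise constraint is vacuous and $x_{j,p}=c_p\mu_1$ for $j\leq c$, so each $\nu^p$ is rectangular of length $c$ with positive part $c_p\mu_1$, which is case (ii).

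The main obstacle is identifying the correct column estimate $N_p^2\leq|\mu|\sum_j x_{j,p}^2/\mu_j$; once this is in place, everything else is routine. This estimate is essentially (or a dual form of) the inequality of \S\ref{appendix}, whose role is foreshadowed in the overview preceding \S\ref{delta-non-neg}.
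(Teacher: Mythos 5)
Your proof is correct and takes a genuinely different route from the paper. The paper's own proof of this lemma is a one-line reduction: set $x_{ik}=\nu^k_{\sigma_k(i)}$ and invoke the inequality of \S\ref{appendix} (Harcos's theorem) as a black box. Your argument instead gives a direct proof from scratch: after the bookkeeping identity, a column-wise Cauchy--Schwarz bound $N_p^2\le|\mu|\sum_j x_{j,p}^2/\mu_j$ plus the elementary $\sum_p x_{j,p}^2\le(\sum_p x_{j,p})^2$ termwise estimate suffice, and tracking the two equality cases recovers the dichotomy (i)/(ii) cleanly. Note that what you call the ``key estimate'' is not itself the inequality of \S\ref{appendix}; rather, your whole argument amounts to an independent and considerably shorter proof of (the relevant case of) Harcos's inequality, replacing his multi-page combinatorial rearrangement with Cauchy--Schwarz. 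That is a real gain: it is more conceptual, it extends verbatim to arbitrary nonnegative reals, and the equality analysis drops out of the two saturation conditions rather than requiring a separate careful inspection of the argument as in the appendix. The only minor issue is the closing remark mischaracterizing your Cauchy--Schwarz step as ``essentially the inequality of \S\ref{appendix}''; the accurate statement is that your proof supersedes the need to cite it here.
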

\begin{proof}
  This is just a restatement of the inequality of \S \ref{appendix} with
  $x_{i,k}=\nu_{\sigma_k(i)}^k$, for the appropriate permutations
  $\sigma_k$, where $1\leq i \leq l(\mu),1\leq k \leq s$.
\end{proof}
\begin{lemma}
\label{sigma-rectang}
If  the partitions $\mu,\nu$ are rectangular of the same length
then 
$$
\sigma_\mu(\nu)=0.
$$
\end{lemma}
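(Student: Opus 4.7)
The plan is to unwind the definition and do a one-line computation. Recall that $\sigma_\mu(\nu) = \nrm_\mu(\nu) = \mu_1|\nu|^2 - |\mu|\sum_i \nu_i^2$. The rectangular hypothesis on $\mu$ and $\nu$ of common length $\ell$ means $\mu = (\mu_1^\ell)$ and $\nu = (\nu_1^\ell)$.

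First I would substitute these into the formula. This gives $|\mu| = \ell\mu_1$, $|\nu| = \ell\nu_1$, and $\sum_i \nu_i^2 = \ell\nu_1^2$. Plugging in:
\[
\sigma_\mu(\nu) = \mu_1(\ell\nu_1)^2 - (\ell\mu_1)(\ell\nu_1^2) = \ell^2\mu_1\nu_1^2 - \ell^2\mu_1\nu_1^2 = 0.
\]
The two terms cancel identically, which is precisely the statement.

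There is no main obstacle here; the lemma is a purely numerical identity whose proof amounts to observing that for rectangular partitions both $|\nu|^2$ and $|\mu|\sum_i\nu_i^2$ evaluate to $\ell^2\mu_1\nu_1^2$ times the same factor. The content of the lemma lies not in its proof but in its intended use: it pinpoints exactly the equality case in Lemma~\ref{nrm-ineq}(ii), so that when combining contributions across the legs of $\Gamma_\muhat$ in the proof of Proposition~\ref{Delta-ineq}, the rectangular, equal-length case produces $\sigma_\mu(\nu) = 0$ and hence no strict loss in $\Delta$.
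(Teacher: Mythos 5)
Your proof is correct and is exactly the "direct calculation" the paper invokes (the paper's entire proof is the phrase "Direct calculation"). Unwinding the definition $\sigma_\mu(\nu)=\mu_1|\nu|^2-|\mu|\sum_i\nu_i^2$ with $\mu=(\mu_1^\ell)$, $\nu=(\nu_1^\ell)$ gives $\ell^2\mu_1\nu_1^2-\ell^2\mu_1\nu_1^2=0$, as you wrote.
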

\begin{proof}
  Direct calculation.
\end{proof}

\begin{proof}[Proof of Proposition~\ref{Delta-ineq}]
From the definition \eqref{Delta-defn} we get 
$$
2n\Delta(\muhat)= \delta(\muhat)n^2+\sum_{i=1}^k\nrm_{\mu^i}(\mu^i)
$$
and similarly 
$$
2n\Delta(\nuhat^p)=\delta(\muhat)n_p^2 +
\sum_{i=1}^k\nrm_{\mu^i}(\nu^{i,p}), \qquad \qquad p=1,\ldots,s
$$
 hence
$$
2n\sum_{p=1}^s\Delta(\nuhat^p)=\delta(\muhat)\sum_{p=1}^sn_p^2 +
\sum_{i=1}^k\sum_{p=1}^s\nrm_{\mu^i}(\nu^{i,p}).
$$
Since $n=\sum_{p=1}^sn_p$ and $\delta(\muhat)\geq 0$ we get from
Lemma~\ref{nrm-ineq} that 
$$
\sum_{p=1}^s\Delta(\nuhat^p)\leq\Delta(\muhat)
$$
as claimed.

Clearly, equality cannot occur if $\delta(\muhat)>0$ and $s>1$. If
$\delta(\muhat)=0$ and $s>1$ it follows from
Lemmas~\ref{nrm-ineq},~\ref{sigma-rectang} and \eqref{Delta-sigma}
that $\Delta(\muhat)=\Delta(\nuhat^p)=0$ for $p=1,2,\ldots,s$. 
Now (ii) is a consequence of Proposition~\ref{affine-descrip}.
\end{proof}

\subsection{Proof of Theorem~\ref{connectedness}}

\subsubsection{Step I}
\label{step-1}

Let 
\begin{equation}
\label{calA-defn}
\calA_{\lambda\muhat}(q):= 
q^{(1-g)|\lambda|}\left(q^{-n(\lambda)} 
  H_{\lambda}(q)\right)^{2g+k-2}
\prod_{i=1}^k\left\langle
  h_{\mu^i}(\x_i),
  s_\lambda(\x_i\y)\right\rangle,
\end{equation}
so that by Lemma \ref{specializ} 
$$
\Omega\left(\sqrt{q},1/\sqrt{q}\right) = \sum_{\lambda, \muhat}
\calA_{\lambda\muhat}(q)\, m_\muhat.
$$It is easy to verify that
$\calA_{\lambda\muhat}$ is in $\Q(q)$.

For a non-zero rational function $\calA\in \Q(q)$ we let
$v_q\left(\calA\right)\in \Z$ be its valuation at $q$.  We will see
shortly that $\calA_{\lambda\muhat}$ is nonzero for all $\lambda,
\muhat$; let $v(\lambda) :=v_q\left(\calA_{\lambda\muhat}(q)\right)$.
The first main step toward the proof of the connectedness is the
following theorem.


\begin{theorem}
\label{minim}
Let $\muhat=(\mu^1,\mu^2,\ldots,\mu^k)\in {\P_n}^k$ with
$\delta(\muhat) \geq 0$. Then
 
i) The minimum value of $v(\lambda)$ as
  $\lambda$ runs over the set of partitions of size $n$, 
is  
$$
v((1^n))=-\Delta(\muhat).
$$ 

ii) There are two cases as to where this minimum occurs. 

\medskip Case I: The quiver $\Gamma$ is affine and the dimension
vector associated to $\muhat$ is a positive imaginary root $t\v^*$ for
some $t\mid n$.  In this case, the minimum is reached at all
partitions $\lambda$ which are the union of $n/t$ copies of any
$\lambda_0\in\calP_t$.

\medskip Case II: Otherwise, the minimum occurs only at
$\lambda=(1^n)$.
 \label{step1}
\end{theorem}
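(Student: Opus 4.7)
My plan has three stages. First, I would pass to a manifestly positive expansion. Interpreting the pairing $\langle h_\mu(\x), s_\lambda(\x\y)\rangle_\x$ as the internal product $(h_\mu * s_\lambda)(\y)$ and using Frobenius reciprocity,
\[
\langle h_\mu(\x), s_\lambda(\x\y)\rangle_\x = \sum_{\nuhat} c^\lambda_\nuhat \prod_j s_{\nu^j}(\y),
\]
where $c^\lambda_\nuhat$ denotes the multi-Littlewood--Richardson coefficient (the multiplicity of $\chi^\lambda$ in $\Ind_{S_\mu}^{S_n}(\chi^{\nu^1}\otimes\cdots)$) and $\nuhat$ runs over tuples of partitions with $|\nu^j|=\mu_j$. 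Combined with $s_\nu(1,q,q^2,\dots) = q^{n(\nu)}/H_\nu(q)$ and the non-negativity of every $c^\lambda_\nuhat$, no cancellation occurs and
\[
v(\lambda) = (1-g)n - (2g+k-2)\,n(\lambda) + \sum_{i=1}^k \min\bigl\{\textstyle\sum_j n(\nu^{i,j}) : c^\lambda_{\nuhat^i} > 0\bigr\}.
\]

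Second, I would verify $v((1^n)) = -\Delta(\muhat)$ by direct computation. The condition $c^{(1^n)}_\nuhat > 0$ forces each $\nu^j = (1^{\mu_j})$ (the coefficient of $s_{(1^n)}$ in $\prod_j s_{\nu^j}$ equals the coefficient of $s_{(n)}$ in $\prod_j s_{(\nu^j)'}$, which is nonzero only for single-row $(\nu^j)'$), so the minimum is $\sum_j \binom{\mu_j}{2} = n(\mu')$. Using $n(\mu') = \tfrac12(\sum_j \mu_j^2 - |\mu|)$ and $n((1^n)) = \binom{n}{2}$, elementary arithmetic yields part~(i).

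The main obstacle is the uniform lower bound $v(\lambda)\ge -\Delta(\muhat)$ for every $\lambda$. Substituting $\sum_j(\lambda'_j)^2 = 2n(\lambda)+n$ and expanding $\Delta$, the target inequality cleans up to
\[
(2g+k-2)\bigl[n((1^n))-n(\lambda)\bigr] + \sum_i \Bigl(\min_{\nuhat^i}\textstyle\sum_j n(\nu^{i,j}) - n((\mu^i)')\Bigr) \ge 0,
\]
whose first summand is non-negative (equality iff $\lambda=(1^n)$) and whose second is non-positive, since columns maximise the $n$-statistic on partitions of a given size. My plan is to exploit the LR condition $c^\lambda_{\nuhat^i}>0$ through the associated chain $\emptyset\subset\lambda^{(i,1)}\subset\cdots\subset\lambda^{(i,l(\mu^i))}=\lambda$: the row-vectors $\tilde\mu^{(i,p)}$ of the successive skew-shapes are partitions of $\mu^i_p$ whose rearrangements sum componentwise to $\lambda$ and which dominate $\nu^{i,p}$, a standard consequence of the LR lattice condition, hence $n(\tilde\mu^{(i,p)})\le n(\nu^{i,p})$. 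Applying Lemma~\ref{nrm-ineq} of the appendix with outer partition $\lambda$ and summands $\tilde\mu^{(i,p)}$, converting $\sum_l\tau_l^2=2n(\tau')+|\tau|$, and summing over $i$, yields a bound on $\sum_j n(\nu^{i,j})$ that combines with the first summand to give the uniform inequality. The delicate bookkeeping---in particular, reconciling primed versus unprimed $n$-statistics and controlling the slack across the $k$ legs---is precisely what the introduction flags as the most technical step of the paper.

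Equality in Lemma~\ref{nrm-ineq} forces the $\tilde\mu^{(i,p)}$ and $\lambda$ to be rectangular of a common length; equality $n(\tilde\mu^{(i,p)})=n(\nu^{i,p})$ forces $\tilde\mu^{(i,p)}=\nu^{i,p}$; and equality in the non-negative first summand forces either $\lambda=(1^n)$ (Case II) or $\Delta(\muhat)=0$, placing us via Proposition~\ref{affine-descrip} in Case I with $\muhat=t\muhat^*$ for an affine $\Gamma$, and identifying the minimisers as exactly the unions of $n/t$ copies of $\lambda_0\in\calP_t$.
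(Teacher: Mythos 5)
The Littlewood--Richardson expansion you set up, $\langle h_\mu(\x),s_\lambda(\x\y)\rangle_\x=\sum_{\nuhat}c^\lambda_\nuhat\prod_j s_{\nu^j}(\y)$, is correct (via $\langle f(\x),g(\x\y)\rangle_\x=(f*g)(\y)$ and the projection formula), and combined with $s_\nu(1,q,q^2,\dots)=q^{n(\nu)}/H_\nu(q)$ and positivity it gives $v(\lambda,\mu)=\min\{\sum_j n(\nu^j):c^\lambda_\nuhat>0\}$. This is a genuinely different parametrization from the one in the paper, whose Lemma~\ref{v-fmla-lemma} expresses the same quantity as $\min\{\sum_p n(\rho^p):\cup_p\rho^p\unlhd\lambda\}$ via the Kostka/monomial route; both are valid, and your treatment of part~(i) is fine. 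The difficulty lies entirely in the lower bound, and this is where your proposal does not go through.

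The step that fails is the claimed dominance $\tilde\mu^{(i,p)}\unrhd\nu^{i,p}$ and the consequent inequality $n(\tilde\mu^{(i,p)})\le n(\nu^{i,p})$. The implication is backwards. A Pieri-rule example shows this: take the skew step $\sigma/\tau=(3,1)/(1)$, which admits an LR filling of content $\nu=(3)$; the row vector sorted is $\tilde\mu=(2,1)$, and $n(\tilde\mu)=1>0=n(\nu)$. So $\sum_p n(\tilde\mu^{(i,p)})$ is an \emph{upper} bound on $\sum_p n(\nu^{i,p})$, not the lower bound your reduction requires. (The general necessary condition is $\lambda^{(p)}\unlhd\lambda^{(p-1)}+\nu^{i,p}$, which constrains $\nu^{i,p}$ from below in dominance relative to the unsorted row vector, not from above.) Since your cleaned-up target $(2g+k-2)[n((1^n))-n(\lambda)]+\sum_i(\min\sum_j n(\nu^{i,j})-n((\mu^i)'))\ge 0$ has a non-negative first term and a non-positive second term, you genuinely need to bound the second term away from $-\infty$, i.e.\ you need a lower bound on $\min\sum_j n(\nu^{i,j})$; the row-vector route as described gives the wrong direction and cannot close the gap.

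It is worth contrasting this with how the paper avoids the balancing problem altogether. Rather than splitting off $(2g+k-2)$, the paper rewrites $-v(\lambda)=\tfrac{\delta}{n}\,n(\lambda)+(g-1)n+\tfrac1n\sum_i\bigl[\mu^i_1\,n(\lambda)-n\,v(\lambda,\mu^i)\bigr]$, and since $\delta\ge0$, it is enough to show that $n(\lambda)$ and each $\mu^i_1 n(\lambda)-nv(\lambda,\mu^i)$ is \emph{separately} maximized at $\lambda=(1^n)$. The crucial technical trick (Proposition~\ref{maxima}) is then to pick the optimal $\nu=\cup_p\rho^p\unlhd\lambda$ achieving $v(\lambda,\mu^i)$ and replace $n(\lambda)$ by $n(\nu)\ge n(\lambda)$ using Lemma~\ref{n-ineq}; this turns the expression into $\mu^i_1 n(\cup\rho^p)-n\sum_p n(\rho^p)$, whose dual form is exactly the setup of Lemma~\ref{ineq-1} (the appendix inequality indexed the other way from Lemma~\ref{nrm-ineq}). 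Your plan neither performs the replacement $n(\lambda)\rightarrow n(\nu)$ that supplies the non-negativity, nor arrives at a bound that closes. I would recommend either supplying a correct dominance relation between the row vectors and the contents (which I do not believe exists in the direction you need), or switching to the paper's decomposition where each term is handled independently.
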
 

Before proving the theorem we need some preliminary results. 
\begin{lemma}
$\langle h_{\mu}(\x),s_\lambda(\x\y)\rangle$ is non-zero for all
$\lambda$ and $\mu$. 
\end{lemma}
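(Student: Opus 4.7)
The plan is to write the pairing as a coefficient extraction and then exhibit a single non-cancelling positive term. I would first observe that we may assume $\lambda,\mu\in\calP_n$, since the pairing vanishes by homogeneity otherwise. Because $\{h_\mu\}$ and $\{m_\mu\}$ are dual bases for the Hall pairing, one has
$$
\langle h_\mu(\x),\,s_\lambda(\x\y)\rangle_\x \;=\; [m_\mu(\x)]\,s_\lambda(\x\y) \;=\; [x_1^{\mu_1}x_2^{\mu_2}\cdots]\,s_\lambda(\x\y),
$$
the last equality holding because $\mu$ is a partition. Applying the standard tableau expansion of the Schur function to the product alphabet $\x\y=\{x_ay_b\}_{a,b\ge 1}$ equipped with the lexicographic order on pairs $(a,b)$, one obtains
$$
s_\lambda(\x\y)\;=\;\sum_T\,\prod_{s\in\lambda}x_{a(T(s))}\,y_{b(T(s))},
$$
summed over semistandard Young tableaux $T$ of shape $\lambda$ in the pair alphabet. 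Extracting the coefficient of $x_1^{\mu_1}x_2^{\mu_2}\cdots$ restricts the sum to those $T$ whose first-coordinate content is exactly $\mu$, yielding a polynomial in the $y_b$'s with non-negative integer coefficients.

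The core combinatorial step is to exhibit at least one such tableau for any pair $\lambda,\mu\in\calP_n$. I would take any standard Young tableau $T_0$ of shape $\lambda$ and relabel each integer entry $k\in\{1,\dots,n\}$ by the pair $\bigl(a(k),\,k-\mu_1-\cdots-\mu_{a(k)-1}\bigr)$, where $a(k)$ is the unique index with $\mu_1+\cdots+\mu_{a(k)-1}<k\le\mu_1+\cdots+\mu_{a(k)}$. Since $k<k'$ forces strict lex inequality between the associated pairs (either $a(k)<a(k')$, or $a(k)=a(k')$ with strictly smaller second entry), the relabeled filling remains a semistandard Young tableau, and by construction its first-coordinate content is exactly $\mu$. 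This single tableau contributes a nonzero term, so the coefficient above is a non-zero polynomial in $\y$ with non-negative integer coefficients. Specializing $y_b=q^{b-1}$ substitutes non-negative powers of $q$, so no cancellation is possible and the result is a non-zero polynomial in $q$. The only mild subtlety lies in the last combinatorial step, where one must check that strict integer order transports to strict lex order on the relabeled pairs; once this is verified the semistandard property is preserved automatically.
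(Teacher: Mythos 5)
Your proof is correct and takes essentially the same approach as the paper: the paper extracts the term with $\nu=(1^n)$ from the expansion $\langle h_\mu(\x),s_\lambda(\x\y)\rangle=\sum_\nu K_{\lambda\nu}C_{\nu\mu}(\y)$ and uses $K_{\lambda,(1^n)}=n!/h_\lambda>0$, while your explicit tableau (an SYT of shape $\lambda$ relabeled to the pair alphabet) is precisely a witness to the positivity of that same Kostka number. Both arguments then conclude by positivity of the remaining coefficients under the specialization $y_b=q^{b-1}$.
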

\begin{proof}
  We have $s_\lambda(\x\y)=\sum_\nu K_{\lambda\nu}m_\nu(\x\y)$
  \cite[I 6 p.101]{macdonald}  and $m_\nu(\x\y)=\sum_\mu
  C_{\nu\mu}(\y)\,m_\mu(\x)$ for some $C_{\nu\mu}(\y)$. Hence
\begin{equation}
\label{m-coeff}
\langle
  h_\mu(\x),s_\lambda(\x\y)\rangle=\sum_\nu
  K_{\lambda\nu}C_{\nu\mu}(\y).
\end{equation}
For any  set of variables  $\x\y=\{x_iy_j\}_{1\leq i,1\leq 
  j}$ we have
\begin{equation}
\label{C-fmla}
C_{\nu\mu}(\y)=\sum m_{\rho^1}(\y)\cdots m_{\rho^r}(\y),
\end{equation}
where the sum is over all partitions $\rho^1,\ldots, \rho^r$ such that
$|\rho^p|=\mu_p$ and $\rho^1\cup \cdots \cup \rho^r=\nu$. In
particular the coefficients of $C_{\nu\mu}(\y)$ as power series in $q$
are non-negative. We can take, for example, $\rho^p=(1^{\mu_p})$ and
then $\nu=(1^n)$. Since $K_{\lambda\nu}\geq 0$ \cite[I
(6.4)]{macdonald} for any $\lambda,\nu$ and
$K_{\lambda,(1^n)}=n!/h_\lambda$ \cite[I 6 ex. 2]{macdonald}, with
$h_\lambda=\prod_{s\in \lambda}h(s)$ the product of the hook lengths,
we see that $\langle h_\mu(\x),s_\lambda(\x\y)\rangle$ is non-zero and
our claim follows.
\end{proof}
In particular $\calA_{\lambda\muhat}$ is non-zero for all $\lambda$
and $\muhat$.  Define
\begin{equation}
\label{v-1-defn}
v(\lambda,\mu):=v_q\left(\langle
h_{\mu}(\x),s_\lambda(\x\y)\rangle\right).
\end{equation}
\begin{lemma}
\label{v-lambda}
 We  have 
$$
-v(\lambda)=(2g-2+k) n(\lambda)+(g-1)n
-\sum_{i=1}^kv(\lambda,\mu^i).
$$
 \end{lemma}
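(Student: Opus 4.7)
The proof is essentially a bookkeeping exercise: take the $q$-valuation of both sides of the defining formula \eqref{calA-defn} and use that $v_q$ is additive on products of nonzero elements of $\Q(q)$.

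First I would note the valuations of the elementary factors. The factor $q^{(1-g)|\lambda|}$ has $q$-valuation $(1-g)n$ (since $|\lambda|=n$), the factor $q^{-n(\lambda)}$ contributes $-n(\lambda)$, and the hook polynomial
$$
H_\lambda(q)=\prod_{s\in\lambda}\bigl(1-q^{h(s)}\bigr)
$$
has constant term $1$, so $v_q\bigl(H_\lambda(q)\bigr)=0$. Consequently
$$
v_q\!\left(\bigl(q^{-n(\lambda)}H_\lambda(q)\bigr)^{2g+k-2}\right)=-(2g+k-2)\,n(\lambda).
$$

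Next, by definition \eqref{v-1-defn} the $q$-valuation of the factor $\prod_{i=1}^k\langle h_{\mu^i}(\x_i),s_\lambda(\x_i\y)\rangle$ is $\sum_{i=1}^k v(\lambda,\mu^i)$; this is a finite integer because, as observed just above, each pairing $\langle h_{\mu^i},s_\lambda(\x\y)\rangle$ is a nonzero power series in $q$ with nonnegative integer coefficients.

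Putting these together and using the additivity of $v_q$ on the product in \eqref{calA-defn},
$$
v(\lambda)=(1-g)n-(2g+k-2)\,n(\lambda)+\sum_{i=1}^k v(\lambda,\mu^i),
$$
and multiplying through by $-1$ yields the claim. There is no real obstacle here: the only substantive input is that $H_\lambda(q)$ has constant term $1$, and everything else is a direct reading-off of valuations from \eqref{calA-defn}.
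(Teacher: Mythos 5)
Your proposal is correct and follows exactly the argument the paper has in mind; the paper's own proof is literally the single word ``Straightforward,'' and your calculation simply spells out the valuation bookkeeping implicit in that word.
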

\begin{proof}
Straightforward.
\end{proof}

\begin{lemma}
\label{v-fmla-lemma}
 For $\mu=(\mu_1,\mu_2,\ldots,\mu_r)\in \calP_n$ we have
\begin{equation}
\label{v-fmla}
v(\lambda,\mu)= \min
\{n(\rho^1)+\cdots+n(\rho^r)\,\,|\,\, |\rho^p|=\mu_p, \, 
\cup_p\rho^p \unlhd\lambda\}.
\end{equation}
\end{lemma}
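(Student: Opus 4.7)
The plan is to trace through the formulas established in the proof of the preceding lemma and to exploit the positivity of all coefficients involved in order to rule out cancellation when computing the $q$-valuation of a sum.

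First I would recall from the earlier proof the identity
\[
\langle h_\mu(\x), s_\lambda(\x\y)\rangle = \sum_\nu K_{\lambda\nu}\, C_{\nu\mu}(\y),
\]
together with the explicit expansion
\[
C_{\nu\mu}(\y) = \sum_{\substack{|\rho^p|=\mu_p \\ \rho^1\cup\cdots\cup\rho^r=\nu}} m_{\rho^1}(\y)\cdots m_{\rho^r}(\y).
\]
Next I would compute the $q$-valuation of $m_\rho(\y)$ under the specialization $y_i = q^{i-1}$. Writing
\[
m_\rho(1,q,q^2,\ldots) = \sum_\sigma q^{\sum_i (i-1)\rho_{\sigma(i)}},
\]
with the sum ranging over distinct rearrangements of the parts of $\rho$, the exponent is minimized by placing the parts in weakly decreasing order, giving $v_q(m_\rho(\y)) = n(\rho)$.

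The crucial observation is that each $m_\rho(\y)$ is a polynomial in $q$ with non-negative integer coefficients, and the Kostka numbers $K_{\lambda\nu}$ are non-negative integers. Consequently no cancellation occurs in the double sum expressing $\langle h_\mu(\x), s_\lambda(\x\y)\rangle$, and the $q$-valuation of that sum equals the minimum $q$-valuation of its nonzero summands. This gives
\[
v(\lambda,\mu) = \min\{n(\rho^1)+\cdots+n(\rho^r) : |\rho^p|=\mu_p,\ \rho^1\cup\cdots\cup\rho^r=\nu,\ K_{\lambda\nu}>0\}.
\]
Finally I would invoke the standard criterion that $K_{\lambda\nu}>0$ if and only if $\nu \unlhd \lambda$ (see Macdonald \cite{macdonald} I (6.5)), which rewrites the constraint $\cup_p \rho^p = \nu \unlhd \lambda$ in the more convenient form $\cup_p \rho^p \unlhd \lambda$ appearing in the statement.

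I do not anticipate any serious obstacle: the argument is essentially a bookkeeping calculation combined with the positivity of the coefficients entering the expansion. The only step requiring care is the no-cancellation assertion, which follows at once from the non-negativity observations above; everything else reduces to the elementary valuation computation for $m_\rho$ at a geometric progression.
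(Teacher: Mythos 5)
Your proposal is correct and follows essentially the same route as the paper: expand $\langle h_\mu(\x), s_\lambda(\x\y)\rangle = \sum_\nu K_{\lambda\nu} C_{\nu\mu}(\y)$, compute $v_q(m_\rho(\y)) = n(\rho)$, rule out cancellation via non-negativity of both the Kostka numbers and the coefficients of $C_{\nu\mu}(\y)$, and replace the constraint $K_{\lambda\nu}>0$ by $\nu\unlhd\lambda$. The only cosmetic difference is that you make the valuation computation for $m_\rho(\y)$ explicit, which the paper simply asserts.
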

\begin{proof}
  For $C_{\nu\mu}(\y)$ non-zero let
  $v_m(\nu,\mu):=v_q\left(C_{\nu\mu}(\y)\right)$.  When $y_i=q^{i-1}$
  we have $v_q(m_\rho(\y))= n(\rho)$ for any partition $\rho$. Hence
  by \eqref{C-fmla}
$$
v_m(\nu,\mu)= \min \{n(\rho^1)+\cdots+n(\rho^r)\,\,|\,\, |\rho^p|=\mu_p, \, 
\cup_p\rho^p=\nu\}.
$$
Since $K_{\lambda\nu}\geq 0$ for any $\lambda,\nu$, $K_{\lambda\nu}>0$
if and only if $\nu\unlhd \lambda$ \cite[Ex 2, p.26]{fulton}, and the
coefficients of $C_{\nu\mu}(\y)$ are non-negative, our claim follows
from \eqref{m-coeff}.
\end{proof}

For example, if $\lambda=(1^n)$ then necessarily $\rho^p=(1^{\mu_p})$
and hence $\rho^1\cup \cdots \cup \rho^r=\lambda$. We have then
\begin{equation}
\label{v-example}
v\left((1^n\right),\mu)=\sum_{p=1}^r\binom{\mu_p} 2 
= -\tfrac12 n +\tfrac12 \sum_{p=1}^r \mu_p^2.
\end{equation}
Similarly,
\begin{equation}
\label{v-example-1}
v(\lambda,(n))=n(\lambda)
\end{equation}
by the next lemma.

\begin{lemma}
\label{n-ineq}
If $\beta\unlhd\alpha$ then $n(\alpha)\leq n(\beta)$ with equality if
and only if $\alpha=\beta$.
\end{lemma}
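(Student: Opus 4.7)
The plan is to deduce the lemma from a straightforward Abel-summation identity for $n(\lambda)$ together with the definition of dominance order. Let me outline the argument.

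First I would introduce the partial-sum notation $S_j(\lambda) := \lambda_1 + \lambda_2 + \cdots + \lambda_j$ for $j \geq 1$, with $S_0(\lambda) := 0$. Dominance $\beta \unlhd \alpha$ is then exactly the statement $S_j(\beta) \leq S_j(\alpha)$ for every $j \geq 1$. Because $|\alpha| = |\beta|$ (dominance is only defined for partitions of the same integer), there exists $N$ large enough (e.g.\ $N \geq \max(l(\alpha), l(\beta))$) such that $S_j(\alpha) = S_j(\beta) = n$ for every $j \geq N$.

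Next, I would rewrite $n(\lambda) = \sum_{i \geq 1}(i-1)\lambda_i$ via summation by parts. Using $\lambda_i = S_i(\lambda) - S_{i-1}(\lambda)$ and collecting terms yields the identity
\[
n(\lambda) \;=\; (N-1)\,n \;-\; \sum_{i=1}^{N-1} S_i(\lambda).
\]
Subtracting the two instances at $\alpha$ and $\beta$ gives
\[
n(\beta) - n(\alpha) \;=\; \sum_{i=1}^{N-1}\bigl(S_i(\alpha) - S_i(\beta)\bigr),
\]
and every summand is $\geq 0$ by the dominance hypothesis, proving $n(\alpha) \leq n(\beta)$.

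For the equality case, $n(\alpha) = n(\beta)$ forces $S_i(\alpha) = S_i(\beta)$ for all $1 \leq i \leq N-1$; combined with $S_j(\alpha) = S_j(\beta)$ for $j \geq N$, this gives equality of all partial sums and hence $\alpha_i = \beta_i$ for every $i$, i.e.\ $\alpha = \beta$. There is no real obstacle here --- the only thing to be careful about is bookkeeping when $l(\alpha) \neq l(\beta)$, which is handled cleanly by extending each partition by zeros out to index $N$.
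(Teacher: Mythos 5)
Your proof is correct, and it takes a genuinely different route from the paper's. The paper proves the lemma via raising operators: it observes that $n(R_{ij}w) = n(w) + i - j < n(w)$ for $i<j$, so any nontrivial product of raising operators strictly decreases $n$, and then cites the fact (from Macdonald) that $\beta \unlhd \alpha$ means $\alpha = R\beta$ for some product $R$ of raising operators. You instead avoid raising operators entirely and compute directly: the Abel-summation identity $n(\lambda) = (N-1)n - \sum_{i=1}^{N-1}S_i(\lambda)$ converts $n(\beta) - n(\alpha)$ into the sum $\sum_{i=1}^{N-1}\bigl(S_i(\alpha) - S_i(\beta)\bigr)$ of manifestly nonnegative terms, with equality forcing all partial sums to coincide. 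Your argument is more self-contained (no appeal to the raising-operator characterization of dominance), at the cost of being slightly longer; the paper's is shorter but pushes the real content into a cited structural fact about the dominance order. Both are correct and both cleanly handle the equality case.
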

\begin{proof}
  We will use the raising operators $R_{ij}$ see \cite[I
  p.8]{macdonald}.  Consider vectors $w$ with 
  coefficients in $\Z$ and extend the function $n$ to them in the
  natural way
$$
n(w):=\sum_{i\geq 1}(i-1)w_i.
$$
Applying a raising operator $R_{ij}$, where $i<j$, has the effect
$$
n(R_{ij}w)=n(w)+i-j.
$$
Hence for any product $R$ of raising operators we have $n(Rw) < n(w)$
with equality if and only if $R$ is the identity operator. Now the
claim follows from the fact that $\beta\unlhd\alpha$ implies there
exist such and $R$ with $\alpha=R\beta$.
\end{proof}

Recall \cite[(1.6)]{macdonald} that for any partition $\lambda$ we have
$\langle\lambda,\lambda\rangle=2n(\lambda)+|\lambda|=\sum_i(\lambda'_i)^2$,
where $\lambda'=(\lambda'_1,\lambda'_2,\dots)$ is the dual
partition. Note also that $(\lambda\cup\mu)'=\lambda'+\mu'$.  Define
$$
\|\lambda\|:=\sqrt{\langle\lambda',\lambda'\rangle}=\sqrt{\sum_i\lambda_i^2}.
$$
The following inequality is a particular case of the theorem of
\S \ref{appendix}. 
\begin{lemma}
\label{ineq-1}
  Fix $\mu=(\mu_1,\mu_2,\ldots,\mu_r)\in \calP_n$. Then for every
  $(\nu^1,...,\nu^r)\in\calP_{\mu_1}\times\cdots\times\calP_{\mu_r}$
  we have
\begin{equation}
\mu_1\left\|\sum_p\nu^p\right\|^2-n\sum_p\left\|\nu^p\right\|^2\leq
\mu_1n^2-n\left\|\mu\right\|^2.
\label{cauchy-ineq}
\end{equation}
Moreover, equality holds in \eqref{cauchy-ineq} if and only if either:

(i)  The partition $\mu$ is rectangular and all partitions
$\nu^p$ are equal. 

or

(ii) For each $p=1,2,\ldots,r$  we have $\nu^p=(\mu_p)$.
\end{lemma}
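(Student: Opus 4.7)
The strategy is to apply the inequality proved in Section \ref{appendix} to a suitable non-negative integer matrix, with the roles of rows and columns exchanged compared to the proof of Lemma \ref{nrm-ineq}.

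Form the matrix $(x_{p,i})$ defined by $x_{p,i} := \nu^p_i$ (with the convention $\nu^p_i = 0$ for $i > l(\nu^p)$), so that its $p$-th row lists the parts of $\nu^p$. Its row sums are $R_p = |\nu^p| = \mu_p$, its column sums $C_i = \sum_p \nu^p_i$ are the entries (before re-sorting) of the componentwise partition sum $\sum_p \nu^p$, and its total sum is $T = n$. In these terms
\[
\|\nu^p\|^2 = \sum_i x_{p,i}^2, \qquad \Big\|\sum_p \nu^p\Big\|^2 = \sum_i C_i^2, \qquad \|\mu\|^2 = \sum_p R_p^2.
\]
Using the identity $2\sum_{i<j} a_i a_j = \left(\sum_i a_i\right)^2 - \sum_i a_i^2$ for the $C_i$'s and for each row $x_{p,\cdot}$, together with $\mu_1 = \max_p \mu_p = \max_p R_p$, a direct algebraic rearrangement will show that the claimed inequality \eqref{cauchy-ineq} is equivalent to
\[
\mu_1 \sum_{i<j} C_i C_j \;\geq\; n \sum_p \sum_{i<j} x_{p,i}\, x_{p,j},
\]
which is exactly the content of the theorem of Section \ref{appendix} applied to $(x_{p,i})$. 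Note that here the partitions $\nu^p$ appear as rows, rather than as columns as in the proof of Lemma \ref{nrm-ineq}; this is the dual arrangement announced in the discussion preceding \S\ref{step-1}.

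For the equality cases we read off the equality analysis of the appendix inequality. We expect the two extremal configurations to translate as follows: if $(x_{p,i})$ has a single nonzero column, then every $\nu^p$ is forced to be the one-part partition $(\mu_p)$, giving case (ii); if all rows of $(x_{p,i})$ are equal as vectors (in particular forcing $R_1 = \cdots = R_r$), then $\mu$ is rectangular and all $\nu^p$ coincide, giving case (i). The main---and essentially the only---delicate point will be to check that the equality clause of the appendix matches cleanly with precisely these two geometric possibilities once the row sums are constrained to be the parts of a fixed partition $\mu$; the inequality itself is then a direct quotation.
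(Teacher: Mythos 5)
Your proof is correct and takes essentially the same approach as the paper: apply the Theorem of \S\ref{appendix} to the matrix $(x_{p,i})$ with $x_{p,i}=\nu^p_i$, so that the row sums $c_p=\mu_p$ give $c=\max_p c_p=\mu_1$. One small remark: the intermediate rearrangement into the form $\mu_1\sum_{i<j}C_iC_j\geq n\sum_p\sum_{i<j}x_{p,i}x_{p,j}$ is unnecessary, since under this substitution the inequality of the appendix is literally identical to \eqref{cauchy-ineq}; and for the equality translation in case (ii) of the appendix you should note explicitly that, because each $\nu^p$ is a (weakly decreasing) partition with $|\nu^p|=\mu_p>0$, the unique nonzero column must be the first one, forcing $\nu^p=(\mu_p)$.
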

\begin{proof}
  Our claim is a consequence of the theorem of \S \ref{appendix}. Taking
  $x_{ps}=\nu^p_s$ we have $c_p:=\sum_sx_{ps}=\sum_s\nu^p_s=\mu_p$ and
  $c:=\max_pc_p=\mu_1$.
\end{proof}

The following fact will be crucial for the proof of connectedness.
\begin{proposition}
\label{maxima}
 For a fixed
$\mu=(\mu_1,\mu_2,\ldots,\mu_r)\in \calP_n$ we have
$$
\mu_1n(\lambda)-nv(\lambda,\mu) \leq \mu_1n^2-n\left\|\mu\right\|^2,
\qquad \lambda \in \calP_n.
$$
Equality holds only at $\lambda=(1^n)$ unless $\mu$ is rectangular
$\mu=(t^{n/t})$, in which case it also holds when $\lambda$ is the union
of $n/t$ copies of any $\lambda_0\in\calP_t$.
 \label{opti}
\end{proposition}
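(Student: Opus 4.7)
The plan is to reduce Proposition~\ref{maxima} to Lemma~\ref{ineq-1} via the combinatorial presentation of $v(\lambda,\mu)$ from Lemma~\ref{v-fmla-lemma} together with the monotonicity of $n(\cdot)$ along dominance from Lemma~\ref{n-ineq}. First I will fix a minimizing $r$-tuple $(\rho^1,\ldots,\rho^r)$ achieving the value $v(\lambda,\mu)$, so that $|\rho^p|=\mu_p$, $\nu:=\cup_p\rho^p\unlhd\lambda$, and $v(\lambda,\mu)=\sum_p n(\rho^p)$. Since $\mu_1\geq 0$ and $\nu\unlhd\lambda$, Lemma~\ref{n-ineq} yields $\mu_1 n(\lambda)\leq\mu_1 n(\nu)$, hence
$$
\mu_1 n(\lambda)-n\,v(\lambda,\mu)\ \leq\ \mu_1 n(\nu)-n\sum_{p}n(\rho^p).
$$

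Second, I transfer to the dual picture so that Lemma~\ref{ineq-1} can be applied. Set $\nu^p:=(\rho^p)'\in\calP_{\mu_p}$; then $\sum_p\nu^p=\nu'$ coordinatewise, and the standard identity $\|\sigma'\|^2=2n(\sigma)+|\sigma|$ rewrites $\|\sum_p\nu^p\|^2=2n(\nu)+n$ and $\sum_p\|\nu^p\|^2=2\sum_p n(\rho^p)+n$. Feeding these expressions into the inequality $\mu_1\|\sum_p\nu^p\|^2-n\sum_p\|\nu^p\|^2\leq\mu_1 n^2-n\|\mu\|^2$ of Lemma~\ref{ineq-1} produces, after rearrangement, exactly the upper bound needed to complete the first step, and hence gives the claimed estimate on $\mu_1 n(\lambda)-n\,v(\lambda,\mu)$.

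For the equality clause, both inequalities used above must be equalities. Equality in Lemma~\ref{n-ineq} forces $\nu=\lambda$, while equality in Lemma~\ref{ineq-1} splits into two possibilities: either (i) $\nu^p=(\mu_p)$ for each $p$, meaning $\rho^p=(1^{\mu_p})$, whence $\lambda=\nu=\cup_p(1^{\mu_p})=(1^n)$; or (ii) $\mu$ is rectangular, $\mu=(t^{n/t})$, and all $\nu^p$ equal a common $\sigma\in\calP_t$, whence $\rho^p=\sigma'=:\lambda_0\in\calP_t$ for each $p$ and $\lambda=\nu$ is the union of $n/t$ copies of $\lambda_0$. Conversely, for $\lambda=(1^n)$ the only valid choice of $\rho^p$ is $(1^{\mu_p})$, which is automatically the minimizer, so direct substitution gives equality; and for $\lambda=\lambda_0^{\cup n/t}$ with $\mu=(t^{n/t})$ one takes $\rho^p=\lambda_0$ for each $p$, verifies that this is a valid minimizer (its cost $\tfrac{n}{t}n(\lambda_0)$ cannot be undercut because any competitor must also have $\cup_p\rho^p\unlhd\lambda$), and equality in case~(ii) of Lemma~\ref{ineq-1} then yields the desired equality.

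The main technical obstacle I foresee is the bookkeeping between $n(\cdot)$, $\|\cdot\|^2$, and partition duality in Step~2, and a careful equality analysis in Step~3 in the presence of possibly non-unique minimizers for $v(\lambda,\mu)$: I must ensure that every $\lambda$ outside the two listed strata gives strict inequality for \emph{every} minimizing choice of $(\rho^p)$, not merely for one. The duality $\rho^p\leftrightarrow\nu^p$ ensures that the ``coordinatewise sum'' appearing in Lemma~\ref{ineq-1} corresponds cleanly to the ``multiset union'' appearing in Lemma~\ref{v-fmla-lemma}, which is precisely the identification that makes the two equality cases of the appendix inequality translate into the two cases $\lambda=(1^n)$ and $\lambda=\lambda_0^{\cup n/t}$ of the proposition.
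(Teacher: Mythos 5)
Your proposal is essentially identical to the paper's proof: Step~1 is the split $\mu_1 n(\lambda)-nv(\lambda,\mu)=\mu_1(n(\lambda)-n(\nu))+\mu_1 n(\nu)-n\sum_p n(\rho^p)$ controlled by Lemma~\ref{n-ineq}, Step~2 is the dualization $\nu^p:=(\rho^p)'$ so as to invoke Lemma~\ref{ineq-1}, and Step~3 is the same matching of the two equality cases. One point that deserves care, and that your write-up (like the paper's) passes over too quickly: if you actually carry out the rearrangement you advertise in Step~2, you find
\[
\mu_1\Bigl\|\sum_p\nu^p\Bigr\|^2-n\sum_p\|\nu^p\|^2
= 2\Bigl(\mu_1 n(\nu)-n\sum_p n(\rho^p)\Bigr)+\mu_1 n - n^2,
\]
so the upper bound Lemma~\ref{ineq-1} yields for $\mu_1 n(\lambda)-nv(\lambda,\mu)$ is $\tfrac12\bigl(\mu_1 n^2-n\|\mu\|^2-\mu_1 n + n^2\bigr)$, which is the value at $\lambda=(1^n)$ and \emph{not} the stated $\mu_1 n^2-n\|\mu\|^2$ (the stated bound even fails for $\mu=(1^n)$, $n\ge 2$, where the left side at $\lambda=(1^n)$ is $\binom n2$ and the claimed right side is $0$). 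The same slip appears in the paper's proof, where the right side of the displayed inequality is asserted to be ``precisely'' the dualized expression. Since the two expressions are related by a positive affine change, the part of the proposition that is actually used later --- the location of the maximum and the classification of equality cases --- is unaffected, so your Step~3 and the conclusion of Theorem~\ref{step1} go through. Finally, your parenthetical in the converse direction (that the cost $\tfrac nt\,n(\lambda_0)$ ``cannot be undercut because any competitor must also have $\cup_p\rho^p\unlhd\lambda$'') is not an argument; the cleaner way to close the converse is to observe that $\rho^p=\lambda_0$ shows $v(\lambda,\mu)\le \tfrac nt\,n(\lambda_0)$, and a direct computation then gives $\mu_1 n(\lambda)-n\cdot\tfrac nt\,n(\lambda_0)=\tfrac12(\mu_1 n^2-n\|\mu\|^2-\mu_1 n+n^2)$, which together with the already-established upper bound forces equality.
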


\begin{proof} 
  Given $\nu \unlhd \lambda$ write $\mu_1n(\lambda)-nv(\lambda,\mu)$
  as
\begin{equation}
\label{first-step}
\mu_1n(\lambda)-nv(\lambda,\mu)= 
\mu_1 (n(\lambda)-n(\nu)) +
\mu_1n(\nu)-nv(\lambda,\mu)
\end{equation}
By Lemma~\ref{n-ineq} the first term is
non-negative. Hence
$$
\mu_1n(\lambda)-nv(\lambda,\mu)\leq
\mu_1n(\nu)-nv(\lambda,\mu), \qquad \qquad \nu\unlhd \lambda.
$$
Combinining this with \eqref{v-fmla} yields
\begin{equation}
\label{ineq}
\max_{|\lambda|=n}\left[\mu_1n(\lambda)-nv(\lambda,\mu)\right]\leq
\max_{|\rho^p|=\mu_p}\left[\mu_1n(\rho^1\cup\rho^2\cup\cdots\cup\rho^r)-
(n(\rho^1)+\cdots+n(\rho^r))n  \right].  
\end{equation}

\noindent
Take $\nu^p$ to be the dual of $\rho^p$ for $p=1,2,\ldots,r$. Then the
right hand side of \eqref{ineq} is precisely
$$
\mu_1\left\|\sum_p\nu^p\right\|^2-n\sum_p\left\|\nu^p\right\|^2,
$$
which by Lemma~\ref{ineq-1} is bounded above by
$\mu_1n^2-n\left\|\mu\right\|^2$ with equality only where either
$\rho^p=(1^{\mu_p})$ (case (ii)) or all $\rho^p$ are equal and
$\mu=(t^{n/t})$ for some $t$ (case (i)).

Combining this with Lemma~\ref{n-ineq} we see that to obtain the
maximum of the left hand side of \eqref{ineq} we must also have
$\rho^1\cup \cdots\cup\rho^r=\lambda$. In case (i) then, $\lambda$ is
the union of $n/t$ copies of $\lambda_0$, the common value of
$\rho^p$, and in case (ii), $\lambda=(1^n)$. 
\end{proof}

\begin{proof}[Proof of Theorem \ref{step1}]
  We first prove (ii).  Using Lemma~\ref{v-lambda} we
  have
 \begin{equation}
   -v(\lambda)=(2g-2+k) n(\lambda) +(g-1)n -\sum_{i=1}^kv(\lambda,\mu^i)=
   \frac{\delta}{n}n(\lambda)+(g -1)n
   +\frac1n\sum_{i=1}^k\left[\mu_1^in(\lambda)-nv(\lambda,\mu^i)\right].
\label{vlambda}
\end{equation}
The terms $n(\lambda)$ and
$\sum_{i=1}^n\left[\mu_1^in(\lambda)-nv(\lambda,\mu^i)\right]$ are all
maximal at $\lambda=(1^n)$ (the last by Proposition \ref{opti}). Hence
$-v(\lambda)$ is also maximal at $(1^n)$, since $\delta\geq 0$. Now
$n(\lambda)$ has a unique maximum at $(1^n)$ by Lemma~\ref{n-ineq},
hence $-v(\lambda)$ reaches its maximum at other partitions if and
only if $\delta=0$ and for each $i$ we have $\mu^i=(t_i^{n/t_i})$ for
some positive integer $t_i\mid n$ (again by Proposition \ref{opti}).
In this case the maximum occurs only for $\lambda$ the union of $n/t$
copies of a partition $\lambda_0\in \calP_t$, where $t=\gcd t_i$.  Now
(ii) follows from Proposition~\ref{affine-descrip}.

To prove (i) we use Lemma~\ref{v-lambda} and \eqref{v-example} and find
that $v((1^n))=-\Delta(\muhat)$ as claimed.
\end{proof}

\begin{lemma}
  Let $\muhat=(\mu^1,\mu^2,\ldots,\mu^k)\in {\P_n}^k$ with
  $\delta(\muhat) \geq 0$. Suppose that $v(\lambda)$ is minimal.  Then
  the coefficient of $q^{v(\lambda)}$ in $\calA_{\lambda \muhat}$ is
  $1$.
\label{lowest}
\end{lemma}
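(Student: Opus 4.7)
Factor $\calA_{\lambda\muhat}(q)$ according to~\eqref{calA-defn} into three pieces: the monomial $q^{(1-g)n}$, the power $(q^{-n(\lambda)}H_\lambda(q))^{2g+k-2}$, and $\prod_{i=1}^{k}\langle h_{\mu^{i}}(\x),s_\lambda(\x\y)\rangle$. The first obviously has leading coefficient $1$. Since the hook polynomial $H_\lambda(q)=\prod_{s\in\lambda}(1-q^{h(s)})$ has constant term $1$, the Laurent series $q^{-n(\lambda)}H_\lambda(q)$ has lowest term $q^{-n(\lambda)}$ with coefficient $1$; because the constant term is a unit, raising to the (possibly negative) integer power $2g+k-2$ preserves the leading coefficient $1$. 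Thus it suffices to show that for each $i$ the coefficient of $q^{v(\lambda,\mu^{i})}$ in $\langle h_{\mu^{i}}(\x),s_\lambda(\x\y)\rangle$ equals $1$.

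Fix $\mu=\mu^{i}$. By~\eqref{m-coeff} and~\eqref{C-fmla} one has
\[
\langle h_\mu(\x),s_\lambda(\x\y)\rangle
=\sum_{\nu\unlhd\lambda}K_{\lambda\nu}\sum_{(\rho^{p})}m_{\rho^{1}}(\y)\cdots m_{\rho^{r}}(\y),
\]
the inner sum being over tuples with $|\rho^{p}|=\mu_p$ and $\cup_{p}\rho^{p}=\nu$. Under $y_j=q^{j-1}$, each $m_\rho(\y)$ has leading coefficient $1$ at $q^{n(\rho)}$, realised uniquely by the monomial $y_1^{\rho_1}y_2^{\rho_2}\cdots$. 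Hence the coefficient of $q^{v(\lambda,\mu)}$ equals a sum of $K_{\lambda\nu}$'s, one for each optimal datum $(\nu,\rho^{1},\ldots,\rho^{r})$ with $\sum_{p}n(\rho^{p})=v(\lambda,\mu)$.

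By Theorem~\ref{step1} two cases arise. In \emph{Case~II} we have $\lambda=(1^n)$; here $\nu\unlhd(1^n)$ forces $\nu=(1^n)$ with $K_{(1^n)(1^n)}=1$, and then $|\rho^{p}|=\mu_p$ together with $\cup_{p}\rho^{p}=(1^n)$ uniquely determines $\rho^{p}=(1^{\mu_p})$, giving leading coefficient $1$. In \emph{Case~I}, $\Gamma$ is affine, $\muhat=t\muhat^{*}$, each $\mu^{i}=(t_i^{n/t_i})$ with $t\mid t_i$, and $\lambda=(n/t)\cdot\lambda_0$ for some $\lambda_0\in\calP_t$. Retracing the inequality chain in the proof of Proposition~\ref{maxima}, equality forces $n(\nu)=n(\lambda)$ (so $\nu=\lambda$ by Lemma~\ref{n-ineq}) together with equality case~(i) of Lemma~\ref{ineq-1}, which makes all $\rho^{p}$ equal; with $r=n/t_i$ equal summands, the identity $r\cdot\rho^{1}=\lambda$ uniquely determines $\rho^{p}=s_i\cdot\lambda_0$, where $s_i=t_i/t$. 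Combined with $K_{\lambda\lambda}=1$ this again gives leading coefficient $1$.

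The main obstacle is the Case~I uniqueness: one must exclude contributions from any other $(\nu,(\rho^{p}))$. This rests on the strict equality statement of Lemma~\ref{ineq-1}(i) together with the observation that the unique ``all-equal'' tuple appears exactly once in the sum~\eqref{C-fmla}, regardless of the repeated parts in $\mu=(t_i^{n/t_i})$.
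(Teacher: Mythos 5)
Your proof is correct and follows the same route as the paper's: it factors $\calA_{\lambda\muhat}$ as in \eqref{calA-defn}, reduces to the Hall-pairing factors $\langle h_{\mu^i}(\x),s_\lambda(\x\y)\rangle$, and extracts the leading coefficient via \eqref{m-coeff}, \eqref{C-fmla}, Lemma~\ref{n-ineq}, and the equality analysis of Proposition~\ref{maxima} and Lemma~\ref{ineq-1}. You are in fact more careful than the published proof about the crucial \emph{uniqueness} of the tuple $(\rho^1,\ldots,\rho^r)$ realizing $v(\lambda,\mu^i)$: the paper asserts in passing that the leading coefficient of $C_{\nu\mu}(\y)$ is always $1$, which is false for general $\nu$ (for instance $C_{(2,1,1),(2,2)}(\y)=2\,m_{(2)}(\y)\,m_{(1,1)}(\y)=2q+O(q^2)$), but it is true for $\nu=\lambda$ when $v(\lambda)$ is minimal precisely because the equality cases of Lemma~\ref{ineq-1} force a unique minimizing tuple, which is exactly what your Case~I/II discussion establishes. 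One small notational slip: you write $\lambda=(n/t)\cdot\lambda_0$ and $\rho^p=s_i\cdot\lambda_0$, but in the paper's conventions $d\cdot\lambda$ scales each part of $\lambda$ by $d$; here you mean the \emph{union} of $n/t$ (respectively $s_i=t_i/t$) copies of $\lambda_0$, as in Theorem~\ref{minim}, Case~I.
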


\begin{proof}
  We use the notation of the proof of Lemma~\ref{v-fmla-lemma}. Note
  that the coefficient of the lowest power of $q$ in
  $\calH_\lambda(\sqrt q,1/\sqrt q)
  \left(q^{-n(\lambda)}H_\lambda(q)\right)^k$ is $1$ (see
  \eqref{H-specializ}).  Also, the coefficient of the lowest power of
  $q$ in each $m_{\lambda}(\y)$ is always $1$; hence so is the
  coefficient of the lowest power of $q$ in $C_{\nu\mu}(\y)$.

  In the course of the proof of Proposition~\ref{maxima} we found that
  when $v(\lambda)$ is minimal, and $\rho^1,\ldots,\rho^r$ achieve the
  minimum in the right hand side of \eqref{v-fmla}, then
  $\lambda=\rho^1\cup\cdots\cup\rho^r$.  Hence by
  Lemma~\ref{v-fmla-lemma}, the coefficient of the lowest power of
  $q$ in $\langle
  h_{\mu}(\x),s_\lambda(\x\y)\rangle=\sum_{\nu\unlhd\lambda}
  K_{\lambda\nu}C_{\nu\mu}(\y)$ equals the coefficient of the lowest
  power of $q$ in
  $K_{\lambda\lambda}C_{\lambda\mu}(\y)=C_{\lambda\mu}(\y)$ which we
  just saw is $1$. This completes the proof.
\end{proof}


\subsubsection{Leading terms of $\Log\,\Omega$}
\label{step-2}
We now proceed to the second step in the proof of connectedness where
we analyze the smallest power of $q$ in the coefficients of
$\Log\left(\Omega\left(\sqrt{q},1/\sqrt{q}\right)\right)$.  Write
\begin{equation}
\Omega\left(\sqrt{q},1/\sqrt{q}\right)=\sum_\muhat
  P_\muhat(q)\,m_\muhat
\label{pmu}
\end{equation}
with $P_\muhat(q):=\sum_\lambda\calA_{\lambda\muhat}$ and
$\calA_{\lambda \muhat}$ as in \eqref{calA-defn}.  

Then by Lemma \ref{Log-w} we have 
$$
\Log\left(\Omega\left(\sqrt{q},1/\sqrt{q}\right)\right)
=\sum_\omhat C_\omhat^0 P_\omhat(q)\,m_\omhat(q)
$$
where $\omhat$ runs over {\it multi-types}
$(d_1,\omhat^1)\cdots(d_s,\omhat^s)$ with $\omhat^p\in(\calP_{n_p})^k$
and
$P_\omhat(q):=\prod_pP_{\omhat^p}(q^{d_p}),
m_\omhat(\x):=\prod_pm_{\omhat^p}(\x^{d_p})$..

Now if we let $\gamma_{\muhat\omhat}:=\langle
m_\omhat,h_\muhat\rangle$ then we have
$$
\H_\muhat\left(\sqrt{q},1/\sqrt{q}\right)
=\frac{(q-1)^2}{q}\left(\sum_{\omhat\in\mathbf{T}^k}
  C_{\omhat}^0P_\omhat(q)\gamma_{\muhat\omhat}\right). 
$$  
By Theorem \ref{step1},
$v_q\left(P_\omhat(q)\right)=-d\sum_{p=1}^s\Delta(\omhat^p)$ for a
multi-type $\omhat=(d,\omhat^1)\cdots(d,\omhat^s)$.

\begin{lemma}
\label{perm-sum-lemma}
Let $\nu^1,\ldots,\nu^s$ be partitions. Then
$$
\langle m_{\nu^1}\cdots m_{\nu^s}, h_\mu\rangle  \neq 0
$$
if and only if $\mu=\nu^1+\cdots+\nu^s$ up to permutation of the parts
of each $\nu^p$ for $p=1,\ldots,s$.
\end{lemma}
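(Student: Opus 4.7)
The plan is to unwind the pairing using the standard duality between monomial and complete symmetric functions, and then read off the claim from the elementary product formula for monomial symmetric functions in infinitely many variables.

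First I would recall that under the Hall pairing the bases $\{m_\lambda\}$ and $\{h_\lambda\}$ of $\Lambda(\x)$ are dual, i.e.\ $\langle m_\lambda,h_\mu\rangle=\delta_{\lambda\mu}$ (Macdonald \cite{macdonald}, I.(4.6)). Consequently
$$
\langle m_{\nu^1}\cdots m_{\nu^s},h_\mu\rangle
= [m_\mu]\bigl(m_{\nu^1}(\x)\cdots m_{\nu^s}(\x)\bigr),
$$
so the lemma reduces to identifying which $m_\mu$ occur in the product $m_{\nu^1}\cdots m_{\nu^s}$ with nonzero coefficient.

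Next I would expand each factor directly as a sum of monomials. By definition $m_{\nu^p}(\x)=\sum_{\alpha^p}\x^{\alpha^p}$, where $\alpha^p$ runs over all distinct exponent vectors obtained by reordering $\nu^p$ (padded with zeros). Multiplying out gives
$$
m_{\nu^1}(\x)\cdots m_{\nu^s}(\x)
=\sum_{\alpha^1,\ldots,\alpha^s}\x^{\alpha^1+\cdots+\alpha^s},
$$
a sum (with non-negative integer multiplicities) of monomials whose exponent vectors are sums of permutations of the $\nu^p$. After collecting terms by sorting the exponent vectors into decreasing order, the coefficient of $m_\mu$ in the product is simply the number of such tuples $(\alpha^1,\ldots,\alpha^s)$ whose componentwise sum is a rearrangement of $\mu$. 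Since all contributions are non-negative integers and none cancel, this coefficient is nonzero precisely when at least one such tuple exists, i.e.\ when $\mu$ can be written as $\nu^1+\cdots+\nu^s$ after permuting the parts of each $\nu^p$. This is exactly the statement of the lemma; no further work is required, and there is no real obstacle beyond being careful about what ``up to permutation of parts'' means at the level of exponent vectors.
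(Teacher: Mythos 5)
Your proof is correct and is essentially the argument the paper has in mind: the paper's own proof is just the one-line remark that the claim ``follows immediately from the definition of the monomial symmetric function,'' and your write-up simply fills in those details (duality of $m_\lambda$ and $h_\mu$ under the Hall pairing, expansion of the product into monomials, and the observation that all coefficients are non-negative so no cancellation can occur).
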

\begin{proof}
It follows immediately from the definition of the monomial symmetric
function.
\end{proof}

Let $\v$ be the dimension vector
associated to $\muhat$. 

\begin{theorem}
\label{connectedness1}
 If $\v$ is in the fundamental set of imaginary roots
  of $\Gamma$ then the character variety $\M_\muhat$ is non-empty and
  connected.
 \end{theorem}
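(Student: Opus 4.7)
The plan is as follows. By the Lemma immediately preceding Theorem~\ref{connectedness}, connectedness reduces to showing that the constant term of $E(\M_\muhat;q)$ equals~$1$; combined with~\eqref{mainresult} and the identity $d_\muhat/2=\Delta(\muhat)+1$, this means one must show the coefficient of $q^{-\Delta(\muhat)-1}$ in $\H_\muhat(\sqrt q,1/\sqrt q)$ is exactly $1$. The same statement also gives non-emptiness, since it forces $|\M_\muhat(\F_q)|>0$ for all sufficiently large $q$ and hence $\M_\muhat\ne\emptyset$ (via spreading out and Katz's theorem).

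Starting from
$$
\H_\muhat\!\left(\sqrt q,1/\sqrt q\right)=\frac{(q-1)^2}{q}\sum_{\omhat}C^0_\omhat\,P_\omhat(q)\,\gamma_{\muhat\omhat}
$$
I would expand $(q-1)^2/q=q^{-1}-2+q$ and observe that only the $q^{-1}$ piece can contribute to the coefficient of $q^{-\Delta(\muhat)-1}$, since $v_q(P_\omhat)\geq-\Delta(\muhat)$ for every admissible $\omhat$. To prove this lower bound, note first that $C^0_\omhat\ne 0$ forces a common value $d=d_1=\cdots=d_s$; then, by Lemma~\ref{perm-sum-lemma} applied variable by variable (using $m_\nu(\x^d)=m_{d\cdot\nu}(\x)$), the condition $\gamma_{\muhat\omhat}\ne 0$ forces $\sum_p\omhat^p=\muhat/d$, so in particular $d$ divides every part of every $\mu^i$. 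Since $\delta(\muhat/d)=\delta(\muhat)/d\geq 0$, Proposition~\ref{Delta-ineq} applies to this decomposition and gives $\sum_p\Delta(\omhat^p)\leq\Delta(\muhat/d)=\Delta(\muhat)/d^2$, whence $v_q(P_\omhat(q))=-d\sum_p\Delta(\omhat^p)\geq-\Delta(\muhat)/d\geq-\Delta(\muhat)$. When $\Gamma$ is \emph{not} of affine type, the equality clause of Proposition~\ref{Delta-ineq} excludes every $\omhat$ except $(1,\muhat)$, whose contribution has $C^0_{(1,\muhat)}=1$, $\gamma_{\muhat\muhat}=\langle m_\muhat,h_\muhat\rangle=1$, and a leading coefficient $1$ in $P_\muhat(q)$ by Theorem~\ref{step1}(i) combined with Lemma~\ref{lowest}; this yields the desired value.

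The remaining case, $\Gamma$ affine and $\muhat=t\muhat^*$ with $\Delta(\muhat)=0$, is the main obstacle. Here Proposition~\ref{Delta-ineq} admits many multi-types attaining the lowest-order valuation: for each $d\mid t$ and each partition $t/d=r_1+\cdots+r_s$, the multi-type $(d,r_1\muhat^*)\cdots(d,r_s\muhat^*)$ contributes. The leading coefficient of each $P_{r\muhat^*}(q)$ equals $p(r)$ (the number of partitions of $r$) by Theorem~\ref{step1}(ii)~Case~I combined with Lemma~\ref{lowest}. I would then collect all these contributions via the $\Log/\Exp$ formalism of \S\ref{Cauchy} so as to reduce the required identity to a single one-variable combinatorial check attached to $\muhat^*$, analogous to the theta-function manipulation of \S\ref{Hilbert} that handles the Jordan case via the expansion~\eqref{expansion}; verifying this cancellation case-by-case for $\Gamma\in\{J,\tilde D_4,\tilde E_6,\tilde E_7,\tilde E_8\}$ is the most delicate step of the proof.
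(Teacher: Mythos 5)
Your strategy is the same as the paper's: reduce to showing the coefficient of the lowest power $q^{-\Delta(\muhat)-1}$ in $\H_\muhat(\sqrt q,1/\sqrt q)$ equals $1$, bound $v_q(P_\omhat)$ from below via Proposition~\ref{Delta-ineq}, and then sum the leading contributions. Your version of the lower bound is a clean variant (you divide $\muhat$ by $d$ rather than multiply the $\omhat^p$ by $d$, as the paper does via $\nuhat^p:=d\omhat^p$, but the two are equivalent). In the non-affine case your argument is correct. In the affine case your enumeration of the contributing multi-types is in fact \emph{more} careful than the paper's wording: the paper's proof asserts that equality in \eqref{conn-ineq} forces $d=1$, which is not literally true when $\Delta(\muhat)=0$ (for $J$ with $n=2$ the type $(2,(1))$ also achieves the minimal valuation and is needed to make the answer come out to $1$ rather than $3/2$). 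You correctly allow $d\mid t$ and $\omhat=(d,r_1\muhat^*)\cdots(d,r_s\muhat^*)$ with $\sum r_p = t/d$, and you correctly compute the leading coefficient of $P_{r\muhat^*}$ as $p(r)$.

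The gap is the final affine step. The sum you need to evaluate is
$$
\sum_{d\mid t}\ \sum_{(r_1,\ldots,r_s)\,\vdash\, t/d}\ C^0_\omhat\; p(r_1)\cdots p(r_s),
$$
and this is \emph{exactly} the coefficient of $T^t$ in $\Log\bigl(\sum_{n\geq 0}p(n)T^n\bigr)$; Euler's product identity $\sum_{n\geq 0}p(n)T^n=\prod_{m\geq 1}(1-T^m)^{-1}=\Exp\bigl(\sum_{n\geq 1}T^n\bigr)$ gives this coefficient as $1$ for every $t\geq 1$, uniformly, with no dependence on $\Gamma$ or on $\muhat^*$. Your proposal to check the cancellation ``case-by-case for $\Gamma\in\{J,\tilde D_4,\tilde E_6,\tilde E_7,\tilde E_8\}$'' is therefore misguided twice over: the sum does not depend on which affine $\Gamma$ you are in, only on $t$, and since $t$ ranges over all positive integers, the check would not even be finite. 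The appeal to the theta-function expansion \eqref{expansion} of \S\ref{Hilbert} is also off-target — that machinery is used for Conjecture~\ref{conjHS} and the quasi-modularity statement, not for the constant-term computation you need here. Replace the last step with the observation that the sum above is the coefficient of $T^t$ in $\Log\bigl(\sum_{n}p(n)T^n\bigr)=\sum_{n\geq1}T^n$, and the argument is complete.

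One further small point shared with the paper: the identity $v_q(P_{\omhat^p})=-\Delta(\omhat^p)$ is justified by Theorem~\ref{minim}, which is stated under the hypothesis $\delta(\omhat^p)\geq 0$; one should verify (or argue around) this hypothesis for the $\omhat^p$ that occur, particularly when $g=0$.
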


\begin{proof}Assume $\v$ is in the fundamental set of roots
of $\Gamma$. By Lemma~\ref{fund-set} this is equivalent to
$\delta(\muhat)\geq 0$.

  Note that $m_\nu(\x^d)=m_{d\nu}(\x)$ for any partition $\nu$ and
  positive integer $d$. Suppose
  $\omhat=(d,\omhat^1)\cdots(d,\omhat^s)$ is a multi-type for which
  $\gamma_{\muhat\omhat}$ is non-zero. Let $\nuhat^p=d\omhat^p$ for
  $p=1,\ldots, s$ (scale every part by $d$). These multi-partitions
  are then exactly in the hypothesis of Proposition~\ref{Delta-ineq}
  by Lemma~\ref{perm-sum-lemma}. Hence
\begin{equation}
\label{conn-ineq}
 d\sum_{p=1}^s
  \Delta(\omhat^p)\leq d^2\sum_{p=1}^s
  \Delta(\omhat^p)=\sum_{p=1}^s\Delta(\nuhat^p) \leq\Delta(\muhat).
\end{equation}
Suppose $\Gamma$ is not affine. Then by Proposition~\ref{Delta-ineq}
we have equality of the endpoints in \eqref{conn-ineq} if and only if
$s=1$, $\nuhat^1=\muhat$ and $d=1$, in other words, if and only if
$\omhat=(1,\muhat)$. Hence, since $C_{(1,\,\muhat)}^0=1$, the
coefficient of the lowest power of $q$ in
$\H_\muhat\left(\sqrt{q},1/\sqrt{q}\right)$ equals the coefficient of
the lowest power of $q$ in $P_\muhat(q)$ which is $1$ by
Lemma~\ref{lowest} and Theorem~\ref{minim}, Case II. This proves our
claim in this case.

Suppose now $\Gamma$ is affine. Then by Proposition~\ref{Delta-ineq}
we have equality of the endpoints in \eqref{conn-ineq} if and only if
$\muhat=t\muhat^*$ and $\omhat=(1,t_1\muhat^*),\ldots,(1,t_s\muhat^*)$
for a partition $(t_1,t_2,\ldots, t_s)$ of $t$ and $d=1$.  Combining
this with Lemma~\ref{lowest} and Theorem~\ref{minim}, Case I we see
that the lowest order terms in $q$ in
$\Log\left(\Omega\left(\sqrt{q},1/\sqrt{q}\right)\right)$ are
$$
L:=\sum C^0_\omhat  p(t_1)\cdots
p(t_s) \,m_{t\muhat^*},
$$
where the sum is over types $\omhat$ as above.  Comparison with
Euler's formula 
$$
\Log\left(\sum_{n\geq 0}p(n)\,T^n\right)=\sum_{n\geq1}T^n,
$$
shows that $L$ reduces to $\sum_{t\geq 1}
\,m_{t\muhat^*}$. Hence  the coefficient of the lowest power of $q$ in
$\H_\muhat\left(\sqrt{q},1/\sqrt{q}\right)$ is also~$1$ in this case
finishing the proof.
\end{proof}

\begin{proof}[Proof of Theorem \ref{connectedness}] If $g\geq 1$, the dimension vector $\v$ is always in the fundamental
 set of imaginary roots of $\Gamma$.  If $g=0$ the character variety
 if not empty if and only if $\v$ is a strict root of $\Gamma$ and if
 $\v$ is real then $\M_\muhat$ is a point \cite[Theorem
 8.3]{crawley-par}. If $\v$ is imaginary then it can be taken by the
 Weyl group to some $\v'$ in the fundamental set and the two
 corresponding varieties $\M_\muhat$ and $\M_{\muhat'}$ are isomorphic
 for appropriate choices of conjugacy classes \cite[Theorem 3.2, Lemma
 4.3 (ii)]{crawley-par}, hence Theorem \ref{connectedness}.

\end{proof}

\section{Appendix by Gergely~Harcos}\label{appendix}

\begin{theorem} Let $n,r$ be positive integers, and let $x_{ik}$
($1\leq i\leq n$, $1\leq k\leq r$) be arbitrary nonnegative numbers.
Let $c_i:=\sum_k x_{ik}$ and $c:=\max_i c_i$. Then we we have
\[c\sum_k\biggl(\sum_i x_{ik}\biggr)^2-\biggl(\sum_i
c_i\biggr)\biggl(\sum_{i,k}x_{ik}^2\biggr)\leq c\biggl(\sum_i
c_i\biggr)^2-\biggl(\sum_i c_i\biggr)\biggl(\sum_i c_i^2\biggr).\]
Assuming $\min_i c_i>0$, equality holds if and only if  we are in
one of the following situations

(i) $x_{ik}=x_{jk}$ for all $i,j,k$,

(ii) there exists some $l$ such that
$x_{ik}=0$ for all $i$ and all $k\neq l$.
\label{harcos}\end{theorem}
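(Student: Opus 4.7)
My plan is to recast the inequality in coordinate-free form and then dispatch it by a two-step argument: a single application of Jensen's inequality combined with the elementary bound $c\geq c_i$.

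First I would view $X=(x_{ik})$ as a matrix with rows $w_1,\ldots,w_n\in\mathbb{R}^r_{\geq 0}$ and column-sum vector $u:=\sum_i w_i$, so that $c_i=\mathbf{1}^T w_i$ and $S=\mathbf{1}^T u$. Introducing the quadratic form
\[
q(v):=(\mathbf{1}^T v)^2-\|v\|^2,
\]
a direct expansion shows that the target inequality is equivalent to
\[
c\,q(u)\;\geq\;S\sum_{i=1}^n q(w_i).
\]
For each $i$ with $c_i>0$ the vector $v_i:=w_i/c_i$ lies in the probability simplex of $\mathbb{R}^r$, so $f(v):=1-\|v\|^2$ is non-negative there; moreover $q(w_i)=c_i^2 f(v_i)$, and writing $u=S\bar v$ with $\bar v:=\sum_i(c_i/S)v_i$ a convex combination of the $v_i$, one has $q(u)=S^2 f(\bar v)$.

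The key step is to apply Jensen's inequality to the strictly concave function $f$ (whose Hessian is $-2I$): with weights $\lambda_i=c_i/S$ this yields
\[
S\,f(\bar v)\;\geq\;\sum_i c_i\,f(v_i).
\]
Multiplying by $c$ and then using both $c\geq c_i$ and the non-negativity $f(v_i)\geq 0$ to replace $c\cdot c_i$ by its lower bound $c_i^2$ term by term, I obtain
\[
cS\,f(\bar v)\;\geq\;c\sum_i c_i\,f(v_i)\;\geq\;\sum_i c_i^2\,f(v_i),
\]
which after multiplying through by $S$ is exactly $c\,q(u)\geq S\sum_i q(w_i)$. Rows with $c_i=0$ are simply zero and contribute $0$ to both sides of the reformulated inequality, so they can be ignored.

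For the equality analysis under the hypothesis $\min_i c_i>0$, strict concavity of $f$ together with positive Jensen weights forces all $v_i$ to coincide with a single $v$ in the simplex. The bound $c\cdot c_i\geq c_i^2$ is then saturated for every $i$ in exactly two ways: either $f(v)=0$, which for a probability vector forces $v=e_l$ for some $l$ and gives case (ii); or $f(v)>0$, in which case every $c_i$ must equal $c$ and hence $w_i=c\,v$ is independent of $i$, giving case (i). The only non-routine point in the whole argument is spotting the reformulation in terms of $q$ and $f$; once that is in place the proof reduces to Jensen plus a pointwise use of $c\geq c_i$, and I do not anticipate any serious obstacle beyond the bookkeeping of the two equality conditions.
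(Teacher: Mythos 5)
Your proof is correct, and it takes a genuinely different route from the paper's. The appendix proof (Harcos) first clears denominators and rewrites the inequality as a comparison of homogeneous degree-three symmetric sums $\sum_{i,j}c_i\sum_{k<l}x_{jk}x_{jl}\leq\sum_{i,j}c\sum_{k<l}x_{ik}x_{jl}$; it then reduces the problem pairwise over the index $(i,j)$ and, after fixing a pair and scaling one of the two rows so that both have the same total, arrives at the elementary identity $\sum_{k<l}(\tilde x_k-x_k)(\tilde x_l-x_l)=-\tfrac12\sum_k(\tilde x_k-x_k)^2\leq 0$. The equality analysis is then traced through each of these pairwise steps. Your approach instead works globally: it normalizes all rows to the probability simplex and applies Jensen's inequality once to the strictly concave function $f(v)=1-\|v\|^2$, followed by a term-by-term use of $c\geq c_i$. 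Both arguments ultimately exploit the same underlying fact --- the negative definiteness of the quadratic form $v\mapsto (\mathbf 1^{T}v)^2-\|v\|^2$ on the hyperplane $\mathbf 1^{T}v=0$ --- but yours does so in one stroke at a higher level of abstraction. What your route buys is transparency, especially for the equality case: strict concavity immediately forces all normalized rows $v_i$ to coincide, and the remaining slack $(c-c_i)f(v)=0$ splits cleanly into the two alternatives (i) and (ii). What the paper's route buys is that it is completely elementary and self-contained, proceeding purely by index manipulation without invoking convexity. One small remark: when you discard rows with $c_i=0$, it is worth noting explicitly that this does not alter $c$, $S$, or either side of the reformulated inequality; and the two equality cases (i) and (ii) are not mutually exclusive --- they can hold simultaneously, which your argument implicitly allows but might be worth flagging.
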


\begin{remark} The assumption $\min_i c_i>0$ does not result in any
loss of generality, because the values $i$ with $c_i=0$ can be
omitted without altering any of the sums.
\end{remark}

\begin{proof} Without loss of generality we can assume $c=c_1\geq\dots\geq
c_n$, then the inequality can be rewritten as
\[\biggl(\sum_i c_i\biggr)\biggl(\sum_j\sum_{k,l}x_{jk}x_{jl}-\sum_{j,k}x_{jk}^2\biggr)
\leq c\biggl(\sum_{i,j}\sum_{k,l}x_{ik}x_{jl}-\sum_{i,j}\sum_k
x_{ik}x_{jk}\biggr).\] Here and later $i,j$ will take values from
$\{1,\dots,n\}$ and $k,l,m$ will take values from $\{1,\dots,r\}$.
We simplify the above as
\[\biggl(\sum_i c_i\biggr)\biggl(\sum_j\sum_{\substack{{k,l}\\k\neq l}}x_{jk}x_{jl}\biggr)
\leq c\biggl(\sum_{i,j}\sum_{\substack{{k,l}\\k\neq
l}}x_{ik}x_{jl}\biggr),\] then we factor out and also utilize the
symmetry in $k,l$ to arrive at the equivalent form
\[\sum_{i,j}c_i\sum_{\substack{{k,l}\\k<l}}x_{jk}x_{jl}\leq
\sum_{i,j}c\sum_{\substack{{k,l}\\k<l}}x_{ik}x_{jl}.\] We distribute
the terms in $i,j$ on both sides as follows:
\[\sum_i c_i\sum_{\substack{{k,l}\\k<l}}x_{ik}x_{il}+
\sum_{\substack{{i,j}\\i<j}}\biggl(c_i\sum_{\substack{{k,l}\\k<l}}x_{jk}x_{jl}
+c_j\sum_{\substack{{k,l}\\k<l}}x_{ik}x_{il}\biggr)\leq \sum_i
c\sum_{\substack{{k,l}\\k<l}}x_{ik}x_{il}+
\sum_{\substack{{i,j}\\i<j}}c\sum_{\substack{{k,l}\\k<l}}(x_{ik}x_{jl}+x_{jk}x_{il}).\]
It is clear that
\[c_i\sum_{\substack{{k,l}\\k<l}}x_{ik}x_{il}\leq c\sum_{\substack{{k,l}\\k<l}}x_{ik}x_{il},\quad 1\leq i\leq n,\]
therefore it suffices to show that
\[c_i\sum_{\substack{{k,l}\\k<l}}x_{jk}x_{jl}+c_j\sum_{\substack{{k,l}\\k<l}}x_{ik}x_{il}
\leq c\sum_{\substack{{k,l}\\k<l}}(x_{ik}x_{jl}+x_{jk}x_{il}),\quad
1\leq i<j\leq n.\] We will prove this in the stronger form
\[c_i\sum_{\substack{{k,l}\\k<l}}x_{jk}x_{jl}+c_j\sum_{\substack{{k,l}\\k<l}}x_{ik}x_{il}
\leq
c_i\sum_{\substack{{k,l}\\k<l}}(x_{ik}x_{jl}+x_{jk}x_{il}),\quad
1\leq i<j\leq n.\]

We now fix $1\leq i<j\leq n$ and introduce $x_k:=x_{ik}$,
$x'_k:=x_{jk}$. Then the previous inequality reads
\[\biggl(\sum_m
x_m\biggr)\biggl(\sum_{\substack{{k,l}\\k<l}}x'_kx'_l\biggr)+
\biggl(\sum_m
x'_m\biggr)\biggl(\sum_{\substack{{k,l}\\k<l}}x_kx_l\biggr)\leq
\biggl(\sum_m
x_m\biggr)\sum_{\substack{{k,l}\\k<l}}(x_kx'_l+x'_kx_l),\] that is,
\[\sum_{\substack{{k,l,m}\\k<l}}(x_mx'_kx'_l+x_kx_lx'_m)\leq
\sum_{\substack{{k,l,m}\\k<l}}(x_kx_mx'_l+x_lx_mx'_k).\] The right
hand side equals
\begin{align*}
\sum_{\substack{{k,l,m}\\k<l}}(x_kx_mx'_l+x_lx_mx'_k)
&=\sum_{\substack{{k,l,m}\\l\neq
k}}x_kx_mx'_l=\sum_{\substack{{k,l,m}\\m\neq k}}x_kx_lx'_m
=\sum_{\substack{{k,m}\\m\neq
k}}x_k^2x'_m+\sum_{\substack{{k,l,m}\\l\neq k\\m\neq k}}x_kx_lx'_m\\
&=\sum_{\substack{{k,m}\\m\neq
k}}x_k^2x'_m+\sum_{\substack{{k,m}\\m\neq k}}x_kx_mx'_m
+\sum_{\substack{{k,l,m}\\l\neq k\\m\neq k,l}}x_kx_lx'_m\\
&=\sum_{\substack{{k,m}\\m\neq
k}}x_k^2x'_m+\sum_{\substack{{k,m}\\k<m}}x_kx_mx'_m
+\sum_{\substack{{k,m}\\m<k}}x_kx_mx'_m
+2\sum_{\substack{{k,l,m}\\k<l\\m\neq k,l}}x_kx_lx'_m\\
&=\sum_{\substack{{k,m}\\m\neq
k}}x_k^2x'_m+\sum_{\substack{{k,l}\\k<l}}x_kx_lx'_l
+\sum_{\substack{{k,l}\\k<l}}x_kx_lx'_k
+2\sum_{\substack{{k,l,m}\\k<l\\m\neq k,l}}x_kx_lx'_m\\
&=\sum_{\substack{{k,m}\\m\neq
k}}x_k^2x'_m+\sum_{\substack{{k,l,m}\\k<l}}x_kx_lx'_m
+\sum_{\substack{{k,l,m}\\k<l\\m\neq k,l}}x_kx_lx'_m,
\end{align*}
therefore it suffices to prove
\[\sum_{\substack{{k,l,m}\\k<l}}x_mx'_kx'_l\leq\sum_{\substack{{k,m}\\m\neq
k}}x_k^2x'_m+\sum_{\substack{{k,l,m}\\k<l\\m\neq k,l}}x_kx_lx'_m.\]
This is trivial if $x'_m=0$ for all $m$. Otherwise $\sum_m x'_m>0$,
hence $c_i\geq c_j$ yields
\[\lambda:=\biggl(\sum_m x_m\biggr)\biggl(\sum_m x'_m\biggr)^{-1}\geq 1.\]
Clearly, we are done if we can prove
\[\lambda^2\sum_{\substack{{k,l,m}\\k<l}}x_mx'_kx'_l\leq\lambda\sum_{\substack{{k,m}\\m\neq
k}}x_k^2x'_m+\lambda\sum_{\substack{{k,l,m}\\k<l\\m\neq
k,l}}x_kx_lx'_m.\] We introduce $\tilde x_m:=\lambda x'_m$, then
\[\sum_m\tilde x_m=\sum_m x_m,\]
and the last inequality reads
\[\sum_{\substack{{k,l,m}\\k<l}}x_m\tilde x_k\tilde x_l\leq
\sum_{\substack{{k,m}\\m\neq k}}x_k^2\tilde
x_m+\sum_{\substack{{k,l,m}\\k<l\\m\neq k,l}}x_kx_l\tilde x_m.\] By
adding equal sums to both sides this becomes
\[\sum_{\substack{{k,l,m}\\k<l}}x_m\tilde x_k\tilde
x_l+\sum_{\substack{{k,l,m}\\k<l}}x_kx_l\tilde x_m\leq
\sum_{\substack{{k,m}\\m\neq k}}x_k^2\tilde
x_m+\sum_{\substack{{k,l,m}\\k<l\\m\neq k,l}}x_kx_l\tilde
x_m+\sum_{\substack{{k,l,m}\\k<l}}x_kx_l\tilde x_m,\] which can also
be written as
\[\biggl(\sum_m x_m\biggr)\biggl(\sum_{\substack{{k,l}\\k<l}}\tilde x_k\tilde x_l\biggr)+
\biggl(\sum_m \tilde
x_m\biggr)\biggl(\sum_{\substack{{k,l}\\k<l}}x_k
x_l\biggr)\leq\sum_k x_k^2\biggl(\sum_{\substack{m\\m\neq k}}\tilde
x_m\biggr)+\sum_{\substack{{k,l}\\k<l}}x_kx_l\biggl(\sum_{\substack{m\\m\neq
k}}\tilde x_m+\sum_{\substack{m\\m\neq l}}\tilde x_m\biggr).\] The
right hand side equals
\begin{align*}
\sum_k x_k^2\biggl(\sum_{\substack{m\\m\neq k}}\tilde
x_m\biggr)+\sum_{\substack{{k,l}\\k<l}}x_kx_l\biggl(\sum_{\substack{m\\m\neq
k}}\tilde x_m+\sum_{\substack{m\\m\neq l}}\tilde x_m\biggr)&= \sum_k
x_k^2\biggl(\sum_{\substack{m\\m\neq k}}\tilde x_m\biggr)+
\sum_{\substack{{k,l}\\l<k}}x_kx_l\biggl(\sum_{\substack{m\\m\neq
l}}\tilde
x_m\biggr)+\sum_{\substack{{k,l}\\k<l}}x_kx_l\biggl(\sum_{\substack{m\\m\neq
l}}\tilde x_m\biggr)\\
&= \sum_k x_k^2\biggl(\sum_{\substack{m\\m\neq k}}\tilde x_m\biggr)+
\sum_{\substack{{k,l}\\k\neq
l}}x_kx_l\biggl(\sum_{\substack{m\\m\neq l}}\tilde x_m\biggr)\\
&=\sum_{k,l}x_kx_l\biggl(\sum_{\substack{m\\m\neq l}}\tilde
x_m\biggr)=\biggl(\sum_k
x_k\biggr)\biggl(\sum_{\substack{{m,l}\\m\neq l}}x_l\tilde
x_m\biggr),
\end{align*}
hence the previous inequality is the same as
\[\biggl(\sum_m x_m\biggr)\biggl(\sum_{\substack{{k,l}\\k<l}}\tilde x_k\tilde x_l\biggr)+
\biggl(\sum_m \tilde
x_m\biggr)\biggl(\sum_{\substack{{k,l}\\k<l}}x_k x_l\biggr)\leq
\biggl(\sum_k x_k\biggr)\biggl(\sum_{\substack{{m,l}\\m\neq
l}}x_l\tilde x_m\biggr).\] The first factors are equal and positive,
hence after renaming $m,l$ to $k,l$ when $m<l$ and to $l,k$ when
$m>l$ on the right hand side we are left with proving
\[\sum_{\substack{{k,l}\\k<l}}(\tilde x_k\tilde x_l+x_k x_l)\leq
\sum_{\substack{{k,l}\\k<l}}(\tilde x_k x_l+x_k\tilde x_l).\] This
can be written in the elegant form
\[\sum_{\substack{{k,l}\\k<l}}(\tilde x_k-x_k)(\tilde x_l-x_l)\leq
0.\] However,
\[0=\biggl(\sum_k (\tilde x_k-x_k)\biggr)^2=\sum_{k,l}(\tilde x_k-x_k)(\tilde x_l-x_l)
=\sum_k(\tilde x_k-x_k)^2 +2\sum_{\substack{{k,l}\\k<l}}(\tilde
x_k-x_k)(\tilde x_l-x_l),\] so that
\[\sum_{\substack{{k,l}\\k<l}}(\tilde x_k-x_k)(\tilde
x_l-x_l)=-\frac{1}{2}\sum_k(\tilde x_k-x_k)^2\leq 0\] as required.

We now verify, under the assumption $\min_i c_i>0$, that equation in
the theorem holds if and only if $x_{ik}=x_{jk}$ for all $i,j,k$ or
there exists some $l$ such that $x_{ik}=0$ for all $i$ and all
$k\neq l$. The ``if" part is easy, so we focus on the ``only if"
part. Inspecting the above argument carefully, we can see that
equation can hold only if for any $1\leq i<j\leq n$ the numbers
$x_k:=x_{ik}$, $x'_k:=x_{jk}$ satisfy
\[\lambda\sum_{\substack{{k,l,m}\\k<l}}x_mx'_kx'_l=
\sum_{\substack{{k,l,m}\\k<l}}x_mx'_kx'_l=\sum_{\substack{{k,m}\\m\neq
k}}x_k^2x'_m+\sum_{\substack{{k,l,m}\\k<l\\m\neq k,l}}x_kx_lx'_m,\]
where $\lambda$ is as before. If $x'_kx'_l=0$ for all $k<l$, then
$x_k^2x'_m=0$ for all $k\neq m$, i.e. $x_kx'_l=0$ for all $k\neq l$.
Otherwise $\lambda=1$ and $x_k=\tilde x_k=x'_k$ for all $k$ by the
above argument. In other words, equation in the theorem can hold
only if for any $i\neq j$ we have $x_{ik}x_{jl}=0$ for all $k\neq l$
or we have $x_{ik}=x_{jk}$ for all $k$. If there exist $j,l$ such
that $x_{jk}=0$ for all $k\neq l$, then $x_{jl}>0$ and for any
$i\neq j$ both alternatives imply $x_{ik}=0$ for all $k\neq l$,
hence we are done. Otherwise the first alternative cannot hold for
any $i\neq j$, so we are again done.
\end{proof}

\end{document}